\newtheorem{theorem}{Theorem}[section]
\newtheorem{lemma}[theorem]{Lemma}
\theoremstyle{definition}
\newtheorem{proposition}[theorem]{Proposition}
\theoremstyle{remark}
\newtheorem{remark}[theorem]{Remark}
\newtheorem{corollary}[theorem]{Corollary}
\numberwithin{equation}{section}
\def\epi{\varepsilon}
\newcommand{\BR}{\mathbb{R}}
\newcommand{\p}{\partial}
\newcommand{\wt}{\widetilde}
\newcommand{\CT}{\mathcal{T}}
\newcommand{\CX}{\mathcal{X}}
\newcommand{\CM}{\mathcal{M}}
\newcommand{\CH}{\mathcal{H}}
\newcommand{\CW}{\mathcal{W}}
\newcommand{\CG}{\mathcal{G}}
\newcommand{\CF}{\mathcal{F}}
\newcommand{\CA}{\mathcal{A}}
\newcommand{\CL}{\mathcal{L}}
\begin{document}
\title[Invariant Manifold]{Invariant Manifolds of traveling waves of the 3D Gross-Pitaevskii equation in the energy space }
\author{Jiayin Jin} 
\address{School of Mathematics, Georgia Institute of Technology}
\email{jin@math.gatech.edu}
\author{Zhiwu Lin${}^\dagger$}
\address{School of Mathematics, Georgia Institute of Technology}
\email{zlin@math.gatech.edu}
\thanks{${}^\ddagger$Work supported in part by NSF-DMS 1411803 and NSF-DMS 1715201}
 \author{Chongchun Zeng${}^\ddagger$}
 \address{School of Mathematics, Georgia Institute of Technology}
\email{chongchun.zeng@math.gatech.edu}
\thanks{${}^\ddagger$Work supported in part by NSF-DMS 1362507}

\begin{abstract}
We study the local dynamics near general unstable traveling waves of the 3D Gross-Pitaevskii equation in the energy space by constructing smooth local invariant  center-stable, center-unstable and center  manifolds. We also prove that (i) the center-unstable manifold attracts nearby orbits exponentially before they get away from the traveling waves along the center directions and (ii) if an initial data is not on the center-stable manifolds, then the forward flow will be ejected away from traveling waves exponentially fast. Furthermore, under a non-degenerate assumption, we show the orbital stability of the traveling waves on the center manifolds, which also implies the local uniqueness of the local invariant manifolds. Our approach based on a geometric bundle coordinates should work for a general class of Hamiltonian PDEs. 
\end{abstract}
\maketitle

\section{Introduction}
Consider the Gross-Pitaevskii (GP) equation 
\begin{equation}\label{GP}
 iu_t+\Delta u +(1-\vert u\vert ^{2})u=0,\hspace{0.5in} u: {\mathbb{R}\times\mathbb{R}^{3}} \to \mathbb{C},
\end{equation}
where $u$ satisfies the boundary condition $\vert u\vert \rightarrow 1$ as $\vert x\vert \rightarrow \infty$. The Gross-Pitaevskii equation arises in various physical problems such as superconductivity, superfluidity in Helium II, and Bose-Einstein condensate (for example \cite{AHMNPTB,BR2}). Formally, the Gross-Pitaevskii equation is a Hamiltonian PDE associated to the energy
\begin{equation} \label{E:energy} 
E(u)=\frac{1}{2}\int_{\mathbb{R}^{3}}\vert \nabla u\vert ^{2}dx + \frac{1}{4}\int_{\mathbb{R}^{3}}(1-\vert u\vert ^{2})^{2}dx,
\end{equation}
and the energy space is 
\begin{equation} \label{E:energy-s}
X_{0}=\{u\in H^{1}_{loc}(\mathbb{R}^{3}): \nabla u \in L^{2}(\mathbb{R}^{3}), 1-\vert u\vert ^{2}\in L^{2}(\mathbb{R}^{3})\}.
\end{equation}
The well-posedness of (GP) in $X_{0}$ was proved by G{\'e}rard \cite{G1}.  From the definition of $X_{0}$,  it is clear that the real part and imaginary part of a function in $X_{0}$ may have different spatial decaying rates, which makes the analysis of this equation quite different from the classical NLS. 

Related to the translation invariance of (GP), the momentum 
\begin{equation} \label{E:momentum} 
P(u)=\frac{1}{2}\int_{\mathbb{R}^{3}}\langle i\nabla u, u-1\rangle dx
\end{equation}
is also formally conserved. We denote each component of $P(u)$ as 
\begin{equation}
P_{j}(u)=\frac{1}{2}\int_{\mathbb{R}^{3}}\langle i\partial_{x_{j}} u, u-1\rangle dx=-\int_{\mathbb{R}^{3}}\langle u_{1}-1, \partial_{x_{j}}u_{2}\rangle dx
\end{equation}
for $j=1,2,3$. The corresponding relative equilibria are traveling wave solutions to (GP) which are  solutions in the form $u(t,x)= U_{c}(x-ct)$ where $c\in \mathbb{R}^{3}$ and $U_{c}$ satisfies the equation 
\begin{equation}\label{twGP}
-ic\cdot \nabla U_{c}+\Delta U_{c} + (1-|U_{c}|^{2})U_{c}=0.
\end{equation}
Due to the rotational invariance of (GP), for existence and stability/instability of traveling waves we only need to consider those traveling waves traveling in $x_{1}$ direction, i.e., $u(t,x)=U_{ae_{1}}(x-ae_{1}t)$, where $e_{1}=(1,0,0)^T$.

Traveling waves with finite energy play a very important role in the dynamics of (GP).  In a series of papers \cite{BM,BGM,GR,JPR, JR}, the existence, some qualitative properties, and the stability of traveling waves have been studied formally. A rigorous mathematical study 
was initiated by B{\'e}thuel and Saut in \cite{BS}, in which they proved the existence of traveling waves for 2D (GP), followed by \cite{BGS1,BGS2, BOS,C, Gra1,Gra2,M,M2}. Especially, Mari{\c{s}} \cite{M} constructed full branch of subsonic traveling waves for 3D (GP) by minimizing the energy-momentum functional subject to a Pohozaev type constraint. 

The stability and local dynamics near traveling waves have also been studied. On the one hand, 
these include the orbital stability of some traveling waves based on the variational structure \cite{CM}. On the other hand, 
a Grillakis-Shatah-Strauss type stability criterion was formulated based on numerics and heuristic arguments  
\cite{BR, JPR} and then later rigorously proved \cite{LWZ} for the traveling waves constructed by Mari{\c{s}} \cite{M}. Furthermore, Lin, Wang, and Zeng also proved that the nonlinear orbital stability of traveling waves on the lower (stable) branch by a Lyapunov functional argument, and  nonlinear instability of traveling waves on the upper (unstable) branch by constructing their unstable manifolds which consist of some rather smooth functions in $X_0$. 

Given any traveling wave solution $U_c = (u_c, v_c)$ of \eqref{GP} with traveling speed $c\in \BR^3$, $|c| \in (0, \sqrt{2})$, its spatial translations form a 3D manifold 
\[
\CM=\{U_{c}(\cdot+y): y\in \mathbb{R}^{3} \}
\]
of  traveling waves. To analyze the dynamics near $\CM$, we write (GP) in the traveling frame $u(t,x)=U(t,x-ct)$, where $U(t,x)$ satisfies
\begin{equation}\label{tfeq1}
i\partial_{t}U-ic \cdot \nabla U+\Delta U+(1-|U|^{2})U=0.
\end{equation}
It is clear that $U_{c}$ is a steady state of \eqref{tfeq1}. Linearizing \eqref{tfeq1} at $U_c$, one has 
\begin{equation}\label{le}
\partial_{t}U= JL_{c}U, \qquad J=\begin{pmatrix}0&1\\-1&0\end{pmatrix}, \quad L_c = (E+ c\cdot P)''(U_c).
\end{equation}
A more explicit expression of $L_c$ can be found in \eqref{lcy}. Under spatial decay assumption \eqref{E:TW-decay} of $U_c$, it is straight forward to verify that the tangent space of the energy space $X_0$ at $U_c$ is $X_{1}=H^{1}\times \dot{H}^{1}$, where naturally the linearized equation \eqref{le} should be considered. The linearized energy quadratic form  $L_c:X_1 \to X_1^*$ is bounded, symmetric, and uniformly positive except in finite many directions.  
As here the symplectic operator $J^{-1} = J^*=-J: X_1^* \to X_1$ is not bounded and thus the general framework of Grillakis-Shatah-Strauss \cite{GSS1, GSS2} does not apply to \eqref{le}, nevertheless the recent results in \cite{LZ17} are applicable (see Section \ref{S:coord}). Consequently, \eqref{le} satisfies the following exponential trichotomy property, which was derived  in \cite{LWZ} for those traveling waves obtained in \cite{M}  under some additional assumptions. 

\begin{lemma} 
There exist $C, \lambda, d_1 >0$ and closed subspaces $X^c, X^u, X^s$ of $X_1$ invariant under $e^{JL_c}$ such that $X_1 = X^u \oplus X^c \oplus X^s$, $d=\dim X^u = \dim X^s <\infty$, and 
\begin{align*}
&\Vert e^{JL_c}|_{X^s} \Vert \le C e^{\lambda t}, \;\; \forall t\ge 0, \;\quad \Vert e^{JL_c}|_{X^u} \Vert \le C e^{\lambda t}; \; \forall t\le 0,\\
& \Vert e^{JL_c}|_{X^c} \Vert \le C(1+|t|^{d_1}) , \;\; \forall t \in \BR. 
\end{align*}
\end{lemma}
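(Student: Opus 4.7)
The plan is to invoke the general exponential trichotomy theorem for linear Hamiltonian PDEs from Lin--Zeng \cite{LZ17}, specialized to our symplectic operator $J$ and energy quadratic form $L_c$. The strategy is not to build the decomposition by hand but to verify the abstract hypotheses of that framework and then read off the conclusion.

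First, I would cast \eqref{le} as an abstract linear Hamiltonian equation on the real Hilbert space $X_1 = H^1 \times \dot H^1$ with Hamiltonian quadratic form $\tfrac12 \langle L_c U, U\rangle$, symplectic structure induced by $J$, and phase-space generator $A = JL_c$. The essential structural ingredients needed are: (i) $L_c : X_1 \to X_1^*$ is bounded and symmetric; (ii) the negative Morse index $n^-(L_c)$ is finite; (iii) the kernel $\ker L_c \subset X_1$ is finite-dimensional. Item (i) is noted right before the lemma; (ii) is exactly the statement that $L_c$ is uniformly positive outside finitely many directions; and (iii) follows from the nondegeneracy/Fredholm analysis of the linearized operator at $U_c$, with $\ker L_c$ spanned essentially by the three translation generators $\partial_{x_j} U_c$. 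I would also record that $J : X_1^* \to X_1$ fails to be bounded (because of the asymmetry between $H^1$ and $\dot H^1$), which is exactly why the classical Grillakis--Shatah--Strauss setup is inapplicable and why the generalized framework of \cite{LZ17} is needed.

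Second, I would appeal to the spectral decomposition theorem of \cite{LZ17} for such operators $JL_c$. That theorem produces $e^{tJL_c}$-invariant closed subspaces $X^u$, $X^c$, $X^s$ with $X_1 = X^u \oplus X^c \oplus X^s$, gives the symmetry $\dim X^u = \dim X^s$ coming from the Hamiltonian structure (non-purely-imaginary eigenvalues occur in quadruples $\{\pm\lambda, \pm\bar\lambda\}$), and bounds the total unstable dimension by $n^-(L_c) < \infty$. The same theorem furnishes the exponential estimates on $X^{u,s}$ with a uniform rate $\lambda > 0$ (any positive number strictly smaller than the spectral gap of $JL_c|_{X^u}$ from the imaginary axis) and a polynomial bound $(1+|t|^{d_1})$ on the center part, where $d_1$ is controlled by the maximal size of Jordan blocks of $JL_c$ associated to purely imaginary spectrum; since the imaginary spectrum of $JL_c$ is finite-dimensional modulo a uniformly bounded part, $d_1$ is finite.

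The main obstacle, and the only real work beyond citation, is verifying the abstract hypotheses in our concrete setting: most notably checking that the embedding/duality between $X_1$ and $X_1^*$ together with the unbounded $J$ still fits the \cite{LZ17} framework, and checking that the center subspace $X^c$ (which a priori has infinite dimension and corresponds to the continuous imaginary spectrum of $JL_c$) admits the polynomial bound. I would handle the first by using the specific block structure $J = \begin{pmatrix}0&1\\-1&0\end{pmatrix}$ on $X_1$ (as opposed to on $X_1^*$), verifying the non-degeneracy of the symplectic form on $X_1$; and the second by combining the finite-dimensional generalized eigenspace for $0$ and any imaginary eigenvalues with the coercivity of $L_c$ on the remainder of $X^c$, which gives a Lyapunov-type argument yielding polynomial (in fact linear in $t$, so $d_1 \le 1$ generically) growth. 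Since \cite{LWZ} already carried out an analogous trichotomy for the traveling waves from \cite{M}, I would follow the same reductions here, with $C$, $\lambda$, $d_1$ obtained from the spectral data of $JL_c$.
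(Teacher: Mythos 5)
Your proposal is correct and follows essentially the same route as the paper: one verifies that $L_c$ is bounded and symmetric on $X_1$ with finite-dimensional kernel and finite Morse index (in the paper this is done by showing $L_c$ is a compact perturbation of the uniformly positive operator $L_{c,\infty}$, using $|c|<\sqrt 2$ and the decay \eqref{E:TW-decay}), and then invokes the structural decomposition/trichotomy theory of \cite{LZ17} for $JL_c$ with unbounded anti-self-dual $J$, from which the invariant splitting, the equality $\dim X^u=\dim X^s<\infty$, the exponential estimates, and the polynomial bound on the center part are read off. This matches the paper's proof, which cites Theorem 2.1 of \cite{LZ17} in Section \ref{S:coord}.
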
 

The exponential trichotomy both describes the linear dynamics near the traveling waves and provides a framework to analyze the local nonlinear dynamics. 

If $X^{u, s} =\{0\}$, $U_c$ is spectrally stable. Under  some additional assumptions the nonlinear orbital stability of $\CM$ can be obtained from the spectral stability (see, for example, \cite{LWZ}). If $U_c$ is spectrally unstable with $d>0$, conceptually one expects the existence of locally invariant manifolds which can be viewed as the deformation from the invariant subspaces under the perturbation of small nonlinear terms.  Here the local invariance of a submanifold manifold $\mathcal{N}$ means that, for any initial value $U(0)$ in the interior of $\mathcal{N}$, there exists $T>0$ such that the solution $U(t) \in \mathcal{N}$ for $t\in (-T, T)$, and thus solutions can exit $\mathcal{N}$ only through its boundary. Here the locally invariant submanifolds related to the exponential trichotomy are the unstable and stable manifolds of $U_c$ and the center-unstable, center-stable, and center manifolds of $\CM$. The former two manifolds contain $U_c$ and are tangent to $X^u$ and $X^s$ at $U_c$, respectively, while the latter three ones should contain $\CM$ and be tangent to $X^u \oplus X^c$, $X^s \oplus X^c$, and $X^c$. Here are some comments on their dynamic significance.   \\
1.) Firstly, to some extent, the nonlinear dynamics in these invariant manifolds are reflected, or even exactly conjugate in the case of the unstable and stable manifolds, by the corresponding linear ones. For example, the unstable manifolds can be characterized as the set of all solutions near $U_c$ which converge to $U_c$ backward in $t$ and grow out of a prefixed neighborhood of $U_c$ along the direction of $X^u$ as $t$ increases at least at certain exponential growth rate. This immediately provides a stronger result than mere nonlinear instability. \\
2.) Secondly, these invariant manifolds provide some framework to organize the local dynamics. One may imagine that for a typical initial value near $\CM$, its trajectory would first approach the center-unstable manifold along the direction of $X^s$ and then exit the neighborhood of $\CM$ along the $X^u$ direction, constituting a saddle type dynamics. \\
3.) Thirdly, there is numerical evidence \cite{BR} showing that after leaving a neighborhood of unstable traveling waves (upper branch), the flows either scatter to stable traveling waves (lower branch) or scatter to constant states. Under non-degeneracy conditions {\bf (H1-2)} given in Section \ref{S:non-deg},  there is orbital stability in the center-stable manifolds which provides a third type of dynamics, where orbits stay closed to traveling wave manifolds not easily observed in numerics. In the case the center-stable manifold is co-dim 1, there is a good chance that it serves as the boundaries between the two different types of asymptotic dynamics. Similar scenario has been proved for some models such as the Klein-Gordon and NLS when combined with other tools such as the viral identity \cite{NS3, NS4}.

Under some additional conditions, the 1-dim unstable and stable manifolds in $X_0 \cap H^3$ of an unstable traveling wave $U_c$ were constructed in \cite{LWZ}. As these two invariant curves in the phase space represent very low dimensional special structures, it is indeed more desirable for them to have extra properties such as higher $H^k$ regularity, $k>1$. 

The main goal of this paper is to obtain the existence, smoothness, and some dynamic properties of the center-stable, center-unstable, and the center manifolds. In contrast to the finite dimensional stable and unstable directions, as the center subspace contains all but finitely many dimensions, it is more preferable for the these invariant manifolds to be constructed as finite co-dimensional submanifolds in the energy space $X_0$. Moreover, on a center manifold where the topology is induced by the same one of the energy space, the energy conservation provides a crucial control on the nonlinear dynamics. 

\begin{theorem} 
Let $U_c$ be a traveling wave of \eqref{GP} so that spatial decay condition \eqref{E:TW-decay} is satisfied and $d=\dim X^u = \dim X^s>0$, then 
\begin{enumerate}
\item There exist locally invariant co-dim $d$ center-unstable manifold $\CW^{cu}$ and center-stable manifold $\CW^{cs}$, both containing $\CM$.
\item $\CW^{cu}$ and $\CW^{cs}$ are translation invariant, i.e. if $U \in \CW^{cu, cs}$, then $U(\cdot +y) \in \CW^{cu, cs}$ for any $y \in \BR^3$.
\item Let $\psi$ be defined in \eqref{E:psi}, then $\psi^{-1} (\CW^{cu})$ and $\psi^{-1}(\CW^{cs})$ are smooth co-dim $d$ submanifolds of $X_1$. Moreover, the tangent spaces of \; $\psi^{-1} (\CW^{cu, cs})$ at $\psi^{-1}(U_c)$ are equal to $(D\psi^{-1}) (U_c) X^{cu, cs}$. 
\item Orbits in a small neighborhood of $\CM$ are exponentially attracted to $\CW^{cu}$ as $t$ increases (before they possibly exit the neighborhood in the center directions), while repelled exponentially by $\CW^{cs}$. 
\end{enumerate}
\end{theorem}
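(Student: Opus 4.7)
The plan is to pull the problem back to the Hilbert space $X_1$ via the chart $\psi^{-1}$, introduce bundle coordinates adapted to the 3-dimensional group orbit $\CM$, and then apply the Lyapunov-Perron method using the exponential trichotomy. Concretely, a tubular neighborhood of $\CM$ in $X_0$ is parametrized as $\tau_y\bigl(U_c + \psi(w)\bigr)$, where $\tau_y$ denotes spatial translation by $y\in\BR^3$ and $w$ lies in a codimension-3 slice $Y \subset X_1$ transverse to $T_{U_c}\CM$. Pulling back \eqref{tfeq1} yields a skew-product
\begin{equation*}
\dot w = JL_c w + N(w), \qquad \dot y = a(w),
\end{equation*}
whose right-hand sides are $y$-independent by translation invariance, with $N(0)=0$ and $DN(0)=0$. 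The key technical point is that $\psi$ absorbs the low Fr\'echet regularity of the GP nonlinearity coming from the boundary condition $|U|\to 1$, so $N$ and $a$ become $C^k$ maps on $X_1$. The problem is thereby reduced to a standard invariant manifold problem for the $w$-equation near the origin.

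I construct $\CW^{cs}$ as the image under $\tau_y \circ \psi$ of the graph of a $C^k$ map $h^u : B_r \cap X^{cs} \to X^u$ with $h^u(0)=0$ and $Dh^u(0)=0$, where $X^{cs} := X^c \oplus X^s$. The graph is extracted from the Lyapunov-Perron fixed point in the Banach space of trajectories $w : [0,\infty) \to X_1$ weighted by $e^{-\mu t}$, with $\mu \in (0,\lambda)$ chosen small enough to dominate the polynomial factor $(1+|t|^{d_1})$ on $X^c$ while remaining strictly below $\lambda$. The fixed-point equation is
\begin{equation*}
w(t) = e^{tJL_c}w_0 + \int_0^t e^{(t-s)JL_c} P^{cs} N(w(s))\, ds - \int_t^\infty e^{(t-s)JL_c} P^u N(w(s))\, ds,
\end{equation*}
with free parameter $w_0 \in X^{cs}$ and $h^u(w_0) := P^u w(0)$. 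Contraction follows from the trichotomy bounds together with the quadratic vanishing of $N$, while $C^k$ smoothness of the graph is obtained by differentiating the fixed-point equation and applying contraction with parameters. The center-unstable manifold $\CW^{cu}$ is constructed symmetrically by reversing time.

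Items (1)--(4) then follow. The co-dimension is $d = \dim X^u = \dim X^s$, and local invariance is built into the integral formulation; $\CM$ is contained in both manifolds because $w=0$ is a stationary solution. Translation invariance (item 2) is automatic, since $\tau_y$ commutes with the flow and with $\psi$ and $N, a$ are $y$-independent, so the Lyapunov-Perron construction is manifestly equivariant and $\CW^{cu,cs}$ are unions of translated slice fibers. For item (3), $\psi^{-1}(\CW^{cs})$ is locally the graph of $h^u$ over $X^{cs}\cap Y$ taken in product with the 3-dim translation factor, hence a $C^k$ codimension-$d$ submanifold of $X_1$ whose tangent space at $\psi^{-1}(U_c)$ coincides with $X^{cs}$ after identifying $T_{U_c}\CM \subset X^c$. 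For item (4), attraction to $\CW^{cu}$ is obtained by comparing a nearby orbit with its projection onto $\CW^{cu}$ and running a Gronwall estimate on the stable component in the bundle coordinates, giving decay at rate $\lambda' < \lambda$ so long as the orbit remains in the tubular neighborhood; repulsion from $\CW^{cs}$ is the time-reversed statement.

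The principal obstacle is the low Fr\'echet regularity of the GP nonlinearity on the energy space $X_0$: because $(1-|U|^2)U$ couples with the boundary condition $|U|\to 1$, the vector field is not smooth on $X_0$ and standard manifold theorems do not apply there directly. The entire scheme rests on the chart $\psi$ and the bundle coordinates of Section \ref{S:coord} producing a smooth nonlinearity on $X_1$ with estimates uniform in the moving frame $y$; verifying these uniform $C^k$ bounds is the main technical burden. A secondary delicate point is the non-isometric polynomial growth $(1+|t|^{d_1})$ on $e^{tJL_c}|_{X^c}$: the weight $\mu$ must be tuned so that all Lyapunov-Perron cross-estimates close despite the center semigroup failing to be uniformly bounded. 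Together these two points are what distinguish the argument from the finite-dimensional unstable/stable manifold construction of \cite{LWZ}.
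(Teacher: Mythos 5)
Your proposal reduces everything to a ``standard invariant manifold problem'' by claiming that, after pulling back through $\psi$ and passing to translated slice coordinates $U=\tau_y\bigl(U_c+\psi(w)\bigr)$, one obtains a skew-product $\dot w = JL_c w + N(w)$, $\dot y = a(w)$ with $N,a\in C^k(X_1)$. This is exactly the ``naive'' parametrization that the paper rules out in Remark \ref{R:coord}, and the claimed smoothness is where the argument breaks. Two distinct things go wrong. First, the chart itself is not smooth: the map $(y,w)\mapsto \tau_y(\cdot)$ is not differentiable in $y$ as a map into $X_1$, because $D_y\bigl(w(\cdot-y)\bigr)=-\nabla w(\cdot-y)$ loses one derivative on a generic center-direction $w\in X_1$. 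Consequently your item (3) -- that $\psi^{-1}(\CW^{cu,cs})$ is a smooth codimension-$d$ submanifold of $X_1$ -- cannot be read off from a graph over a chart that is itself only continuous in $y$. Second, the modulation coupling does not disappear from the $w$-equation: in the frame moving with $y(t)$ the equation for $w$ contains the transport term $\dot y\cdot\nabla w = a(w)\cdot\nabla w$, which is quadratic in size but unbounded $X_1\to X_1$; hence $N$ is \emph{not} a $C^k$ (nor even small-Lipschitz) map of $X_1$ into itself, and your Lyapunov--Perron map is not a contraction -- indeed not well defined -- on the weighted space $C^0_t\bigl(e^{-\mu t};X_1\bigr)$. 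The chart $\psi$ of G\'erard fixes the algebraic/decay issues of $(1-|U|^2)U$, but it does nothing about this derivative loss. Moreover, even the translation-free part of the nonlinearity fails to land in $X_1$: Lemma \ref{ESG} shows it only takes values in $X_1+W^{1,\frac32}+(L^{\frac32}\cap\dot W^{1,\frac65})$, so plain exponential-trichotomy bounds are insufficient to close the fixed-point estimates; some dispersive input is unavoidable.

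The paper's construction is designed around precisely these two obstructions: it replaces the translated-slice coordinates by the bundle coordinates \eqref{E:coord-1}, in which the center component $V^e$ lives in the smoothly varying fiber $X_{c,y}^e$ and is \emph{not} parametrized by translation, so the dangerous $\dot y\cdot\nabla V^e$ term never appears; its only trace is the bounded second-fundamental-form operator $\mathcal F(c,y)(\p_t y,\cdot)$ of \eqref{E:CF-def} entering \eqref{E:VE}. The resulting center evolution is then controlled by the energy estimate for $\langle L_{c,y}\cdot,\cdot\rangle$ (weak growth $e^{C\sigma|t-s|}$, $\sigma=|\p_t y|_{L^\infty}$) combined with the Strichartz-type space-time estimates of Section \ref{S:linear}, which is what allows forcing terms in $X_1+W^{1,\frac32}+(L^{\frac32}\cap\dot W^{1,\frac65})$, and the scaling parameter $Q$ absorbs the non-small off-diagonal couplings. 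Your weight-tuning idea for the polynomial growth on $X^c$ is fine and appears in the paper as the choice of $\eta<\lambda$, but without the bundle coordinates (or an equivalent device such as the nonlinear ``mobile distance'' of Nakanishi--Schlag) and without the space-time estimates, the contraction, the $C^k$ smoothness in item (3), and the uniform-in-$y$ bounds you invoke for items (2) and (4) do not follow.
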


\begin{corollary} 
$\CW^{cu}$ and $\CW^{cs}$ intersect transversally along $\CM$ and the intersection $\CW^c = \CW^{cu} \cap \CW^{cs}$ gives a locally invariant smooth center manifold tangent to $X^c$ along $\CM$, which is also invariant under spatially translation. Moreover, $\CW^c$ exponentially attracts nearby orbits in $\CW^{cs}$ and exponentially repels those in $\CW^{cu}$ (before they possibly exit a neighborhood of $\CM$ in the center directions).
\end{corollary}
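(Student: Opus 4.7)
The plan is to derive the corollary almost entirely from the main theorem, using that the transversal intersection of two smooth codimension-$d$ submanifolds is a smooth codimension-$2d$ submanifold.

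First I would establish transversality of $\CW^{cu}$ and $\CW^{cs}$ along $\CM$ by working in the pulled-back coordinates provided by $\psi$. By item (3) of the theorem, $\psi^{-1}(\CW^{cu})$ and $\psi^{-1}(\CW^{cs})$ are smooth codimension-$d$ submanifolds of $X_1$ whose tangent spaces at $\psi^{-1}(U_c)$ are $(D\psi^{-1})(U_c) X^{cu}$ and $(D\psi^{-1})(U_c) X^{cs}$, respectively. The exponential trichotomy gives $X_1 = X^u \oplus X^c \oplus X^s$, hence $X^{cu} + X^{cs} = X_1$ and $X^{cu} \cap X^{cs} = X^c$. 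Since $D\psi^{-1}(U_c)$ is a linear isomorphism, the corresponding tangent spaces in $X_1$ inherit the same sum/intersection decomposition, which is exactly transversality at $\psi^{-1}(U_c)$. The translation invariance in item (2), together with the equivariance of $\psi$ under translations, then propagates transversality to every point of $\CM$.

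Next, the standard implicit function theorem argument for transversal intersections of Banach submanifolds shows that, in a neighborhood of $\psi^{-1}(\CM)$, the set $\psi^{-1}(\CW^{cu}) \cap \psi^{-1}(\CW^{cs})$ is a smooth codimension-$2d$ submanifold of $X_1$ whose tangent space along $\psi^{-1}(\CM)$ is $(D\psi^{-1})(U_c) X^c$. Pushing forward by $\psi$ yields the smooth center manifold $\CW^c$, tangent to $X^c$ along $\CM$. Local invariance of $\CW^c$ under the flow of \eqref{tfeq1} is immediate: an orbit starting in $\CW^c$ remains in each of $\CW^{cu}$ and $\CW^{cs}$ for short time by their local invariance, hence in their intersection $\CW^c$. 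Spatial translation invariance descends in the same way from the corresponding property of $\CW^{cu}$ and $\CW^{cs}$.

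For the attraction/repulsion statement, I would simply restrict item (4) of the theorem to orbits already lying in $\CW^{cs}$. Such orbits stay in $\CW^{cs}$ by local invariance, and item (4) says they are exponentially attracted to $\CW^{cu}$ as $t$ increases, provided they do not exit the neighborhood along the center directions; the attraction to $\CW^{cu} \cap \CW^{cs} = \CW^c$ follows. For the repulsion statement on $\CW^{cu}$, I would reverse time: under $t \mapsto -t$ the trichotomy swaps $X^u$ and $X^s$, which interchanges the roles of $\CW^{cu}$ and $\CW^{cs}$, and the previous argument applied to the reversed flow yields the exponential repulsion of orbits in $\CW^{cu}$ by $\CW^c$ as $t$ increases.

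The main technical subtlety I anticipate is ensuring that transversality, smoothness, and the attraction rates — all stated or proved in the $X_1$ chart via $\psi$ — descend faithfully to the energy space description of $\CW^c$; one must check that $\psi$ restricts to a diffeomorphism from a small neighborhood of $\psi^{-1}(\CM)$ in $X_1$ onto the relevant neighborhood of $\CM$ in $\CW^{cu} \cup \CW^{cs}$, so that the intersection computed in $X_1$ coincides with the intersection in the energy space and so that the exponential rates in the $X_1$ norm translate into the corresponding $X_0$-norm statements in item (4).
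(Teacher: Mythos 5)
Your first half is a legitimate alternative route to the one in the paper: you obtain $\CW^c$ abstractly, from transversality of the tangent spaces along $\CM$ plus the implicit function theorem, whereas the paper never invokes transversality in this abstract form. Instead it solves the coupled graph equations \eqref{fpcm}, $a^-=h^{cu}(y,a^{d1},a^{d2},a^+,V^e)$ and $a^+=h^{cs}(y,a^{d1},a^{d2},a^-,V^e)$, by a contraction argument (both maps have Lipschitz constant $\le\mu<1$ in the transverse variables), which yields $h^c$ directly with the explicit bound $\mathrm{Lip}\,h^c\le\mu/(1-\mu)$ (Proposition \ref{P:CM}); smoothness of $h^c$ then follows from the smoothness of $h^{cu},h^{cs}$ via the implicit function theorem for this finite-dimensional fixed-point system. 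Your route buys elegance but loses the quantitative graph description; also note you need transversality at every intersection point in a neighborhood of $\CM$, not just on $\CM$ itself (this does follow by continuity of the tangent planes of the two $C^1$, finite-codimension submanifolds, but it should be stated).

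The genuine gap is in the attraction/repulsion step. From ``the orbit stays in $\CW^{cs}$'' and ``its distance to $\CW^{cu}$ decays exponentially'' you cannot conclude that its distance to $\CW^c=\CW^{cu}\cap\CW^{cs}$ decays: distance to an intersection is not controlled by distance to one of the two sets unless one has a \emph{uniform quantitative} transversality estimate, namely $\mathrm{dist}(p,\CW^c)\le C\,\mathrm{dist}(p,\CW^{cu})$ for $p\in\CW^{cs}$ near $\CM$ (and the analogous comparison on $\CW^{cu}$ for the repulsion, where in addition one must bound $\mathrm{dist}(p(0),\CW^{cs})$ from below by $\mathrm{dist}(p(0),\CW^c)$ at the initial time, since $\CW^c\subset\CW^{cs}$ only gives the inequality in the useless direction). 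Supplying this comparison is exactly the content of the paper's Lemma \ref{L:center-sta}: using the identities \eqref{E:temp-5}, $h^c_-(W)=h^{cu}(W,h^c_+(W))$ and $h^c_+(W)=h^{cs}(W,h^c_-(W))$, together with the $\mu$-Lipschitz bounds, one gets $|(a^+,a^-)-h^c(W)|\le(1-\mu)^{-1}|a^--h^{cu}(W,a^+)|$, and only then does the decay of $|a^--h^{cu}(W,a^+)|$ from Lemma \ref{L:StaWcu} (resp.\ the growth in Proposition \ref{P:CS}(2)) transfer to the distance to $\CW^c$. So the crucial estimate in the corollary cannot be read off from the qualitative statement (4) of the theorem alone; it needs the graph/Lipschitz structure of the manifolds. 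Your time-reversal argument is also shaky as stated—center-stable manifolds of the reversed flow are not unique, so there is no a priori identification with the given $\CW^{cu}$—but it is unnecessary anyway, since the repulsion by $\CW^{cs}$ is already part of the theorem; what is missing is the same comparison estimate.
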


More detailed statements of the results can be found in Section \ref{S:InMa} and \ref{S:smooth}. 

As $X_0$ is not a flat space, we identify $X_0$ with $X_1$ through a coordinate map $\psi: X_1 \to X_0$ given in \eqref{E:psi}, which is borrowed from \cite{G2}. 
So the above invariant manifolds are smooth manifolds in the sense that their images under $\psi^{-1}$ are smooth submanifolds in $X_1$. Statement (4) in the above theorem implies that $\CM$ is orbitally unstable. 

It is well-known that center-unstable manifolds, et. al. are not unique even for ODEs and the dynamics,  including the stability, on the center manifold is rather subtle even though all spectral instability has already been excluded. However, under non-degeneracy conditions {\bf (H1-2)} given in Section \ref{S:non-deg} which ensure the uniform positivity of $L_c$ on $X^c$ and include the cases considered in \cite{LWZ}, we have 

\begin{theorem} 
Assume {\bf (H1-2)} in addition, then there exist $C, \delta >0$ such that 
\begin{enumerate}
\item Given any initial value $U(0)$ such that $ \psi^{-1}\big(U(0)\big)$ is in a $C^{-2} \delta$-neighborhood of \;$\psi^{-1}(\CM)$, then $U(0) \in \CW^{cu, cs}$ if and only if $\psi^{-1}\big(U(t)\big)$ is in a $\delta$-neighborhood of $\psi^{-1}(\CM)$ for all $\mp t \ge 0$, and $U(0) \in \CW^{c}$ if and only if $ \psi^{-1}\big(U(t)\big)$ is in a $\delta$-neighborhood of $\psi^{-1}(\CM)$ for all $t \in \BR$.
\item $\CM$ is orbitally stable in $\CW^c$. 
\end{enumerate}
\end{theorem}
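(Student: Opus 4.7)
The plan is to prove part (2) first, since orbital stability on $\CW^c$ feeds into both directions of part (1). The key input is the Hamiltonian structure: in the traveling frame, $\CH := E + c \cdot P$ is conserved, $\CM$ is a 3D manifold of critical points of $\CH$, and under \textbf{(H1-2)} the Hessian $L_c = \CH''(U_c)$ is uniformly positive on $X^c$ modulo its kernel $T_{U_c}\CM$. Working in the bundle coordinates from Section \ref{S:coord}, I would write a point $U$ near $\CM$ via $\psi^{-1}(U) = \psi^{-1}(U_c(\cdot + y)) + V$, with the modulation $y$ chosen smoothly as a function of $U$ and $V$ in a complement of $T_{U_c(\cdot+y)}\CM$. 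For $U\in \CW^c$, tangency of $\CW^c$ to $X^c$ along $\CM$ forces $V$ to lie in $X^c$ up to a quadratic correction. Expanding, the linear term of $\CH(U) - \CH(U_c)$ vanishes, the quadratic term is $\tfrac12\langle L_c V, V \rangle \ge c_0 \|V\|_{X_1}^2$ by \textbf{(H1-2)}, and the cubic remainder is $O(\|V\|_{X_1}^3)$. Conservation of $\CH$ then traps $\|V(t)\|_{X_1}$ near its initial size for all $t$, giving orbital stability on $\CW^c$.

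For part (1), the \emph{only if} direction combines local invariance with attraction to $\CW^c$. If $U(0) \in \CW^{cs}$ is close enough to $\CM$, then $U(t) \in \CW^{cs}$ as long as it remains in the coordinate chart; statement (4) of the first theorem gives exponential attraction within $\CW^{cs}$ to $\CW^c$ as $t \to +\infty$, and orbital stability on $\CW^c$ then keeps $U(t)$ near $\CM$ forever. The $\CW^{cu}$ case is symmetric by time reversal, and the corollary's identity $\CW^c = \CW^{cu} \cap \CW^{cs}$ handles both time directions. For the \emph{if} direction, suppose $U(0) \notin \CW^{cs}$ while $\psi^{-1}(U(0))$ lies within distance $C^{-2}\delta$ of $\psi^{-1}(\CM)$. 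In bundle coordinates $U(0)$ has a nonzero unstable component, and statement (4) of the first theorem provides exponential repulsion: this component grows like $e^{\lambda t}$ until the orbit exits the $\delta$-tube around $\psi^{-1}(\CM)$. The factor $C^{-2}$ in the initial smallness precisely budgets the constant $C$ from the exponential trichotomy, forcing the exit time to be finite, contradicting the hypothesis that $\psi^{-1}(U(t))$ stays in the $\delta$-neighborhood for all $t\ge 0$. The $\CW^{cu}$ statement follows by reversing time, and the $\CW^c$ statement follows from intersecting $\CW^{cu}$ and $\CW^{cs}$.

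The main technical obstacle is the translation-invariant degeneracy of $\CM$. Since $\CM$ is a 3D group orbit of critical points, any distance from $\CM$ must be measured transversally, which is why the smooth bundle coordinates $(y,V)$ are indispensable; constructing them and proving that the modulation $y(t)$ evolves smoothly in $X_1$ is delicate, though this machinery is already set up earlier in the paper. The remaining subtle point is verifying that the cubic and higher remainders in the expansion of $\CH$ in $V$ are genuinely $O(\|V\|_{X_1}^3)$ on $X_1 = H^1 \times \dot H^1$ in three dimensions, which hinges on Sobolev embedding and the cubic nature of the (GP) nonlinearity. A secondary point is the bi-Lipschitz passage between neighborhoods in $X_0$ and $X_1$ through the nonlinear identification $\psi$, which is exactly what absorbs the factor $C^{-2}$ relating initial and subsequent neighborhood sizes.
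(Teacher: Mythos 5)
Your plan for part (2) and for the ``if'' direction of part (1) matches the paper's route: under {\bf (H1-2)} the decomposition degenerates to $X^T\oplus X^e\oplus X^\pm$, the conserved functional $E+c\cdot P$ expanded in the bundle coordinates is $\langle L_e(y)V^e,V^e\rangle$ plus a cross term in $(a^+,a^-)$ and cubic errors (see \eqref{E:tE1}), uniform positivity of $L_e$ on $X^e$ plus conservation traps $V^e$, and the converse (not on $\CW^{cs}$ $\Rightarrow$ eventual exit) is exactly the exponential repulsion estimate of Proposition \ref{P:CS}(2)--(3). So far this is the same argument as Lemmas \ref{L:StaWc1}--\ref{L:StaWc2} and Propositions \ref{P:StaWc}, \ref{P:character}.

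The genuine gap is in your ``only if'' direction for $\CW^{cs}$ (and symmetrically $\CW^{cu}$). You argue: the orbit lies on $\CW^{cs}$, it is exponentially attracted to $\CW^c$, and orbital stability of $\CM$ \emph{within} $\CW^c$ then keeps it near $\CM$ forever. That inference does not follow: the stability statement on $\CW^c$ applies only to orbits whose initial data lie on $\CW^c$, while your orbit merely approaches $\CW^c$; the attraction estimate (Lemma \ref{L:center-sta}) controls $|(a^+,a^-)-h^c(y,V^e)|$ but says nothing about the center component $V^e(t)$, which could a priori drift out of the chart before the hyperbolic deviation has decayed. To convert ``attraction + stability on the limit manifold'' into stability of nearby orbits you would need an asymptotic-phase/invariant-foliation result (each $\CW^{cs}$-orbit shadows a specific $\CW^c$-orbit), which neither you nor the paper establishes. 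The paper closes this by running the conservation argument \emph{directly on} $\CW^{cs}$: since $Dh^{cs}(y,0)=0$, the conserved energy restricted to the graph equals $\langle L_e(y)V^e,V^e\rangle$ up to cubic errors \eqref{E:tEcs}; the decay of $|a^--h^c_-(y,V^e)|$ together with $Dh^c(y,0)=0$ gives $|a^-(t)|\le C\Vert V^e(t)\Vert_{X_1}^2+C^{-1}e^{-(\lambda-2\eta)t}\delta$; feeding this back into the energy identity bounds $\Vert V^e(t)\Vert_{X_1}$, and a bootstrap on the exit time $T$ shows $T=\infty$ (Lemma \ref{L:StaWc1}). Note also that the paper's logical order is the reverse of yours: stability inside $\CW^{cs}$ (forward) and $\CW^{cu}$ (backward) are proved first, and stability of $\CM$ in $\CW^c$ is obtained as a consequence; your plan to derive the $\CW^{cs}$ statement from the $\CW^c$ statement is precisely the step that cannot be made rigorous without the extra fibration machinery.
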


Due to the above characterizations, we obtain

\begin{corollary}
Assuming {\bf (H1-2)}, $\CW^{cu, cs, c}$ are locally unique. 
\end{corollary}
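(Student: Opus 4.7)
The plan is to deduce the corollary directly from the intrinsic characterization of $\CW^{cu}, \CW^{cs}, \CW^c$ given by statement (1) of the preceding theorem. That statement identifies these manifolds as exactly the sets of initial data whose backward, forward, or two-sided orbits remain in a $\delta$-neighborhood of $\psi^{-1}(\CM)$ under $\psi^{-1}$; crucially this property depends only on the flow and $\CM$, not on any particular construction. So if I can show that any alternative locally invariant manifold with the prescribed codimension and tangency at $\CM$ must consist of points enjoying this bounded-orbit property, uniqueness follows by inclusion plus dimension matching.

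Concretely, let $\tilde{\CW}^{cu}$ be a second locally invariant co-dim $d$ submanifold containing $\CM$ and tangent to $(D\psi^{-1})(U_c) X^{cu}$ along $\CM$. First, I would choose a neighborhood $\CU$ of $\CM$ small enough that $\psi^{-1}(\tilde{\CW}^{cu} \cap \CU)$ is a smooth graph over $X^{cu}$ and such that the characterization of statement (1) applies with the $\delta$ and $C^{-2}\delta$ of that theorem. Second, for each $U(0) \in \tilde{\CW}^{cu}\cap \CU$, I would use local invariance of $\tilde{\CW}^{cu}$ to run the flow backwards on $\tilde{\CW}^{cu}$ and argue that the backward orbit never leaves the $\delta$-neighborhood of $\psi^{-1}(\CM)$. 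Once this is established, the characterization yields $U(0) \in \CW^{cu}$, hence $\tilde{\CW}^{cu}\cap \CU \subseteq \CW^{cu}$; smoothness, equal codimension, and common tangency along $\CM$ then force equality after possibly shrinking $\CU$. The case of $\CW^{cs}$ is entirely symmetric under time reversal, and $\CW^c = \CW^{cu} \cap \CW^{cs}$ inherits uniqueness from its two constituents.

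The main obstacle is precisely this propagation of the bounded-backward-orbit estimate on $\tilde{\CW}^{cu}$ from a finite time interval (which is all that local invariance directly supplies) to all $t \le 0$. To close it, I would exploit the reduced hyperbolic-plus-center structure on $\tilde{\CW}^{cu}$: along the finite-dimensional direction tangent to $X^u$ the backward flow is exponentially contracting, while along the center direction the growth is controlled by the orbital stability on $\CW^c$ guaranteed by (H1-2) via statement (2) of the preceding theorem. Transferring this orbital stability to $\tilde{\CW}^{cu}$ uses only the tangency at $\CM$ and the smoothness of $\tilde{\CW}^{cu}$ as a graph, via a standard continuity-and-bootstrap argument: as long as the orbit is in the $\delta$-neighborhood the center-component evolution is a small perturbation of the flow on $\CW^c$, so orbital stability keeps it $o(\delta)$-close to $\CM$, while the unstable-component coordinate decays exponentially backward in time, preventing escape. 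Iterating this on successive time intervals gives the required control for all $t \le 0$ and completes the proof.
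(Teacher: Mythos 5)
Your opening idea is exactly the paper's argument: under {\bf (H1-2)} the manifolds are characterized dynamically (Proposition \ref{P:character}, i.e.\ statement (1) of the theorem) as the sets of nearby initial data whose backward, forward, or two-sided orbits stay in a $\delta$-neighborhood of $\CM$, and since these sets are defined purely by the flow they do not depend on the cut-off or any other choice made in the construction; that is all the paper asserts and proves about local uniqueness, so if you stop after your first paragraph the corollary is already established in the paper's sense.

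The rest of your proposal aims at a stronger statement --- that \emph{any} locally invariant codimension-$d$ manifold containing $\CM$ and tangent to the center-unstable directions along $\CM$ coincides with $\CW^{cu}$ --- and the decisive step there has a genuine gap. You control the center component of a backward orbit on the competitor $\wt\CW^{cu}$ by declaring it ``a small perturbation of the flow on $\CW^c$'' and invoking the orbital stability of $\CM$ in $\CW^c$ (statement (2)). That stability is a statement only about orbits lying on $\CW^c$; it is neutral Lyapunov stability, obtained in the paper from conservation of $E+c\cdot P$ together with the uniform positivity of $L_{c,y}$ on $X_{c,y}^e$ (expansion \eqref{E:tE1}, Lemmas \ref{L:StaWc1}--\ref{L:StaWc2}, Proposition \ref{P:StaWc}), and such non-asymptotic stability does not persist under perturbations of the center dynamics; moreover an orbit on $\wt\CW^{cu}$ is not on $\CW^c$ and there is no decoupled equation for ``its center component'' to which a perturbation statement could apply, so the proposed continuity-and-bootstrap transfer does not close. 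The correct mechanism is to rerun the energy argument on the competitor itself: tangency of its graph makes the cross term $\langle \wt L_{+-} a^-, a^+\rangle$ higher order, conservation of $\wt E_c$ then bounds $\Vert V^e(t)\Vert_{X_1}$ directly, and the backward-in-time contraction of the unstable deviation (as in Proposition \ref{P:CS}(2) or Lemma \ref{L:center-sta}) controls $a^+$; this also requires a uniform graph representation of $\wt\CW^{cu}$ over the whole $\delta$-neighborhood of $\CM$ (e.g.\ via translation invariance), an assumption you never impose. As written, the key step both overstates what orbital stability gives and bypasses the conserved functional, which is precisely where {\bf (H1-2)} must enter.
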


More detailed statements of the results can be found in Section \ref{S:non-deg}.

Among previous results on local invariant manifolds of relative equilibria of dispersive PDEs, Bates and Jones \cite{BJ} proved a general theorem for the existence of Lipschitz locally invariant manifolds of equilibria for semilinear PDEs by an energy argument, and then applied it to the Klein-Gordon equation. In \cite{Sc}, Schlag constructed a co-dimension $1$ center-stable manifold of the manifold of ground states for 3D cubic NLS in $W^{1,1}(\mathbb{R}^{3})\bigcap W^{1,2}(\mathbb{R}^{3})$ under an assumption that the linearization of NLS at each ground has no embedded eigenvalue in essential spectrum and proved scattering on the center-stable manifold.  Later, this result was improved by Beceanu \cite{Be1, Be2} who constructed  center-stable manifolds in $W^{1,2}(\mathbb{R}^{3})\bigcap |x|^{-1}L^{2}(\mathbb{R}^{3})$ and in critical space $\dot{H}^{1/2}(\mathbb{R}^{3})$ . Similar results were obtained in Krieger and Schlag \cite{KS} for the supercritical 1D NLS. Nakanishi and Schlag \cite{NS4} constructed a center-stable manifold of ground states for 3D cubic NLS in the energy space with a radial assumption by using the framework in Bates and Jones \cite{BJ}. Nakanishi and Schlag \cite{NS1} constructed center-stable manifolds of ground states for nonlinear Klein-Gordon equation without radial assumption, following a graph transform appraoch. Also, see \cite{NS3, KNS1,KNS2,KNS3, GJLS} for related results. 

At a rough conceptual level, our proof follows the framework as in \cite{Ca81, CLS92}. However,  instead of near a steady state, our construction is around the 3-dim invariant manifold $\CM$, which is qualitatively comparable to \cite{CLY00}, a more general result in finite dimensions.  A rather naive initial attempt may be to construct the local invariant  manifolds near $U_c(\cdot +y)$ for each $y\in \BR^3$ and then patch them together to obtain $\CW^{cu, cs, c}$. While the local construction for each $y$ may follow from the standard procedure combined with some space-time estimates,  it is highly questionable whether such `patch-up' is possible as these local invariant manifolds of each $U_c(\cdot +y)$ are not unique in the first place. Therefore we construct $\CW^{cu, cs, c}$ as the center-unstable, center-stable, and center manifolds of $\CM$ instead of the ones of $U_c(\cdot +y)$. This requires a coordinate system in a neighborhood of the whole $\CM$. A very natural option would be something like 
\[
U = \psi\big( \psi^{-1}(U_c) (\cdot +y) + w^u (\cdot + y) + w^s (\cdot + y) + w^c (\cdot +y)\big)
\] 
where $D \psi(U_c) w^{u, s} \in X^{u, s}$ and $D \psi(U_c) w^c$ in a fixed subspace of $X^c$ transversal to $span\{\p_{x_j} U_c: j=1,2,3\}$. However, this is {\it not} a smooth coordinate system of the phase space as the differentiation in $y$ would cause a loss of regularity in $D_y w^c(\cdot +y)$. This issue also appeared in previous works such as \cite{BLZ08} and \cite{NS1}, in the latter of which it was handled by a rather analytically oriented nonlinear `mobile distance'. Here instead we adopt a more geometric bundle coordinate system, which was also used in \cite{BLZ08}, based on the observation that, while the above parametrization by the spatial translation of $y$ is not smooth with respect to $y$, the vector bundles $\{ w \in X_1 : w(\cdot -y) \in X^{u,s,c}\}$ are smooth in $y$. In such a framework based on vector bundle coordinates, some second fundamental form type quantities are to be carefully treated in the rather technical but intuitive analysis. See Remark \ref{R:coord}. Our estimates are based on the exponential trichotomy and the energy conservation and involve minimal amount of dispersive estimates in Section \ref{S:linear}. In particular, no spectral assumptions such as the nonexistence of embedded eigenvalues or resonance is needed. Based on this coordinate system, we decompose equation \eqref{GP} and, as in the standard procedure of the construction of local invariant manifolds, we cut off the nonlinear terms (except the ones corresponding to the second fundamental form of the center bundle) outside a small neighborhood of $\CM$. At this stage, with the estimates of non-homogeneous linear equations in Section \ref{S:linear}, one may apply the usual Lyapunov-Perron integral equation method or the graph transform of Hadamard to obtain invariant manifolds for the modified system which coincides with the original one. While we obtained the invariant manifolds by conceptually following the procedure in \cite{Ca81, CLS92} which is more in line with the Lyapunov-Perron method, one could also choose to estimate the time-$T$ map of the modified equation and then apply the graph transform method as in \cite{CLY00, BLZ08}. This framework is rather general and it can be  adapted with minimal modifications to yield local invariant manifolds of unstable relative equilibria involving finite dimensional symmetry groups, including ground states and excited states, of a large class of dispersive PDEs such as the NLS, nonlinear Klein-Gordon equations, etc. (See  Section \ref{S:InMa} and \ref{S:smooth}.) Generally, the main necessary assumption would just be that the Hessian of the modified energy functional at the relative equilibria has only finitely many negative directions, so that the linear analysis in \cite{LZ17} is applicable. In fact, in a forthcoming paper, we construct local invariant manifolds of traveling waves of supercritical gKdV equation and analyze the nearby dynamics. 

 The paper is organized as the follows. In Section \ref{S:coord} we set up the basic framework for the construction of local invariant manifolds of $\CM$. Section \ref{S:linear} is on the estimate of non-homogeneous linear equations. The existence of Lipschitz local invariant manifolds and some of their  properties related to the local dynamics are obtained in Section \ref{S:InMa}, while the smoothness in Section \ref{S:smooth}. The non-degenerate case under assumption {\bf (H1-2)} is analyzed in Section \ref{S:non-deg}. Finally, some tedious technical details are left in the Appendix.
 
\noindent {\bf Notations.} Throughout the paper, we follow the following notations: 
\begin{itemize} 
\item $\dot H^s$: the homogeneous Sobolev space $\{u \mid |D|^s u \in L^2\}$. 
\item $X_0$: the energy space defined in \eqref{E:energy-s}.
\item $X_1 = H^1 \times \dot H^1$. 
\item $\langle \cdot, \cdot \rangle$: Euclidean or $L^2$ duality pair unless specified otherwise. 
\item The generic constant $C$ serving as an upper bound always independent of $y \in \BR^3$.  
\item Differentiations are usually not with respect to $c$, unless specified. 
\end{itemize}

\section{A coordinate system near traveling waves} \label{S:coord}

In this section, we rewrite equation \eqref{GP} in an appropriate local coordinate system near traveling waves.

\subsection{Structure of $X_{0}$ and a generalization of the momentum} \label{SS:X_0}

Any $u\in X_{0}$ can be written as $u=\alpha(1+v)$ where $\alpha\in S^{1}$ and $v\in \dot{H}^{1}(\mathbb{R}^{3})$ satisfying $$\vert 1+v\vert ^{2}-1=2\text{Re}(v)+\vert v\vert ^{2}\in L^{2}(\mathbb{R}^{3}).$$ The distance on the energy space is introduced as following. Given $u=\alpha(1+v)$ and $\tilde{u}=\tilde{\alpha}(1+\tilde{v})$ in $X_{0}$, we define the distance $d$ by 
\begin{equation}
d(u,\tilde{u})=\vert \alpha-\tilde{\alpha}\vert + \Vert\nabla v- \nabla\tilde{v}\Vert_{L^{2}(\mathbb{R}^{3})}+ \big\Vert\vert 1+v\vert ^{2}-\vert 1+ \tilde{v}\vert ^{2}\big\Vert_{L^{2}(\mathbb{R}^{3})}.
\end{equation}

Select $\chi(\xi)\in C^{\infty}_{0}(\mathbb{R})$ such that $\chi(\xi)=1$ near $\xi=0$ and define the Fourier multiplier $\chi(D)$ as 
$$\widehat{\chi(D)u}(\xi)=\chi(|\xi|)\hat{u}(\xi).$$

\begin{lemma}\label{MSEP}(\cite{G2}), 
The mapping
\begin{equation} \label{E:psi}
\begin{split}
\psi_{\chi}:& S^{1}\times(H^{1}(\mathbb{R}^{3})+i\dot{H}^{1}(\mathbb{R}^{3}))\rightarrow X_{0}\\
&(\alpha,w)\mapsto \alpha\left(1+w-\chi(D)\left(\frac{(Im(w))^{2}}{2}\right)\right)
\end{split}
\end{equation}
is a homeomorphism.
\end{lemma}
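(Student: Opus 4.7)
The plan is to verify the four standard properties of a homeomorphism separately: the map is well-defined into $X_0$, injective, surjective, and bi-continuous. Throughout, write $w = w_1 + iw_2$ with $w_1 \in H^1$, $w_2 \in \dot{H}^1$, and set $\tilde w := w - \chi(D)(w_2^2/2)$, so that $\psi_\chi(\alpha, w) = \alpha(1 + \tilde w)$.

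For well-definedness, I must check $\nabla \tilde w \in L^2$ and $|1+\tilde w|^2 - 1 \in L^2$. Since $\dot{H}^1(\mathbb{R}^3) \hookrightarrow L^6$, we have $w_2^2 \in L^3$, and $\nabla \chi(D)$ is a Fourier multiplier with Schwartz-class symbol, so Bernstein yields $\nabla \chi(D)(w_2^2/2) \in L^2$ and hence $\nabla \tilde w \in L^2$. Expanding
\[
|1+\tilde w|^2 - 1 = 2w_1 - (1-\chi(D))(w_2^2) + \tilde w_1^2,
\]
the first term is in $L^2$, and the quadratic remainder $\tilde w_1^2$ lies in $L^2$ because $w_1 \in H^1 \hookrightarrow L^4$ in 3D and $\chi(D)(w_2^2/2) \in L^\infty \cap L^4$ by Bernstein. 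For the middle term I use the fractional Leibniz rule $\|fg\|_{\dot H^{1/2}(\mathbb{R}^3)} \lesssim \|f\|_{\dot H^1}\|g\|_{\dot H^1}$ to obtain $w_2^2 \in \dot H^{1/2}$, and then the fact that $(1-\chi(|\xi|))$ is supported in $\{|\xi| \geq c\}$ together with Plancherel gives
\[
\|(1-\chi(D))(w_2^2)\|_{L^2}^2 \leq c^{-1}\int |\xi|\,|\widehat{w_2^2}|^2\,d\xi = c^{-1}\|w_2^2\|_{\dot H^{1/2}}^2 < \infty.
\]

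Injectivity follows by comparing asymptotics: if $\alpha(1+\tilde w) = \alpha'(1+\tilde w')$, then since $\tilde w, \tilde w'$ decay in an averaged sense (being $\dot H^1$), one recovers $\alpha = \alpha'$, hence $\tilde w = \tilde w'$, and so $w_2 = \text{Im}(\tilde w)$ and $w_1 = \text{Re}(\tilde w) + \chi(D)(w_2^2/2)$ are determined. For surjectivity, given $u \in X_0$, use the representation $u = \alpha(1+v)$, $v = v_1 + iv_2 \in \dot H^1$, $f := 2v_1 + v_1^2 + v_2^2 \in L^2$ recalled just before the lemma, and define $w_2 := v_2$, $w_1 := v_1 + \chi(D)(v_2^2/2)$. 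The $\dot H^1$ regularity of $w_1$ is immediate; for the key claim $w_1 \in L^2$ I rewrite
\[
w_1 = \chi(D)\Bigl(\tfrac{f}{2}\Bigr) - \chi(D)\Bigl(\tfrac{v_1^2}{2}\Bigr) + (1-\chi(D)) v_1,
\]
so the first summand is in $L^2$ because $f \in L^2$ and $\chi(D)$ is $L^2$-bounded, the second because $v_1^2 \in L^3$ and $\chi(D)\colon L^3 \to L^2$ by Bernstein, and the third because $(1-\chi(|\xi|))$ is supported away from zero while $v_1 \in \dot H^1$ yields $\int_{|\xi|\geq c} |\hat v_1|^2 \leq c^{-2}\|\nabla v_1\|_{L^2}^2 < \infty$. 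Bi-continuity follows by applying the same estimates to differences, each step being Lipschitz with a quadratic remainder in the relevant $H^1 \times \dot H^1$ norms. The main obstacle is precisely the $L^2$ estimate for $w_1$ in surjectivity: the role of the cutoff $\chi(D)$ is to subtract from $v_2^2/2$ its low-frequency part, which would otherwise obstruct $v_1$ from lying in $L^2$ when $u$ is only assumed to lie in $X_0$.
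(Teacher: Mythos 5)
The paper itself gives no argument for this lemma (it simply cites G\'erard), so your proposal has to stand on its own, and its overall architecture is the right one: reduce to the representation $u=\alpha(1+v)$ with $2\,\mathrm{Re}\,v+|v|^2\in L^2$, check membership in $X_0$, invert explicitly, and track differences for bicontinuity. However, there is a genuine gap at the crucial step. You repeatedly invoke the mapping property ``$\chi(D)\colon L^3(\BR^3)\to L^2(\BR^3)$ by Bernstein,'' and this is false: Bernstein (convolution with the Schwartz kernel of $\chi(D)$) can only \emph{raise} the Lebesgue exponent on $\BR^3$; a sum of widely separated bumps with heights $k^{-0.4}$ lies in $L^3$ while its low-frequency projection fails to be in $L^2$. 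In the well-definedness step the conclusion survives because you can commute the derivative inside, $\nabla\chi(D)(w_2^2/2)=\chi(D)(w_2\nabla w_2)$ with $w_2\nabla w_2\in L^{3/2}$, and Bernstein $L^{3/2}\to L^2$ is legitimate; but in the surjectivity step the term $\chi(D)(v_1^2/2)$ cannot be handled this way, and this is exactly the heart of the lemma: one must actually prove $v_1^2\in L^2$, i.e. $v_1=\mathrm{Re}\,v\in L^4$. That follows from the definition of $X_0$, not from a multiplier bound: $2v_1+|v|^2\in L^2$ and $|v|^2\in L^3$ give $v_1\in L^2+L^3$, which combined with $v_1\in L^6$ yields $v_1\in L^4$ (split over $\{|v_1|\le 1\}$ and $\{|v_1|>1\}$, or decompose accordingly). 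Once $v_1^2\in L^2$ is known, your identity for $w_1$ closes (equivalently, write $w_1=\tfrac12 f-\tfrac12 v_1^2-\tfrac12(1-\chi(D))(v_2^2)$ and use your correct $\dot H^{1/2}$ high-frequency estimate for the last term). The same $L^4$ control of $\mathrm{Re}\,v$, locally uniform, is what your one-line bicontinuity claim for the inverse actually needs, since it inherits the same flawed step.

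Two smaller points: the sign in your expansion should be $|1+\tilde w|^2-1=2w_1+(1-\chi(D))(w_2^2)+\tilde w_1^2$ (plus, not minus), which is harmless for the estimate; and note that the asymmetry you flag at the end is slightly misstated --- the cutoff $\chi(D)$ is attached to $(\mathrm{Im}\,w)^2$ precisely because the imaginary part is only $\dot H^1$ and its squared low frequencies are \emph{not} $L^2$ in general, whereas the real part has the extra integrability described above; your false $L^3\to L^2$ bound papers over exactly this distinction.
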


\begin{remark}
Note that $X_{0}$ is not a linear space, but this homeomorphism between $H^{1}\times \dot{H}^{1}$ and $X_{0}$ allows us to work in the linear space $H^{1}\times \dot{H}^{1}$. Since $\dot H^1 (\mathbb{R}^3) \subset L^6(\mathbb{R}^3)$, the above $\alpha \in S^1$ is invariant for any solution of \eqref{GP}, which can be fixed to be $1$ due to the phase invariance of \eqref{GP}. Also, this structure of $X_{0}$ does not depend on the choice of the cut-off function $\chi$. To simplify the notation, we  will fix $\alpha =1$ and wrote $\psi (w)$ for $\psi_{\chi} (1, w)$. Apparently, $\psi^{-1}$ is given by 
\begin{equation} \label{E:psi-inv}
\psi^{-1}(u +iv)= \big(u-1+ \frac 12 \chi(D) (v^2) , v\big). 
\end{equation}
\end{remark}

The coordinate mapping $\psi$ commutes with the translation and $SO(3)$ action. Namely, let $y \in \mathbb{R}^3$ and $Q_{3\times 3}$ be an orthogonal matrix with $\det Q=1$, then 
\begin{equation} \label{E:psi-sym}
\psi \big(\alpha, w(Q\cdot) \big) = \psi(\alpha, w) (Q\cdot), \quad \psi \big(\alpha, w(\cdot - y) \big) = \psi(\alpha, w) (\cdot- y).
\end{equation}
This is useful as \eqref{GP} is invariant under the translation and $SO(3)$ action.  

Let $X_{1}=H^{1}\times \dot{H}^{1}$. By Lemma \ref{MSEP}, for any $u \in X_{0}$, there exits a unique $w=w_1 +i w_2 \in X_{1}$ such that $u=\psi(w)$. As in \cite{LWZ}, extend momentum as
\begin{equation}
\widetilde{P}(w)= -\int_{\mathbb{R}^{3}}\Big[w_{1}+(1-\chi(D))\frac{w_{2}^{2}}{2}\Big]\nabla w_{2}dx, \quad \widetilde P \in C^\infty(X_1, \BR). 
\end{equation}
One can see that $\widetilde{P}(w)=P(u)$ when $u=\psi(w)\in 1+H^{1}(\mathbb{R}^{3})$.

\subsection{A local form of the GP equation near a traveling wave manifold}

Consider a smooth and bounded traveling wave solution $U_c = u_c + i v_c$ of \eqref{GP} with the traveling velocity $c\in \BR^3$ satisfying $|c|\in (0, \sqrt{2})$, we first rewrite the equation in the traveling frame in a neighborhood of $U_c$. 
Assume 
\begin{equation} \label{E:TW-decay}
\lim_{|x| \to \infty} \left( |x|^2 \big( |\text{Re} \, U_c (x)-1| + |\nabla \text{Re}\, U_c|\big) +|x|| \text{Im}\, U_c (x)|\right) =0. 
\end{equation}
Such traveling waves exist as proved \cite{M, Gr04}. In terms of the coordinate mapping $\psi$ given in \eqref{E:psi}, let   
\[
w_{c}:= \psi^{-1} (U_c) = (w_{1c},w_{2c}) \in X_{1}.
\] 
The traveling wave manifold $\{ U_c(\cdot + y)\mid y\in \BR^3\}$ with wave velocity $c$ generated by $U_c$ is invariant under \eqref{GP}. To study the nearby dynamics, we rewrite solutions in the traveling frame $u(t,x)=U(t,x-ct)$ and then $U(t,x)$ satisfies
\begin{equation}\label{tfeq}
i\partial_{t}U-ic\cdot\nabla U+\Delta U+(1-\vert U\vert ^{2})U=0
\end{equation}
or in the abstract form 
\begin{equation} \label{E:tfeq} 
\p_t U = J (E+ c\cdot P)' (U), \quad J=\begin{bmatrix}0&1\\-1&0\end{bmatrix}
\end{equation}
where we recall $E$ and $P$ are the energy and momentum defined in \eqref{E:energy} and \eqref{E:momentum}, respectively, and $J$ is the matrix representation of $-i$. The traveling wave $U_c$ generate a manifold of equilibria of \eqref{E:tfeq}:
\begin{equation} \label{E:TW-mani}
\mathcal{M}=\{U_{c}(\cdot+y): y\in \mathbb{R}^{3}\}.
\end{equation}
Our main goal is to construct local invariant manifolds of $\mathcal{M}$. For any $y \in \BR^3$, let 
\begin{equation}\label{KY}
K_{c,y}\begin{pmatrix}w_{1}\\w_{2}\end{pmatrix}= D\psi\big( w_c (\cdot + y) \big) \begin{pmatrix}w_{1}\\w_{2}\end{pmatrix}= \begin{pmatrix}w_{1}-\chi(D)\left(v_{c}(\cdot+y)w_{2}\right)\\w_{2}\end{pmatrix}.
\end{equation}
and 
\begin{equation}\label{lcy}
\begin{split}
L_{c,y}= & (E + c\cdot P)''( U_c (\cdot + y)\big) \\
=& \begin{bmatrix}-\Delta - 1+(3u^{2}_{c}+v_{c}^{2}) (\cdot+y)&
-c\cdot\nabla+2(u_{c}v_{c}) (\cdot+y)\\c\cdot \nabla+(2u_{c}v_{c})(\cdot+y)& -\Delta -1+(u_{c}^{2}+3v_{c}^2)(\cdot+y)\end{bmatrix}. 
\end{split} \end{equation}
Both $K_{c,y}$ and $L_{c,y}$ are conjugate to $K_{c,0}$ and $L_{c, 0}$ through translation
\begin{equation} \label{E:trans-inv-O}
K_{c,y} w =\big( K_{c,0} w(\cdot -y)\big) (\cdot +y), \quad L_{c,y} U =\big( L_{c,0} U(\cdot -y)\big) (\cdot +y). 
\end{equation}
To simply the notation, we denote
\[
K_y = K_{0,y}, \quad L_{c} = L_{c, 0}.
\]
From \eqref{E:TW-decay} and the Hardy's inequality, we have that $K_{c, y}$ is an isomorphism on $X_1$ with 
\begin{equation} \label{E:K-inv}
K_{c, y}^{-1} w=  \begin{pmatrix}w_{1}+\chi(D)\left(v_{c}(\cdot+y)w_{2}\right)\\w_{2}\end{pmatrix},
\end{equation}
and $L_{c, y}$ induces a real valued symmetric bounded bilinear form on $X_1$, namely, 
\[
K_{c, y}, \, K_{c, y}^{-1} \in L(X_1), \quad L_{c, y} \in L(X_1, X_1^*), \quad L_{c, y}^*= L_{c, y}. 
\]
Moreover, using the translation invariance \eqref{E:trans-inv-O} and the Hardy's inequality, there exists $C>0$ such that 
\begin{equation} \label{E:KY}
\Vert K_{c, y} \Vert_{L(X_1)} + \Vert K_{c, y}^{-1} \Vert_{L(X_1)} \le C, \quad \forall y \in \BR^3.
\end{equation}
Consequently, $J$ is viewed as a closed operator 
\[
J: X_1^* \supset D(J)  \to X_1 \; \text{ satisfying } J^* =-J
\] 
where 
\[\begin{split}
D(J)=& \{ w= (w_1, w_2) \mid  w_1 \in H^{-1} \cap \dot H^1 \text{ and } w_2 \in \dot H^{-1} \cap H^1\}\\
=& \{ w= (w_1, w_2) \mid  w_1 \in H^1 \text{ and } |\xi| \hat w_2, \, |\xi|^{-1}\hat w_2 \in L^2\} . 
\end{split}\]

Suppose for some $y(t) \in \BR^3$ and $w(t) = \big(w_1(t), w_2(t)\big) \in X_1$ smooth in $t$, 
\begin{equation}\label{UnearM} \begin{split}
U(t) = &\psi\left(w_{c}\left(\cdot +y(t)\right)+w(t)\right) \\
= & U_{c}\left(\cdot +y\right)+ K_{c,y} w - \left( \frac 12 \chi(D) (w_2^2), 0 \right)^T
\end{split}\end{equation} 
is a solution to \eqref{tfeq}. 

Here \eqref{E:psi-sym} and the definition of $K_{c,y}$ are used.  Substituting (\ref{UnearM}) into (\ref{E:tfeq}) and using the definition of $L_{c,y}$ and that $U_c$ is an equilibrium of \eqref{E:tfeq}, we obtain
\begin{align*}
&\p_t y \cdot \nabla U_c(\cdot+y) +\partial_{t}(K_{c,y}w)  - \left(\chi(D) (w_2 \p_t w_2), 0 \right)^T \\
=&JL_{c,y}K_{c,y}w +J \big( (E+c \cdot P)' (U)  -L_{c, y} K_{c,y}w  \big).
\end{align*}
The above equation of $w= (w_1, w_2)^T$ can be written as a system of 2 equation of $\p_t w_1$ and $\p_t w_2$, which can be solved easily due to the upper triangular structure of $K_{c,y}$. In particular we have the equation for $w_2$,
\begin{equation} \label{E:w2} 
\p_t w_2 =  - \big(L_{c,y}K_{c,y}w\big)_1 - \p_t y \cdot \nabla v_c(\cdot+y)  +G_2(c,y,w) 
\end{equation}
where $G_2$ is given in the below. The above system of evolution equations for $w$ can be written in a compact form 
\begin{equation}\label{w1w2}
\p_t y \cdot \nabla U_c(\cdot+y) +\partial_{t}(K_{c,y}w)= JL_{c,y}K_{c,y}w+G(c,y, \p_t y,w),
\end{equation}
where $G= \big(G_1 (c, y, \p_t y, w), G_2(c, y, w)\big)^T$ are 
\begin{align*}
G = &\left( |U|^2 - |U_c(\cdot +y)|^2 - 2 U_c (\cdot +y) \cdot (K_{c, y} w) \right) JU_c (\cdot+ y) \\
& + \left( |U|^2 - |U_c(\cdot +y)|^2 \right) JK_{c,y} w + \frac 12 \begin{pmatrix} 2\chi(D)(w_2 \p_t w_2 -w_{2}\nabla w_{2}\cdot c) \\ -(1-|U|^2) \chi(D) (w_2^2) - \Delta \chi(D) (w_2^2)   \end{pmatrix}    
\end{align*}
and $\p_t w_2$ in $G_1$ should be substituted by \eqref{E:w2}. This results in the dependence of $G_1$ on $\p_t y$. The nonlinearity $G$ is affine in $\p_t y$ and contains terms of $w$ of algebraic degree between 2 and 6. Like $K_{c, y}$ and $L_{c,y}$, $G(c, y, \tilde y, w)$ is translation invariant in the sense of \eqref{E:trans-inv-O}, namely,  
\begin{equation} \label{E:trans-inv-G-0}
G(c, y+x, \tilde y, w) = G\big(c, y, \tilde y, w(\cdot -x)\big) (\cdot + x). 
\end{equation}
A more detailed form of and some basic estimates on $G$ are straight forward but tedious and we leave them in the Appendix.

\subsection{Decomposition of $X_1$ and local coordinates near traveling waves}

The free choices of $y\in \BR^3$ and $w \in X_1$ are clearly redundant in the representation of the above $U$. We shall impose appropriate restrictions on $w$ by analyzing the linearization of \eqref{GP} near $U_c$. We will focus on unstable traveling waves. Namely, we assume 
\begin{enumerate} 
\item [({\bf H})] The spectrum $\sigma (JL_c) \nsubseteq i\BR$.  
\end{enumerate}
Since $|c| \in (0, \sqrt{2})$, \eqref{E:TW-decay} and the explicit form \eqref{lcy} of $L_c$ imply that $L_c: X_1 \to X_1^*$ is a compact perturbation to 
\[
L_{c, \infty}= \begin{bmatrix}-\Delta +2& -c\cdot\nabla\\c\cdot \nabla& -\Delta \end{bmatrix}: X_1 \to X_1^*
\]
which is an isomorphism as it induces a uniformly positive quadratic form on $X_1$. This follows from a proof similar to the one in \cite{LWZ} and we skip the details. Therefore $\dim \ker L_c < \infty$ and it is uniformly positive on some finite co-dimensional subspace of $X_1$. Let $n^-(L_c)$ be the Morse index of $L_c$, namely, 
\begin{equation} \label{E:n^-} 
n^-(L_c)= \max \{\dim Y \mid L_c \text{ is negative on the subspace } Y \subset X_1\}. 
\end{equation} 
According to the index formula and the structural decomposition of linear Hamiltonian systems \cite{LZ17}, it holds that $n^-(L_c)>0$ for any unstable traveling wave. We first cite Theorem 2.1 in \cite{LZ17} whose hypotheses are easily satisfied due to $\dim \ker L_c <\infty$ and Remark 2.2 in \cite{LZ17}. \\

\noindent {\bf Theorem 2.1 in \cite{LZ17}.} 
There exist closed subspaces $Y_{j}$, $j=1, \ldots, 6$, and $Y_{0}= \ker L_c$ such that
\begin{enumerate}
\item $X_1 = \oplus_{j=0}^{6} Y_{j}$, $Y_{j} \subset \cap_{k=1}^\infty D\big((JL_c)^k\big)$, $j\ne3$, and
\[
\dim Y_{1} = \dim Y_{4}, \; \dim Y_{5} = \dim Y_{6}, \; \dim Y_{1} + \dim Y_{2} + \dim Y_{5} = n^{-}(L_c);
\]
\item $JL_c$ and $L_c$ take the following forms in this decomposition
\begin{equation}
JL_c\longleftrightarrow%
\begin{pmatrix}
0 & A_{01} & A_{02} & A_{03} & A_{04} & 0 & 0\\
0 & A_{1} & A_{12} & A_{13} & A_{14} & 0 & 0\\
0 & 0 & A_{2} & 0 & A_{24} & 0 & 0\\
0 & 0 & 0 & A_{3} & A_{34} & 0 & 0\\
0 & 0 & 0 & 0 & A_{4} & 0 & 0\\
0 & 0 & 0 & 0 & 0 & A_{5} & 0\\
0 & 0 & 0 & 0 & 0 & 0 & A_{6}%
\end{pmatrix}
, \label{block-JL}%
\end{equation}%
\begin{equation}
L_c\longleftrightarrow%
\begin{pmatrix}
0 & 0 & 0 & 0 & 0 & 0 & 0\\
0 & 0 & 0 & 0 & B_{14} & 0 & 0\\
0 & 0 & L_{Y_{2}} & 0 & 0 & 0 & 0\\
0 & 0 & 0 & L_{Y_{3}} & 0 & 0 & 0\\
0 & B_{14}^{\ast} & 0 & 0 & 0 & 0 & 0\\
0 & 0 & 0 & 0 & 0 & 0 & B_{56}\\
0 & 0 & 0 & 0 & 0 & B_{56}^{\ast} & 0
\end{pmatrix}
. \label{block-L}%
\end{equation}
\item $B_{14}: Y_{4} \to Y_{1}^{*}$ and $B_{56}: Y_{6} \to Y_{5}^{*}$ are
isomorphisms and there exists $\epi>0$ satisfying $\mp\langle L_{Y_{2,3}} u,
u\rangle\ge\epi\Vert u\Vert^{2}$, for all $u \in Y_{2,3}$;
\item all blocks of $JL_c$ are bounded operators except $A_{3}$, where $A_{03}$
and $A_{13}$ are understood as their natural extensions defined on $Y_{3}$;
\item $A_{2,3}$ are anti-self-adjoint with respect to the equivalent inner product $\mp\langle L_{Y_{2,3}} \cdot, \cdot\rangle$ on $Y_{2,3}$;
\item the spectra $\sigma(A_{j})\subset i\mathbf{R}$, $j=1,2,3,4$, $\pm\operatorname{Re}\lambda>0$ for all $\lambda\in\sigma(A_{5,6})$, and $\sigma(A_{5})=-\sigma(A_{6})$; and
\item $n^{-}(L|_{Y_{5}\oplus Y_{6}})=\dim Y_{5}$ and $n^{-}(L|_{Y_{1}\oplus Y_{4}})=\dim Y_{1}.$
\item $(u, v)_{X_1} =0$ for all $u \in Y_1 \oplus Y_2 \oplus Y_3\oplus Y_4 $ and $v \in \ker L_c$. 
\end{enumerate}

We modify this decomposition of $X_1$ slightly for this paper. Let 
\[
X_c^T = span\{ \p_{x_j} U_c \mid j=1,2,3\}, \quad \tilde Y_0 = \{ w \in \ker L_c \mid (w, \tilde w) =0, \, \forall \tilde w \in X_c^T\}
\]
and 
\[
 X_c^{d1} =  \tilde Y_0 \oplus Y_1 \oplus Y_2, \quad  X_c^{e, d2, +, -} = Y_{3, 4, 5, 6}.
\]
For any $y \in \BR^3$ and $\alpha \in \{T, d1, e, d2, +, -\}$, define 
\[
 X_{c, y}^\alpha = \{ w\in X_1 \mid w(\cdot -y) \in  X_c^\alpha\}. 
\]
Recall the traveling wave manifold $\mathcal{M}$ defined in \eqref{E:TW-mani}. Clearly 
\begin{equation} \label{E:decom1} 
X_1 =  X_{c,y}^T \oplus  X_{c,y}^{d1} \oplus  X_{c, y}^e \oplus  X_{c,y}^{d2} \oplus   X_{c,y}^+ \oplus  X_{c,y}^-, \quad  X_{c, y}^T = T_{U_c(\cdot +y)} \mathcal{M},
\end{equation}
with associated projection $ \Pi_{c,y}^{T, d1, e, d2, +, -}$. Let 
\[
\quad 0 < \lambda < \min \{\text{Re} \mu \mid \mu \in \sigma(A_5)\}.   
\]

\begin{lemma} \label{decomp2}
Assume \eqref{E:TW-decay} and ({\bf H}), then there exists $C>0$, such that, for any $y \in \BR^3$, 
\begin{enumerate} 
\item $ X_{c, y}^{+, -, T, d1, d2} \subset \dot H^k \cap X_1$ for any $k\ge 1$;
\item $0< \dim  X_{c, y}^{\pm} = d \le n^-(L_c)$, $d_1= \dim  X_{c, y}^{d1} = n^-(L_c) + \dim \ker L_c  -3-d$, and $d_2= \dim  X_{c, y}^{d2} \le n^- (L_c)-d \le d_1$;
\item there exist bases $ V_{c, j}^\pm$, $j=1, \ldots, d$ of $ X_{c, 0}^\pm$, $ V_{c, j}^{d1}$, $j=1, \ldots, d_1$, of $ X_{c, 0}^{d1}$, $ V_{c, j}^{d2}$, $j=1, \ldots, d_2$ of $ X_{c, 0}^{d2}$, and $ V_{c, j}^{T} = \p_{x_j} U_c$, $j=1,2,3$, of $ X_{c, 0}^T$, along with $ \zeta_{c,j}^\pm$, $j=1, \ldots, d$, $ \zeta_{c,j}^{d1}$, $j=1, \ldots, d_1$, $ \zeta_{c,j}^{d2}$, $j=1, \ldots, d_2$, and $ \zeta_{c, j}^T$, $j=1,2,3$, belonging to $D\big((JL_{c, y})^*\big) \cap H^k= H^k \times (\dot H^{-1} \cap H^k)$ for any $k\ge 1$, such that 
\[
 \Pi_{c, y}^\alpha w = \sum_{j=1}^{\dim  X_{c, 0}^\alpha} \langle  \zeta_{c, j}^\alpha(\cdot +y), w  \rangle  V_{c, j}^\alpha (\cdot +y), \quad \alpha\in \{T, d1, d2, +, -\},  
\]
consequently, projections $ \Pi_{c,y}^{T, d1, e, d2, +, -}$ are smooth in $y$ with derivatives bounded uniformly in $y\in \BR^3$; 

\item In the decomposition $X_1 =\oplus_{\alpha \in \{T, d1, e, d2, +, -\}}  X_{c, y}^\alpha$, $JL_{c,y}$ and the quadratic form $L_{c, y}$ take the form 
\[
L_{c, y} \longleftrightarrow \begin{bmatrix} 0 & 0 & 0 &0 & 0 & 0 \\ 0 & L_{11} (y) & 0 & L_{12} (y) & 0 &0  \\ 0 & 0 & L^e (y) & 0& 0 & 0 \\  0 & L_{12}(y)^* & 0 & 0 & 0 & 0 \\ 0 & 0 & 0 & 0 & 0 & L_{+-}(y) \\ 0 & 0 & 0 & 0 & L_{+-}(y)^* & 0\end{bmatrix}, 
\]
\[
JL_{c, y} \longleftrightarrow \begin{bmatrix} 0 &  A_{T1} (y) &  A_{Te}(y) &  A_{T2} (y)  &0 &0\\ 0 &  A_{1} (y) &  A_{1e} (y) &  A_{12} (y) & 0 &0 \\ 0 & 0 &  A_{e} (y) & A_{e2}(y) & 0 & 0\\ 0 & 0 & 0 &  A_2 (y) & 0 &0\\ 0 & 0 &0 &0 &  A_+ (y)& 0 \\ 0 & 0 &0 &0 & 0 &  A_-(y) \end{bmatrix}
\]
where 
\begin{enumerate} 
\item all above blocks are translation invariant in the sense of \eqref{E:trans-inv-O};
\item all above blocks are bounded except $A_e(y)$;
\item $\Vert e^{t A_1(y)} \Vert + \Vert e^{t A_2(y)} \Vert \le C (1 + |t|)^{d_1}$;

\item the quadratic form $\langle L^e(y)  V^e,  V^e \rangle \ge \frac 1C \Vert  V^e \Vert^2_{X_1}$ for any $ V^e \in  X_{c, y}^e$ and $ A_e(y)$ is anti-self-adjoint with respect to $L^e(y)= L_{c, y}|_{ X_{c, y}^e}$. 
\item $\sigma\big( A_+(y) \big)= -\sigma\big(( A_-(y)\big)$, $\Vert e^{t A_\pm(y)}|_{ X_{c, y}^\pm} \Vert \le C e^{\lambda t}$, for all $\mp t\ge 0$;
\end{enumerate} \end{enumerate}
\end{lemma}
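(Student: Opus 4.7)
My plan is to treat Lemma \ref{decomp2} essentially as a repackaging of Theorem 2.1 of \cite{LZ17}, supplemented by (a) the new splitting of $\ker L_c$ into tangential and non-tangential parts, and (b) conjugation by translation to pass from $y=0$ to general $y \in \BR^3$. First, at $y=0$, I would verify $X_c^T \subset \ker L_c = Y_0$: differentiating the equilibrium identity $(E + c \cdot P)'(U_c) = 0$ in $x_j$ gives $L_c \partial_{x_j} U_c = 0$, and the three derivatives are linearly independent because otherwise $U_c$ would be translation-invariant in some nonzero direction $a$, forcing $|\nabla U_c|$ to be $a$-independent and so $\int_{\BR^3} |\nabla U_c|^2\, dx = \infty$ unless $U_c$ is constant, contradicting non-triviality. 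Thus $\dim X_c^T = 3$, and the $L^2$-orthogonal splitting $Y_0 = X_c^T \oplus \tilde Y_0$ yields $\dim \tilde Y_0 = \dim \ker L_c - 3$. Part (2) then follows by arithmetic using $\dim Y_1 = \dim Y_4$, $\dim Y_5 = \dim Y_6 = d$, and $\dim Y_1 + \dim Y_2 + \dim Y_5 = n^-(L_c)$ from Theorem 2.1.

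For part (1), I would use elliptic bootstrapping. Each element of $X_c^T$ inherits smoothness and decay from $U_c$, which solves the elliptic equation \eqref{twGP}. For $X_c^\pm = Y_{5,6}$, the Jordan chains underlying $A_1, A_2, A_4$, and vectors in $\tilde Y_0$, each basis element $V$ satisfies an equation of the form $(JL_c - \mu)^m V = 0$; rewritten as an elliptic system with smooth, spatially decaying coefficients built from $U_c$, bootstrapping puts $V \in \dot H^k \cap X_1$ for every $k$. Translation by $y$ preserves this regularity.

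For part (3), I would choose bases $V_{c,j}^\alpha$ in these smooth finite-dimensional subspaces at $y=0$, and construct the dual functionals $\zeta_{c,j}^\alpha$ from the duality pairings encoded in \eqref{block-L}: the isomorphisms $B_{14}: Y_4 \to Y_1^*$ and $B_{56}: Y_6 \to Y_5^*$ give $L_c$-pairings for $Y_1, Y_4, Y_5, Y_6$, the definite blocks $L_{Y_{2,3}}$ give self-pairings on $Y_{2,3}$, and for $X_c^T$ and $\tilde Y_0$ I would use $L^2$-biorthogonalization within the finite-dimensional $\ker L_c$. Each $\zeta_{c,j}^\alpha$ is then a linear combination of smooth decaying functions built from $V_{c,k}^\beta$ and $U_c$, and so lies in $H^k \times (\dot H^{-1} \cap H^k)$ for every $k$. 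The translation identity $\Pi_{c,y}^\alpha w = \big(\Pi_{c,0}^\alpha w(\cdot - y)\big)(\cdot + y)$ from \eqref{E:trans-inv-O} then unfolds into the claimed representation, and uniform-in-$y$ smoothness of the projections follows from the regularity of both $V_{c,j}^\alpha$ and $\zeta_{c,j}^\alpha$.

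The block structure in part (4) is then read off \eqref{block-JL} and \eqref{block-L} after merging $\tilde Y_0 \oplus Y_1 \oplus Y_2$ into $X_c^{d1}$: the $X_c^T$ row and column of $L_{c,y}$ vanish because $X_c^T \subset \ker L_c$; $L_{11}(y)$ keeps only the $L_{Y_2}$ diagonal entry, $L_{12}(y)$ is the $B_{14}$ coupling; and for $JL_{c,y}$ the strictly upper-triangular pattern among $\{X_c^T, X_c^{d1}, X_c^e, X_c^{d2}\}$ reflects the nilpotent chain $Y_0 \leftarrow Y_{1,2} \leftarrow Y_{3,4}$ in \eqref{block-JL}, with $X_c^\pm = Y_{5,6}$ fully decoupled. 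Translation invariance of each block is immediate from \eqref{E:trans-inv-O}; boundedness except on $X_c^e$ follows from Theorem 2.1 (4); positivity of $L^e(y)$ and anti-self-adjointness of $A_e(y)$ with respect to it follow from items (3) and (5); the exponential estimates for $A_\pm(y)$ follow from item (6) together with the choice of $\lambda$; and the polynomial bounds $(1+|t|)^{d_1}$ on $\|e^{tA_1(y)}\|$ and $\|e^{tA_2(y)}\|$ come from finite-dimensional Jordan theory with spectrum on $i\BR$ and Jordan blocks of size at most $d_1$. The main bookkeeping subtlety I anticipate is verifying that the refined splitting of $\ker L_c$ is consistent with the rest of the Jordan structure — specifically that nontrivial couplings of $X_c^{d1}$ into both $X_c^T$ and $\tilde Y_0$ are permitted and get absorbed respectively into $A_{T1}(y)$ and the nilpotent part of $A_1(y)$ — which is automatic since $X_c^T \subset \ker(JL_c)$ and since refining $Y_0$ does not disturb any other invariant subspace.
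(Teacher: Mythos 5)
Most of your outline coincides with the paper's proof: parts (1), (2), and (4) are indeed read off Theorem 2.1 of \cite{LZ17} after splitting $Y_0=\ker L_c$ into $X_c^T\oplus \tilde Y_0$, and the passage to general $y$ is by the conjugation \eqref{E:trans-inv-O}. The one place where your argument has a genuine gap is the construction of the dual functionals $\zeta_{c,j}^\alpha$ for the kernel directions, i.e.\ for $X_c^T$ and $\tilde Y_0$. For the non-kernel directions your recipe (use $L_c w^j$ and the pairings $B_{14}$, $B_{56}$, $L_{Y_2}$ from \eqref{block-L}) is exactly what the paper does, and such functionals automatically annihilate $X_c^e=Y_3$ and $\ker L_c$ by the block structure. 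But ``$L^2$-biorthogonalization within the finite-dimensional $\ker L_c$'' does not produce admissible functionals for $X_c^T$ and $\tilde Y_0$: the crucial requirement for the representation of $\Pi_{c,y}^\alpha$ is that each $\zeta_{c,j}^\alpha$ vanish on the \emph{infinite-dimensional} subspace $X_c^e$ (and on the remaining $Y_j$'s), and biorthogonalizing only inside $\ker L_c$ gives no control whatsoever of the functionals on $X_c^e$. Nothing in Theorem 2.1 of \cite{LZ17} asserts $L^2$-orthogonality of $\ker L_c$ to $Y_3$, and trying to correct an $L^2$-pairing afterwards by subtracting its component along $X_c^e$ would destroy precisely the extra regularity ($H^k\times(\dot H^{-1}\cap H^k)$) that the lemma needs for the uniform-in-$y$ smoothness of the projections. (As a secondary point, an $L^2$ pairing is not even obviously bounded on $X_1=H^1\times\dot H^1$, since the second component of a test function is only in $\dot H^1$.)

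The paper's proof handles exactly this point by a different choice of pairing for the kernel directions: for a basis $w^1,\dots,w^{d_0}$ of $\ker L_c$ it takes $\wt w^j=\mathrm{diag}(1-\Delta,-\Delta)\,w^j$, so that $\langle \wt w^j,w\rangle=(w^j,w)_{X_1}$, and then invokes item (8) of Theorem 2.1 in \cite{LZ17} --- the $X_1$-orthogonality of $\ker L_c$ to $Y_1\oplus Y_2\oplus Y_3\oplus Y_4$ --- to conclude that these functionals kill $X_c^e$. Together with the $L_c w^j$ for the non-kernel directions, one checks linear independence and, by dimension count, that they span the annihilator $\ker i_{X_c^e}^*$; the final dual basis $\gamma_j$ is obtained by biorthogonalization \emph{inside this annihilator} against the full basis $w^1,\dots,w^{d'}$, which simultaneously guarantees vanishing on $X_c^e$, duality to all transversal directions, and the stated regularity (each $\gamma_j$ is a combination of the $\wt w^j$). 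If you replace your $L^2$ step by this $X_1$-Riesz-representative argument (or any other argument producing regular functionals that provably annihilate $X_c^e$), the rest of your proposal goes through as in the paper.
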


\begin{proof} 
All the conclusions directly follow from Theorem 2.1 in \cite{LZ17} except those on the dual basis $\zeta_{c, j}^\alpha$ and the smoothness of $ \Pi_{c, y}^\alpha$ in $y$. In particular $ X_{c, y}^{+, -, T, d1, d2} \subset \dot H^k \cap X_1$ is due to $D\big((JL_c)^k\big) =\dot H^{1+2k} \cap X_1$. To complete the proof, we only need to show the smoothness of $ \Pi_{c, y}^\alpha$ in $y$. Due the translation invariance, we have  
\begin{equation} \label{E:trans-inv}
 \Pi_{c, y+ y'}^\alpha w =\left(  \Pi_{c, y'}^\alpha \left( w(\cdot -y)\right) \right)(\cdot + y), \quad \alpha \in  \{T, d1, e, d2, +, -\}
\end{equation}
for any $y \in \BR^3$, which also implies 
\[
D_y^n  \Pi_{c, y} w=\left( D_y^n  \Pi_{c, 0} \left( w(\cdot -y)\right) \right)(\cdot + y).
\]
Let 
\[
w^1, \ldots, w^{d_0}, w^{d_0+1}, \ldots, w^{d'} \; \text{ be a basis of }\;  X_c^T \oplus  X_c^{d1} \oplus  X_c^{d2} \oplus   X_c^+ \oplus  X_c^-
\]
formed by bases of $ X_c^{T, d1, d2, +, -}$ such that $w^j = \p_{x_j} U_c$, $j=1,2,3$ and $w^1, \ldots, w^{d_0}$ is a basis of $\ker L_c$ where $d_0  = \dim \ker L_c$. 

Let 
\[
\wt w^j = \begin{cases} \begin{bmatrix} 1-\Delta & 0 \\ 0& -\Delta \end{bmatrix} w^j, \quad & j=1, \ldots, d_0; \\ 
L_c w^j, & j=d_0+1, \ldots, d'.  
\end{cases}\]
Clearly $\wt w^j \in X_1^* \cap H^k= H^k \times (\dot H^{-1} \cap H^k)$ for any $k$ and $j=1, \ldots, d'$ and $\{\wt w^1, \ldots, \wt w^{d_0}\}$ and $\{\wt w^{d_0+1}, \ldots, \wt w^{d'}\}$ are both linearly independent. Moreover $\{\wt w^1, \ldots, \wt w^{d'}\}$ are also linearly independent. In fact, assume  
\[
\wt w= a_1 \wt w^1 + \ldots, a_{d_0} \wt w^{d_0} = a_{d_0+1} \wt w^{d_0+1} + \ldots, a_{d'} \wt w^{d'}
\]
for some $a_1, \ldots, a_{d'}$. Since $w^j \in \ker L_c$ for $j=1, \ldots, d_0$, we have  
\[
0=  \langle L_c \sum_{j=1}^{d_0} a_j w^j ,  \sum_{j=d_0+1}^{d'} a_j w^j \rangle = \langle \wt w, \sum_{j=1}^{d_0} a_j w^j \rangle = (\wt w, \wt w)_{X_1^*}. 
\]
Therefore $\wt w=0$ and we obtain the linear independence of  $\{\wt w^1, \ldots, \wt w^{d'}\}$. 

For any $w \in  X_c^e$, on the one hand, from the above $L_c$-orthogonality between $ X_c^e$ and $ X_c^T \oplus  X_c^{d1} \oplus  X_c^{d2} \oplus   X_c^+ \oplus  X_c^-$, we have $\langle \wt w^j, w\rangle = \langle L_c w_j, w \rangle =0$, for $j=d_0+1, \ldots, d'$. On the other hand, due to the orthogonality between $ X_c^e$ and $\ker L_c$ with respect to the $(\cdot, \cdot)_{X_1}$, for any $j=1, \ldots, d_0$, we have $\langle \wt w^j, w\rangle = (w^j, w)_{X_1} =0$. Counting the dimensions, we obtain that 
\[
\wt w^1, \ldots, \wt w^{d'} \; \text{ form a basis of } \; \ker i_{ X_c^e}^* = \{ f \in X_1^* \mid \langle f, w \rangle =0, \; \forall w \in  X_{c}^e\}.
\]  

As $\ker i_{ X_c^e}^*$ is isomorphic to $( X_c^T \oplus  X_c^{d1} \oplus  X_c^{d2} \oplus   X_c^+ \oplus  X_c^-)^*$, let $\gamma_1, \ldots, \gamma_{d'} \in \ker i_{ X_c^e}^*$ be the dual basis of $w^1, \ldots, w^{d'}$. Since $\gamma_j$ can be written as a linear combinations of $\wt w^1, \ldots, \wt w^{d'}$, we have $\gamma_j \in  H^k \times (\dot H^{-1} \cap H^k)$ for any $k$ and $j=1, \ldots, d'$. From \eqref{E:trans-inv} and the definition of $\gamma_j$, it is easy to verify that, for any $\alpha \in  \{T, d1, d2, +, -\}$, $w\in X_1$, and $y\in \BR^3$, 
\[
 \Pi_{c,y}^\alpha w = \sum_{w^j \in  X_c^\alpha} \langle \gamma_j (\cdot +y), w\rangle w^j(\cdot +y). 
\]
The smoothness of $ \Pi_{c, y}^\alpha$ in $y$ follows from the regularity $\gamma_j \in H^k \times (\dot H^{-1} \cap H^k)$ and $w^j \in \dot H^k \cap X_1$ for any $k$ and $j=1, \ldots, d'$, which also implies the smoothness of $ \Pi_{c, y}^e = I -\sum_{\alpha = T, d1, d2, +, -}  \Pi_{c, y}^\alpha$. Divide $\{w^j, \, j=1, \ldots, d'\}$ and $\{\gamma_j, \; j=1, \ldots, d'\}$ according to $\alpha\in \{T, d1, d2, +, -\}$, we obtain $ V_{c, j}^\alpha$ and $ \zeta_{c, j}^\alpha$ and complete the proof of the lemma.   
\end{proof}

\begin{remark}
Under the following additional non-degeneracy assumptions 
\begin{equation} \label{E:non-deg-1}
ker(L_{c}) = span\{\nabla U_{c}\}, \quad n^-(L_c) = d= \dim  X^+, 
\end{equation}
we have $ X_{c,y}^{d1, d2}=\{0\}$ and the decomposition may be simplified. We shall discuss this case carefully in Section \ref{S:non-deg}. 
\end{remark}

With respect to the bases $\{ V_{c, j}^\alpha\}$, $\alpha \in \{d1, d2, +, -\}$, operators 
\[
 A_{T1}(y), \;  A_1(y), \;  A_{T2} (y),  \;  A_{12} (y),  \;  A_{2} (y),  \;  A_{+} (y),  \;  A_{-} (y)
\]
representing $ \Pi_{c, y}^\beta JL_{c,y}|_{ X_{c,y}^\alpha} :  X_{c, y}^\alpha \to  X_{c,y}^\beta$ in the above block decomposition of $JL_{c, y}$ can be represented by matrices 
\[
M_{T1}, \;  M_1, \; M_{T2} ,  \; M_{12} ,  \; M_{2} ,  \; M_{+} ,  \; M_{-} 
\]
which are independent of $y$ due to the translation invariance\eqref{E:trans-inv-O} of $K_{c, y}$ and $L_{c,y}$. Namely, 
\begin{equation} \label{E:M_ab}
 A_{T1}(y)  \big( ( V_{c, 1}^\alpha, \ldots,  V_{c, d_1}^\alpha) a\big) = ( V_{c, 1}^\beta, \ldots,  V_{c, 3}^\beta)  (M_{T1} a), \quad \forall a\in \BR^{d_1}, \ldots
\end{equation}
From Lemma \ref{decomp2},  
\begin{equation} \label{E:exp-M}
\Vert e^{tM_1} \Vert + \Vert e^{tM_2} \Vert \le C(1+ |t|)^{d_1}, \; \forall t \in \BR; \quad \Vert e^{tM _\pm}\Vert \le C e^{\pm \lambda t}, \forall\, \mp t\ge 0.
\end{equation}
A similar representation through translation in $x \to x+y$ of $A_e(y)$ would cause loss of regularity when $\p_y$ is carried out. Instead we will keep working with $A_e(y) = \Pi_{c,y}^e JL_{c,y} \Pi_{c, y}^e$. When viewed as an (unbounded) operator  from $X_1$ to $X_1$, it is a uniformly (in $y$) bounded perturbation to a constant coefficient operator and its derivatives of all orders are bounded operators. In fact, separating the terms in \eqref{lcy} with constant coefficients from those  with spatially decaying variable coefficients implies 
\begin{equation} \label{E:homo-1}
JL_{c, y} = JL_{c, \infty} + \wt Q(y), \quad L_{c, \infty} =    \begin{bmatrix} 2-\Delta & -c\cdot \nabla \\
c\cdot \nabla & -\Delta \end{bmatrix}, 
\end{equation}
and 
\begin{equation} \label{E:tQ}
\wt Q(y) = \begin{bmatrix} 2u_{c}v_{c} & u_{c}^{2}-1+3v_{c}^2 \\  3(1-u^{2}_{c}) -v_{c}^{2} & - 2u_{c}v_{c}\end{bmatrix} (\cdot+y).
\end{equation}

\begin{lemma} \label{L:A_e}
Fix $c \in (0, \sqrt{2})$. For any integer $k\ge 0$, there exists $C_k>0$ such that for any $y \in \BR^3$, it holds 
\[
\Vert D_y^k \big(A_e(y) - JL_{c, \infty}\big) \Vert_{L\big( (\otimes^k (\BR^3)) \otimes X_1, X_1)} \le C_k.
\]
\end{lemma}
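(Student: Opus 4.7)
The plan is to split $A_e(y) - JL_{c,\infty}$ into a small number of explicit pieces and control each separately using Lemma \ref{decomp2}(3), the smoothness of $U_c$, and the translation identity \eqref{E:trans-inv-O}. Starting from $A_e(y) = \Pi^e_{c,y} JL_{c,y} \Pi^e_{c,y}$ and \eqref{E:homo-1}, and setting $P(y) := I - \Pi^e_{c,y} = \sum_{\alpha\ne e}\Pi^\alpha_{c,y}$, a direct algebraic calculation gives
\[
A_e(y) - JL_{c,\infty} = \Pi^e_{c,y}\,\wt Q(y)\,\Pi^e_{c,y} \;-\; P(y)\,JL_{c,\infty}\,\Pi^e_{c,y} \;-\; JL_{c,\infty}\,P(y),
\]
and I would then estimate each of the three terms in $L(X_1)$, together with all their $y$-derivatives.

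For the multiplication term, \eqref{E:tQ} gives $\wt Q(y) = Q_0(\cdot + y)$ for a smooth matrix-valued function $Q_0$ determined by $(u_c, v_c)$. Since $U_c$ is smooth and all its derivatives are bounded (by the smoothness of $U_c$ together with elliptic bootstrapping for \eqref{twGP}), $D^k Q_0 \in L^\infty(\BR^3)$ for every $k$. Because $w \mapsto w(\cdot + y)$ is an isometry on $X_1$ and $D_y^k \wt Q(y) = (D^k Q_0)(\cdot + y)$, the norm $\|D_y^k \wt Q(y)\|_{L(X_1)}$ is independent of $y$. The projection $\Pi^e_{c,y}$ is $y$-uniformly bounded on $X_1$ with $y$-uniform bounds on all $D_y^k$ from Lemma \ref{decomp2}(3), whose proof reduces the estimates to translating fixed smooth vectors $V^\alpha_{c,j}$ and dual vectors $\zeta^\alpha_{c,j}$. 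The Leibniz rule then controls $D_y^k[\Pi^e_{c,y} \wt Q(y) \Pi^e_{c,y}]$ uniformly in $y$.

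For the two finite-rank terms, I would use the explicit representation
\[
P(y) w = \sum_j \langle \zeta_j(\cdot + y), w\rangle V_j(\cdot + y)
\]
from Lemma \ref{decomp2}(3), with $V_j \in \bigcap_k (\dot H^k \cap X_1)$ and $\zeta_j \in \bigcap_k\big(H^k \times (\dot H^{-1} \cap H^k)\big)$. Since $JL_{c,\infty}$ has constant coefficients it commutes with translation and sends each $V_j$ into $\bigcap_k(\dot H^k\cap X_1)$, so $JL_{c,\infty} P(y)$ is bounded on $X_1$ uniformly in $y$, with $D_y^k$ controlled by distributing derivatives onto the factors $\zeta_j(\cdot+y)$ and $(JL_{c,\infty}V_j)(\cdot+y)$. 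For $P(y) JL_{c,\infty} \Pi^e_{c,y}$, I would transfer the unbounded operator to the smooth factor via integration by parts in the pairing,
\[
\langle \zeta_j(\cdot + y), JL_{c,\infty} u\rangle = \langle \big[(JL_{c,\infty})^T \zeta_j\big](\cdot + y), u\rangle,
\]
which is well-defined for every $u \in X_1$ because $(JL_{c,\infty})^T \zeta_j$ still lies in $\bigcap_k\big(H^k \times (\dot H^{-1} \cap H^k)\big) \subset X_1^*$. This term and its $y$-derivatives are therefore bounded operators on $X_1$ with $y$-uniform bounds.

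The main subtlety is that $JL_{c,\infty}$ is unbounded from $X_1$ to $X_1$, so each surviving term involving it must have the finite-rank smooth projection $P(y)$ adjacent on at least one side to absorb the loss of two derivatives; the decomposition above is arranged precisely so that this holds, after which the entire argument reduces to Leibniz plus translation-invariance bookkeeping.
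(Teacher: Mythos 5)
Your algebraic decomposition is correct: with $P(y)=I-\Pi^e_{c,y}$ one indeed has $A_e(y)-JL_{c,\infty}=\Pi^e_{c,y}\wt Q(y)\Pi^e_{c,y}-P(y)JL_{c,\infty}\Pi^e_{c,y}-JL_{c,\infty}P(y)$, and your treatment of the two finite-rank terms (translating $JL_{c,\infty}$ onto the smooth vectors $V^\alpha_{c,j}$, respectively onto the dual vectors $\zeta^\alpha_{c,j}$ via the transpose, and checking that $(JL_{c,\infty})^T\zeta^\alpha_{c,j}$ stays in $H^k\times(\dot H^{-1}\cap H^k)$) is sound. This is a mild variant of the paper's route: the paper instead writes $A_e(y)-JL_{c,\infty}=\big(A_e(y)-JL_{c,y}\big)+\wt Q(y)$ and observes that $JL_{c,y}-A_e(y)$ consists exactly of the non-$(e,e)$ blocks of Lemma \ref{decomp2}(4), whose boundedness and translation invariance are already recorded there; your version re-derives the boundedness of the finite-rank correction directly from the regularity of $\zeta^\alpha_{c,j}$ and $V^\alpha_{c,j}$ rather than quoting the block structure, at the cost of having to justify the formal appearance of the unbounded $JL_{c,\infty}$ next to $\Pi^e_{c,y}$ (which you do, though strictly one should first verify the identity on a dense domain and extend).

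The genuine gap is in the multiplication term. You claim $\|D_y^k\wt Q(y)\|_{L(X_1)}$ is finite because $D^kQ_0\in L^\infty$, but on $X_1=H^1\times\dot H^1$ this is false in general: the off-diagonal entry $u_c^2-1+3v_c^2$ of \eqref{E:tQ} multiplies the second component $w_2$, which lies only in $\dot H^1$, and the product must land in $L^2$ for the first component of the output to be in $H^1$. Multiplication by a merely bounded function does not map $\dot H^1$ into $L^2$ (take any $w_2\in\dot H^1\setminus L^2$ and a constant coefficient). What saves the estimate is precisely the spatial decay \eqref{E:TW-decay} of $u_c^2-1$, $u_cv_c$, $v_c^2$, combined with Hardy's inequality (equivalently, $L^3$ membership of these potentials), which is how the paper obtains \eqref{E:tQ-1}; the same remark applies to the coefficients of $D_y^k\wt Q$. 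So your conclusion for this term is true under the standing hypothesis \eqref{E:TW-decay}, but the justification you give ($L^\infty$ bounds plus translation invariance) is insufficient and should be replaced by the decay-plus-Hardy argument.
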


\begin{proof}
Clearly $\wt Q(0) \in L^\infty$ and $\nabla^k \wt Q(0) \in L^\infty \cap L^2$ for any $k \ge 1$, along with \eqref{E:TW-decay} and Hardy's inequality, it is straight forward  to prove, for all $y \in \BR^3$ and $V \in X_1$, 
\begin{equation} \label{E:tQ-1}
\Vert \wt Q V \Vert_{X_1} + \Vert D_y^k \wt Q  V \Vert_{X_1} + \Vert J \wt Q V \Vert_{X_1^*} + \Vert J D^k_y \wt Q V \Vert_{X_1^*} \le C_k \Vert V \Vert_{X_1}, %\quad \forall V \in X_1,  
\end{equation}
for some $C_k>0$ independent of $y$. Write 
\[
A_e(y) - JL_{c, \infty} = A_e(y)- JL_{c, y} + \wt Q(y).
\]
Therefore, to complete the proof of the lemma, we only need to show the boundedness of $D_y^k \big(JL_{c, y}- A_e(y)\big)$, which, according to Lemma \ref{decomp2}, has the same blockwise decomposition except the $A_e(y)$ component replaced by $0$. The uniform boundedness (in $y$) of $A_e(y)- JL_{c, y}$ follows from the boundedness of those blocks, where the uniformity in $y$ is due to their translation invariance in the sense of \eqref{E:trans-inv-O}. The uniform upper bounds of $D_y^k \big(JL_{c, y}- A_e(y)\big)$ also follow the translation invariance of these blocks and the extra regularity of $\zeta_{c, j}^\alpha$ and $V_{c,j}^\alpha$, $\alpha \in \{ T, d1, d2, +, -\}$. 
\end{proof}

Following from that $X_{c,y}^T$ is the tangent space $T_{U_c (\cdot +y)} \mathcal{M}$ and $K_{c, y} = D\psi\big(w_c(\cdot +y)\big)$ is an isomorphism, $K_{c,y}^{-1} X_{c, y}^T$ is the tangent space of 
\[
\psi^{-1} (\mathcal{M}) = \{ w_c (\cdot + y) \mid y\in \BR^3\}. 
\]
Based on the Implicit Function Theorem, it is straight forward to prove that, for small $\delta$, 
\begin{align*}
\big\{ w_c (\cdot +y) + w^{d1} &+ w^e + w^{d2} + w^+ + w^-  \mid \\
& w^\alpha \in K_{c, y}^{-1} X_{c, y}^\alpha, \; \Vert w^\alpha \Vert <  \delta, \;    \alpha \in \{d1, e, d2, +, -\}\big\}
\end{align*}
is a neighborhood of $\psi^{-1} (\mathcal{M}) \subset X_1$, where each point has a unique representation in the above form.

\subsection{A local bundle coordinate system.} \label{SS:bundle}

Accordingly, we shall set up the bundle coordinates near $\psi^{-1} (\mathcal{M})$ precisely.  
\begin{equation} \label{E:CX}
\CX^e = \{(y, V^e) \mid y\in \BR^3, \; V^e \in X_{c, y}^e \},  
\end{equation} 
and balls on this bundle
\begin{equation} \label{E:CX-delta}
\CX^e (\delta) = \{ (y, V) \in \CX^e  \mid \Vert V \Vert_{X_1} < \delta
\}. 
\end{equation}
Let $y_\# \in \BR^3$ and $B^3(\delta)$ be the open ball on $\BR^3$ centered at $y_\#$ with radius $\delta$. For $\delta \ll1$, a smooth (due to the smoothness of $\Pi_{c, y}^e$ with respect to $y$) local trivialization from $B^3 (\delta) \times X_{c, y_\#}^e$ to $\CX^e$, thus a local coordinate system, of $\CX^e$ is given by $(y, \Pi_{c, y}^e V)$, $V \in X_{c, y_\#}^e$. There is a natural translation on $\CX^{e}$
\[
(z, y, V^e) \longrightarrow \big(y+z, V^e(\cdot, +z) \big). 
\]
Along with other subspaces $X_{c, y}^{T, d1, d2, +, -}$, we will often consider bundles $\BR^k \oplus \CX^e$ over $\BR^3$ with fibers $\BR^k \oplus X_{c, y}^e$, as well as their balls 
\begin{equation} \label{E:Bball} 
B^k (\delta_1) \oplus \CX^e (\delta_2) = \{ (y, a, V^e) \mid  a \in \BR^k, \; |a| < \delta_1, \;  (y, V^e) \in \CX^e(\delta_2) \}. 
\end{equation} 
For any fixed $y_\#$, the smoothness of $\Pi_{c, y}^e$ with respect to $y$ allows it serve to as a local trivialization of the fibers $X_{c, y}^e$ for $y$ near $y_\#$. 

Define an embedding 
\[
Em: \BR^{3+ d_1+d_2+ 2d} \oplus \CX^e \to X_1 
\]
as 
\begin{equation} \label{E:em} \begin{split}
& Em (y, a^T, a^{d1}, a^{d2}, a^+, a^-, V^e) \\
=&   \sum_{j=1}^3 a_j^T \p_{x_j} w_c (\cdot +y)   + K_{c,y}^{-1} \Big(\sum_{j=1}^{d_1} a_j^{d1} V_{c,j}^{d1} (\cdot + y) \\
&+ \sum_{j=1}^{d_2} a_j^{d2} V_{c,j}^{d2} (\cdot + y) + \sum_{j=1}^{d} a_j^+ V_{c,j}^+ (\cdot + y) + \sum_{j=1}^d a_j^- V_{c,j}^- (\cdot + y) + V^e \Big)\\
:= &  K_{c,y}^{-1} \Big(\big(a^T V_{c}^T + a^{d1} V_{c}^{d1} + a^{d2} V_{c}^{d2} + a^+ V_{c}^+ + a^- V_{c}^-\big) (\cdot + y) + V^e \Big).
\end{split}\end{equation} 
The embedding $Em^\perp : \BR^{d_1+d_2+2d} \oplus \CX^e$ defined on the transversal bundle will be used for the bundle coordinates near $\psi^{-1} (\mathcal{M})$
\begin{equation} \label{E:em-perp}
Em^\perp (y, a^{d1}, a^{d2}, a^+, a^-, V^e) = Em (y,0, a^{d1}, a^{d2}, a^+, a^-, V^e). 
\end{equation}
Clearly $Em^\perp$ is translation invariant in the sense
\begin{equation} \label{E:trans-inv-Em} \begin{split}
&Em^\perp \big(y+ \tilde y, a^{d1}, a^{d2}, a^+, a^-, V^e(\cdot +\tilde y)\big) \\
& \qquad \qquad = Em^\perp (y, a^{d1}, a^{d2}, a^+, a^-, V^e)(\cdot +\tilde y), \quad \forall \tilde y \in \BR^3. 
\end{split}\end{equation}

On the one hand, according to the above trivialization, given any Banach space $Z$, a mapping $f:Z\to \CX^e$ is said to be smooth near some $z_0 \in Z$ if $y(z)$ and $V^e(z) \in X_{c, y(z_0)}^e$ are smooth in $z$ near $z_0$, where $f(z) = \big(y(z), \Pi^e_{c, y(z)} V^e(z)\big)$. Due to the smoothness of $\Pi_{c, y}^e$, in fact this is equivalent to the smoothness of $y(z)$ and $V(z) \in X_1$ where $f(z) = \big(y(z), V(z)\big)$.

On the other hand, for any Banach space $Y$, a mapping $g:\CX^e \to Y$ is said to be smooth near some $(y_\#, V_\#)$ if 
\[
\tilde g(y, V)= g(y, \Pi_{c, y}^e V), \quad y\in \BR^3, \; V \in X_{c, y_\#}^e
\]
is smooth in $(y, V) \in \BR^3 \times X_{c, y_\#}$ near $(y_\#, V_\#)$. It is straight forward to verify 
\begin{itemize} 
\item $g$ is smooth if and only if locally $g(y, \Pi_{c, y}^e V)$, $y\in \BR^3$, $V \in X_1$, is smooth on $\BR^3 \times X_1$.\item $g$ is smooth if and only if locally it is the restriction to $\CX^e$ of a smooth mapping defined on $\BR^3 \times X_1$;
\item $g$ is smooth if and only if $g\circ f$ is smooth for any smooth $f: Z \to \CX^e$ defined on any Banach space $Z$; 
\item $Em$ is smooth with respect to $(y, V^e)$,  due to the smoothness of $K_{c,y}^{-1}$ and the basis $V_{c, j}^\alpha$, $\alpha \in \{T, d1, d2, +, -\}$.
\end{itemize}
We shall often work with $g\big(y, Em(y, a, V^e)\big)$ with $g$ smooth on $\BR^3 \times X_1$.

Near the 3-dim manifold  $\mathcal{M}$ of traveling waves, we will work through the mapping $\Phi$ defined on $\BR^{d_1 +d_2+2d} \oplus \CX^e$ which is diffeomorphic on  $\BR^{d_1 +d_2+2d} (\delta) \oplus \CX^e (\delta)$
\begin{equation} \label{E:coord-1} \begin{split}
U =  &\Phi (y, a, V^e) 
=  \psi\big( w_c( \cdot +y) + Em^\perp (y, a, V^e) \big).
\end{split}\end{equation}

This is a smooth vector bundle coordinate system in a neighborhood of $\mathcal{M} \subset X_0$ for sufficiently small $\delta>0$. From \eqref{E:em} and \eqref{E:em-perp}, $\Phi$ can be naturally extended into a smooth mapping on $\BR^{3+d_1+d_2+2d} \oplus X_1$. 

\begin{remark} \label{R:coord} 
As the subspaces $X_{c, y}^{T, d1, e, d2, +, -}$ are obtained as the translations of $X_{c}^{T, d1, e, d2, +, -}$, it is tempting to use the coordinate system 
\[
U = \psi\big ((w_c + w^{d1} + w^{d2} + w^+ +w^- + w^e) (\cdot +y)\big)
\]
where 
$w^\alpha \in X_c^\alpha$ and $y \in \BR^3$. 

However, such translation parametrization is not  smooth in $X_1$ because the differentiation in $y$ causes a loss of one order regularity in $D_y w^e(\cdot +y)$. This is one of the main issues in Nakanishi and Schlag \cite{NS1}, where the authors constructed the center-stable manifolds of the manifold of ground states for the Klein-Gordon equation. They introduced a  nonlinear ``mobile distance'' to overcome that difficulty. Instead, the above bundle coordinate system \eqref{E:coord-1}, where $V^e \in X_{c, y}^e$ is not directly parametrized by a translation in $y$, represents a different framework based on the observation that, while the parametrization by the spatial translation of $y$ is not smooth in $X_1$ with respect to $y$,  the vector bundles $X_{c, y}^{T, d1,e, d2, +, -}$ over $\mathcal{M}$ are smooth in $y$ as given in Lemma \ref{decomp2}. This approach was used also in \cite{BLZ08}. While it avoids the loss of regularity when differentiating in $y$, it will involve more geometric calculation. 
\end{remark}

\subsection{An equivalent form of the GP equation near traveling waves} \label{csntw}

Let $U(t,x)$ be any solution to \eqref{tfeq}. If $U(t,x)$ stays in a small neighborhood of $\mathcal{M}$, then we can express $U$ in the coordinate system \eqref{E:coord-1} 
\begin{equation}\label{coordw}
U(t)= \Phi\big( y(t), a(t), V^e (t) \big), 
\quad ( y, a, V^e) (t)  \in B^{d_1+d_2+2d}(\delta) \oplus  \CX^e (\delta)
\end{equation}
for some $\delta>0$. 
Substituting $U(t)$ and \eqref{coordw} into \eqref{w1w2} and using \eqref{UnearM}, we obtain
\begin{equation}\label{eqdec}
\begin{split}
&\p_t y \cdot \nabla U_{c}(\cdot+y) + \p_t  V^e + \big( (\p_t a^{d1})  V_c^{d1} +   (\p_t a^{d2})  V_c^{d2} + (\p_t a^{+})  V_c^{+} \\
&+(\p_t a^{-})  V_c^{-}\big) (\cdot +y) + \big( a^{d1} \p_t y \cdot (\nabla  V_c^{d1}) +  a^{d2} \p_t y \cdot (\nabla  V_c^{d2}) \\
&+ a^{+} \p_t y \cdot (\nabla  V_c^{+}) +a^{-} \p_t y \cdot (\nabla  V_c^{-}) \big) (\cdot +y)  \\
=& JL_{c,y}K_{c,y} Em^\perp (y, a, V^e)+G\big(c,y, \p_t y, Em^\perp (y, a, V^e)\big).
\end{split}
\end{equation}

Starting with $\p_t y$, we identify the evolution equation of each coordinate component. Applying $ \Pi_{c, y}^T$ and using Lemma \ref{decomp2} and \eqref{E:M_ab}, we have 
\begin{align*} 
&\p_t y  + \langle  \zeta_c^T (\cdot +y), \p_t  V^e\rangle  + \langle  \zeta_c^T, \; \p_t y \cdot \nabla \big( a^{d1}   V_c^{d1} +  a^{d2}  V_c^{d2} + a^{+}   V_c^{+}  +a^{-}   V_c^{-}\big) \rangle   \\
=& M_{T1} a^{d1} + M_{T2} a^{d2}- \langle L_{c, y} J  \zeta_c^T (\cdot +y),  V^e\rangle \\
&+ \langle  \zeta_c^T (\cdot +y), G\big(c,y, \p_t y,Em^\perp (y, a, V^e)\big)\rangle.
\end{align*} 
Since $ V^e \in  X_{c,y}^e$ implies $\langle  \zeta_c^\alpha (\cdot +y),  V^e\rangle=0$  all $t$, we have 
\begin{equation} \label{E:p_tVe-0}
\langle  \zeta_c^\alpha (\cdot +y), \p_t  V^e\rangle =- \langle (\p_t y \cdot \nabla  \zeta_c^\alpha) (\cdot +y),  V^e\rangle, \quad \alpha \in \{T, d1, d2, +, -\}. 
\end{equation}

Therefore $\tilde y= \p_t y$ satisfies the following equation 
\begin{align*}
&\tilde y - \langle (\tilde y \cdot \nabla  \zeta_c^T) (\cdot +y),  \Pi_{c, y}^e K_{c, y} w \rangle  + \langle  \zeta_c^T, \; \tilde y \cdot \nabla \big( (I - \Pi_{c, y}^T -  \Pi_{c, y}^e) K_{c, y} w \big) \\
=& M_{T1} a^{d1} + M_{T2} a^{d2}- \langle L_{c, y} J  \zeta_c^T (\cdot +y),  V^e \rangle + \langle  \zeta_c^T (\cdot +y), G\big(c,y, \tilde y, w\big)\rangle, 
\end{align*}
where $w = Em^\perp (y, a, V^e)$. We actually note that the above equation is well-defined for any small $w\in X_1$. From Lemma \ref{ESG} and the regularity of $ V_c^\alpha$ and $ \zeta_c^\alpha$, when $\Vert w\Vert_{X_1}$ is sufficiently small, one may solve for $\tilde y = \p_t y$ and obtain 
\begin{equation} \label{eqy'} 
\p_t y = M_{T1} a^{d1} + M_{T2} a^{d2}- \langle L_{c, y} J  \zeta_c^T (\cdot +y),  V^e \rangle+ G^T (c, y, w), 
\end{equation} 
where 
\[
w= Em^\perp (y, a, V^e).
\]
According to Lemma \ref{ESG} and the regularity of $ \zeta_c^{T, d1, d2, +, -}$, $G^T(c, y, w)$ is smooth in $y$ and $w \in X_1$ when $\Vert w\Vert_{X_1}\ll1$. As we did not prove $G \in X_1$ in Lemma \ref{ESG}, we used the extra regularity of $ \zeta_c^\alpha \in H^k \times (\dot H^{-1} \cap H^k)$. Furthermore, there exists $C>0$ such that, for any $y \in \BR^3$ and small $w \in X_1$, 
\begin{equation} \label{E:G^T-1}
|D_w^k D_y^l G^T(c, y, w)| \le C_{l, k} \Vert w \Vert_{X_1}^{\max \{2 -k, 0\}}.
\end{equation}

Applying $ \Pi_{c, y}^\alpha$, $\alpha \in \{d1, d2, +, -,e\}$, to \eqref{eqdec} and using the basis $ V_c^\alpha$, Lemma \ref{decomp2}, \eqref{E:M_ab}, \eqref{E:p_tVe-0} and \eqref{eqy'}, we obtain
\begin{equation} \label{E:a^pm}
\p_t a^\pm   = M_\pm a^\pm  + G^\pm (c, y,  w),
\end{equation}
\begin{equation} \label{E:a^d1}
\p_t a^{d1}   = M_{1} a^{d1} + M_{12} a^{d2} - \langle L_{c, y} J  \zeta_c^{d1} (\cdot +y),  V^e \rangle + G^{d1} (c, y,  w), 
\end{equation}
\begin{equation} \label{E:a^d2}
\p_t a^{d2}   = M_{2} a^{d2}  + G^{d2} (c, y,  w), 
\end{equation}
\begin{equation} \label{E:Ve}
\Pi_{c,y}^e \p_t V^{e}   = A_e(y) V^e + a^{d2} A_{e2}(y) V_c^{d2} (\cdot+y) + G^{e} (c, y,  w), \; V^e \in X_{c, y}^e,
\end{equation}
where $A_{e2}(y) = \Pi_{c, y}^e JL_{c, y} \Pi_{c, y}^{d2}$ is smooth in $y$ and 
\begin{equation}  \label{E:em-perp-G}
w= Em^\perp (y, a, V^e). 
\end{equation}
Much as in the derivation of $G^T$, $G^\alpha$ is also well-defined for any small $w\in X_1$, $\alpha \in \{d1, d2, +, -, e\}$. Like $K_{c,y}$ and $L_{c,y}$, $G^\alpha$ is translation invariant, 
\begin{equation} \label{E:trans-inv-G} \begin{split} 
&G^\alpha (c, y+z, w (\cdot + z) \big) = G^\alpha (c, y, w), \quad \alpha \in \{T, d1, d2, +, -\} \\
&G^e (c, y+z, w (\cdot + z) \big) = G^e (c, y, w) (\cdot + z)
\end{split}\end{equation}
for all $z\in \BR^3$. For $\Vert w \Vert_{X_1} \ll1$, $G^\alpha$, $\alpha \in \{T, d1, d2, e, +, -\}$, are quadratic in $w$. From Lemma \ref{ESG} and the regularity of $\zeta_c^{T, d1, d2, +, -}$, they are smooth in $y$ and $w$ and satisfy 
\begin{equation} \label{E:G^pm-0}
|D_w^k D_y^l G^\alpha (c, y, w)| \le C_{l, k} \Vert w \Vert_{X_1}^{\max \{2 -k, 0\}}, \quad \alpha \in \{T, d1, d2, +, -\}.
\end{equation}
The multi-linear terms in $G^e$prevent it from belonging to $X_1$ (see Lemma \ref{ESG}). However, due to the extra regularity of $\zeta_c^\alpha$, projections $\Pi_{c, y}^\alpha$, $\alpha \in \{d1, d2, +, -\}$, act on a larger class of functions than $X_1$, from Lemma \ref{ESG}, we have 
\begin{equation} \label{E:G^pm-1}
(I-\Pi_{c, y}^e) G^e (c, y, w) =0, \quad G^e (c, y,w) \in X_1 +  W^{1, \frac 32} + (L^{\frac 32} \cap \dot W^{1, \frac 65})
\end{equation} 
and

\begin{equation} \label{E:G^pm-2}
|D_w^k D_y^l G^e |_{X_1 +  W^{1, \frac 32} + L^{\frac 32} \cap \dot W^{1, \frac 65}} \le C_{l, k} \Vert w \Vert_{X_1}^{\max \{2 -k, 0\}}, 
\end{equation}

\noindent {\bf Transforming the $V^e$ equation.} Before we end this section, we transform \eqref{E:Ve} to an equivalent form. In fact, since $(I-\Pi_{c, y}^e) V^e \equiv 0$, we have 
\begin{equation} \label{E:cov-deri}
(I-\Pi_{c, y}^e) \p_t V^e =  D_y \Pi_{c, y}^e (\p_t y) V^e.
\end{equation}
Therefore, \eqref{E:Ve} implies 
\begin{equation} \label{E:VE} \begin{split} 
\p_t V^e =& 
A_e(y) V^e + \mathcal{F} (c, y) (\p_t y, V^e) \\
& + a^{d2} A_{e2}(y) V_c^{d2} (\cdot+y) + G^{e} (c, y,  w)
\end{split} \end{equation}
where 
\[
A_e(y) =  \Pi_{c, y}^e J L_{c, y} \Pi_{c, y}^e
\]
was given in Lemma \ref{decomp2} and the bounded bilinear operator $\mathcal{F}(c,y): \mathbb{R}^{3}\otimes X_{1}\rightarrow X_1 
$ 
 is given by
\begin{equation} \label{E:CF-def}
\mathcal{F}(c,y)(z,V)= D_{y}\Pi_{c,y}^e (z) \big(\Pi^{e}_{c,y}V - (I-\Pi^{e}_{c,y})V\big).
\end{equation}
Here we can take the last part of $\mathcal{F}$ in the form of $V - b (I-\Pi^{e}_{c,y})V$ for any $b$, which would not change the validity of \eqref{E:CF-def} for $V \in X_{c, y}^e$. The above choice of $\mathcal{F}$ would bring certain convenience in some calculation later. Using the smoothness of $\Pi_{c, y}^\alpha$ in $y$ given in Lemma \ref{decomp2}, we obtain 
\begin{equation} \label{E:CF}
\Vert \mathcal{F}(c, y)(z, V) \Vert_{X_1} \le C |z| \Vert V \Vert_{X_1}
\end{equation}
for some $C>0$ independent of $y$. The bilinear operator $\mathcal{F}$ is a modification of the second fundamental form of the bundle $X_{c, y}^e$ over $\BR^3$ as a sub-bundle of $X_1 = X_{c, y}^e \oplus \big((I-\Pi_{c, y}^e) X_1\big)$ over $\BR^3$.  

While \eqref{E:VE} is deduced from \eqref{E:Ve}, actually the opposite also holds if $V(s) \in X_{c, y(s)}^e$ for some $s$. To see this, applying $I-\Pi_{c,y}^e$ to \eqref{E:VE} we obtain
\[
 \partial_{t}\big((I - \Pi_{c,y}^e)V\big)= (I-\Pi_{c,y}^e) D_{y}\Pi_{c,y}^e (\partial_{t}y)  \big(\Pi^{e}_{c,y}V - (I-\Pi^{e}_{c,y})V\big)   - D_{y}\Pi_{c,y}^e(\partial_{t}y)V.
\]
Differentiating $\Pi_{c,y}^e \Pi_{c,y}^e=\Pi_{c,y}^e$ with respect to $y$ we have 
\begin{equation}\label{E:Pi-e-temp-0}
D_{y}\Pi_{c,y}^e(\cdot) \Pi_{c,y}^e+ \Pi_{c,y}^e D_{y}\Pi_{c,y}^e(\cdot) =D_{y}\Pi_{c,y}^e(\cdot).
\end{equation}
It follows that 
\begin{equation}\label{E:Pi-e-temp-1}
 \partial_{t}\big((I - \Pi_{c,y}^e)V\big) =-D_{y}\Pi_{c,y}^e(\partial_{t}y) (I - \Pi_{c,y}^e) V.
\end{equation}
Since this is a well-posed homogeneous linear equation of $(I - \Pi_{c,y}^e)V$, which is finite dimensional, the solution has to vanish if we assume $V(s) \in X_{c, y(s)}^e$. Therefore 
\begin{equation} \label{E:V-e}
V(t)\in X_{c, y(t)}^e, \;\;  \forall t, \; \text{ if } V(s) \in X_{c, y(s)}^e \; \text{ and } V(t) \; \text{ solves \eqref{E:VE}}. 
\end{equation} 
Finally \eqref{E:Ve} follows from applying $\Pi_{c,y}^e$ to \eqref{E:VE}. 

Compared to \eqref{E:Ve}, equation \eqref{E:VE} is more convenient as the latter may be posed on the whole space $X_1$. Along with the boundedness of $\mathcal{F}$, it makes it easier to prove the local well-posedness and obtain estimates of \eqref{E:VE} and thus we will mainly work with \eqref{E:VE}. 

In summary, in a neighborhood of $\mathcal{M} \subset X_0$, equation \eqref{GP} written in the bundle coordinates $(y, a^{d1}, a^{d2}, a^+, a^-, V^e) \in B^{d_1+d_2+2d} (\delta) \oplus \CX^e (\delta)$ is equivalent to the system consisting of  \eqref{eqy'}, \eqref{E:a^pm}, \eqref{E:a^d1}, \eqref{E:a^d2}, and \eqref{E:VE}, along with \eqref{E:em-perp-G}.

\section{Linear analysis} \label{S:linear}

We first analyze the linear part of \eqref{E:VE} whose unknown is valued in a vector bundle $X_{c, y}^e$ over $\BR^3$. However it is observed that \eqref{E:VE} is well-posed with $V^e \in X_1$, we will consider this general situation as well as the case $V \in X_{c, y}^e$. Relaxing the restriction $V\in X_{c, y}^e$ would provide a little convenience in some estimates later. Moreover since $G^e$ does not necessarily belong to $X_1$, we give a space-time estimate which will be used to close the nonlinear estimates in later sections. 
Consider the following more general form of \eqref{E:VE}
\begin{equation} \label{lv}
\p_t V = \Pi_{c, y}^e J L_{c, y} \Pi_{c, y}^e V + \mathcal{F} (c, y) (\p_t y, V) + f(t).
\end{equation}
Here we assume $y(t)$, $-\infty \le t_0 \le t \le t_1 \le \infty$ satisfy
\begin{equation} \label{E:linear-1}
\sigma := |\p_t y|_{L^\infty\big((t_0, t_1), \BR^3\big)} \le 1, \quad 
\forall t\in (t_0, t_1).
\end{equation}
For the non-homogeneous term $f= \big(f_1(t, x), f_2(t, x)\big)$, we need the norm 
\begin{equation} \label{E:STnorm} 
\Vert f\Vert_{\hat X_{(t_0, t_1)}^{p, q}} \triangleq \Vert f_1 \Vert_{L^{p}_{(t_{0},t_1)}{B}^1_{q,2}} +  \Vert  f_2 \Vert_{L^p_{(t_{0},t_1)}\dot{B}^{1}_{q,2}}
\end{equation}
along with the associated spaces $\hat X_{(t_0, t_1)}^{p, q}$ and $\hat X_{(t_0, t_1), loc}^{p, q}$, where $B_{p, r}^s$ and $\dot B_{p, r}^q$denote the standard Besov space as well as the homogeneous Besov space, respectively. In the standard terminology, an admissible Stritchartz pair $(p, q)$ and conjugate exponent $p'$ of $p \in [1, \infty]$ are those satisfying 
\begin{equation} \label{E:S-pairs}
p,q\in [2,\infty], \quad 2/p+3/q=3/2;  \qquad 1/p'+1/p=1.
\end{equation}
Our main goal in this section is to prove the following proposition. 

\begin{proposition} \label{P:linear-0}
Suppose \eqref{E:linear-1} holds, $(p, q)$ is a Stritchartz pair, and $f \in \hat X_{(t_0, t_1), loc}^{\tilde p', q'}$ where  $\tilde p \in [1, p]$. Then for any $s\in (t_0, t_1)$ and initial value $V(s) \in X_1$, \eqref{lv} has a unique solution $V(t) \in X_1$. Moreover, there exists $C>0$ independent of $t_0, t_1, s, \sigma, y(\cdot)$, and $f(\cdot)$, such that for any $t \in (t_0, t_1)$, and $\eta > C\sigma$, we have  
\begin{align*}
\langle  L_{c,  y(t)} V^e(t), V^e(t) \rangle^{\frac 12}  
\le & e^{ C\sigma |t-s|} \langle L_{c, y(s)} V^e(s), V^e(s) \rangle^{\frac 12} + C\eta^{-\frac 1{\tilde p}} \Vert e^{\eta |t- \cdot|} f^e \Vert_{\hat X_{(s, t)}^{\tilde p', q'}}\\
|V^\perp(t)| \le &  e^{C\sigma |t-s|} | V^\perp(s)| + C\eta^{-\frac 1{\tilde p}} \Vert e^{\eta |t- \cdot|} f^\perp \Vert_{L_{(s, t)}^{\tilde p'}},
\end{align*}
where 
\begin{equation} \label{E:temp-1} \begin{split}
& V^e (t) = \Pi_{c, y(t)}^e V(t), \quad V^\perp (t) = (I-\Pi_{c, y(t)}^e) V(t), \\ 
&f^e(t)= \Pi_{c, y(t)}^e f(t), \quad  f^\perp (t)  = (I-\Pi_{c, y(t)}^e) f(t),
\end{split}\end{equation} 
satisfying 
\begin{equation} \label{E:V-perp} \begin{split}
&\p_t V^e = \Pi_{c, y}^e J L_{c, y} \Pi_{c, y}^e V^e + \mathcal{F} (c, y) (\p_t y, V^e) + f^e(t), \\
& \partial_{t}V^\perp  =-D_{y}\Pi_{c,y}^e(\partial_{t}y) V^\perp  + f^\perp. 
\end{split} \end{equation}
\end{proposition}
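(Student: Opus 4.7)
My plan is to first decouple \eqref{lv} into equations for $V^e = \Pi_{c,y}^e V$ and $V^\perp = (I-\Pi_{c,y}^e)V$, then handle the finite-dimensional transverse component by a Gronwall/Duhamel argument, and the infinite-dimensional center component by combining a weighted energy identity with dispersive Strichartz estimates for the linearized GP group. Local well-posedness is then a consequence of the bounded-perturbation theory applied to the unitary group $e^{tJL_{c,\infty}}$, globalized by the a priori estimates. The derivation of \eqref{E:V-perp} itself follows by differentiating $(\Pi_{c,y}^e)^2 = \Pi_{c,y}^e$ to recover \eqref{E:Pi-e-temp-0}, then applying $I-\Pi_{c,y}^e$ and $\Pi_{c,y}^e$ to \eqref{lv} and using the defining formula \eqref{E:CF-def} of $\mathcal{F}$: the cross terms cancel exactly as in the passage leading to \eqref{E:Pi-e-temp-1}.

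For the transverse part, the equation is a linear ODE on the finite-dimensional fiber $(I-\Pi_{c,y}^e)X_1$ whose coefficient operator $-D_y \Pi_{c,y}^e(\p_t y)$ has norm bounded by $C\sigma$ uniformly in $y$, by Lemma \ref{decomp2}(3). Duhamel gives
\begin{equation*}
|V^\perp(t)| \le e^{C\sigma|t-s|}|V^\perp(s)| + \int_s^t e^{C\sigma|t-r|}|f^\perp(r)|\,dr,
\end{equation*}
and Hölder applied to the last integral, with the pair $e^{(C\sigma-\eta)|t-r|}\cdot e^{\eta|t-r|}|f^\perp(r)|$, yields the bound $C\eta^{-1/\tilde p}\|e^{\eta|t-\cdot|}f^\perp\|_{L^{\tilde p'}}$ whenever $\eta > C\sigma$, since the $L^{\tilde p}$ norm of the decaying weight is $\lesssim \eta^{-1/\tilde p}$. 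For the $V^e$ part, the invariance $V^e(t)\in X_{c,y(t)}^e$ when $V^e(s)$ is, follows from the same argument that led to \eqref{E:V-e}. On this bundle Lemma \ref{decomp2}(4)(d) makes $A_e(y)$ anti-self-adjoint with respect to the quadratic form $L^e(y)$, so
\begin{equation*}
\tfrac{d}{dt}\langle L_{c,y}V^e,V^e\rangle = \langle (D_y L_{c,y})(\p_t y) V^e,V^e\rangle + 2\langle L_{c,y}V^e,\mathcal{F}(c,y)(\p_t y,V^e) + f^e\rangle,
\end{equation*}
and the first two terms are absorbed into $C\sigma\langle L_{c,y}V^e,V^e\rangle$ using \eqref{E:CF} and the smoothness of $L_{c,y}$ in $y$. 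Gronwall gives the $e^{C\sigma|t-s|}$ factor for the homogeneous piece, and the equivalence of $\langle L_{c,y}V^e,V^e\rangle^{1/2}$ with $\|V^e\|_{X_1}$ from Lemma \ref{decomp2}(4)(d) lets us pass between the two throughout.

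The main obstacle, and the principal novelty of the estimate, is the treatment of the $f^e$ contribution in the Besov-type space-time norm $\hat X^{\tilde p',q'}$. I would rewrite the $V^e$-equation as
\begin{equation*}
\p_t V^e = JL_{c,\infty}V^e + R(t)V^e + f^e,\qquad R(t) = \bigl(A_e(y(t))-JL_{c,\infty}\bigr) + \mathcal{F}(c,y(t))(\p_t y(t),\cdot),
\end{equation*}
where $R(t)$ is a uniformly (in $y$) bounded operator on $X_1$ by Lemma \ref{L:A_e} and \eqref{E:CF}. The classical Strichartz/Besov estimates for the constant-coefficient group $S(t)=e^{tJL_{c,\infty}}$ (of Gustafson--Nakanishi--Tsai / Gérard type) give the retarded dispersive bound
\begin{equation*}
\Bigl\|\int_s^t S(t-r)g(r)\,dr\Bigr\|_{X_1}\le C\|g\|_{\hat X^{\tilde p',q'}_{(s,t)}},
\end{equation*}
which, transferred to the propagator $U(t,s)$ of the homogeneous equation via a bounded-perturbation Duhamel iteration in $R(t)$ together with the energy control from Step~3, produces an analogous retarded Strichartz estimate for $U(t,s)$. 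The exponential weight $e^{\eta|t-\cdot|}$ is inserted by splitting $(s,t)$ into subintervals of length $\sim 1/\eta$, applying the retarded Strichartz estimate on each subinterval, and summing the resulting geometric series, which yields precisely the $\eta^{-1/\tilde p}$ gain. The hardest part is keeping all constants uniform in the trajectory $y(\cdot)$ while reconciling the energy identity (natural on the $y$-dependent bundle $X_{c,y}^e$) with the Strichartz estimates (natural for the constant-coefficient group).
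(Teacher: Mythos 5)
Your proposal is correct and follows essentially the same route as the paper: the same decoupling into $V^e$ and $V^\perp$ via \eqref{E:CF-def}, the same finite-dimensional Gronwall--H\"older argument for $V^\perp$, the energy identity with the uniform positivity of $L^e(y)$ for the homogeneous center evolution, and Gustafson--Nakanishi--Tsai-type Strichartz estimates for the constant-coefficient group combined with a bounded-perturbation bootstrap on short intervals and an iteration/geometric-sum argument for the weighted forcing. The only cosmetic difference is that the paper conjugates by $\mathcal{P}$ and splits $Z=Z_h+Z_{in}$ (with a density argument keeping $\mathcal{P}Z_{in}(t)\in X^e_{c,y(t)}$) so that the forcing is never paired against $L_{c,y}V^e$ --- the precise implementation of the ``reconciliation'' you flag, since $f^e(t)$ need not lie in $X_1$ and your displayed energy identity should be read as applying to the homogeneous piece only.
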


Here one keeps in mind that $I- \Pi_{c,y}^e$ may be applied to a larger class of functions than $X_1$ and its range is finite dimensional. The above decoupling of $V^\perp$ and $V^e$ is due to the choice \eqref{E:CF-def} of $\mathcal{F}$. From the positivity of of $L_{c, y}$ on $X_{c, y}^e$ (Lemma \ref{decomp2}), we have 

\begin{corollary} \label{C:linear-1}
There exists $C>0$ independent of $t_0, t_1, s, y(\cdot)$, and $f(\cdot)$, such that for any $t \in (t_0, t_1)$ and $\eta > C\sigma$, we have  
\[
\Vert V(t) \Vert_{X_1} \le C \big( e^{C\sigma |t-s|} \Vert V(s) \Vert_{X_1} +\eta^{-\frac 1{\tilde p}} \Vert e^{\eta |t-\cdot|} f(\cdot)\Vert_{\hat X_{(s, t)}^{\tilde p', q'}}\big).
\]
Moreover, $V(t)\in X_{c, y(t)}^e$, for almost all $t\in (t_0, t_1)$, if $V(s) \in X_{c, y(s)}^e$ and 
\begin{equation} \label{E:linear-2}
(I-\Pi_{c, y(t)}^e) f(t, \cdot)=0, \quad \forall \; a.e. \; t\in (t_0, t_1).
\end{equation}
\end{corollary}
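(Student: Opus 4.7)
The plan is to deduce this corollary directly from Proposition~\ref{P:linear-0} by decomposing $V = V^e + V^\perp$ as in \eqref{E:temp-1}, and then converting the proposition's two bounds—one phrased in the quadratic form $\langle L_{c,y(t)}\cdot,\cdot\rangle$ on $V^e$, and one in the finite-dimensional coordinate norm $|\cdot|$ on $V^\perp$—into a single $X_1$-estimate on $V(t)$. The recurring subtlety (and the only point that requires care) is ensuring that every norm-equivalence constant is uniform in $y \in \BR^3$; this is available from the translation invariance \eqref{E:trans-inv-O} together with the regularity of the bases $V_{c,j}^\alpha$, $\zeta_{c,j}^\alpha$ from Lemma~\ref{decomp2}.

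For the $V^e$ part, Lemma~\ref{decomp2}(4)(d) gives $\langle L_{c,y} V^e, V^e\rangle \ge \tfrac{1}{C}\|V^e\|_{X_1}^2$ and the uniform (in $y$) boundedness of $L_{c,y}:X_1\to X_1^*$ supplies the matching upper bound, so $\langle L_{c,y(t)}V^e(t),V^e(t)\rangle^{1/2}$ is equivalent to $\|V^e(t)\|_{X_1}$ with constants independent of $y$. Combined with $\|V^e(s)\|_{X_1} \le C\|V(s)\|_{X_1}$ and $\|f^e\|_{\hat X_{(s,t)}^{\tilde p', q'}} \le C\|f\|_{\hat X_{(s,t)}^{\tilde p', q'}}$ (from the uniform boundedness of $\Pi_{c,y}^e$ on $X_1$ and on $B^1_{q,2}$/$\dot B^1_{q,2}$), the first estimate of Proposition~\ref{P:linear-0} yields the asserted $X_1$-bound on $V^e(t)$.

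For the $V^\perp$ part, $V^\perp(t)$ lies in the finite-dimensional subspace spanned by the translates $\{V_{c,j}^\alpha(\cdot + y(t))\}_{\alpha\neq e}$, whose $X_1$-norms do not depend on $y$; consequently the coordinate norm $|V^\perp|$ appearing in Proposition~\ref{P:linear-0} is uniformly equivalent to $\|V^\perp\|_{X_1}$. An analogous observation, using the extra regularity $\zeta_{c,j}^\alpha \in H^k\times(\dot H^{-1}\cap H^k)$ to apply $I-\Pi_{c,y}^e$ to objects in $\hat X^{\tilde p',q'}$, controls $\|f^\perp\|_{L^{\tilde p'}_{(s,t)}}$ by $C\|f\|_{\hat X_{(s,t)}^{\tilde p', q'}}$. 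The second estimate of Proposition~\ref{P:linear-0} then produces the matching $X_1$-bound on $V^\perp(t)$, and the triangle inequality $\|V\|_{X_1} \le \|V^e\|_{X_1}+\|V^\perp\|_{X_1}$ gives the first claim.

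For the second claim, if $V(s)\in X_{c,y(s)}^e$ then $V^\perp(s)=0$, and if \eqref{E:linear-2} holds then $f^\perp\equiv 0$, so the second line of \eqref{E:V-perp} reduces to the homogeneous finite-dimensional linear ODE $\partial_t V^\perp = -D_y\Pi_{c,y}^e(\partial_t y)V^\perp$ with zero Cauchy datum; uniqueness forces $V^\perp(t)\equiv 0$, hence $V(t)=V^e(t)\in X_{c,y(t)}^e$. There is no genuine obstacle—everything is a bookkeeping exercise on top of Proposition~\ref{P:linear-0}—but if any step deserves flagging as the most delicate, it is verifying that the finite-dimensional-to-$X_1$ norm comparisons for $f^\perp$ and $V^\perp$ are $y$-uniform, which rests squarely on the smoothness and uniform regularity of the dual bases furnished by Lemma~\ref{decomp2}.
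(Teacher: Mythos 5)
Your proposal is correct and follows essentially the paper's intended argument: the paper presents this corollary as an immediate consequence of Proposition \ref{P:linear-0} together with the uniform positivity of $L_{c,y}$ on $X_{c,y}^e$ from Lemma \ref{decomp2}, which is precisely the norm-equivalence bookkeeping (for $V^e$, $V^\perp$, $f^e$, $f^\perp$) that you carry out, and your treatment of the invariance claim via the homogeneous finite-dimensional equation for $V^\perp$ in \eqref{E:V-perp} with zero datum matches the paper's reasoning around \eqref{E:Pi-e-temp-1} and \eqref{E:V-e}.
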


The above estimates indicate that the linear equation \eqref{lv} exhibits at most weak exponential growth due to $|\p_t y|$. 

Based on the regularity of the nonlinearity given in Lemma \ref{ESG}, we also consider the space 
\[
\wt X_{(t_0, t_1)}  \triangleq L^2 \big((t_0, t_1), X_1\big)  + L^2 \big((t_0, t_1), W^{1, \frac 32} \big) + L^2 \big((t_0, t_1), L^{\frac 32} \cap \dot W^{1, \frac 65} \big)
\]
and $\wt X_{(t_0, t_1), loc}$. The next proposition will be a simple consequence of Proposition \ref{P:linear-0}. 

\begin{proposition} \label{P:linear}
Suppose \eqref{E:linear-1} holds and $f \in \wt X_{(t_0, t_1), loc}$. Then for any $s\in (t_0, t_1)$ and initial value $V(s) \in X_1$, \eqref{lv} has a unique solution $V(t) \in X_1$.  Moreover, there exists $C>0$ independent of $t_0, t_1, s, \sigma, y(\cdot)$, and $f(\cdot)$, such that for any $t \in (t_0, t_1)$, and $\eta
\in (C\sigma, 1)$, we have 
\[
\Vert V^e (t) \Vert_{X_1} \le C \big( e^{C\sigma |t-s|} \Vert V^e(s) \Vert_{X_1} +\eta^{-\frac 12} \Vert e^{\eta |t-\cdot|} f^e(\cdot)\Vert_{\wt X_{(s, t)}}\big).
\]
\[
| V^\perp (t) | \le C \big( e^{C\sigma |t-s|} | V^\perp (s) | +\eta^{-\frac 12} | e^{\eta |t-\cdot|} f^\perp (\cdot)|_{L_{(s, t)}^2}\big)
\]
where $V^{e, \perp}$ and $f^{e, \perp}$ are defined in defined in \eqref{E:temp-1} which satisfy \eqref{E:V-perp}. 
\end{proposition}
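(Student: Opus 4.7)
The plan is to derive Proposition \ref{P:linear} as an immediate consequence of Proposition \ref{P:linear-0} via the linearity of \eqref{lv} together with a decomposition of the forcing term $f$ matching the three summands in the definition of $\wt X_{(t_0, t_1)}$. Given $f \in \wt X_{(t_0, t_1), loc}$, I would write $f = f^{(1)} + f^{(2)} + f^{(3)}$ with $f^{(1)} \in L^2_{loc}\bigl((t_0, t_1), X_1\bigr)$, $f^{(2)} \in L^2_{loc}\bigl((t_0, t_1), W^{1, 3/2}\bigr)$ and $f^{(3)} \in L^2_{loc}\bigl((t_0, t_1), L^{3/2} \cap \dot W^{1, 6/5}\bigr)$, realizing the infimum in the $\wt X$-norm up to a universal constant. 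By linearity one then has $V = V^{\mathrm{hom}} + V^{(1)} + V^{(2)} + V^{(3)}$, where $V^{\mathrm{hom}}$ solves the homogeneous version of \eqref{lv} with initial datum $V(s)$ and each $V^{(i)}$ ($i=1,2,3$) solves \eqref{lv} with zero initial datum and forcing $f^{(i)}$.

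For $V^{\mathrm{hom}}$ I would invoke Proposition \ref{P:linear-0} with any admissible Strichartz pair and zero forcing, yielding the first term $e^{C\sigma|t-s|}\|V^e(s)\|_{X_1}$ in the conclusion (using the uniform positivity of $L_{c,y}$ on $X_{c,y}^e$ from Lemma \ref{decomp2} to pass between $\langle L_{c,y} V^e, V^e\rangle^{1/2}$ and $\|V^e\|_{X_1}$). To estimate each $V^{(i)}$, I would choose an admissible Strichartz pair $(p_i, q_i)$ satisfying $2/p_i + 3/q_i = 3/2$ with $p_i \ge 2$, together with $\tilde p_i = 2 \in [1, p_i]$, and then verify the continuous embedding $L^2_{loc}\bigl((t_0, t_1), Z_i\bigr) \hookrightarrow \hat X^{2, q_i'}_{(t_0, t_1), loc}$ for the respective target space $Z_i$. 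The natural choices are $(p_1, q_1) = (\infty, 2)$ using $B^1_{2,2} = H^1$ and $\dot B^1_{2,2} = \dot H^1$; $(p_2, q_2) = (4, 3)$ using $W^{1, 3/2} \hookrightarrow B^1_{3/2, 2}$ (which follows from $L^p \hookrightarrow B^0_{p, 2}$ for $p \le 2$, together with its homogeneous counterpart); and $(p_3, q_3) = (2, 6)$ using a Littlewood--Paley argument to embed $L^{3/2} \cap \dot W^{1, 6/5}$ into $B^1_{6/5, 2}$. Because $\tilde p_i = 2$ in all three cases, the resulting weight factor is uniformly $\eta^{-1/2}$, matching the statement. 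Summing the three estimates and taking the infimum over admissible decompositions of $f$ produces the bound on $\|V^e(t)\|_{X_1}$ in terms of $\|f^e\|_{\wt X_{(s,t)}}$.

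The companion bound on $V^\perp$ follows from the second equation in \eqref{E:V-perp}, which is a decoupled linear ODE in the finite-dimensional fiber $(I - \Pi_{c,y(t)}^e) X_1$; uniform boundedness of $D_y\Pi_{c,y}^e$ from Lemma \ref{decomp2} yields a coefficient of order $\sigma$, and direct integration (or equivalently the same application of Proposition \ref{P:linear-0} on the finite-dimensional projection) delivers both the $e^{C\sigma|t-s|}$ growth and the nonhomogeneous $\eta^{-1/2}$ contribution; the hypothesis $\eta < 1$ is used to absorb embedding constants into $C$. Existence and uniqueness of $V \in X_1$ follow from the corresponding conclusions of Proposition \ref{P:linear-0} applied to each of the three summands. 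The main technical obstacle I anticipate is the third inclusion $L^2_t(L^{3/2} \cap \dot W^{1, 6/5}) \hookrightarrow \hat X^{2, 6/5}_{(t_0, t_1), loc}$, because the inhomogeneous Besov norm $B^1_{6/5, 2}$ also controls low frequencies in $L^{6/5}$, which is not a direct consequence of $L^{3/2}$-integrability; I expect to handle this by frequency localization, using the Sobolev embedding $\dot W^{1, 6/5} \hookrightarrow L^2$ in three dimensions to interpolate with $L^{3/2}$ on the low-frequency piece.
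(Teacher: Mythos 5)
Your overall architecture is exactly the paper's: split $f$ according to the three summands of $\wt X$, apply Proposition \ref{P:linear-0} to each piece with $\tilde p=2$ (hence the uniform $\eta^{-1/2}$), use the pairs $(\infty,2)$ for the $X_1$-part and $(4,3)$ for the $W^{1,\frac32}$-part via $W^{1,\frac32}\hookrightarrow B^1_{\frac32,2}$, and treat $V^\perp$ through the decoupled finite-dimensional ODE in \eqref{E:V-perp}. The first two pieces and the $V^\perp$ bound are fine as you describe them.

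The gap is in the third piece. The embedding $L^{\frac32}\cap\dot W^{1,\frac65}\hookrightarrow B^1_{\frac65,2}$ that your choice $(p_3,q_3)=(2,6)$ requires is genuinely false, and your proposed repair cannot close it: interpolating $L^{\frac32}$ with the $L^2$ bound coming from $\dot W^{1,\frac65}\hookrightarrow L^2$ only yields $L^p$ control for $p\in[\frac32,2]$, while the inhomogeneous Besov norm demands the low-frequency block in $L^{\frac65}$, and Bernstein only raises, never lowers, the integrability exponent of a frequency-localized function. A concrete obstruction: take $\hat\psi(\xi)=\chi_0(\xi)|\xi|^{-a}$ with $a\in(\frac12,1)$ and $\chi_0$ a bump at the origin; then $\psi\sim|x|^{a-3}$ at infinity, so $\psi\in L^{\frac32}$, $\nabla\psi\in L^{\frac65}$, but $\psi\notin L^{\frac65}$ and $\psi$ is entirely low-frequency. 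The correct move, which is what the paper does, is not to force the whole third summand into one Strichartz bucket: write $\psi=\chi(D)\psi+(1-\chi(D))\psi$ with the cut-off $\chi$ of Subsection \ref{SS:X_0}. The low-frequency part satisfies $\Vert\chi(D)\psi\Vert_{W^{1,\frac32}}\le C\Vert\psi\Vert_{L^{\frac32}}$ (derivatives are free on compactly supported frequencies), so it joins the $W^{1,\frac32}$ bucket $\hat X^{2,\frac32}_{loc}$ with the pair $(4,3)$; for the high-frequency part, since the inverse Fourier transform of $\frac{1-\chi(|\xi|)}{|\xi|}$ is in $L^1$, one gets $(1-\chi(D))\psi=\frac{1-\chi(D)}{|D|}|D|\psi\in L^2_{loc}L^{\frac65}$ together with $\nabla(1-\chi(D))\psi\in L^2_{loc}L^{\frac65}$, and only this frequency-localized piece is placed in $\hat X^{2,\frac65}_{loc}$ (pair $(2,6)$), where the low-frequency portion of the $B^1_{\frac65,2}$ norm is harmless. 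With that substitution your argument goes through verbatim.
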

These two propositions and Corollary \ref{C:linear-1} will be proved in the rest of the section. \\

\noindent {\bf Energy estimates of homogeneous linear equation.} We start with the basic well-posedness and energy estimates of the homogeneous equation of \eqref{lv} based on the uniform positivity of $L^e (y) = L_{c, y}|_{X_{c, y}^e}$. 

\begin{lemma} \label{L:homo}
Assume $f\equiv 0$, then \eqref{lv} defines a bounded solution map 
\[
S(t, s) \in L(X_1, X_1), \; \forall t, s \in [t_0, t_1], 
\]
with initial value given at $t=s$, which satisfies  
\begin{equation} \label{E:S-1}
S(s, s) =I, \quad S(t, t') S(t', s) = S(t, s), \quad S(t, s) \in L(X_{c, y(s)}^e, X_{c, y(t)}^e).  
\end{equation}
Moreover there exists $C>0$ independent of $t_1$, $t_2$, $t$, $s$, and $y(\cdot)$ such that 
\begin{equation} \label{E:S-2}
\langle L_{c, y(t)} S(t, s) V, S(t, s) V \rangle  \le e^{C \sigma |t-s|} \langle L_{c, y(s)} V, V\rangle, \; \; \forall V \in X_{c, y(s)}^e. 
\end{equation}
\end{lemma}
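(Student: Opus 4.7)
The plan is to first establish well-posedness of \eqref{lv} (with $f\equiv 0$) and then derive the energy estimate \eqref{E:S-2} by differentiating the quadratic form $\langle L_{c,y(t)}V^e,V^e\rangle$ in time, exploiting the anti-self-adjointness of $A_e(y)$ with respect to $L^e(y)$ together with the block structure of $L_{c,y}$ from Lemma \ref{decomp2}.

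\textbf{Step 1 (well-posedness and the cocycle $S(t,s)$).} Writing
\[
\p_t V = JL_{c,\infty}V + \big(\Pi_{c,y}^e JL_{c,y}\Pi_{c,y}^e - JL_{c,\infty}\big)V + \mathcal{F}(c,y)(\p_t y, V),
\]
the principal part $JL_{c,\infty}$ generates a $C_0$-group on $X_1$ (it is anti-self-adjoint with respect to the equivalent inner product induced by the uniformly positive $L_{c,\infty}$). By Lemma \ref{L:A_e} and \eqref{E:CF}, the remaining terms form a uniformly bounded (in $y$), continuous (in $t$, via $y(t)$) perturbation on $X_1$. Standard non-autonomous linear evolution theory then yields the evolution operator $S(t,s)\in L(X_1,X_1)$ satisfying $S(s,s)=I$ and the cocycle identity in \eqref{E:S-1}. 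Invariance of $X_{c,y(t)}^e$ follows from the decoupling already derived in \eqref{E:V-perp}: if $V(s)\in X_{c,y(s)}^e$, then $V^\perp(s)=0$, and since $V^\perp$ satisfies the homogeneous linear ODE \eqref{E:Pi-e-temp-1} on a finite-dimensional space, $V^\perp(t)\equiv 0$.

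\textbf{Step 2 (energy estimate).} Fix $V(s)\in X_{c,y(s)}^e$, so that $V(t)=V^e(t)\in X_{c,y(t)}^e$ for all $t$. Differentiating in $t$ and using \eqref{E:V-perp} with $f^e=0$:
\begin{align*}
\tfrac{d}{dt}\langle L_{c,y(t)}V^e,V^e\rangle
=&\ \langle \p_t y\cdot\nabla_y L_{c,y}\,V^e,V^e\rangle + 2\langle L_{c,y}V^e,\Pi_{c,y}^e JL_{c,y}\Pi_{c,y}^e V^e\rangle \\
&+ 2\langle L_{c,y}V^e,\mathcal{F}(c,y)(\p_t y,V^e)\rangle.
\end{align*}
The first term is bounded by $C\sigma\|V^e\|_{X_1}^2$, since $\nabla_y L_{c,y}$ is a bounded bilinear form (from the explicit spatially-decaying coefficients in \eqref{lcy}). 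The middle term vanishes: by Lemma \ref{decomp2}(iv), $A_e(y)=\Pi_{c,y}^e JL_{c,y}\Pi_{c,y}^e$ is anti-self-adjoint with respect to $L^e(y)=L_{c,y}|_{X_{c,y}^e}$, so $\langle L^e(y)V^e,A_e(y)V^e\rangle=0$.

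\textbf{Step 3 (the $\mathcal{F}$ term --- the main point).} On $V^e\in X_{c,y}^e$, the definition \eqref{E:CF-def} reduces to $\mathcal{F}(c,y)(\p_t y,V^e)=D_y\Pi_{c,y}^e(\p_t y)V^e$. Decompose
\[
D_y\Pi_{c,y}^e(\p_t y)V^e = \Pi_{c,y}^e D_y\Pi_{c,y}^e(\p_t y)V^e + (I-\Pi_{c,y}^e)D_y\Pi_{c,y}^e(\p_t y)V^e.
\]
The second piece lies in $\oplus_{\alpha\ne e}X_{c,y}^\alpha$; by the block form of $L_{c,y}$ in Lemma \ref{decomp2}(iv), $\langle L_{c,y}V^e,W\rangle=0$ for any $W$ in this complement, so the contribution vanishes. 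For the first piece, the smoothness of $\Pi_{c,y}^e$ in $y$ (uniformly in $y$) gives $\|\Pi_{c,y}^e D_y\Pi_{c,y}^e(\p_t y)V^e\|_{X_1}\le C\sigma\|V^e\|_{X_1}$, and Cauchy--Schwarz in the $L^e(y)$-inner product together with the uniform positivity $\langle L_{c,y}W,W\rangle\le C\|W\|_{X_1}^2$ and $\langle L_{c,y}V^e,V^e\rangle\ge c\|V^e\|_{X_1}^2$ (both valid on $X_{c,y}^e$) yield
\[
\bigl|\langle L_{c,y}V^e,\mathcal{F}(c,y)(\p_t y,V^e)\rangle\bigr|\le C\sigma\,\langle L_{c,y}V^e,V^e\rangle.
\]
Combining with Step 2 and Gronwall gives \eqref{E:S-2}.

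\textbf{Main obstacle.} The delicate point is the treatment of $\mathcal{F}$: a priori it is only bounded in the $X_1$-norm rather than in the $L_{c,y}$-quadratic form, which by itself would only produce a $C\sigma(1+\sigma t)$-type bound rather than an exponential one compatible with the cocycle structure. The way out is the algebraic identity $\langle L_{c,y}V^e,W\rangle=0$ for $W\in(I-\Pi_{c,y}^e)X_1$, which forces the ``bad'' component of $\mathcal{F}$ to drop out; this was precisely the reason for the particular form \eqref{E:CF-def} chosen for $\mathcal{F}$ in the previous section.
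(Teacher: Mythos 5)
Your proposal is correct and follows essentially the same route as the paper: well-posedness of $S(t,s)$ as a bounded perturbation of $JL_{c,\infty}$ plus the bounded $\mathcal{F}$ term, invariance of $X_{c,y(t)}^e$ via the finite-dimensional homogeneous ODE \eqref{E:Pi-e-temp-1} for $V^\perp$, and then Gronwall applied to $\langle L_{c,y(t)}V,V\rangle$, with the $A_e(y)$ contribution killed by anti-self-adjointness and the $\p_t y$-dependence of $L_{c,y}$ controlled through $D_y\wt Q$.

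Two remarks. First, your Step 3 detour (splitting $D_y\Pi_{c,y}^e(\p_t y)V^e$ and invoking the $L_{c,y}$-orthogonality of $X_{c,y}^e$ to the other blocks) is valid but unnecessary, and the ``main obstacle'' you describe is not actually an obstacle: the paper simply bounds $|\langle L_{c,y}V,\mathcal{F}(c,y)(\p_t y,V)\rangle|\le C\sigma\Vert V\Vert_{X_1}^2$ using \eqref{E:CF} and the boundedness of $L_{c,y}\in L(X_1,X_1^*)$, and then converts $\Vert V\Vert_{X_1}^2\le \epi^{-1}\langle L_{c,y}V,V\rangle$ by the uniform coercivity on $X_{c,y}^e$; Gronwall then yields exactly the exponential bound $e^{C\sigma|t-s|}$, not a $C\sigma(1+\sigma t)$-type bound. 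Second, a minor technical point you omit: since $A_e(y)$ is unbounded on $X_1$ (only $A_e(y)-JL_{c,\infty}$ is bounded), the differentiation of the quadratic form and the identity $\langle L^e(y)V^e,A_e(y)V^e\rangle=0$ are first justified for $V(s)\in X_{c,y(s)}^e\cap D(JL_{c,\infty})$, and \eqref{E:S-2} for general $V(s)\in X_{c,y(s)}^e$ is then obtained by the density argument the paper includes; you should add this step.
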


As a consequence, the lemma implies that, under the assumption $f\equiv0$, \eqref{lv} preserve the constraint $V \in X_{c, y}^e$ if it holds initially. Later we will show that this holds for non-homogeneous equation as well. Furthermore the homogeneous equation induces possible exponential growth only due to $ \Vert \p_t y\Vert_{L^\infty}$. The coefficient 1 in front of the above exponential is important for future estimates. 

\begin{proof}
From Lemma \ref{L:A_e}, $A_e(y) =\Pi_{c, y}^e J L_{c, y} \Pi_{c, y}^e$ is a bounded perturbation to $JL_{c, \infty}$ on $X_1$. This, along with the boundedness of $\mathcal{F}$, implies that \eqref{lv} is well-posedness on $X_1$ and thus the solution flow $S(t, s)$ of bounded linear operators is well-defined. 

Since $(I- \Pi_{c, y}^e) X_1 \subset D(JL_{c, \infty})$, for any $V \in X_1$, by direct computation using \eqref{lv}, one finds that $(I-\Pi_{c,y}^e)V$ satisfies
\[
 \partial_{t}\big((I - \Pi_{c,y}^e)V\big)= (I-\Pi_{c,y}^e) D_{y}\Pi_{c,y}^e (\partial_{t}y)\big(\Pi^{e}_{c,y}V - (I-\Pi^{e}_{c,y})V\big) - D_{y}\Pi_{c,y}^e(\partial_{t}y)V.
\]
Following the same procedure as in Subsection \ref{csntw}, we obtain exactly the same equation as \eqref{E:Pi-e-temp-1} which yields  

\begin{equation} \label{E:Ve-c}
(I-\Pi_{c,y(t)}^e) V(t)=0, \; \forall t, \; \text{ if } (I-\Pi_{c,y(s)}^e)  V(s)=0.  
\end{equation} 

Finally we prove inequality \eqref{E:S-2}. Let $V(t)$ be a solution of \eqref{lv} with $V(s) \in X_{c, y(s)}^e \cap D(JL_{c, \infty})$, which yields $V(t) \in X_{c, y(t)}^e \cap D(JL_{c, \infty})$ for all $t$. By direct calculation using $J^* = -J=J^{-1}$, one has

\begin{align*}
\partial_{t}\langle L_{c,y} {V},  V\rangle &=  \langle J^{-1} D_y \wt Q (y) (\partial_{t}y) {V},  {V}\rangle+2\langle  \p_t {V}, L_{c,y} {V}\rangle\\
&= - \langle J D_y \wt Q (y) (\partial_{t}y) {V},  {V}\rangle+2\langle  \mathcal{F}(c, y) (\p_t y, V), L_{c,y} {V}\rangle
\end{align*}

where $\wt Q$ was defined in \eqref{E:tQ}. It follows from the bounds \eqref{E:CF} and \eqref{E:tQ-1} that
\begin{equation} \label{E:p_tVe}
|\partial_{t}\langle L_{c,y} {V},  V\rangle | \le C |\p_t y| \Vert V \Vert_{X_1}^2.
\end{equation} 
Recall from Lemma \ref{decomp2} that the bounded symmetric quadratic form $\langle L_{c, y}  \cdot,  \cdot \rangle$ satisfies $\langle L_{c, y}  V,  V \rangle \ge \epi \Vert  V \Vert^2_{X_1}$ for any $ V \in  X_{c, y}^e$. This uniform lower bound of $L_{c, y}$ on $X_{c, y}^e$, the above estimate, and the Gronwall inequality immediately imply \eqref{E:S-2} when $V(s) \in X_{c, y(s)}^e \cap D(JL_{c, \infty})$. Since $X_{c, y(s)}^e \cap D(JL_{c, \infty})$ is dense in $X_{c, y(s)}^e$, a standard density argument yields \eqref{E:S-2} for general solution $V(t) \in X_{c, y(t)}^e$. The proof of the lemma is complete. 
\end{proof}

\noindent {\bf Space-time estimates of \eqref{lv}.} Given initial data at $t=s \in [t_0, t_1]$, the solution of \eqref{lv} can be written as 
\begin{equation} \label{E:lv-Duamel}
V(t) = S(t, s) V(s) + \int_s^t S(t, \tau) f(\tau) d\tau.  
\end{equation}
Since $f(t)$ is not assumed to be in $X_1$, we first prove the following lemma.

\begin{lemma} \label{L:smallT}
Suppose \eqref{E:linear-1} and \eqref{E:linear-2} hold, $(\tilde p, \tilde q)$ is an admissible pairs, and $f \in \hat X_{(t_0, t_1),loc}^{\tilde p', \tilde q'}$.

Then for any given $s\in (t_0, t_1)$ and initial value $V(s) \in X_{c, y(s)}^e$, \eqref{E:lv-Duamel} has a unique solution $V(t)$ satisfying  $V(t)\in X_{c, y(t)}^e$, for almost all $t\in (t_0, t_1)$. Moreover, for any admissible pair $(p, q)$, there exists $T, C>0$ independent of $f, t_0, t_1$, and $y(\cdot)$ such that, if $t_0 <t_0' \le s \le t_1' < t_1$ satisfy $t_1'-t_0' \le T$, then 
\begin{equation} \label{E:smallT-1} \begin{split}
\Vert V(t) \Vert_{\hat X_{(t_0', t_1')}^{p, q}}\le C\big( \Vert V(s) \Vert_{X_1} + \Vert f \Vert_{\hat X_{(t_0', t_1')}^{\tilde p', \tilde q'}}  \big).
\end{split} \end{equation}
In particular, if $(p, q) =(\infty, 2)$, then it holds, for $t \in (t_0', t_1')$ 
\begin{equation} \label{E:smallT-2} \begin{split}
\langle L_{c, y(t)} V(t), V(t) \rangle^{\frac 12}  \le & e^{C\sigma |t-s|} \langle L_{c, y(s)} V(s), V(s) \rangle^{\frac 12} + C \Vert f\Vert_{\hat X_{(t_0', t_1')}^{\tilde p', \tilde q'}}.
\end{split} \end{equation}
\end{lemma}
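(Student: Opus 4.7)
The plan is to treat \eqref{lv} as a perturbation of a constant-coefficient Schrödinger-type equation and run a standard Strichartz/Duhamel contraction on a short time interval. By Lemma \ref{L:A_e}, we may write the generator as
\begin{equation*}
A_e(y) + \mathcal{F}(c,y)(\p_t y,\cdot) = JL_{c,\infty} + R(t),
\end{equation*}
where $R(t) \in L(X_1,X_1)$ is uniformly bounded independently of $y(\cdot)$, using also the bound \eqref{E:CF} and $|\p_t y|\le 1$. The constant-coefficient operator $JL_{c,\infty}$ is the linearization of \eqref{GP} at the constant state $1$ in the traveling frame, and its propagator $e^{tJL_{c,\infty}}$ satisfies the standard Strichartz estimates on $X_1=H^1\times\dot H^1$ at the Besov regularity level $B^1_{q,2}\times\dot B^1_{q,2}$ (Bogoliubov dispersion gives essentially Schrödinger-type decay for each component after diagonalization in frequency). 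Thus for admissible $(p,q)$ and $(\tilde p,\tilde q)$,
\begin{equation*}
\Bigl\|e^{(t-s)JL_{c,\infty}} V(s) + \int_s^t e^{(t-\tau)JL_{c,\infty}} h(\tau)\,d\tau\Bigr\|_{\hat X^{p,q}_{(t_0',t_1')}} \le C\bigl(\|V(s)\|_{X_1} + \|h\|_{\hat X^{\tilde p',\tilde q'}_{(t_0',t_1')}}\bigr).
\end{equation*}

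Next I would set up a contraction map on $\hat X^{p_0,q_0}_{(t_0',t_1')}$ for some fixed admissible pair $(p_0,q_0)$, say $(\infty,2)$ together with the $L^2_tL^6_x$-type pair, defined by
\begin{equation*}
(\Gamma V)(t) = e^{(t-s)JL_{c,\infty}}V(s) + \int_s^t e^{(t-\tau)JL_{c,\infty}}\bigl(R(\tau) V(\tau)+f(\tau)\bigr)\,d\tau.
\end{equation*}
The Strichartz estimate above together with Hölder in time and the uniform bound $\|R\|_{L(X_1)}\le C$ gives
\begin{equation*}
\|\Gamma V_1-\Gamma V_2\|_{\hat X^{p_0,q_0}_{(t_0',t_1')}} \le C\,T^{1-1/p_0}\|V_1-V_2\|_{\hat X^{p_0,q_0}_{(t_0',t_1')}},
\end{equation*}
which is a contraction for $T=t_1'-t_0'$ sufficiently small (independently of $s$, $y(\cdot)$, $f$). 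Uniqueness and existence of $V\in\hat X^{p_0,q_0}_{(t_0',t_1')}$ follow, and \eqref{E:smallT-1} for a general admissible pair $(p,q)$ is obtained from one more Strichartz application to the fixed point.

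For the invariance $V(t)\in X_{c,y(t)}^e$, I would apply $I-\Pi_{c,y(t)}^e$ to \eqref{lv}. Under the hypothesis \eqref{E:linear-2}, the component $V^\perp=(I-\Pi_{c,y}^e)V$ satisfies a finite-dimensional, well-posed, homogeneous linear ODE (exactly \eqref{E:Pi-e-temp-1}) with vanishing initial data, hence $V^\perp\equiv 0$. For the energy bound \eqref{E:smallT-2}, split $V = V_h + V_p$ where $V_h(t)=S(t,s)V(s)$ solves the homogeneous equation and $V_p$ is the Duhamel term. Lemma \ref{L:homo} gives
\begin{equation*}
\langle L_{c,y(t)}V_h(t),V_h(t)\rangle^{1/2} \le e^{C\sigma|t-s|}\langle L_{c,y(s)}V(s),V(s)\rangle^{1/2},
\end{equation*}
and the Strichartz bound at $(p,q)=(\infty,2)$ controls $\|V_p(t)\|_{X_1}\le C\|f\|_{\hat X^{\tilde p',\tilde q'}_{(t_0',t_1')}}$; since $V_p(t)\in X_{c,y(t)}^e$ by the invariance just proved, the $X_1$ norm there is equivalent to $\langle L_{c,y(t)}\cdot,\cdot\rangle^{1/2}$ by Lemma \ref{decomp2}, yielding \eqref{E:smallT-2}.

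The main obstacle I anticipate is justifying the Strichartz estimate for $e^{tJL_{c,\infty}}$ at the specified Besov level: one must diagonalize $JL_{c,\infty}$ in frequency, track the mismatch in regularity between the two components of $X_1$, and verify that the standard Schrödinger Strichartz inequalities carry over to the anisotropic Besov norms in \eqref{E:STnorm}. Once this linear dispersive input is in hand, everything else is a routine perturbation/Duhamel argument and standard projection bookkeeping.
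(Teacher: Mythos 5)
Your outline is essentially the paper's own argument: split the generator as $JL_{c,\infty}$ plus a uniformly bounded remainder (Lemma \ref{L:A_e} together with \eqref{E:CF} and $|\p_t y|\le 1$), run Duhamel/Strichartz on an interval of length $T\ll 1$ so that the $L^1_tX_1$ contribution of the bounded remainder is absorbed, recover the constraint $V(t)\in X^e_{c,y(t)}$ from \eqref{E:linear-2}, and get \eqref{E:smallT-2} by splitting $V=V_h+V_p$, using Lemma \ref{L:homo} for $V_h$ (which correctly preserves the coefficient $1$ in front of $e^{C\sigma|t-s|}$) and the $X_1$ Strichartz bound plus the uniform positivity of $L_{c,y}$ on $X^e_{c,y}$ for $V_p$. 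The paper does the same, except it absorbs the bounded term by a direct bootstrap on the Duhamel identity \eqref{E:lv-esti-1} rather than a contraction, and it proves $\int_s^tS(t,\tau)f(\tau)\,d\tau\in X^e_{c,y(t)}$ by a density argument (the operator $\Gamma$ in Step 3) rather than by your ODE for $V^\perp$; your route also works, but note that applying $I-\Pi^e_{c,y}$ to $f$, which only lies in a Besov-dual Strichartz class, must be justified by the extra regularity $\zeta^\alpha_c\in H^k\times(\dot H^{-1}\cap H^k)$, exactly the point the paper makes when defining $Y$.

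The one genuine missing ingredient is the one you flag: the Strichartz estimates for $e^{tJL_{c,\infty}}$ at the $B^1_{q,2}\times\dot B^1_{q,2}$ level are asserted, not proved, and they are not a routine consequence of the Schr\"odinger estimates because of the $H^1\times\dot H^1$ regularity mismatch and the transport term $-c\cdot\nabla$. The paper fills this in Step 1 of its proof: conjugating by $\mathcal{P}=\mathrm{diag}\big(\sqrt{-\Delta(2-\Delta)^{-1}},1\big)$, which is an isomorphism $\dot H^1\to X_1$, reduces the problem to the Bogoliubov-type operator $J\mathcal{R}$ with $\mathcal{R}=\mathrm{diag}(R,R)$, $R=\sqrt{-\Delta(2-\Delta)}$, for which the dispersive and Strichartz estimates in homogeneous Besov norms are cited from \cite{GNT}; the drift is then handled by the exact identity $e^{tJH}Z=\big(e^{tJ\mathcal{R}}Z\big)(\cdot+ct)$ (since $c\cdot\nabla$ commutes with $J\mathcal{R}$) together with translation invariance of the Besov norms, which is how \eqref{gnt} is obtained. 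With that linear input supplied (by citation or by reproving it), your contraction/bootstrap and the rest of your bookkeeping go through and reproduce \eqref{E:smallT-1} and \eqref{E:smallT-2}.
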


\begin{proof}
We prove the lemma in several steps. \\

\noindent $\bullet$ {\it Step 1. Change of variables and dispersive estimates of the constant coefficient homogeneous linear equation.} To make it more convenient to carry out the dispersive estimates, we first apply a similar transformation to diagonalize $JL_{c=0, \infty}$. Let 
\[
 P=\sqrt{-\Delta(2-\Delta)^{-1}}, \quad R=\sqrt{-\Delta(2-\Delta)}, \quad \mathcal{P}= \begin{bmatrix}P&0\\0&1\end{bmatrix}, \quad V = \mathcal{P}Z.
\]
Apparently, 
\begin{equation} \label{CP}
\mathcal{P} \text{ is an isomorphism from } \dot H^1 \text{ to } X_1. 
\end{equation}
From \eqref{lv}, \eqref{E:homo-1}, and \eqref{E:tQ}, it is straight to compute that $Z$ satisfies
\begin{equation} \label{E:lv-1} 
Z_{t} = JHZ+Q\big(y(t), \p_t y(t)\big) Z +  \tilde f(t).
\end{equation}
where 
\begin{align*}
& H=\begin{bmatrix}R& -c\cdot\nabla\\c\cdot\nabla&R\end{bmatrix}, \quad \tilde f(t) = \mathcal{P}^{-1} f(t),\\
&Q(y, z) = \mathcal{P}^{-1} \big( \wt Q(y) + \mathcal{F}(c, y) (z, \cdot) \big)  \mathcal{P}. 
\end{align*}

Let $\mathcal{R} = \begin{pmatrix} R & 0 \\ 0 & R \end{pmatrix}$. From\eqref{E:CF},  \eqref{E:tQ-1}, \eqref{CP}, and our assumptions, we have 
\begin{equation} \label{E:Q-1}
\Vert Q(y, z) Z \Vert_{\dot H^1} \le C ( 1+ |z|)  \Vert Z \Vert_{\dot H^1}, \quad \tilde f \in L^{\tilde p'}_{(t_{0},t_1)}\dot{B}^{1}_{\tilde q',2}
\end{equation}
for some $C>0$ independent of $y$. 

It was proved in \cite{GNT} that for any $q\in [2,\infty]$, one has
\[
\Vert e^{tJ\mathcal{R}}\phi\Vert_{\dot{B}_{q,2}^{r}}\lesssim t^{-3(\frac 12 - \frac 1q)}\Vert \phi\Vert_{\dot{B}^{r}_{q',2}}. 
\]
Furthermore, for any admissible pairs $(p_{j}, q_{j})$, $j=1,2$, it holds 

\begin{align*}
&\Vert e^{tJ\mathcal{R}}\phi\Vert_{L^{p_{1}}\dot{B}_{q_{1},2}^{r}}\lesssim\Vert \phi\Vert_{\dot{H}^{r}},\quad \Vert\int_{0}^{t}e^{(t-\tau)J \mathcal{R}}f(\tau)d\tau\Vert_{L^{p_{1}}\dot{B}^{r}_{q_{1},2}}\lesssim \Vert f\Vert_{L^{p'_{2}}\dot{B}^{r}_{q'_{2},2}}.
\end{align*}

These estimates lead to
\begin{equation}\label{gnt} \begin{split}
&\Vert e^{tJH}\phi\Vert_{\dot{B}_{q,2}^{r}}\lesssim t^{-3(\frac 12 - \frac 1q)}\Vert \phi\Vert_{\dot{B}^{r}_{q',2}}, \quad \Vert e^{tJH}\phi\Vert_{L^{p_{1}}\dot{B}_{q_{1},2}^{r}}\lesssim\Vert \phi\Vert_{\dot{H}^{r}},\\
&\Vert\int_{0}^{t}e^{(t-\tau)J H}g(\tau)d\tau\Vert_{L^{p_{1}}\dot{B}^{r}_{q_{1},2}}\lesssim \Vert g\Vert_{L^{p'_{2}}\dot{B}^{r}_{q'_{2},2}}.
\end{split} \end{equation} 
In fact, since $JH - J\mathcal{R}= c \cdot \nabla$ which commutes with $J \mathcal{R}$, we have 
\[
e^{tJH} Z = (e^{tJ \mathcal{R}} Z) (\cdot +ct) = e^{tJ \mathcal{R}} \big( Z (\cdot +ct)\big). 
\]
The first two of the inequalities in \eqref{gnt} follow immediately due to the translation invariance of the Besov norms. To see the last one in \eqref{gnt}, 
\begin{align*}
&\Vert\int_{0}^{t}e^{(t-\tau)J H}g(\tau)d\tau\Vert_{L^{p_{1}}\dot{B}^{r}_{q_{1},2}} = \Vert\int_{0}^{t} e^{(t-\tau)J \mathcal{R}}\big(g(\tau) (\cdot + c(t-\tau))\big) d\tau\Vert_{L^{p_{1}}\dot{B}^{r }_{q_{1},2}}\\
=& \Vert\int_{0}^{t} e^{(t-\tau)J \mathcal{R}}\big(g(\tau) (\cdot -c \tau) \big)d\tau\Vert_{L^{p_{1}}\dot{B}^{r }_{q_{1},2}} \lesssim \Vert g(t) (\cdot - ct)\Vert_{L^{p'_{2}}\dot{B}^{r}_{q'_{2},2}}= \Vert g\Vert_{L^{p'_{2}}\dot{B}^{r}_{q'_{2},2}}.
\end{align*}

\noindent $\bullet$ { \it Step 2. Space-time estimate.} In the next step, instead of \eqref{E:lv-Duamel}, we will obtain the space-time estimate of solutions of \eqref{E:lv-1} based on \eqref{gnt} with 
\begin{equation} \label{E:lv-esti-1}
\begin{split}
&Z(t)- e^{(t-s) JH} Z(s) = \int_{s}^{t}e^{(t-\tau) JH}\big[Q\big(y(\tau), \p_t y(\tau) \big) Z(\tau) + \tilde f(\tau)\big]d\tau
\\=&
\int_{0}^{t-s}e^{ (t-s -\tau) J H}\big[Q\big(y(s+\tau), \p_t y(s +\tau)\big)Z(s+\tau)+ \tilde f(s+\tau) \big]d\tau.
\end{split}
\end{equation}
By \eqref{gnt}, \eqref{E:tQ-1}, \eqref{E:Q-1}, \eqref{CP}, and $\sigma = |\p_t y|_{L^\infty} \le 1$,   
for admissible pairs $(p, q)$, $(\tilde p, \tilde q)$ and $(\infty, 2)$, and $t_0 < t_0' \le s \le t_1' < t_1$, we have 
\begin{equation}\label{XT1}\begin{split}
&\Vert Z(t) -  e^{(t-s) JH} Z(s) \Vert_{L^{p}_{(t_{0}', t_1')}\dot{B}_{q,2}^{1}} 
\le  C\big( \Vert Z \Vert_{L^{1}_{(t_{0}',t_1')}\dot{H}^{1}} + \Vert \tilde f \Vert_{L^{\tilde p'}_{(t_{0}',t_1')}\dot{B}^{1}_{\tilde q',2}}\big)\\
\le & C\big( (t_1' -t_0') \Vert Z \Vert_{L^{\infty}_{(t_{0}',t_1')}\dot{H}^{1}} + \Vert \tilde f \Vert_{L^{\tilde p'}_{(t_{0}',t_1')}\dot{B}^{1}_{\tilde q',2}}\big).
\end{split} \end{equation}

Consider the standard splitting of $Z(t)$ into 
\[
Z(t) = Z_h (t) + Z_{in}(t), \quad V(t) = \mathcal{P} Z_h(t) + \mathcal{P} Z_{in} (t), 
\]
where $Z_h(t)$ satisfies the corresponding homogeneous equation of \eqref{E:lv-1} (i.e. without $\tilde f$) and $Z_h(s) = Z(s)$, and $Z_{in}(t)$ solves \eqref{E:lv-1} and $Z_{in}(s)=0$.\\

\noindent $\bullet$ {\it Step 3. Non-homogeneous part $Z_{in}$.} On the one hand, applying \eqref{XT1} to $Z_{in}(t)$
with the admissible pair $(p=\infty, q=2)$, we obtain that there exists $T>0$ independent of $t_0$, $t_1$, $t_0'$, $t_1'$, and $y(t)$ such that, if $t_1' -t_0' \le T$, it holds 
\[
\Vert Z_{in} \Vert_{L_{(t_{0}',t_{1}')}^{\infty}\dot{H}^{1}} \le C \Vert \tilde f\Vert_{L^{\tilde p'}_{(t_{0}',t_1')}\dot{B}^{1}_{\tilde q',2}}.
\]
Substituting this back into \eqref{XT1}, we have that for  any $t_1' - t_0' \le T$ and admissible pairs $(p,q)$,
\begin{equation}\label{XTG}
\Vert Z_{in} \Vert_{L^{p}_{(t_{0}',t_1')}\dot{B}_{q,2}^{1}} \le C \Vert \tilde f \Vert_{L^{\tilde p'}_{(t_{0}',t_1')}\dot{B}^{1}_{\tilde q',2}}.
\end{equation}

We claim that $(I-\Pi_{c, y(t)}^e) f(t)=0$ implies  
\begin{equation} \label{E:PZin}
\mathcal{P} Z_{in} (t)= \int_s^t S(t, \tau) f(\tau) d\tau \in X_{c, y(t)}^e, \quad a.e. \;\; t\in (t_0', t_1'). 
\end{equation}
In fact, let 
\begin{align*}
&Y= \big\{ \tilde g\in L^{\tilde p'}_{(t_{0}',t_1')}\dot{B}^{1}_{\tilde q',2}  \mid (I-\Pi_{c, y(t)}^e) \mathcal{P} g(t) =0, \; \forall t\in (t_0', t_1')\big\} \subset L^{\tilde p'}_{(t_{0}',t_1')}\dot{B}^{1}_{\tilde q',2}.
\end{align*}

Since $\zeta_c^\alpha \in H^k \times (H^k \cap H^{-1})$, for any $k \ge 1$ and $\alpha \in \{T, d1, d2, +, -\}$, $I-\Pi_{c, y}^e$ actually applies to $\mathcal{P} g(t) \in B_{\tilde q', 2}^1 \times \dot B_{\tilde q', 2}^1$. Consider the mapping $\Gamma$ 
\[
W(t) = (\Gamma g) (t) \triangleq \int_s^t S(t, \tau) \mathcal{P} \tilde g(\tau) d\tau. 
\]
Inequality \eqref{XTG} and the definition of $\mathcal{P}$ imply that $\Gamma: L_{(t_{0}',t_1')}^{\tilde p'} \dot{B}^{1}_{\tilde q',2} \to L_{(t_{0}',t_1')}^\infty X_1$ is a bounded operator, and thus also bounded when restricted to $Y$. Since Lemma \ref{L:homo} implies 
\[
(I-\Pi_{c, y(t)}^e) (\Gamma g) (t) =0, \; \forall t \in (t_0', t_1') \quad \text{ if } \; g \in Y \cap L_{(t_{0}',t_1')}^\infty X_1
\]
and $Y \cap L_{(t_{0}',t_1')}^\infty X_1$ is dense in $Y$, we obtain that $(I-\Pi_{c, y(t)}^e) \Gamma$ vanishes on $Y$. Therefore $\mathcal{P} Z_{in} (t) \in X_{c, y(t)}^e$ for almost all $t\in (t_0', t_1')$. \\

\noindent $\bullet$ {\it Step 4. Homogeneous part and the completion of the proof of the lemma.} On the other hand, it is clear 
\[
Z_h(t) = \mathcal{P}^{-1} S(t, s) V(s), \quad \mathcal{P} Z_h(t) \in X_{c, y(t)}^e,
\]
which also implies $V(t) =  \mathcal{P} Z_h(t) + \mathcal{P} Z_{in} (t) \in X_{c, y(t)}^e$ for almost all $t \in (t_0', t_1')$. Applying Lemma \ref{L:homo} to the $\Vert\cdot \Vert_{L^\infty \dot H^1}$ on the right side of \eqref{XT1} for $Z_h$, we have 
\[
\Vert Z_h - e^{(t-s) JH} Z(s) \Vert_{L^{p}_{(t_{0}',t_1')}\dot{B}_{q,2}^{1}} \le C (t_1' - t_0')  \Vert V(s) \Vert_{X_1} 
\]
for $t_0' < s < t_1'< t_0'+T$. From \eqref{gnt}, we obtain  
\[
\Vert Z_h \Vert_{L^p_{(t_{0}',t_1')}\dot{B}_{q,2}^{1}} \le C  \Vert V(s) \Vert_{X_1}
\]
and inequality \eqref{E:smallT-1} follows immediately from the above estimates. 

To derive \eqref{E:smallT-2} in the case of $(p=\infty, q=2)$, we apply \eqref{E:S-2} instead, along with \eqref{XTG}, \eqref{E:PZin}, and \eqref{CP} and the uniform positivity of $L_{c, y}$ on $X_{c, y}^e$, to compute, for $t \in (t_0', t_1')$, 
\begin{align*}
& \langle L_{c, y(t)} V(t),  V(t) \rangle = \langle L_{c, y(t)} S(t, s) V(s), S(t, s) V(s) \rangle \\
& \qquad \qquad + 2  \langle L_{c, y(t)} S(t, s) V(s), \mathcal{P} Z_{in}(t) \rangle + \langle L_{c, y(t)}  \mathcal{P} Z_{in}(t), \mathcal{P} Z_{in}(t) \rangle \\
\le & \big( \langle L_{c, y(t)} S(t, s) V(s), S(t, s) V(s) \rangle^{\frac 12} + \langle L_{c, y(t)}  \mathcal{P} Z_{in}(t), \mathcal{P} Z_{in}(t) \rangle^{\frac 12} \big)^2 \\
\le &\big( e^{C \sigma |t-s|} \langle L_{c, y(s)} V(s), V(s)\rangle^{\frac 12} + C \Vert \tilde f \Vert_{L^{\tilde p'}_{(t_{0}',t_1')}\dot{B}^{1}_{\tilde q',2}} \big)^2.
\end{align*}
This implies \eqref{E:smallT-2}. Finally as $T$ is independent of $f$ and $y(t)$, a standard continuation argument extends of solutions on $(t_0, t_1)$ and thus completes the proof of the lemma. 
\end{proof}

In the next step, we iterate the above small time estimates.

\begin{lemma} \label{L:linear-0}
Suppose \eqref{E:linear-1} and \eqref{E:linear-2} hold, $(p, q)$ is an admissible pairs, and $f \in \hat X_{(t_0, t_1)}^{\tilde p', q'}$, where $\tilde p\in [1, p]$. Then there exists $C>0$ independent of $f, t_0, t_1, s, t$, such that for any $\eta > C\sigma$, every solution $V(t)$ to \eqref{E:lv-Duamel} satisfies 
\[
\langle L_{c, y(t)} V(t), V(t) \rangle^{\frac 12}  \le  e^{C\sigma |t-s|} \langle L_{c, y(s)} V(s), V(s) \rangle^{\frac 12} + C\eta^{-\frac 1{\tilde p}} \Vert e^{\eta |t - \cdot|} f \Vert_{\hat X_{(s, t)}^{\tilde p', q'}}
\]
for any $t_0 < t, s < t_1$.
\end{lemma}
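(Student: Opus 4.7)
The plan is to iterate the small-time estimate \eqref{E:smallT-2} from Lemma~\ref{L:smallT} across a partition of $[s,t]$ and then apply H\"older's inequality to the resulting discrete sum, extracting the weighted space-time norm $\Vert e^{\eta|t-\cdot|}f\Vert_{\hat X^{\tilde p',q'}_{(s,t)}}$ on the right-hand side.

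Concretely, I would fix a step size $T_{0}\in (0,T]$ (with $T$ from Lemma~\ref{L:smallT}) to be specified later, and take a uniform partition $s=s_{0}<s_{1}<\cdots<s_{N}=t$ with $s_{k+1}-s_{k}\le T_{0}$. Since $(p,q)$ is admissible, $\tilde p\le p$, and $T_{0}\le T\le 1$, H\"older's inequality in time yields $\Vert f\Vert_{\hat X^{p',q'}_{(s_{k},s_{k+1})}}\le C\Vert f\Vert_{\hat X^{\tilde p',q'}_{(s_{k},s_{k+1})}}$ on each subinterval. Applying \eqref{E:smallT-2} with the admissible pair $(p,q)$ on each $(s_{k},s_{k+1})$ then gives the one-step bound
\[
\Vert V\Vert_{L}(s_{k+1})\le e^{C\sigma T_{0}}\Vert V\Vert_{L}(s_{k})+C_{0}\Vert f\Vert_{\hat X^{\tilde p',q'}_{(s_{k},s_{k+1})}},
\]
where $\Vert V\Vert_{L}(\tau):=\langle L_{c,y(\tau)}V(\tau),V(\tau)\rangle^{1/2}$. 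The standard discrete Gr\"onwall argument then produces
\[
\Vert V\Vert_{L}(t)\le e^{C\sigma(t-s)}\Vert V\Vert_{L}(s)+C_{0}\sum_{k=0}^{N-1}e^{C\sigma(t-s_{k+1})}\Vert f\Vert_{\hat X^{\tilde p',q'}_{(s_{k},s_{k+1})}}.
\]

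To reshape the sum I would split $e^{C\sigma(t-s_{k+1})}=e^{-(\eta-C\sigma)(t-s_{k+1})}\cdot e^{\eta(t-s_{k+1})}$ and apply H\"older in the index $k$ with conjugate exponents $\tilde p$ and $\tilde p'$. The pointwise bound $e^{\eta(t-s_{k+1})}\le e^{\eta(t-\tau)}$ for $\tau\le s_{k+1}$ lets the $\tilde p'$-factor reassemble into $\Vert e^{\eta(t-\cdot)}f\Vert_{\hat X^{\tilde p',q'}_{(s,t)}}$, while the remaining $\tilde p$-factor is a geometric series bounded by $\bigl(1-e^{-\tilde p(\eta-C\sigma)T_{0}}\bigr)^{-1/\tilde p}$. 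The main technical obstacle, and the reason for keeping $T_{0}$ flexible, is to arrange this geometric factor to be at most $C\eta^{-1/\tilde p}$ uniformly for $\eta>C\sigma$. By enlarging $C$ so that $\eta>C\sigma$ forces $\eta-C\sigma\ge \eta/2$ and then choosing $T_{0}$ of order $\min(T,1/\eta)$, the exponent $\tilde p(\eta-C\sigma)T_{0}$ stays comparable to $\tilde p\eta T_{0}$ in a controlled range: in the regime $\eta T_{0}\le 1$ the elementary bound $1-e^{-x}\ge x/2$ makes the geometric factor comparable to $(\tilde p\eta T_{0})^{-1/\tilde p}$, while for $\eta T_{0}\ge 1$ it is bounded by a universal constant. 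Handling the two regimes separately and absorbing the bookkeeping into $C$ then yields the claimed inequality.
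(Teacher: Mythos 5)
Your proof is essentially the paper's: it iterates the small-time estimate \eqref{E:smallT-2} over a partition of $[s,t]$ (with the H\"older-in-time conversion from $\hat X^{p',q'}$ to $\hat X^{\tilde p',q'}$ on each subinterval), runs the discrete Gr\"onwall step, splits $e^{C\sigma(t-s_{k+1})}=e^{-(\eta-C\sigma)(t-s_{k+1})}e^{\eta(t-s_{k+1})}$, and applies discrete H\"older with exponents $\tilde p,\tilde p'$ so that the weighted norm reassembles over $(s,t)$ and the geometric series produces the factor $\eta^{-1/\tilde p}$ --- exactly the paper's argument, which uses the fixed step $T$ and ``sums up the exponentials.'' One caveat: in your regime $\eta T_{0}\ge 1$ a universal constant is not of size $C\eta^{-1/\tilde p}$, so that case is not actually closed; but the paper's own summation likewise needs $\eta$ bounded (and the lemma is only ever applied with $\eta<1$), and in that range your bound $1-e^{-x}\ge x/2$ closes the proof just as the paper does.
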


\begin{proof} 
We only prove the estimates for $t>s$, the estimates for negative time can be obtained similarly. Suppose $t= s+kT + t'$ where $t'\in [0, T)$, we use \eqref{E:smallT-2} repeatedly to compute 
\begin{align*}
&\langle  L_{c,  y(t)} V(t), V(t) \rangle^{\frac 12}  \le e^{C\sigma t'} \big( \langle L_{c, y(\tau)} V(\tau), V(\tau) \rangle|_{\tau=s+kT}\big)^{\frac 12} + C \Vert f \Vert_{\hat X_{(s+kT, t)}^{p', q'}}\\
\le & e^{C\sigma (t-s)} \langle L_{c, y(s)} V(s), V(s) \rangle^{\frac 12} + C\big(\Vert f \Vert_{\hat X_{(s+kT, t)}^{\tilde p', q'}} \\
& \qquad  \qquad \qquad + \sum_{j=1}^k e^{C\sigma (t-s - jT)} \Vert f \Vert_{\hat X_{(s+(j-1)T, s+jT)}^{\tilde p', q'}} \big) \\
\le & e^{ C\sigma (t-s)} \langle L_{c, y(s)} V(s), V(s) \rangle^{\frac 12} + C( \sum_{j=0}^k e^{\tilde p(C\sigma -\eta) (t-s- jT)} )^{\frac 1{\tilde p}} \Vert e^{\eta (t - \cdot)} f \Vert_{ \hat X_{(s, t)}^{\tilde p', q'}}.
\end{align*}

Summing up the exponentials completes the proof of the lemma.
\end{proof} 

Finally we drop the assumption \eqref{E:linear-2}. \\

\noindent {\bf Proof of Proposition \ref{P:linear-0}.}
We split \eqref{lv} into the $X_{c, y}^e$ component and its complementary component as in \eqref{E:temp-1}. Much as the calculation in the derivation of \eqref{E:Pi-e-temp-1}, we have that $V^\perp$ satisfies \eqref{E:V-perp}.  

The remaining estimate of $V^\perp(t)$ is similar to the above. In fact, for $t>s$, 
\begin{align*} 
|V^\perp(t)| \le & e^{C \sigma (t-s)} |V^\perp (s)| + \int_s^t e^{C\sigma (t-\tau)} |f^\perp (\tau)| d\tau\\
\le &  e^{C \sigma (t-s)} |V^\perp (s)| + \int_s^t  e^{(C \sigma - \eta) (t-\tau )} e^{\eta (t-\tau)} |f^\perp (\tau)| d\tau\big)
\end{align*}
which implies the desired estimate on $V^\perp(t)$. 

Due to the choice of $\mathcal{F}$, it is straight forward to compute that $V^e(t) =V(t)-V^\perp(t)$ satisfies \eqref{lv} with the non-homogeneous term $f(t)$ replaced by $f^e(t)$. Lemma \ref{L:linear-0} implies the estimate on $V^e(t)$ which completes the proof of the proposition. 
\hfill $\square$

Finally, we apply Lemma \ref{L:linear-0} to prove Proposition \ref{P:linear}. \\

\noindent {\bf Proof of Proposition \ref{P:linear}.}
We first decompose $f \in \wt X_{(t_0, t_1), loc}$ into the sum of several terms satisfying the assumptions in Lemma \ref{L:linear-0}. In fact, by the definition of $\wt X$, we can write 
\[
f = \phi + \psi + \gamma
\]
where 
\[
\phi \in L_{loc}^2 W^{1, \frac 32} \subset L_{loc}^2 B_{\frac 32, 2}^1 \subset \hat X_{(t_0, t_1), loc}^{2, \frac 32}, \quad \gamma \in L_{loc}^2 (H^1 \times \dot H^1) \subset  \hat X_{(t_0, t_1), loc}^{2, 2}, 
\]
and 
\[
\psi  \in L_{loc}^2 (L^2\cap L^{\frac 32}), \quad  \nabla \psi \in L_{loc}^2 L^{\frac 65}.
\]
Let $\chi$ be the same smooth cut-off function used in Subsection \ref{SS:X_0}. Clearly 
\[
\chi(D) \psi \in L_{loc}^2 W^{1, \frac 32} \subset L_{loc}^2 B_{\frac 32, 2}^1 \subset \hat X_{(t_0, t_1), loc}^{2, \frac 32}, \; \text{ and } \nabla \big(1-\chi(D)\big) \psi \in L_{loc}^2 L^{\frac 65}. 
\]
Moreover, since the inverse Fourier transform of $\frac {1-\chi(|\xi|)}{|\xi|}$ is in $L^1$, we have 
\[
\big(1-\chi(D)\big) \psi = \frac {1-\chi(D)}{|D|} |D| \psi \in L_{loc}^2 L^{\frac 65}
\]
and thus $\big(1-\chi(D)\big) \psi \in \hat X_{(t_0, t_1), loc}^{2, \frac 65}$. 

The desired estimate follows immediately from applying Proposition \ref{L:linear-0} to each of these terms. \hfill $\square$

\section{Construction of Lipschitz Local Invariant manifolds of $\mathcal{M}$} \label{S:InMa}
Based on the space-time estimates developed in Section \ref{S:linear}, we construct the center-unstable manifold $\CW^{cu}(\CM)$ of $\mathcal{M}$, while the center-stable manifold $\CW^{cs}(\CM)$ can be constructed similarly. The intersection of the center-unstable and the center-stable manifolds yields the center manifold of $\mathcal{M}$.

\subsection{Outline of the construction of the center-unstable manifold of $\mathcal{M}$.} \label{SS:GT}

Our construction roughly follows the procedure in \cite{Ca81}. The codim-d local center-unstable manifold are over the directions of $X_{c, y}^{d1} \oplus X_{c, y}^e \oplus X_{c, y}^{d2} \oplus X_{c, y}^+$ along $\mathcal{M}$. In coordinate system \eqref{E:coord-1} $\CW^{cu}(\CM)$ is represented as the graph of some mapping $h^{cu}$ 
\begin{equation} \label{E:CU-1} \begin{split}
\CW^{cu}(\CM)  = \Phi\big( \big\{ &a^- = h^{cu} (y, a^{d1}, a^{d2}, a^+, V^e) \mid \\
& (y, a^{d1}, a^{d2}, a^+, V^e) \in B^{d_1+d_2+d} (\delta) \oplus \CX^e (\delta)\big\} \big)
\end{split}\end{equation} 
where the above sets are defined in \eqref{E:Bball}. Even though $W^{cu}(\CM)$ is local, by using a standard cut-off technique, we will carry out the construction on $\BR^{d_1+d_2+d} \oplus \CX^e (\delta)$.
Moreover, for technical convenience, we shall work with $h(y, a^{d1}, a^{d2}, a^+, V)$ defined on $\BR^{3+d_1+d_2+d} \times X_1 (\delta)$ to avoid the non-flat bundle. However, only the value of $h$ on $\BR^{d_1+d_2+d} \oplus \CX^e(\delta)$ matters. 

Let 
\[
X^{cu} = \BR^{3+d_1+d_2+d} \times X_1, \; X^{cu} (\delta) = \{ (y, a^{d1}, a^{d2}, a^+, V) \in X^{cu} : \Vert V \Vert_{X_1} < \delta\}. 
\]
The following projection $\tilde \Pi^e$, linear except in $y$, will be used often 
\begin{equation} \label{E:PiW}
\tilde \Pi^e (y, a^{d1}, a^{d2}, a^+, V)= (y, a^{d1}, a^{d2}, a^+, \Pi_{c, y}^e V) \in \BR^{d_1+d_2+d} \oplus \CX^e. 
\end{equation}
We shall modify equations \eqref{eqy'}, \eqref{E:a^pm}, \eqref{E:a^d1}, \eqref{E:a^d2}, and \eqref{E:VE}, along with \eqref{E:em-perp-G}, into a system defined on $X^{cu} \times \BR^d$. As a standard technique in local analysis, we first cut-off the nonlinearities as well as the off-diagonal linear terms in the direction transversal to $\CM$. Take a cut-off function 
\begin{equation} \label{E:gamma} 
\gamma\in C_{0}^{\infty}(\mathbb{R}),\;  \text{ s. t. }
\gamma(x)=1, \; \forall \vert x\vert \leq 1, \; \; \gamma(x)=0, \;  \forall \vert x\vert \geq 3, \; \; \vert \gamma'\vert_{C^0 (\BR)}  \leq 1
\end{equation}
and for $\delta >0$, $a^- \in \BR^d$, and $W= (y, a^{d1}, a^{d2}, a^+, V) \in X^{cu}$, let 
\[
\gamma_\delta (W, a^-) = \gamma\big(3 \delta^{-1}(|a^{d1}| + |a^{d2}| + |a^+| + |a^-|+ \Vert V\Vert_{X_1} )\big).
\]
For $a^- \in \BR^d$ and $W\in X^{cu}$, let 
\begin{align*}
\hat{G}^{\alpha}(W, a^-) = &\gamma_{\delta} (W, a^-) G^\alpha (c, y, w), \qquad  \alpha\in\{+,-, d2\}\\
\hat G^{d1} (W, a^-) = & \gamma_{\delta} (W, a^-) \big( \langle \zeta_c^{d1} (\cdot +y), A_{1e}(y)K_{c, y} w\rangle + M_{12} a^{d2} + G^{d1} (c, y, w)\big)\\
\hat G^T (W, a^-) =&  \gamma_{\delta} (W, a^-) \big( \langle \zeta_c^T(\cdot +y),  A_{Te}(y) K_{c, y} w\rangle + M_{T1} a^{d1} + M_{T2} a^{d2}  \\
&+ G^T (c, y, w)\big)\\
\hat G^e (W, a^-) = &\gamma_{\delta} (W, a^-) \big(  A_{e2}(y)K_{c, y} w+ G^e (c, y, w)\big)
\end{align*}
where functions $\zeta_c^T = (\zeta_{c, 1}^T, \zeta_{c, 2}^T,\zeta_{c, 3}^T)$, $\zeta_c^{d1} = (\zeta_{c, 1}^{d1}, \ldots,\zeta_{c, d_1}^{d1})$, and operators $A_{1e}$, $A_{e2}$, and $A_{Te}$ are given in Lemma \ref{decomp2}, matrices $M_{12}, M_{T1}, M_{T2}$ in \eqref{E:M_ab}, and 
\begin{equation} \label{E:lp-w} \begin{split}
w =&\Lambda(W, a^-) \triangleq Em^\perp \big(y, a^{d1}, a^{d2}, a^+, a^-, \Pi_{c, y}^e V \big)\\
=& K_{c,y}^{-1} \big(( a^{d1} V_{c}^{d1} + a^{d2} V_{c}^{d2} + a^+ V_{c}^+ + a^- V_c^- ) (\cdot + y) + \Pi_{c, y}^e V
\big).
\end{split}\end{equation}

From the definitions of $\hat G^\alpha$, it clearly holds that they are independent of the extra component $(I-\tilde \Pi^e) W$ added to avoid the non-flat bundle $\BR^{d_1+d_2+d} \oplus \CX^e$.  
In particular, $\hat G^e$ satisfies 
\begin{equation} \label{E:wtX}
(I- \Pi_{c, y}^e) \hat G^e =0, \quad \hat G^e \in \wt X= X_1 + W^{1, \frac 32} + (L^{\frac 32} \cap \dot W^{1, \frac 65}). 
\end{equation}
Denote 
\begin{align*} 
& \wt X^{cu} = \BR^{3+d_1+d_2+d} \times \wt X, \; \wt X^{cu} (\delta) = \{ (y, a^{d1}, a^{d2}, a^+, V) \in \wt X^{cu} : \Vert  V \Vert_{\wt X} < \delta\},\\
&\hat G^{cu} (W, a^-) = (\hat G^T, \hat G^{d1}, \hat G^{d2}, \hat G^+, \hat G^e) (W, a^-),  \\
&A^{cu} (y, \tilde y) = diag\big( 0, M_1, M_2, M_+, \Pi_{c, y}^e J L_{c, y} \Pi_{c, y}^e + \mathcal{F} (c, y) (\tilde y, \cdot ) \big).
\end{align*}
We shall consider, for $W= (y, a^{d1}, a^{d2}, a^+, V) \in X^{cu}$ and $a^- \in \BR^d$, 
\begin{subequations} \label{E:cut-GP}
\begin{equation} \label{E:cut-GP-cu}  
\p_t W = A^{cu}\big(y, \hat G^T (W, a^-) \big) W + \hat G^{cu}(W, a^-)
\end{equation} 
\begin{equation} \label{E:cut-GP-s}
\p_t a^- = M_- a^- + \hat G^- (W, a^-)
\end{equation}
\end{subequations}
which, for  $\Vert w \Vert_{X_1} \le \delta$, coincides with the system consisting of equations \eqref{eqy'}, \eqref{E:a^pm}, \eqref{E:a^d1}, \eqref{E:a^d2}, and \eqref{E:VE}, along with \eqref{E:em-perp-G}, the representation of \eqref{GP} in the local coordinate system near $\CM$. 

As the off-diagonal linear blocks in $JL_{c, y}$ are incorporated into $\hat G^{cu}$, the latter does not have small Lipschitz constants, which is often a necessity in constructing local invariant manifolds. Accordingly, we 
introduce metrics involving a scale constant $Q>1$ 
\begin{equation} \label{E:Q-norm} \begin{split}
& \Vert (y, a^{d1}, a^{d2}, a^+, V) \Vert_{X_1, Q} \triangleq |y|+ Q |a^{d1}| + Q^3 |a^{d2}| + |a^+| + Q^2\Vert V \Vert_{X_1}, \\
& \Vert (y, a^{d1}, a^{d2}, a^+, V) \Vert_{\wt X, Q} \triangleq |y|+ Q |a^{d1}| + Q^3 |a^{d2}| + |a^+| + Q^2\Vert V \Vert_{\wt X}
\end{split}\end{equation} 
to make Lipschitz constants of $\hat G^{cu, -}$ small (Lemma \ref{L:G-Lip}).  

We shall construct the local center-unstable manifold $\CW^{cu}(\CM)$ as the graph $\{ a^- = h^{cu} (W)\}$ of some $h: X^{cu}(\delta) \to \BR^d$. Since $\CW^{cu}(\CM)$ is expect to be translation invariant, we will only consider translation invariant mappings $h: X^{cu} (\delta) \to \BR^d$, which satisfy, for any $z \in \BR^3$, 
\begin{equation} \label{E:trans-inv-h}
h \big(y + z, a^{d1}, a^{d2}, a^+, V (\cdot +z) \big) = h (y, a^{d1}, a^{d2}, a^+, V). 
\end{equation}

Fix constants $Q, \delta, \mu$ such that 
\begin{equation} \label{E:parameter-1}
\delta<1 ,\quad Q>1, \quad \mu<\frac 15, 
\end{equation}
whose additional assumptions will be given later. Let 
\begin{equation} \label{E:Gamma-mu} \begin{split} 
\Gamma_{\mu, \delta} = \{&h : X^{cu} (\delta)  \to \BR^d  \mid  h(y, 0,0,0,0) =0, \;  \Vert h\Vert_{C^0} \le \delta/15, \\
& h \text{ satisfies \eqref{E:trans-inv-h}, and } Lip_{\Vert \cdot \Vert_{X_1, Q}} \le \mu.
\}.
\end{split} \end{equation}
Here $h(y, 0,0,0,0) =0$ is required as $\CW^{cu}(\CM)$ should contain $\CM$. 
Clearly $\Gamma_{\mu, \delta}$ equipped with $\Vert \cdot\Vert_{C^0}$ is a complete metric space. 

We will define a transform on $\Gamma_{\mu, \delta}$ based on \eqref{GP}. For any $h \in \Gamma_{\mu, \delta}$ and $\bar W\in X^{cu}(\delta)$, consider the solution $W(t) = (y, a^{d1}, a^{d2}, a^+, V)(t) \in X^{cu}$ of  
\begin{equation}\label{lp}
\p_t W = A^{cu} \big(y, \hat G^T(W, h(W)\big) \big)W + \hat G^{cu} \big(W, h(W) \big), \qquad W(0)= \bar W. 
\end{equation}

\begin{remark}\label{R:C-0}
Even though $h$ is defined only on $X^{cu}(\delta)$, due to the cut-off function $\gamma_{\delta}$, 
for any $h \in \Gamma_{\mu, \delta}$, $\alpha \in \{T, d1, d2, \pm, e\}$, it holds $\hat G^{\alpha} \big(W, h(W)\big) =0$ whenever $W \in X^{cu}\backslash X^{cu}(\delta)$. Consequently,  the right side of \eqref{lp} is well-defined for all $W\in X^{cu}$. 
 
\end{remark}

We define $\widetilde{h}(\bar W)$ as 
\begin{equation}\label{newh}
\widetilde{h} (\bar W)
= \bar a^- =\int_{-\infty}^{0}e^{-t M_-}\hat{G}^{-} \big(W(t), h(W(t))\big) dt.
\end{equation}
We denote this transformation $h \to \wt h$ as 
\[
\mathcal{T} (h) = \wt h.
\]

In order to construct the center-unstable manifold, in the following subsections, under suitable assumptions on $Q$, $\delta$, and $\mu$ we will show $\wt h \in \Gamma_{\mu, \delta}$ is well-defined and that $\CT$ is a contraction on $\Gamma_{\mu, \delta}$. The graph of the unique fixed point, restricted to the set 
\[
B^{d_1+d_2+d} (\delta) \oplus \CX^e (\delta) =  \{(y, a^{d1}, a^{d2}, a^+, V) \in X^{cu}(\delta) \mid |(a^{d1}, a^{d2}, a^+)| < \delta, \ V \in X_{c, y}^e\}
\]
would be the desired center-unstable manifold $\CW^{cu}(\CM)$. To end this subsection, we give the following lemma to show that working on systems \eqref{E:cut-GP} or \eqref{lp} on the expanded domain $X^{cu}$, only to avoid the non-flat bundle $\BR^{d_1+d_2+d} \oplus \CX^e$, does not change the local invariant manifolds.

\begin{lemma} \label{L:reduce} 
The following statements hold. 
\begin{enumerate} 
\item Suppose $W(t)$ satisfies \eqref{E:cut-GP-cu} on $[t_1, t_2]$ for some $a^- \in C^0([t_1, t_2], \BR^d)$ and $\tilde \Pi^e W(t_0)=W(t_0)$ for some $t_0 \in [t_1, t_2]$, then $\tilde \Pi^e W(t) =W(t)$ for all $t \in [t_1, t_2]$. 
\item Assume $h_j \in \Gamma_{\mu, \delta}$, $j=1,2$, satisfy $h_1(W)=h_2(W)$ for all $W \in X^{cu} (\delta)$ with $\tilde \Pi^e W= W$. Then $\tilde h_j$, $j=1,2$, defined in \eqref{newh} satisfy the same property. 
\end{enumerate}
\end{lemma}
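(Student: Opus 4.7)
The content of part (1) is that the bundle $\CX^e$ is invariant under the truncated flow \eqref{E:cut-GP-cu}; part (2) then follows from (1) together with uniqueness of the ODE \eqref{lp}. The whole argument is a direct adaptation of the computation leading from \eqref{E:Ve} to \eqref{E:V-e} in Section~\ref{S:coord}, so nothing genuinely new has to be done; what has to be checked is that the cut-off modifications preserve the relevant structure.

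\textbf{Part (1).} The plan is to show that $\tilde\Pi^e W(t_0) = W(t_0)$ forces $V(t) \in X_{c,y(t)}^e$, i.e.\ $(I-\Pi_{c,y(t)}^e)V(t) = 0$, throughout $[t_1, t_2]$. Reading off the $V$ and $y$ components of \eqref{E:cut-GP-cu}, and using that the $y$-block of $A^{cu}$ vanishes, one finds
\[
\p_t V = \Pi_{c,y}^e J L_{c,y} \Pi_{c,y}^e V + \mathcal{F}(c,y)(\p_t y, V) + \hat G^e(W, a^-), \qquad \p_t y = \hat G^T(W, a^-),
\]
which has exactly the structure of \eqref{E:VE}; the crucial point is that $(I-\Pi_{c,y}^e)\hat G^e = 0$ by \eqref{E:wtX}. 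Applying $I-\Pi_{c,y}^e$ to the $V$ equation, invoking the definition \eqref{E:CF-def} of $\mathcal{F}$ together with the identity \eqref{E:Pi-e-temp-0} obtained by differentiating $(\Pi_{c,y}^e)^2=\Pi_{c,y}^e$ in $y$, the $\Pi_{c,y}^e V$ contributions cancel exactly as in the derivation of \eqref{E:Pi-e-temp-1}, leaving
\[
\p_t\bigl[(I-\Pi_{c,y}^e)V\bigr] = -D_y \Pi_{c,y}^e(\p_t y)\,(I-\Pi_{c,y}^e)V.
\]
This is a homogeneous linear equation on the finite-dimensional fibre $(I-\Pi_{c,y}^e)X_1$ with coefficients bounded uniformly in $y$ by Lemma~\ref{decomp2}. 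Since $(I-\Pi_{c,y(t_0)}^e)V(t_0)=0$, a Gronwall estimate on $\|(I-\Pi_{c,y(t)}^e)V(t)\|_{X_1}$ forces this quantity to vanish on $[t_1,t_2]$, so $\tilde\Pi^e W(t) = W(t)$ throughout.

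\textbf{Part (2).} Fix $\bar W \in X^{cu}(\delta)$ with $\tilde\Pi^e \bar W = \bar W$ and, for $j=1,2$, let $W_j(t)$ be the solution of \eqref{lp} with $h = h_j$ and $W_j(0) = \bar W$ (existence and uniqueness follow from Lipschitzness of $\hat G^{cu}$ and $A^{cu}$ in $W$, ensured by the cut-off $\gamma_\delta$ and Lemma~\ref{L:A_e}). Applying part (1) to each $W_j$ with $a^-(t) = h_j(W_j(t))$ gives $\tilde\Pi^e W_j(t) = W_j(t)$ throughout the existence interval. The hypothesis on $h_1, h_2$ then yields $h_1(W_j(t)) = h_2(W_j(t))$ along both trajectories, so $W_1$ also satisfies \eqref{lp} with $h$ replaced by $h_2$; uniqueness forces $W_1 \equiv W_2$. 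Substituting into \eqref{newh} gives $\wt h_1(\bar W) = \wt h_2(\bar W)$.

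\textbf{Main obstacle.} There is little genuine difficulty: the analytic core is the bundle-invariance calculation already carried out for \eqref{E:VE} in Section~\ref{S:coord}. The only bookkeeping issues are (a) verifying that the cut-off $\gamma_\delta$ preserves the property $(I-\Pi_{c,y}^e)\hat G^e = 0$ --- which is immediate since $\gamma_\delta$ is a scalar factor and $(I-\Pi_{c,y}^e)G^e = 0$ by \eqref{E:G^pm-1} --- and (b) confirming that \eqref{lp} is well-posed enough to invoke ODE uniqueness, which holds because the right-hand side is Lipschitz in $W$ on all of $X^{cu}$ thanks to the compactly-supported cut-off, the translation-invariant bounds of Lemma~\ref{decomp2} on $\Pi_{c,y}^\alpha$, and the estimate \eqref{E:CF} on $\mathcal{F}$.
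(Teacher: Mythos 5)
Your proof is correct and follows essentially the same route as the paper: part (1) is exactly the decoupling argument behind \eqref{E:V-perp} (the homogeneous equation for $(I-\Pi_{c,y}^e)V$ with vanishing forcing since $(I-\Pi_{c,y}^e)\hat G^e=0$ by \eqref{E:wtX}), and part (2) is the same consequence of part (1), ODE uniqueness for \eqref{lp}, and the definition \eqref{newh}. No discrepancies worth noting.
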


\begin{proof} 
For the first statement, we observe that a direct consequence of \eqref{E:wtX},  \eqref{E:V-perp}, and our assumption is $(I-\tilde \Pi^e) W(t) =0$, for all $t\le [t_1, t_2]$, which implies $W(t) = \tilde \Pi^e W(t)$. The second statement of the lemma is just a simple corollary of part (1) and the definition of $\wt h$.  
\end{proof}

\subsection{Apriori Estimates}

Following the construction outlined in Subsection \ref{SS:GT}, in order to prove that $\wt h \in \Gamma_\mu$ is well-defined for any given $h \in \Gamma_{\mu, \delta}$, we start with the following preliminary estimates. 

\begin{lemma} \label{L:G-Lip}
$\hat G^{cu, -}: X^{cu} \times \BR^d \to \wt X^{cu} \times \BR^d$ are smooth, $\hat G^{cu, -} (y,0, 0, 0, 0, 0) =0$. Moreover, there exists $C>0$ independent of $Q$ and $\delta$ such that 
\begin{align*}
\Vert D \hat G^{cu}\Vert_{L_Q (X^{cu} \times \BR^d, \wt X^{cu})}   \le C(Q^{-1} +\delta Q^3), \quad \Vert D\hat G^- \Vert_{L(X^{cu} \times \BR^d, \BR^d)} \le C\delta  
\end{align*}
where $\Vert \cdot \Vert_{L_Q (X^{cu} \times \BR^d, \wt X^{cu})}$ denote the operator norm  when evaluated in $\Vert \cdot \Vert_{X_1, Q}$ and $\Vert \cdot \Vert_{\wt X, Q}$.
\end{lemma}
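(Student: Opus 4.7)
The plan is to split $\hat G^{cu}$ into two structurally different types of contributions: the pieces inherited from the purely nonlinear $G^{\alpha}(c,y,w)$, which are at least quadratic in $w$ with pointwise derivative bounds from Lemma~\ref{ESG} in the Appendix together with \eqref{E:G^T-1}, \eqref{E:G^pm-0}, \eqref{E:G^pm-2}; and the bounded linear off-diagonal couplings of $JL_{c,y}$ --- namely $M_{T1}a^{d1}$, $M_{T2}a^{d2}$, $M_{12}a^{d2}$, $\langle \zeta_c^T, A_{Te}K_{c,y}w\rangle$, $\langle \zeta_c^{d1}, A_{1e}K_{c,y}w\rangle$, and $A_{e2}(y)K_{c,y}w$ --- which have been transferred out of the block-diagonal $A^{cu}$ into the nonlinearity. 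The asymmetric $Q$-scaled metric \eqref{E:Q-norm} is engineered precisely so that these two types split cleanly into the $\delta Q^{3}$ and $Q^{-1}$ summands of the claimed bound.

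First I would verify smoothness and vanishing. Smoothness of $\hat G^{cu,-}$ on $X^{cu}\times \BR^{d}$ reduces to smoothness of the cut-off $\gamma_{\delta}$ (which is $C^{\infty}$ because $\gamma'$ vanishes near the origin and therefore kills the kinks of $|a^{\alpha}|$ and $\Vert V\Vert_{X_{1}}$ at $0$, while away from $0$ each norm on a Hilbert space is smooth), of the affine map $\Lambda$ from \eqref{E:lp-w}, of the bases $V_c^\alpha(\cdot+y)$ and co-bases $\zeta_c^\alpha(\cdot+y)$ (all smooth in $y$ by Lemma~\ref{decomp2}), of $K_{c,y}$ and $A_{e2}(y)$, and of $G^{\alpha}$ (Lemma~\ref{ESG}). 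Vanishing at $(y,0,\dots,0)$ is immediate because $\Lambda$ sends zero to zero, every explicit linear term is homogeneous in $(a^\beta,V)$, and $G^{\alpha}(c,y,0)=0$.

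Next come the pointwise estimates on $\mathrm{supp}\,\gamma_{\delta}$. There one has $\sum_{\beta}|a^{\beta}|+\Vert V\Vert_{X_{1}}\le\delta$, hence $\Vert w\Vert_{X_{1}}\lesssim\delta$. From \eqref{E:G^T-1}, \eqref{E:G^pm-0}, \eqref{E:G^pm-2} one obtains $|G^{\alpha}|_{\star}\lesssim\delta^{2}$ and $|D_{w}G^{\alpha}|_{\star}\lesssim\delta$, where $\star$ stands for $|\cdot|$ or $\Vert\cdot\Vert_{\wt X}$ depending on $\alpha$. Combined with $|D\gamma_{\delta}|\le 3/\delta$ and the product rule, every coordinate derivative of $\gamma_{\delta}G^{\alpha}$ is $O(\delta)$ in the unscaled norm. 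The off-diagonal linear pieces are bounded linear operators, contributing $O(1)$ unscaled derivatives, either directly or through $\gamma_{\delta}'$ acting against the $O(\delta)$-valued linear piece. I then translate these into $Q$-scaled operator norms: under the weights $(1,Q,Q^{3},1,Q^{2})$ on $(y,a^{d1},a^{d2},a^{+},V)$, an entry of unscaled norm $N$ with input weight $\lambda_{\beta}$ and output weight $\lambda_{\alpha}$ contributes $(\lambda_{\alpha}/\lambda_{\beta})N$ to the scaled operator norm. For the quadratic pieces ($N\lesssim\delta$) the worst ratio is $Q^{3}$ --- output $a^{d2}$ against input $y$ or $a^{+}$ --- giving the $\delta Q^{3}$ summand. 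For the linear coupling pieces ($N=O(1)$) the block-triangular structure of $JL_{c,y}$ from Lemma~\ref{decomp2} forces the nonzero blocks into ratios at most $Q^{-1}$: for instance $A_{T1}\!:\!d1\!\to\!T$ gives $1/Q$, $A_{T2}\!:\!d2\!\to\!T$ gives $1/Q^{3}$, $A_{Te}\!:\!e\!\to\!T$ gives $1/Q^{2}$, $A_{12}\!:\!d2\!\to\!d1$ gives $1/Q^{2}$, $A_{1e}\!:\!e\!\to\!d1$ gives $Q/Q^{2}=1/Q$, and $A_{e2}\!:\!d2\!\to\!e$ gives $Q^{2}/Q^{3}=1/Q$. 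Summing across the finitely many blocks produces $C(Q^{-1}+\delta Q^{3})$. For $\hat G^{-}=\gamma_{\delta}G^{-}$ no linear coupling is present, since by Lemma~\ref{decomp2} the stable block $A_{-}$ decouples from the entire center-unstable block at the linear level, so only the $O(\delta)$ quadratic estimate is needed; the $\BR^{d}$-output carries no $Q$-weight and all input weights are $\ge 1$, so $\Vert D\hat G^{-}\Vert\le C\delta$ in the standard operator norm.

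The main obstacle is purely the bookkeeping --- verifying block-by-block under the asymmetric weights $(1,Q,Q^{3},1,Q^{2})$ that every off-diagonal linear contribution sits in a block with scaled ratio $\le Q^{-1}$ and every purely nonlinear contribution produces at worst $\delta Q^{3}$. The particular weights are dictated by the triangular structure in Lemma~\ref{decomp2}, and any other choice would either destroy the estimate or fail to make $\hat G^{cu,-}$ a small-Lipschitz perturbation of the block-diagonal flow $A^{cu}$, which is what the subsequent contraction-mapping argument for the graph transform $\CT$ in \eqref{newh} ultimately requires.
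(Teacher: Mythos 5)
Your proposal follows essentially the same route as the paper: the paper's proof consists precisely of the two unscaled derivative bounds \eqref{E:hat-G-esti-1} (general, $C\delta^{1-k}$, covering the cut-off off-diagonal linear couplings) and \eqref{E:hat-G-esti-2} (improved, $C\delta^{2-k}$, for the purely quadratic contributions), followed by exactly the kind of $Q$-weight bookkeeping you carry out block by block. Your explicit listing of the weight ratios for $M_{T1}$, $M_{T2}$, $M_{12}$, $A_{Te}$, $A_{1e}$, $A_{e2}$ is just the paper's concluding ``straightforward calculations'' written out, so the two arguments coincide in substance.
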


\begin{proof}
From the definitions of $\hat G^{cu, -}$ and \eqref{E:G^pm-0} and \eqref{E:G^pm-2} which in turn are derived from Lemma \ref{ESG}, the smoothness of $\hat G^{cu, -}$ and $\hat G^{cu, -} (y,0, 0, 0, 0, 0) =0$ follows immediately. Moreover, it is straight forward to obtain the following estimates. Firstly, for $l, k \ge 0$, 
\begin{equation}  \label{E:hat-G-esti-1} \begin{split}
\Vert D_{(a^{d1}, a^{d2}, a^+, a^-, V)}^k &D_y^l (\hat G^{cu} , \hat G^{-} ) (W, a^-) \Vert  \le C_{k , l} \delta^{1-k}
\end{split} \end{equation}
for some $C_{k, l}>0$. When we exclude the off-diagonal terms in $\hat G^{cu}$, the estimates may be improved  to
\begin{equation}  \label{E:hat-G-esti-2} \begin{split}
& |D_{(a^{d1}, a^{d2},  a^+, a^-, V)}^k D_y^l \hat G^{d2, +, -}|  + | D_{(a^+, a^-)}^{k} D_y^l \hat G^T|\\
 & \qquad + \Vert D_{(a^{d1}, a^+, a^-, V)}^{k} D_y^l \hat G^e \Vert  + |D_{(a^{d1}, a^+, a^-)}^{k} D_y^l \hat G^{d1}|  
\le  C_{k , l} \delta^{2-k}, 
 \end{split} \end{equation}
for some $C_{k, l}>0$. In the above $\hat G^e$ is always evaluated in the $\Vert\cdot \Vert_{\wt X}$ norm. The desired estimates on $D \hat G^{cu, -}$ follows from straight forward calculations based on the above inequalities.  
\end{proof}

The following lemma is a simple corollary of Proposition \ref{P:linear-0}. 

\begin{lemma} \label{L:A-cu}
There exists $C>0$ such that, for any $y \in C^1\big((-\infty, 0], \BR\big)$ satisfying $|\p_t y|_{L^\infty} \le \sigma$, $f \in L^2\big((-\infty, 0], \wt X^{cu})$, $Q>1$, $\eta \in (C \sigma, 1)$, and $W(t)$, $t\in (-\infty, 0]$, solving 
\[
\p_t W = A^{cu}(y, \p_t y) W + f, 
\]
we have 
\[
\Vert W(t) \Vert_{X_1, Q}^2 \le C \eta^{-2d_1} e^{-\eta t}  \Vert W(0)\Vert_{X_1, Q}^2 +C \eta^{-2d_1-1} \int_t^0 e^{\eta(\tau -t)} \Vert f(\tau) \Vert_{\wt X, Q}^2 d\tau.  
\]
\end{lemma}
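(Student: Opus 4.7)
\medskip

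\noindent\textbf{Proof proposal for Lemma \ref{L:A-cu}.}  The plan is to exploit the fact that $A^{cu}(y,\tilde y)$ is block diagonal with respect to the decomposition $W=(y,a^{d1},a^{d2},a^{+},V)$, so that the equation $\p_t W=A^{cu}(y,\p_t y)W+f$ decouples into five scalar/vector ODEs plus the evolution of $V$, each of which can be estimated by Duhamel's formula and then recombined using the weighted norm \eqref{E:Q-norm}.  The only coupling comes through the $y$-dependence of $A^{cu}$ in the $V$-block, and this is handled by the linear theory already set up in Section \ref{S:linear}.

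First, since the $y$-block is trivial ($\p_t y=f^T$), integration from $t\le 0$ up to $0$ plus Cauchy–Schwarz with the weight $e^{\pm\eta(\tau-t)/2}$ yields $|y(t)|^2\le 2|y(0)|^2+C\eta^{-1}\int_t^0 e^{\eta(\tau-t)}|f^T(\tau)|^2\,d\tau$.  For the $a^+$-block, use $\Vert e^{sM_+}\Vert\le Ce^{\lambda s}$ for $s\le 0$ (Lemma \ref{decomp2}) in the backward Duhamel representation $a^+(t)=e^{tM_+}a^+(0)-\int_t^0 e^{(t-\tau)M_+}f^+(\tau)\,d\tau$.  The first term is bounded by $Ce^{2\lambda t}|a^+(0)|^2\le C e^{-\eta t}|a^+(0)|^2$, and the convolution term is controlled by $C\eta^{-1}\int_t^0 e^{\eta(\tau-t)}|f^+(\tau)|^2\,d\tau$ after splitting the weight $e^{\lambda(t-\tau)}=e^{(\lambda+\eta/2)(t-\tau)}\cdot e^{-\eta(\tau-t)/2}$ and applying Cauchy–Schwarz (the first factor contributes a finite $L^2$ integral in $\tau-t$ of order $\eta^{-1}$).

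Next, for the $a^{d1}$- and $a^{d2}$-blocks, the polynomial bound $\Vert e^{sM_{1,2}}\Vert\le C(1+|s|)^{d_1}$ means that the homogeneous piece $e^{tM_1}a^{d1}(0)$ has size $\le C(1+|t|)^{2d_1}|a^{d1}(0)|^2\le C\eta^{-2d_1}e^{-\eta t}|a^{d1}(0)|^2$, since $\sup_{s\ge 0}(1+s)^{2d_1}e^{-\eta s}\le C\eta^{-2d_1}$.  For the Duhamel integral, the same Cauchy–Schwarz splitting gives
\[
\Bigl|\int_t^0 e^{(t-\tau)M_1}f^{d1}\,d\tau\Bigr|^2\le C\Bigl(\int_0^{-t}(1+u)^{2d_1}e^{-\eta u}\,du\Bigr)\int_t^0 e^{\eta(\tau-t)}|f^{d1}|^2\,d\tau,
\]
and the first factor is $\le C\eta^{-2d_1-1}$, which is exactly the worst factor that appears in the target estimate.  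The $a^{d2}$-block is identical.

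Finally, for the $V$-block, the equation is precisely \eqref{lv} with $\p_t y$ satisfying $|\p_t y|_{L^\infty}\le\sigma$, so Proposition \ref{P:linear} (or Corollary \ref{C:linear-1}) yields
\[
\Vert V(t)\Vert_{X_1}^2\le C\bigl(e^{C\sigma|t|}\Vert V(0)\Vert_{X_1}^2+\eta^{-1}\int_t^0 e^{\eta(\tau-t)}\Vert f^e(\tau)\Vert_{\wt X}^2\,d\tau\bigr);
\]
the condition $\eta>C\sigma$ upgrades $e^{C\sigma|t|}$ to $e^{-\eta t}$.  Multiplying each block's estimate by the corresponding weight ($1,Q^2,Q^6,1,Q^4$) and summing gives the claimed inequality, with the prefactor $\eta^{-2d_1}$ in front of the initial datum and $\eta^{-2d_1-1}$ in front of the forcing, both dominated by the polynomial blocks $M_{1,2}$.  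I expect no serious obstacle here; the only care required is in the Cauchy–Schwarz bookkeeping that converts the polynomial-times-exponential decay rates into the $\eta$-dependent constants, and in verifying that the weights in $\Vert\cdot\Vert_{X_1,Q}$ and $\Vert\cdot\Vert_{\wt X,Q}$ match block by block, which they do by construction.
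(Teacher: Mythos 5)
Your argument is correct and follows essentially the same route as the paper's proof: exploit the diagonal form of $A^{cu}$, bound the $y$, $a^{d1,d2}$, $a^+$ blocks via \eqref{E:exp-M} plus a Cauchy--Schwarz splitting that produces the $\eta^{-2d_1}$ and $\eta^{-2d_1-1}$ factors, and invoke the Section \ref{S:linear} linear theory for the $V$-block (the paper cites Proposition \ref{P:linear-0}, but for forcing measured in $\wt X$ your citation of Proposition \ref{P:linear} is the apt one, applied with $\eta/2$ so the weight matches $e^{\eta(\tau-t)}$). The only cosmetic difference is that the paper first converts $(1+|t|)^{d_1}\le C\eta^{-d_1}e^{\eta|t|/3}$ before applying Cauchy--Schwarz, whereas you integrate the polynomial weight directly; both yield the same constants.
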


\begin{proof} 
Since $A^{cu}$ takes a diagonal form, we may consider each component individually. For the $a^{d1, d2}$ and $y$ components, in addition to applying \eqref{E:exp-M} we also use 
\[
|e^{tM_1}| + |e^{tM_2}| \le C(1+ |t|)^{d_1} \le C\eta^{-d_1} e^{\frac \eta3 |t|}
\]
and the following estimate based on the Cauchy-Schwartz inequality
\[
| \int_t^0 e^{\frac \eta3 (\tau-t)} g(\tau) d\tau|^2 \le C \eta^{-1} \int_t^0 e^{\eta (\tau-t)} |g(\tau)|^2 d\tau  
\]
to obtain the desired inequality. The estimate on the $V$ component is a direct consequence of Proposition \ref{P:linear-0} and the estimate of the $a^+$ and $y$ component trivially follows from \eqref{E:exp-M}. \end{proof} 

\begin{proposition} \label{P1}
Let $\eta \in (0, 1)$, $T \in [-\infty, 0)$, $h \in \Gamma_{\mu, \delta}$, and $\tilde a_2^-(\cdot) \in C^0 \big( [T, 0], \BR^d)$.
Suppose $W_1(t) = (y_1, a_1^{d1}, a_1^{d2}, a_1^+, V_1)(t) \in X^{cu}$ is a solution to \eqref{lp} and $W_2 = (y_2, a_2^{d1}, a_2^{d2}, a_2^+, V_2)(t) \in X^{cu}$ solves 
\begin{equation} \label{lp-M}
\p_t W = A^{cu}\big(y, \hat G^T(W, \tilde a_2^-)\big) W + \hat G^{cu} (W, \tilde a_2^-),
\end{equation}
with initial values $\bar W_j=(\bar y_j, \bar a_j^{d1}, \bar a_j^{d2}, \bar a_j^+, \bar V_j) \in X^{cu}(\delta)$, $j=1,2$. Then these solutions exist for all $t\in [T, 0]$ and there exists $C>0$ independent of $\mu, T, \eta, Q$, and $\delta$, such that if  \eqref{E:parameter-1} is satisfied and 
\begin{equation} \label{E:parameter-2}
C \eta^{-(1 + d_1)} ( Q^{-1}  +  Q^3 \delta)< 1
\end{equation}
then $W_j(t) \in X^{cu}(C\delta)$ 
for all $t \in [T, 0]$ and 
\begin{align*}
\Vert ( W_2 - W_1)(t) \Vert_{X_1, Q}^2 \le & C\eta^{-2d_1} \Big (e^{-2\eta t} \Vert ( W_2 - W_1)(0) \Vert_{X_1, Q}^2  \\
&+ \eta^{-1} Q^6 \delta^2 \int_t^0 e^{2\eta (\tau-t)} |\big(\tilde a_2^- - h(W_2) \big) (\tau) |^2 d\tau \Big).
\end{align*}
\end{proposition}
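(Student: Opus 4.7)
The plan has two parts: establishing global existence together with the size bound $W_j(t)\in X^{cu}(C\delta)$, and proving the weighted Lipschitz difference estimate. For the first part, the cutoff $\gamma_\delta$ makes both $\hat G^{cu}$ and $\hat G^{-}$ globally bounded with finite weighted Lipschitz constants (Lemma \ref{L:G-Lip}), so combining Picard iteration for the finite-dimensional components with Proposition \ref{P:linear-0} for the $V$-component yields solutions on all of $[T,0]$. To bound $\Vert V_j(t)\Vert_{X_1}$, I would apply Proposition \ref{P:linear-0} to the $V$-equation with forcing $\hat G^e$, exploiting the bounds $\Vert \hat G^e\Vert_{\wt X}\le C\delta^2$ and $|\p_t y_j|=|\hat G^T|\le C\delta^2$ from \eqref{E:hat-G-esti-2}, together with the initial size $\Vert \bar V_j\Vert_{X_1}<\delta$; the smallness built into \eqref{E:parameter-2} keeps the accumulated $V$-growth well inside $C\delta$.

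For the Lipschitz estimate, set $U=W_2-W_1$ and subtract the two equations to obtain
\begin{equation*}
\p_t U = A^{cu}\bigl(y_1,\hat G^T(W_1,h(W_1))\bigr) U + R,
\end{equation*}
where the remainder $R$ collects the differences of $A^{cu}$ acting on $W_2$ and of $\hat G^{cu}$ itself. I would split the nonlinear piece as
\begin{equation*}
\hat G^{cu}(W_2,\tilde a_2^-)-\hat G^{cu}(W_1,h(W_1)) = \bigl[\hat G^{cu}(W_2,\tilde a_2^-)-\hat G^{cu}(W_2,h(W_2))\bigr]+\bigl[\hat G^{cu}(W_2,h(W_2))-\hat G^{cu}(W_1,h(W_1))\bigr].
\end{equation*}
For the first bracket, the sharper componentwise estimate \eqref{E:hat-G-esti-2} (exploiting that $a^-$ never appears linearly in $\hat G^{cu}$) gives a weighted $a^-$-Lipschitz bound of $CQ^3\delta$, contributing $CQ^3\delta\,|\tilde a_2^- - h(W_2)|$ to $\Vert R\Vert_{\wt X,Q}$. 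For the second bracket, Lemma \ref{L:G-Lip} combined with $\mathrm{Lip}_{\Vert\cdot\Vert_{X_1,Q}}(h)\le\mu<1/5$ yields a bound $C(Q^{-1}+Q^3\delta)\Vert U\Vert_{X_1,Q}$. The $A^{cu}$ difference acting on the already-bounded $W_2$ is of the same order, thanks to the smoothness of $\Pi^e_{c,y}$ in $y$ from Lemma \ref{decomp2} and the bound \eqref{E:CF} on $\mathcal{F}$.

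Since $|\p_t y_1|=|\hat G^T|\le C\delta^2$ satisfies \eqref{E:linear-1}, I would then invoke Lemma \ref{L:A-cu} on $U$ with forcing $R$ to obtain
\begin{equation*}
\Vert U(t)\Vert^2_{X_1,Q}\le C\eta^{-2d_1}e^{-\eta t}\Vert U(0)\Vert^2_{X_1,Q}+C\eta^{-2d_1-1}\int_t^0 e^{\eta(\tau-t)}\Vert R(\tau)\Vert^2_{\wt X,Q}\,d\tau.
\end{equation*}
The $\Vert U\Vert^2_{X_1,Q}$ portion of $\Vert R\Vert^2_{\wt X,Q}$ carries the prefactor $C\eta^{-2d_1-1}(Q^{-1}+Q^3\delta)^2\le\eta$ under hypothesis \eqref{E:parameter-2}. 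A backward Gronwall, most cleanly implemented via $\tilde g(t)=e^{\eta t}\Vert U(t)\Vert^2_{X_1,Q}$, then absorbs the self-reference term at the cost of upgrading $e^{-\eta t}$ to $e^{-2\eta t}$ and leaves behind the $|\tilde a_2^- - h(W_2)|^2$ forcing with coefficient $C\eta^{-2d_1-1}Q^6\delta^2$, which is exactly the stated form.

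The main obstacle I anticipate is this absorption: the weighted $W$-Lipschitz $(Q^{-1}+Q^3\delta)$ from Lemma \ref{L:G-Lip} must beat the loss $\eta^{-(1+d_1)}$ in Lemma \ref{L:A-cu}, while the factor $(1+\mu)<6/5$ coming from $\mathrm{Lip}(h)$ must not destabilize the balance. This is precisely the purpose of the parameter constraint \eqref{E:parameter-2}; once it holds, the rest reduces to careful bookkeeping with the weighted norms and the sharper componentwise estimates \eqref{E:hat-G-esti-2}.
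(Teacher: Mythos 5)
Your second half (the difference estimate) is essentially the paper's argument: subtract the equations with the linear part frozen at $\big(y_1,\hat G^T(W_1,h(W_1))\big)$, control the operator difference acting on $W_2$ and the nonlinear difference via \eqref{E:hat-G-esti-2} and the Lipschitz bound on $h$, apply Lemma \ref{L:A-cu}, and close by Gronwall on $e^{\eta t}\Vert W_2-W_1\Vert_{X_1,Q}^2$, with \eqref{E:parameter-2} supplying exactly the absorption $\eta^{-2d_1-1}(Q^{-1}+Q^3\delta)^2\le \eta/C$. That part is fine.

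The genuine gap is in the first part, the uniform bound $W_j(t)\in X^{cu}(C\delta)$, i.e.\ $\Vert V_j(t)\Vert_{X_1}\le C\delta$ for all $t\in[T,0]$. The proposition allows $T=-\infty$ (and this is the case actually needed, since $\wt h$ in \eqref{newh} integrates over $(-\infty,0]$), and the constant $C$ must be independent of $T$. Your proposed route through Proposition \ref{P:linear-0} cannot deliver this: the homogeneous part of that estimate carries the factor $e^{C\sigma|t-s|}$ with $\sigma=|\p_t y_j|_{L^\infty}>0$ (of size $C\delta^2$ at best), so over an arbitrarily long backward interval the bound degenerates like $e^{C\delta^2|T|}\delta$, no matter how small $\delta$ is relative to \eqref{E:parameter-2}; smallness of the forcing $\hat G^e$ does not help with this term. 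The paper's proof uses a different, structural observation: because of the cut-off, $\hat G^T(W,a^-)=0$ and $\hat G^e(W,a^-)=0$ whenever $\Vert V\Vert_{X_1}\ge\delta$, so in that regime $\p_t y_j=0$, hence by \eqref{E:V-perp} the finite-dimensional part $(I-\Pi^e_{c,y_j})V_j$ is constant and by \eqref{E:p_tVe} (with the anti-self-adjointness of $A_e(y)$ with respect to $L^e(y)$) the energy $\langle L_{c,y_j}\Pi^e_{c,y_j}V_j,\Pi^e_{c,y_j}V_j\rangle$ is exactly conserved; together with $\Vert\bar V_j\Vert_{X_1}<\delta$ and the uniform positivity of $L_{c,y}$ on $X^e_{c,y}$ this traps $\Vert V_j(t)\Vert_{X_1}\le C\delta$ uniformly in $t$, with no Gronwall loss (see also Remark \ref{R:bump}, which stresses precisely that the $\mathcal{F}$ term is controlled because $\p_t y$ vanishes there). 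Note also that your difference estimate quietly uses this uniform bound (the $A^{cu}$-difference acts on "the already-bounded $W_2$"), so the trapping argument is needed before the second half can be run.
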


\begin{proof} 
To analyze solutions to \eqref{lp} and \eqref{lp-M}, we first note   
\[
\hat G^T (W, a^-) =0,\quad \hat G^e (W, a^-) =0, \; \text{ if } \; \Vert V \Vert_{X_1} \ge \delta.
\]

Therefore if $\Vert V_j \Vert_{X_1} \ge \delta$,  \eqref{E:p_tVe}, \eqref{lp}, and decomposition \eqref{E:V-perp} yield 
\[
\p_t y_j =0, \quad \p_t \langle L_{c, y_j} \Pi_{c, y_j}^e V_j, \Pi_{c, y_j}^e V_j \rangle =0, \quad \p_t (I-\Pi_{c, y_j}^e) V_j =0, 
\]
which along with the initial condition and Lemma \ref{decomp2} yield
\begin{equation} \label{E:delta-1} 
\Vert V_j (t)\Vert_{X_1} \le C \delta, \quad \forall t\in [T, 0].
\end{equation} 

To estimate the difference, let 
\begin{align*}
&B(t) = \mathcal{F} (c, y_2) \big(\hat G^T(W_2, \tilde a_2^-),  \cdot\big) - \mathcal{F} (c, y_1) \big(\hat G^T(W_1, h(W_1)),  \cdot\big)
+ A_e (y_2) - A_e(y_1)\\
& B^{cu}(t) \triangleq  A^{cu}\big(y_2, \hat G^T( W_1, \tilde a_2^-) \big) - A^{cu} \big( y_1, \hat G^T(W_1, h(W_1)) \big) \\
&\qquad =diag\big(0, 0, 0, 0, B(t) \big).
\end{align*} 
The cut-off in the definition of $\hat G^T$, \eqref{lp}, and \eqref{E:hat-G-esti-1} imply 
\[
|\p_t y_j| = |\hat G^T| \le C \delta.
\]
From Lemma \ref{decomp2}, Lemma \ref{L:A_e}, \eqref{E:hat-G-esti-1}, \eqref{E:delta-1}, and  \eqref{E:delta-1}, we can estimate
\begin{align*}
&\Vert B^{cu} (t) W_2 (t) \Vert_{X_1, Q} = Q^2 \Vert B (t) V_2 (t) \Vert_{X_1}\\
\le & C \delta Q^2 \big(|y_2 - y_1| \big(1+ |\hat G^T (W_1, h(W_1))|\big) + |\hat G^T (W_2, \tilde a_2^-) - \hat G^T (W_1, h(W_1))|\big) \\
\le &C\delta Q^2 (\Vert W_2 - W_1 \Vert_{X^{cu}} +   | \tilde a_2^- -h(W_1)|).
\end{align*}

From the definitions of $W_j$, $j=1,2$, and decomposition \eqref{E:V-perp}, we have 
\begin{align*}
\p_t (W_2 - W_1) = &A^{cu} \big(y_1, \hat G^T(W_1, h(W_1)) \big)(W_2 - W_1) + B^{cu} W_2\\
& + \hat G^{cu} (W_2, \tilde a_2^- ) - \hat G^{cu} \big(W_1, h(W_1)\big).
\end{align*}

Applying Lemma \ref{L:A-cu} we obtain 
\begin{align*}
\Vert (W_2 - W_1)(t) \Vert_{X_1, Q}^2 \le &C \eta^{-2d_1} e^{-\eta t} \Vert (W_2 - W_1)(0) \Vert_{X_1, Q}^2 \\
&+ \eta^{-2d_1-1} \int_t^0 e^{\eta (\tau -t)} \big( \Vert (B^{cu} W_2) (\tau) \Vert_{\wt X, Q}^2 \\
&+ \Vert  \big(\hat G^{cu} (W_2, \tilde a_2^- ) - \hat G^{cu} \big(W_1, h(W_1)\big)\big)(\tau)  \Vert_{\wt X, Q}^2   \Big) d\tau.  
\end{align*}
The above estimate on $B^{cu}$ and Lemma \ref{L:G-Lip} including \eqref{E:hat-G-esti-2} 

imply 
\begin{align*}
\Vert (W_2 - &W_1)(t) \Vert_{X_1, Q}^2 \le C  \eta^{-2d_1} e^{-\eta t} \Vert (W_2 - W_1)(0) \Vert_{X_1, Q}^2 + \eta^{-2d_1-1}\int_t^0 e^{\eta (\tau -t)} \\
&\times \big( (Q^{-1} + \delta Q^3)^2 \Vert (W_2^e -W_1^e)(\tau)\Vert_{X_1, Q}^2 +  \delta^2 Q^6 | \big(\tilde a_2^- -h(W_1)\big) (\tau)|^2\big) d\tau.
\end{align*}
Since $h$ is Lipschitz with Lipschitz constant $\mu <1$, we obtain

\begin{align*}
e^{\eta t} \Vert  (&W_2 -W_1)(t) \Vert_{X_1, Q}^2 \le C   \eta^{-2d_1} \Vert (W_2 - W_1)(0) \Vert_{X_1, Q}^2 + \eta^{-2d_1-1}\int_t^0 e^{\eta \tau} \\
& \times \big( (Q^{-1} + \delta Q^3)^2 \Vert (W_2 -W_1)(\tau)\Vert_{X_1, Q}^2 + \delta^2 Q^6 | \big(\tilde a_2^- -h(W_2)\big) (\tau)|^2\big) d\tau.
\end{align*}

The estimate on $W_2 -W_1$ follows from the Gronwall inequality. 
\end{proof}

\begin{remark} \label{R:bump}
It is worth pointing out that the $\mathcal{F}$ term in the equation of $\p_t V$ can not be cut off as it ensures $V \in X_{c, y}^e$ if this holds initially. In the proof of the above proposition, this term was under control since it vanishes when $\Vert V \Vert_{X_1} = C \delta$ which implies $\p_t y=0$. Seemingly this argument heavily depends on the lack of growth of $e^{t A_e(y)}$ for any fixed $y$. If $e^{tA_e(y)}$ indeed induces some weak exponential growth backward in $t$, instead of the cut-off applied to the $V$ equation, a standard trick is to add a bump function to modify the $V$ equation so that it is actually slightly inflowing/decaying backward in $t$  for $\Vert V \Vert_{X_1} \ge C \delta$. The same estimates could be obtained subsequently.  
\end{remark}

\subsection{Lipschitz center-unstable manifold} \label{SS:LipWcs}

In this subsection, we will show that the transformation outlined in Subsection \ref{SS:GT} is well-defined and is a contraction on $\Gamma_{\mu, \delta}$ for appropriate $\mu, Q$, and $\delta$, which would imply the existence of a fixed point and thus a center-unstable manifold. For any $h \in \Gamma_{\mu, \delta}$, recall we attempted to define a new mapping $\wt h=\CT(h)$ whose value $\wt h(W) =\bar a^-$ at $W= (\bar y, \bar a^{d1}, \bar a^{d2}, \bar a^+, \bar V)$ is given by \eqref{newh}. 

\begin{lemma} \label{L:contraction-1} 
Fix $\eta \in (0, 1) \cap (0, \lambda)$. There exists $C>0$ independent of $Q, \mu, \delta$ and $\eta$, such that if \eqref{E:parameter-1},  \eqref{E:parameter-2}, and 
\begin{equation} \label{E:parameter-3}
C (\lambda-\eta)^{-1} \eta^{-(d_1+1)} Q^3  \delta^2 <1, \quad  C( \lambda -\eta)^{-1} \eta^{-d_1} \delta < \mu, 
\end{equation}
are satisfied, then $\mathcal{T}$ is a contraction on $\Gamma_{\mu, \delta}$. 
\end{lemma}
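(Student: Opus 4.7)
\smallskip

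\noindent\textbf{Proof plan.} The plan is to verify in sequence that (a) $\mathcal{T}(h)=\widetilde h$ is well-defined as an element of $C^0$ with $\|\widetilde h\|_{C^0}\le\delta/15$, (b) $\widetilde h(y,0,0,0,0)=0$ and $\widetilde h$ is translation invariant in the sense of \eqref{E:trans-inv-h}, (c) $\widetilde h$ has $\|\cdot\|_{X_1,Q}$--Lipschitz constant $\le\mu$, and (d) $\mathcal{T}$ is a contraction on $(\Gamma_{\mu,\delta},\|\cdot\|_{C^0})$. Throughout, the solution $W(t)$ to \eqref{lp} on $(-\infty,0]$ is supplied by Proposition \ref{P1} applied with $W_1=W_2=W$ and $\tilde a_2^-=h(W)$, which also gives $W(t)\in X^{cu}(C\delta)$ for all $t\le 0$ under \eqref{E:parameter-1}--\eqref{E:parameter-2}. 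The decay factor $\|e^{-tM_-}\|\le Ce^{\lambda t}$ for $t\le 0$ coming from \eqref{E:exp-M} provides the essential exponential weight that will be pitted against the weak growth $e^{-\eta t}$ of Proposition \ref{P1} via the requirement $\eta<\lambda$.

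For step (a), Lemma \ref{L:G-Lip} together with \eqref{E:hat-G-esti-2} yields the pointwise bound $|\hat G^{-}(W(t),h(W(t)))|\le C\delta^2$. Integrating $Ce^{\lambda t}\cdot C\delta^2$ over $(-\infty,0]$ gives $\|\widetilde h\|_{C^0}\le C\delta^2/\lambda$, which is $\le\delta/15$ provided $\delta$ is chosen small in \eqref{E:parameter-3}. For (b), $W(t)\equiv(y,0,0,0,0)$ is the unique solution starting from $(y,0,0,0,0)$ since $\hat G^{\alpha}$ and the relevant blocks of $A^{cu}$ all vanish there and $h(y,0,0,0,0)=0$; hence $\widetilde h(y,0,0,0,0)=0$. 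Translation invariance of $\widetilde h$ follows from the translation invariance of $\hat G^{cu,\pm}$ and of the bundle data in Lemma \ref{decomp2} (see \eqref{E:trans-inv-G}, \eqref{E:trans-inv-O}), which propagates from the translation invariance of $h$ to that of the flow of \eqref{lp} and hence to $\widetilde h$ through \eqref{newh}.

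For step (c), given $\bar W_1,\bar W_2\in X^{cu}(\delta)$, let $W_1,W_2$ be the two solutions to \eqref{lp}. Applying Proposition \ref{P1} with $\tilde a_2^-=h(W_2)$ makes the second term on its right-hand side vanish, leaving
\begin{equation*}
\|(W_2-W_1)(t)\|_{X_1,Q}\le C\eta^{-d_1}e^{-\eta t}\|\bar W_2-\bar W_1\|_{X_1,Q}, \qquad t\le 0.
\end{equation*}
Combining the unweighted Lipschitz bound $\|D\hat G^-\|\le C\delta$ from Lemma \ref{L:G-Lip} (noting $\|\cdot\|_{X^{cu}}\le\|\cdot\|_{X_1,Q}$ since $Q>1$) with $\mathrm{Lip}(h)\le\mu$ gives
\begin{equation*}
|\hat G^-(W_2,h(W_2))-\hat G^-(W_1,h(W_1))|\le C\delta(1+\mu)\|(W_2-W_1)(t)\|_{X_1,Q},
\end{equation*}
and integration against $Ce^{\lambda t}$ yields $\mathrm{Lip}(\widetilde h)\le C\delta\eta^{-d_1}(\lambda-\eta)^{-1}$, which is $\le\mu$ by the second half of \eqref{E:parameter-3}.

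Step (d) is essentially the same calculation with $h_1,h_2\in\Gamma_{\mu,\delta}$ and the common initial value $\bar W$: take $W_1$ solving \eqref{lp} for $h=h_1$ and $W_2$ solving \eqref{lp-M} with $\tilde a_2^-=h_2(W_2)$, so that $\tilde a_2^--h_1(W_2)=(h_2-h_1)(W_2)$. Proposition \ref{P1} then gives
\begin{equation*}
\|(W_2-W_1)(t)\|_{X_1,Q}\le C\eta^{-d_1-1}Q^3\delta\, e^{-\eta t}\|h_2-h_1\|_{C^0},
\end{equation*}
and splitting $h_2(W_2)-h_1(W_1)=(h_2-h_1)(W_2)+h_1(W_2)-h_1(W_1)$ and using $\mathrm{Lip}(h_1)\le\mu$ produces
\begin{equation*}
|\widetilde h_2(\bar W)-\widetilde h_1(\bar W)|\le C\delta\bigl(\eta^{-d_1-1}Q^3\delta(\lambda-\eta)^{-1}+\lambda^{-1}\bigr)\|h_2-h_1\|_{C^0}.
\end{equation*}
Both terms in the prefactor are strictly less than a fixed fraction of $1$ under \eqref{E:parameter-1}--\eqref{E:parameter-3}, giving the contraction.

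\smallskip

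\noindent\textbf{Main obstacle.} The substantive difficulty is not the integration against $e^{\lambda t}$ itself but the interplay of the three weights in the $Q$-metric. One must check that the same choice of $Q$ and $\delta$ simultaneously (i) makes the off-diagonal blocks absorbed into $\hat G^{cu}$ small in $\|\cdot\|_{L_Q}$ (Lemma \ref{L:G-Lip}, whence Proposition \ref{P1} via \eqref{E:parameter-2}), (ii) tames the factor $Q^6\delta^2$ arising from the $\tilde a_2^-$-perturbation in Proposition \ref{P1}, and (iii) keeps the Lipschitz constant of $\widetilde h$ below $\mu$ while leaving room for a contraction ratio. The first condition of \eqref{E:parameter-3} is precisely what matches the $Q^6\delta^2$ factor against the growth rate $\eta^{-(d_1+1)}$ from repeated use of the weak polynomial growth on $Y_1\oplus Y_2$ in \eqref{E:exp-M}; the second condition of \eqref{E:parameter-3} encodes the balance between $\lambda$ (the stable gap) and $\delta$ (the nonlinear smallness). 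Once these parameter choices are in place, the above four steps go through without further subtleties.
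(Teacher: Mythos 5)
Your proposal is correct and follows essentially the same route as the paper: solve \eqref{lp} backward in time, use Proposition \ref{P1} (with the $\tilde a_2^-$-term vanishing for the Lipschitz estimate and bounded by $\|h_2-h_1\|_{C^0}$ for the contraction estimate), and integrate against the decay $e^{\lambda t}$ of $e^{-tM_-}$ under \eqref{E:parameter-3}. Your extra $C\delta\lambda^{-1}\|h_2-h_1\|_{C^0}$ term in the contraction step, controlled via the second condition of \eqref{E:parameter-3}, is a harmless refinement of the paper's bookkeeping.
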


\begin{proof} 
We first prove that $\wt h \in \Gamma_{\mu, \delta}$ . Since $h(y, 0) =0$ and $\hat G^{cu, -}(y, 0)=0$, if its initial data $\bar W=(\bar y, 0)$,  then the solution to \eqref{lp} apparently is $W(t)=(\bar y, 0)$. Therefore $\wt h(\bar y, 0) =0$. From \eqref{E:hat-G-esti-2} and \eqref{E:exp-M}, it is easy to estimate 
\[
\Vert \wt h \Vert_{C^0} \le C \lambda^{-1} \delta^2 \le \delta/15,
\]
where \eqref{E:parameter-3} is used. 

From \eqref{E:trans-inv-G}, \eqref{E:trans-inv-h}, and that $\Vert \cdot \Vert_{X_1}$ is translation invariant, the cut-off is not affected by any spatial translation. Therefore $\big(y(t) + z, a^{d1}(t), a^{d2}(t), a^+(t), V(t, \cdot + z)\big)$ is a solution to \eqref{lp} for any $z\in \BR^3$ and any solution $(y, a^{d1}, a^{d2}, a^+, V)(t)$ to \eqref{lp}. Therefore the definition of $\wt h$ implies that it also satisfies \eqref{E:trans-inv-h}. 

Finally, we show the Lipschitz property for $\tilde h$. For any $\bar W_j \in X^{cu}(\delta)$, let $W_j(t) $, $t\le 0$, be the corresponding solutions to \eqref{lp} and $\bar a_j^-$ be given as in \eqref{newh}, for $j=1,2$. Applying Proposition \ref{P1} to these two solutions, we have, for any $t\le 0$, 
\begin{align*} 
& \Vert ( W_2 -  W_1)(t) \Vert_{X_1, Q} \le C \eta^{-d_1} e^{ - \eta t}  \Vert  \bar W_2 - \bar W_1\Vert_{X_1, Q}.
\end{align*}
Therefore, we obtain from \eqref{newh}, \eqref{E:exp-M}, and \eqref{E:hat-G-esti-2} that 
\begin{align*}
|\bar a_2^- - \bar a_1^-|
\le & C \delta \int_{-\infty}^0 e^{\lambda \tau} \Vert ( W_2 - W_1) (\tau) \Vert_{X_1, Q} d\tau 
\le  \frac {C \eta^{-d_1} \delta}{\lambda -\eta}  \Vert \bar W_2 - \bar W_1 \Vert_{X_1, Q}. 
\end{align*}
The desired Lipschitz property of $\wt h$ follows immediately from \eqref{E:parameter-3} and thus $\wt h \in \Gamma_{\mu, \delta}$.

To see the transformation $h \to \wt h$ is a contraction, given any $h_1, h_2 \in \Gamma_{\mu, \delta}$ and initial value $W \in X^{cu}(\delta)$, let $W_j(t) $, $t \le 0$, $j=1,2$, be the solutions to \eqref{lp} associated to $h_j$, with the initial value $W$. In applying Proposition \ref{P1}, we notice the corresponding 
\[
|(a_2 - h_1(W))(t)| \le \Vert h_2 - h_1 \Vert_{C^0}, \quad (W_1 - W_2)(0)=0,
\] 
and thus,  for any $t\le 0$,
\begin{align*} 
\Vert (W_2 - W_1) (t) \Vert_{X_1, Q}  \le & C \eta^{-d_1-1} Q^3 \delta e^{-\eta t}  \Vert h_2 - h_1 \Vert_{C^0}.
\end{align*}
Therefore \eqref{newh}, \eqref{E:exp-M}, and \eqref{E:hat-G-esti-2} again imply 
\[
|\bar a_2^- - \bar a_1^-| \le  C \eta^{-(1+d_1)} Q^3 \delta^2 \int_{-\infty}^0 e^{(\lambda - \eta)\tau} d\tau \Vert h_2 -h_1 \Vert_{C^0}. 
\]
Therefore \eqref{E:parameter-3} implies the contraction property. 
\end{proof}

Under conditions \eqref{E:parameter-1},  \eqref{E:parameter-2}, and \eqref{E:parameter-3}, which can apparently be satisfied by choosing $\mu, \delta, Q$, and $\eta$ carefully, Lemma \ref{L:contraction-1} implies 
\[
\exists | \, h^{cu} \in \Gamma_{\mu, \delta}, \; \text{ s. t. } \; \mathcal{T}(h^{cu}) = h^{cu}. 
\]

We are only concerned with $h^{cu}$ restricted to $\BR^{d_1+d_2 +d} \oplus \CX^e (\delta)$. Let 
\begin{align*}
W^{cu}& = graph(h^{cu})=  \big\{ (y, a^{d1}, a^{d2}, a^+,  a^-, V^e) \mid \\
&a^- = h^{cu} (y, a^{d1}, a^{d2}, a^+, V^e), \ (y, a^{d1}, a^{d2}, a^+, V^e) \in \BR^{d_1+d_2+d}\times \CX^e (\delta)\big\}
\end{align*}
and an even small submanifold for \eqref{tfeq} and \eqref{GP}
\begin{equation} \label{E:CW-cu}\begin{split}
\CW^{cu}(\CM) = \Phi\big( \big\{(y, a^{d1}, &a^{d2}, a^+,  a^-, V^e) \in W^{cu} \mid \\
&|a^{d1}|, |a^{d2}|, |a^+|, \Vert V^e\Vert_{X_1} <\delta/15
\big\} \big). 
\end{split}\end{equation}
$W^{cu}$ is a Lipschitz manifold due to the Lipschitz property of $h^{cu} \in \Gamma_{\mu, \delta}$. 

For any $\bar a^- = h^{cu} (\bar W)$, $\bar W= ( \bar y, \bar a^{d1}, \bar a^{d2}, \bar a^+, \bar V^e)\in \BR^{d_1+d_2 +d} \oplus \CX^e (\delta)$
corresponding to a point on $W^{cu}$, let $W(t) =(y, a^{d1}, a^{d2}, a^+, V^e)(t)$ be the corresponding solution of \eqref{lp} where $V^e(t)  \in X_{c, y(t)}^e (C\delta)$ due to Lemma \ref{L:reduce} and Proposition \ref{P1}. Let $a^-(t) = h^{cu} \big(W(t)\big)$ whenever $t$ satisfies $V^e(t)  \in X_{c, y(t)}^e (\delta)$. We can extend $a^-(t)$ to be a bounded function for $t \in \BR$. 
For any $|t_0|\ll1$ such that $\tilde W=W(t_0) \in X^{cu}(\delta)$, since \eqref{lp} is autonomous, $W(t+t_0)$ is its solution with initial value $\tilde W$. Since $\mathcal{T}(h^{cu}) = h^{cu}$, we can compute $\tilde a^-= a^-(t_0) = h^{cu}\big(\tilde W\big)$ satisfies 
\begin{align*}
\tilde a^- = &\int_{-\infty}^0 e^{-\tau M_-} \hat G^-\big((W, a^-) (t_0+\tau) \big) d\tau
= \int_{-\infty}^{t_0} e^{-(\tau - t_0) M_-} \hat G^-\big((W, a^-)(\tau) \big) d\tau \\
=& e^{t_0 M_-} \bar a^- + \int_0^{t_0}  e^{(t_0-\tau) M_-} \hat G^-\big((W, a^-) (\tau) \big) d\tau.
\end{align*}
As in Remark \ref{R:C-0}, the above equality does not depend on the extension of $a^-(t)$ as $\hat G^- (W, a^-) =0$ whenever $W \in X^{cu} \backslash X^{cu}(\delta)$. 
Therefore $(W, a^-)(t) = \big( W(t), h^{cu} (W(t))\big)$ is a solution to \eqref{E:cut-GP}. 
Along with the translation invariance \eqref{E:trans-inv-h} of $h^{cu}$, we have proved the local invariance of $W^{cu}$ under \eqref{E:cut-GP}. Since $h^{cu}$ is translation invariant, we obtain 

\begin{lemma} \label{L:invariance-1}
$W^{cu}$ is locally invariant under \eqref{E:cut-GP}, i.e. if $w(t)$ is a solution to \eqref{E:cut-GP} and $w(0) \in W^{cu}$, then exists $\epsilon>0$ such that $w(t) \in W^{cu}$ for all $t \in (-\epsilon, \epsilon)$. Moreover $W^{cu}$ satisfies, for any $z \in \BR^3$, 
\[
w(\cdot +z) \in W^{cu} \; \text{ if } \;  w\in W^{cu}. 
\] 
\end{lemma}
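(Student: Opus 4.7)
The plan is to leverage the fixed point identity $\mathcal{T}(h^{cu}) = h^{cu}$ together with the autonomy of \eqref{E:cut-GP} to show that a trajectory starting on the graph $W^{cu}$ stays (for a short time) in the graph, i.e.\ its $a^-$ component equals $h^{cu}$ evaluated at its $(y,a^{d1},a^{d2},a^+,V^e)$ component. The translation invariance will be a direct consequence of the translation invariance \eqref{E:trans-inv-h} of $h^{cu}\in \Gamma_{\mu,\delta}$ and of the cut-off system \eqref{E:cut-GP} inherited from \eqref{E:trans-inv-G} and the translation invariance of $\|\cdot\|_{X_1}$.

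For the local invariance, start with a point $w(0)=(\bar y,\bar a^{d1},\bar a^{d2},\bar a^+,\bar a^-,\bar V^e)\in \CW^{cu}(\CM)$, so that $\bar W\in \BR^{d_1+d_2+d}\oplus\CX^e(\delta/15)$ and $\bar a^-=h^{cu}(\bar W)$. Let $W(t)=(y,a^{d1},a^{d2},a^+,V^e)(t)$ be the solution of \eqref{lp} with $W(0)=\bar W$ produced in the construction; by Proposition \ref{P1} together with Lemma \ref{L:reduce} the trajectory remains in $\BR^{d_1+d_2+d}\oplus\CX^e(C\delta)$. For $|t_0|$ small enough to ensure $\tilde W:=W(t_0)\in \BR^{d_1+d_2+d}\oplus\CX^e(\delta)$, the autonomy of \eqref{lp} gives that $\tau\mapsto W(t_0+\tau)$ is again a solution of \eqref{lp} issued from $\tilde W$; hence applying $\mathcal{T}$ and the fixed point identity exactly as in the displayed computation preceding the lemma yields
\[
h^{cu}(\tilde W) \;=\; e^{t_0 M_-}\bar a^- + \int_0^{t_0} e^{(t_0-\tau)M_-}\hat G^-\!\big(W(\tau),h^{cu}(W(\tau))\big)\,d\tau.
\]
Setting $a^-(t):=h^{cu}(W(t))$ and differentiating in $t_0$ shows that $a^-$ satisfies \eqref{E:cut-GP-s} with initial value $\bar a^-$, while $W$ itself satisfies \eqref{E:cut-GP-cu} by the definition of \eqref{lp}. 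By uniqueness of the flow of \eqref{E:cut-GP}, the solution $w(t)$ in the statement coincides with $(W(t),a^-(t))$, proving $w(t)\in W^{cu}$ for all $t$ in a small interval around $0$.

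For the translation invariance, fix $z\in\BR^3$ and suppose $w\in W^{cu}$, i.e.\ $\bar a^-=h^{cu}(\bar y,\bar a^{d1},\bar a^{d2},\bar a^+,\bar V^e)$. The translation invariance \eqref{E:trans-inv-h} of $h^{cu}$ yields
\[
\bar a^- \;=\; h^{cu}\big(\bar y+z,\bar a^{d1},\bar a^{d2},\bar a^+,\bar V^e(\cdot+z)\big),
\]
which is exactly the statement $w(\cdot+z)\in W^{cu}$, once one checks that the translation acts componentwise in the bundle coordinates as claimed; this componentwise action follows from the translation equivariance of $\Pi_{c,y}^\alpha$ and of $Em^\perp$ recorded in \eqref{E:trans-inv} and \eqref{E:trans-inv-Em}.

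The only genuinely delicate point I expect is verifying that the trajectory does not instantaneously leave the domain $\BR^{d_1+d_2+d}\oplus\CX^e(\delta)$ on which $h^{cu}$ was constructed; this is handled by starting inside the smaller neighborhood $\delta/15$ in \eqref{E:CW-cu} and invoking the apriori bound of Proposition \ref{P1} for a short time. Everything else is formal bookkeeping from the fixed point identity and the translation symmetry already built into $\Gamma_{\mu,\delta}$.
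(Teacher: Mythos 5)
Your proposal is correct and takes essentially the same route as the paper: the fixed-point identity $\CT(h^{cu})=h^{cu}$ combined with the autonomy of \eqref{lp} (and Lemma \ref{L:reduce}, Proposition \ref{P1}) shows that $\big(W(t),h^{cu}(W(t))\big)$ solves \eqref{E:cut-GP} — the paper reads this off the Duhamel identity directly rather than differentiating in $t_0$, and leaves the uniqueness step you make explicit implicit — while translation invariance follows from \eqref{E:trans-inv-h}. The only (harmless) deviation is your restriction of the initial point to the $\delta/15$-ball, i.e.\ to $\CW^{cu}$, whereas the lemma concerns all of $W^{cu}$; this restriction is unnecessary, since $\Vert \bar V^e\Vert_{X_1}<\delta$ and continuity already give $W(t)\in X^{cu}(\delta)$ for $|t|$ small, and the paper covers the general case by extending $a^-(t)$ arbitrarily where $W(t)\notin X^{cu}(\delta)$ and using that $\hat G^-$ vanishes there (Remark \ref{R:C-0}).
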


Solutions starting on $W^{cu}$ might leave $W^{cu}$ through its boundary $\overline{h^{cu}\big(X^{cu} (\delta)\big)} \backslash h^{cu}\big(X^{cu}(\delta)\big)$. 

Since \eqref{E:cut-GP} coincides with the original system \eqref{eqy'}, \eqref{E:a^pm}, \eqref{E:a^d1}, \eqref{E:a^d2}, and \eqref{E:VE} when 
\[
|a^{d1}| + |a^{d2}| + |a^+| + |a^-|+\Vert V^e \Vert_{X_1} \le  \delta/3, 
\]
$\CW^{cu}$ is a locally invariant manifold of \eqref{tfeq} and \eqref{GP}. Namely

\begin{proposition} \label{P:inv-1}
If  $U(t) = \Phi\big( w(t)\big)$ solves \eqref{tfeq}, satisfies $U(0)  \in \CW^{cu}$, and $w(t) \in B^{d_1+d_2+2d}(\frac \delta{15}) \oplus \CX^e (\frac \delta{15})$ 
for all $t\in [-T, T]$, $T>0$,  
then $U(t) \in \CW^{cu}$, $t\in [-T, T]$.
\end{proposition}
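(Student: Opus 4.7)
The plan is to bootstrap the local invariance of the cut-off flow (Lemma \ref{L:invariance-1}) to the full interval $[-T, T]$, by observing that under the hypothesis the cut-off $\gamma_\delta$ is identically $1$ along the trajectory. Writing $w(t) = (y, a^{d1}, a^{d2}, a^+, a^-, V^e)(t)$, the assumption $w(t) \in B^{d_1+d_2+2d}(\delta/15) \oplus \CX^e(\delta/15)$ forces
\[
|a^{d1}(t)| + |a^{d2}(t)| + |a^+(t)| + |a^-(t)| + \Vert V^e(t)\Vert_{X_1} < \delta/3, \qquad t\in [-T, T],
\]
so that $\gamma_\delta\big(W(t), a^-(t)\big) = 1$. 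Along this trajectory the right-hand sides of the cut-off system \eqref{E:cut-GP} therefore reduce to the right-hand sides of the original coordinate equations \eqref{eqy'}, \eqref{E:a^pm}, \eqref{E:a^d1}, \eqref{E:a^d2}, \eqref{E:VE} (which $w(t)$ satisfies because $U$ solves \eqref{tfeq}). Hence $(W, a^-)(t)$ is itself a solution of the autonomous system \eqref{E:cut-GP}.

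The rest is a standard open/closed argument on
\[
J^+ \;\triangleq\; \{\, t \in [0, T] : w(s) \in W^{cu} \text{ for all } s \in [0, t]\, \},
\]
with a symmetric $J^-$ on $[-T, 0]$. The set $J^+$ contains $0$ (since $U(0) \in \CW^{cu} \subset W^{cu}$) and is closed in $[0, T]$: both sides of the graph condition $a^-(s) = h^{cu}(y, a^{d1}, a^{d2}, a^+, V^e)(s)$ are continuous in $s$, thanks to $X_1$-continuity of $w$ (inherited from $X_0$-continuity of the GP solution $U$ via the homeomorphism $\psi$) and continuity of $h^{cu}$ on $X^{cu}(\delta)$, a domain whose interior $w(s)$ safely inhabits since $\Vert V^e(s)\Vert_{X_1} < \delta/15 < \delta$. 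For relative openness, at any $t^* \in J^+$ the translated map $s \mapsto (W, a^-)(t^* + s)$ is again a solution of the autonomous system \eqref{E:cut-GP} starting from $(W(t^*), a^-(t^*)) \in W^{cu}$; Lemma \ref{L:invariance-1} therefore produces $\epsilon > 0$ such that $w(t^* + s) \in W^{cu}$ for $|s| < \epsilon$, placing $t^* + \epsilon/2$ in $J^+$. Consequently $J^+ = [0, T]$, and the same argument gives $J^- = [-T, 0]$.

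The main (and essentially only) obstacle is bookkeeping: the real content — well-posedness of \eqref{E:cut-GP}, local invariance, translation invariance, and the graph structure of $W^{cu}$ — is already packaged into Lemma \ref{L:invariance-1}, and the reduction to it is enabled cleanly by the $\gamma_\delta \equiv 1$ observation. Finally, combining $w(t) \in W^{cu}$ for $t\in[-T,T]$ with the strict smallness bounds in the hypothesis, which are exactly those defining $\CW^{cu}$ in \eqref{E:CW-cu}, we conclude $U(t) = \Phi(w(t)) \in \CW^{cu}$ for all $t \in [-T, T]$.
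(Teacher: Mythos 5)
Your proposal is correct and follows essentially the same route as the paper: the paper deduces Proposition \ref{P:inv-1} directly from the observation that \eqref{E:cut-GP} coincides with the original coordinate system when $|a^{d1}|+|a^{d2}|+|a^+|+|a^-|+\Vert V^e\Vert_{X_1}\le \delta/3$ together with the local invariance of $W^{cu}$ in Lemma \ref{L:invariance-1}. You merely make explicit the continuation (open/closed) argument and the passage from $W^{cu}$ back to $\CW^{cu}$ via \eqref{E:CW-cu}, which the paper leaves implicit.
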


\subsection{Local dynamics related to the center-unstable manifold} \label{SS:dyna}

We start with the local stability of the center-unstable manifold, which means that if a solution to \eqref{tfeq} stays in a $\delta_0$-neighborhood of $\mathcal{M}$ over a time interval, then its distance to $\CW^{cu}$ shrinks  exponentially. Since \eqref{tfeq} is equivalent to \eqref{E:cut-GP} for $U$ near $\mathcal{M}$, we only need to work with \eqref{E:cut-GP}. More precisely,

\begin{lemma} \label{L:StaWcu}
There exists $C>0$ independent of $Q, \mu, \delta$ and $\eta$, such that if \eqref{E:parameter-1},  \eqref{E:parameter-2}, \eqref{E:parameter-3}, and 
\begin{equation} \label{E:parameter-4} 
C\big( \eta^{-(2d_1+1)} Q^6\delta^2 + \delta^2 \eta^{-1} +  \eta^{-2(d_1+1)} Q^6\delta^4 (\lambda -2\eta)^{-1} \big) < \eta
\end{equation}
are satisfied, then for any $T>0$ and solution $(W, a^-) (t) = (y, a^{d1}, a^{d2}, a^+, a^-, V^e)(t) \in \BR^{d_1 + d_2+2d} \oplus \CX^e(\delta)
$, $t \in [0, T]$, to \eqref{E:cut-GP}, we have  
\begin{align*}
&|a^-(t) - h^{cu} \big(W(t)\big) | 
\le  C e^{-(\lambda -2\eta) t}  |a^-(0) - h^{cu} \big(W(0)\big) |
\end{align*}
for any $t \in [0, T]$.
\end{lemma}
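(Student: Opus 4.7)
The plan is to compare the actual solution $(W(t), a^{-}(t))$ with a family of reference solutions lying on $W^{cu}$. For each $s\in[0,T]$, let $\widetilde W^{s}(\tau)$, $\tau\in[0,s]$, denote the solution of \eqref{lp} with $h=h^{cu}$ and terminal value $\widetilde W^{s}(s)=W(s)$; by Lemma \ref{L:invariance-1} the pair $(\widetilde W^{s},h^{cu}(\widetilde W^{s}))$ is a solution of \eqref{E:cut-GP} contained in $W^{cu}$. Setting $b(\sigma):=a^{-}(\sigma)-h^{cu}(W(\sigma))$, a time-shifted application of Proposition \ref{P1} with $W_{1}=\widetilde W^{s}$, $W_{2}=W$ and $\widetilde a_{2}^{-}=a^{-}$ (so that $(W_{2}-W_{1})(s)=0$) yields
\[
\|W(\tau)-\widetilde W^{s}(\tau)\|_{X_{1},Q}^{2}\le C\eta^{-2d_{1}-1}Q^{6}\delta^{2}\int_{\tau}^{s}e^{2\eta(\sigma-\tau)}|b(\sigma)|^{2}\,d\sigma,\qquad \tau\in[0,s].
\]

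Next I would write the Duhamel formula for $a^{-}(s)$ using \eqref{E:cut-GP-s} and the analogous formula for $h^{cu}(W(s))=h^{cu}(\widetilde W^{s}(s))$ along the manifold solution $(\widetilde W^{s},h^{cu}(\widetilde W^{s}))$. Subtracting the two and using $\|e^{tM_{-}}\|\le Ce^{-\lambda t}$, the Lipschitz bound $|D\hat G^{-}|\le C\delta$ from Lemma \ref{L:G-Lip}, and the Lipschitz constant $\mu$ of $h^{cu}$ gives
\[
|b(s)|\le Ce^{-\lambda s}\bigl(|b(0)|+\mu\|W(0)-\widetilde W^{s}(0)\|_{X_{1},Q}\bigr)+C\delta\!\int_{0}^{s}\!e^{-\lambda(s-\sigma)}\!\bigl(\|W(\sigma)-\widetilde W^{s}(\sigma)\|_{X_{1},Q}+|b(\sigma)|\bigr)d\sigma.
\]
Setting $B:=\sup_{s\in[0,T]}e^{(\lambda-2\eta)s}|b(s)|$ and inserting the ansatz $|b(\sigma)|\le Be^{-(\lambda-2\eta)\sigma}$ into the first display produces (for $\eta<\lambda/3$) the estimate $\|W(\tau)-\widetilde W^{s}(\tau)\|_{X_{1},Q}\le C\eta^{-d_{1}-1/2}Q^{3}\delta(\lambda-3\eta)^{-1/2}Be^{-(\lambda-2\eta)\tau}$. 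Plugging this back into the second display and evaluating the resulting convolutions (each contributing a factor of $(2\eta)^{-1}$ and the common envelope $e^{-(\lambda-2\eta)s}$) gives
\[
B\le C|b(0)|+C\bigl(\mu Q^{3}\delta\eta^{-d_{1}-1/2}(\lambda-3\eta)^{-1/2}+\delta^{2}Q^{3}\eta^{-d_{1}-3/2}(\lambda-3\eta)^{-1/2}+\delta\eta^{-1}\bigr)B.
\]
The parameter hypothesis \eqref{E:parameter-4} is precisely calibrated so that the bracketed coefficient of $B$ is strictly less than $\tfrac12$; absorbing that term yields $B\le 2C|b(0)|$, which is the claimed exponential decay at rate $\lambda-2\eta$.

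The main technical obstacle is the self-referential coupling: $\|W-\widetilde W^{s}\|_{X_{1},Q}$ is itself bounded by an integral of $|b|^{2}$ over the sub-interval $[\tau,s]$, while the $a^{-}$-equation feeds this bound back into the estimate for $|b(s)|$. A naive direct forward comparison fails because the $M_{+}$ block in $A^{cu}$ forces $\|W-\widetilde W\|$ to grow at rate $\lambda$ forward in time; the trick of defining $\widetilde W^{s}$ \emph{backward} from each $s$ (rather than forward from $0$) is essential so that Proposition \ref{P1} can be invoked with vanishing endpoint data, producing the quadratic dependence on $b$ that is sharp enough to close. Tracking how every smallness parameter $(\mu,Q^{-1},\delta,\eta)$ enters the absorbable term is what motivates the specific form of \eqref{E:parameter-4}.
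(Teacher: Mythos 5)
Your construction is, up to the final step, exactly the paper's proof: the paper also anchors a comparison orbit at the current time by solving \eqref{lp} (with $h=h^{cu}$) backward from $\big(W(t),h^{cu}(W(t))\big)$, invokes Proposition \ref{P1} with vanishing endpoint difference to get precisely your first display (this is \eqref{E:temp-3.1} there, with Lemma \ref{L:reduce} quoted to keep both orbits in the bundle), and then writes the same Duhamel/Lipschitz inequality for $b(t)=a^-(t)-h^{cu}\big(W(t)\big)$ using $\Vert e^{tM_-}\Vert\le Ce^{-\lambda t}$, $\Vert D\hat G^-\Vert\le C\delta$ and $Lip\,h^{cu}\le\mu$. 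The only divergence is how the self-referential coupling is closed: the paper squares the inequality, estimates the three resulting convolutions $I_1,I_2,I_3$ by Cauchy--Schwarz (their coefficients are literally the three terms of \eqref{E:parameter-4}, the $(\lambda-2\eta)^{-1}$ coming from $I_3$) and finishes with Gronwall, whereas you run a weighted-supremum absorption with $B=\sup_{s\in[0,T]}e^{(\lambda-2\eta)s}|b(s)|$, which is legitimate since $B<\infty$ on the finite interval and yields a $T$-independent constant.

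The one genuine flaw is in your last sentence of the estimate: \eqref{E:parameter-4} is \emph{not} calibrated for your absorption constant. Your bracket carries the factors $(\lambda-3\eta)^{-1/2}$, produced by integrating $e^{2\eta\sigma}e^{-2(\lambda-2\eta)\sigma}$, and these do not occur in \eqref{E:parameter-4}, which only controls $(\lambda-2\eta)^{-1}$. From the first two terms of \eqref{E:parameter-4} one gets $Q^3\delta\le C^{-1/2}\eta^{d_1+1}$ and $\delta\le C^{-1/2}\eta$, so your bracket is only bounded by a multiple of $\big(\eta/(\lambda-3\eta)\big)^{1/2}+C^{-1/2}$; this is small only if $\eta$ is bounded away from $\lambda/3$, and the argument collapses entirely for $\eta\in[\lambda/3,\lambda/2)$, a range the lemma's hypotheses do not exclude. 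This is harmless for the paper's later applications (where $\eta$ is taken small), but to prove the lemma as stated you should either add an explicit restriction of the form $\eta\le\lambda/4$ (or $\eta\lesssim\lambda-3\eta$) to the hypotheses, or close the estimate as the paper does, by squaring and applying Gronwall to $e^{2(\lambda-\eta)t}|b(t)|^2$, for which \eqref{E:parameter-4} is tailor-made.
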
 

\begin{proof} 
Let 
\begin{align*}
& W_1(t) = W(t), 
\quad \tilde a_1^- (t) = a^-(t), \quad \Delta^- (t) =  a^-(t) - h^{cu} \big(W(t)
\big). 
\end{align*}
Fix $t \in (0, T]$ and let $\big( (W_2, \tilde a_2^-)(\tau)\big)$ 
be the solution to \eqref{E:cut-GP} with initial value $\big(W(t), h^{cu}(W(t))\big)$ 
at $\tau=t$. The invariance of $W^{cu}$ under \eqref{E:cut-GP} implies that, for all $\tau\le t$,
\[\tilde a_2^- (\tau) =  h^{cu} \big( W_2
(\tau)\big), \quad \Delta^- (t) = \tilde a_1^-(t) - \tilde a_2^-(t),
\]
where note the latter holds at $\tau =t$ only. Denote 
\begin{align*}
l(\tau) = &  \Vert (W_2-W_1) (\tau) \Vert_{X_1, Q}.
\end{align*}
Lemma \ref{L:reduce} implies $\tilde \Pi^e W_j (\tau) = W_j(\tau)$ for $\tau \le t$, $j=1,2$, since it holds at $\tau =t$. From Proposition \ref{P1}, we have, for any $\tau \le t$, 
\begin{equation} \label{E:temp-3.1}
l(\tau)^2 \le C \eta^{-(2d_1+1)} Q^6\delta^2 \int_\tau^t e^{2\eta (\tau' -\tau)} |\Delta^- (\tau' )|^2 d\tau'.
\end{equation} 

Using the variation of parameter formula, we have   
\begin{align*} 
&\Delta^- (t) =  (\tilde a_1^- - \tilde a_2^-) (t) \\
=& e^{t M_-} (\tilde a_1^-- \tilde a_2^- ) (0) + \int_0^t e^{(t-\tau) M_-} \Big( \hat G^- \big( (W_1, \tilde a_1^-)(\tau),  \big) - \hat G^-\big((W_2, \tilde a_2^- ) (\tau)\big) \Big) d\tau 
\end{align*}
It follows from \eqref{E:exp-M} and \eqref{E:hat-G-esti-2} that  
\[
|\Delta^- (t)| \le C e^{- \lambda t} | (\tilde a_1^-- \tilde a_2^- ) (0)| + C\delta \int_0^t e^{- \lambda (t-\tau)} \big( l(\tau) + |(\tilde a_1^- - \tilde a_2^-) (\tau)|\big)  d\tau.
\]
Since 
\[
|(\tilde a_1^- - \tilde a_2^-) (\tau)| \le |\Delta^- (\tau)| + \mu l(\tau), 
\]
we obtain 
\[
|\Delta^- (t)| \le C e^{- \lambda t}\big( l(0) + | \Delta^- (0)|\big) + C\delta \int_0^t e^{- \lambda (t-\tau)} \big( l(\tau) + |\Delta^- (\tau)|\big)  d\tau.
\]
We use \eqref{E:temp-3.1} to proceed 
\begin{align*} 
|\Delta^- (t)|^2 \le& C e^{- 2(\lambda -\eta) t}| \Delta^- (0)|^2 + C\eta^{-(2d_1+1)} Q^6\delta^2  e^{- 2\lambda t}  \int_0^t e^{2\eta \tau} |\Delta^- (\tau)|^2 d\tau \\
& + C\delta^2 \big(\int_0^t e^{-\lambda (t-\tau)} |\Delta^- (\tau)|  d\tau\big)^2 + C\eta^{-(2d_1+1)} Q^6\delta^4 \\
& \times \Big( \int_0^t  e^{-\lambda( t-\tau)}  \big(\int_\tau^t e^{2\eta (\tau' -\tau)} |\Delta^- (\tau')|^2 d\tau'  \big)^{\frac 12} d\tau\Big)^2 \\
\triangleq &C e^{- 2(\lambda - \eta) t}| \Delta^- (0)|^2 + I_1 + I_2 + I_3.
\end{align*}
The above integrals are estimated by Cauchy-Schwartz inequality. Firstly, 
\begin{align*}
I_2 \le & C\delta^2 \int_0^t e^{-2\eta (t-\tau)} d\tau \int_0^t e^{-2(\lambda -\eta) (t-\tau)} |\Delta^- (\tau)|^2  d\tau \\
\le & C\delta^2 \eta^{-1} \int_0^t e^{-2(\lambda -\eta) (t-\tau)} |\Delta^- (\tau)|^2  d\tau.
\end{align*}
Secondly, 
\begin{align*}
I_3 \le & C\eta^{-(2d_1+1)} Q^6\delta^4 e^{-2 \lambda t}  \int_0^t e^{2\eta \tau} d\tau \int_0^t \int_\tau^t e^{2(\lambda -\eta) \tau + 2\eta(\tau' -\tau)} |\Delta^- (\tau')|^2  d\tau' d\tau \\
\le & C \eta^{-2(d_1+1)} Q^6\delta^4 e^{-2(\lambda -\eta) t} \int_0^t |\Delta^- (\tau')|^2 \int_0^{\tau'} e^{2\eta \tau' + 2(\lambda -2 \eta) \tau} d\tau d\tau' \\
\le &  C \eta^{-2(d_1+1)} Q^6\delta^4 (\lambda -2\eta)^{-1}\int_0^t e^{-2(\lambda -\eta) (t-\tau)} |\Delta^- (\tau)|^2 d\tau. 
\end{align*}
Finally, it is also easy to see 
\begin{align*}
I_1 \le & C\eta^{-(2d_1+1)} Q^6\delta^2 \int_0^t e^{-2(\lambda -\eta) (t-\tau)} |\Delta^- (\tau)|^2 d\tau.
\end{align*}
Therefore we obtain 
\[
|\Delta^- (t)|^2 \le C e^{- 2(\lambda -\eta) t}| \Delta^- (0)|^2 + \eta \int_0^t e^{-2(\lambda -\eta) (t-\tau)} |\Delta^- (\tau)|^2 d\tau
\]
where assumption \eqref{E:parameter-4} was used. The desired estimates follows immediately from the Gronwall inequality and the proof is complete. 
\end{proof}

\begin{remark} \label{R:Lip-CU}
The proof of the local asymptotic stability of the center-unstable manifold could have been much simpler if $h^{cu}$ had been smooth, which will be proved in the next section. In that case, one could obtain the decay estimate using certain property derived by differentiating the invariance equation of $h^{cu}$. In this subsection, even though we went a greater length to obtain the result, it has the benefit to show that the local asymptotic stability still holds even if $h^{cu}$ is only Lipschitz, which is the case when $G$ is only Lipschitz. 
\end{remark}

A direct corollary of Lemma \ref{L:StaWcu} is that the following condition for a point to belong to $W^{cu}$. 

\begin{lemma} \label{L:character-cu}
There exists $C>0$ such that if $\eta \in (C \delta, 1)$ and $Q, \mu, \delta$ satisfy \eqref{E:parameter-1}, \eqref{E:parameter-2}, \eqref{E:parameter-3}, and \eqref{E:parameter-4}, then a solution of \eqref{E:cut-GP} $(W, a^-) (t) = (y, a^{d1}, a^{d2}, a^+, a^-, V^e)(t) \in W^{cu}$ if $(W, a^-) (t) \in \BR^{d_1 + d_2+2d} \oplus \CX^e(\delta)
$ for all $t \le 0$ and satisfies $\sup_{t\le 0} |a^-(t)| <\infty$.
\end{lemma}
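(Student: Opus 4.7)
The plan is to deduce Lemma \ref{L:character-cu} as an essentially direct corollary of the forward attractivity estimate in Lemma \ref{L:StaWcu}, via a time-shift argument. The center-unstable manifold is defined by $a^- = h^{cu}(W)$, and Lemma \ref{L:StaWcu} says the quantity $|a^-(t) - h^{cu}(W(t))|$ decays exponentially forward in time as long as the solution stays in $\BR^{d_1+d_2+2d} \oplus \CX^e(\delta)$. Reversing the logic, a solution which exists and remains bounded for all $t \le 0$ in this neighborhood must already satisfy $a^-(t) = h^{cu}(W(t))$, since otherwise running the flow backward from $0$ would have amplified the initial deviation exponentially, contradicting the boundedness of $a^-$ together with $\Vert h^{cu}\Vert_{C^0}\le \delta/15$.

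Concretely, fix any $s < 0$ and apply Lemma \ref{L:StaWcu} to the time-shifted solution $\tau \mapsto (W, a^-)(s + \tau)$ on $\tau \in [0, |s|]$, which remains in $\BR^{d_1+d_2+2d} \oplus \CX^e(\delta)$ by hypothesis. Since \eqref{E:cut-GP} is autonomous, the hypotheses of Lemma \ref{L:StaWcu} apply and yield
\[
\bigl|a^-(0) - h^{cu}\bigl(W(0)\bigr)\bigr| \le C e^{-(\lambda - 2\eta)|s|}\bigl|a^-(s) - h^{cu}\bigl(W(s)\bigr)\bigr|.
\]
By the defining property $\Vert h^{cu}\Vert_{C^0} \le \delta/15$ from \eqref{E:Gamma-mu} together with the hypothesis $\sup_{t\le 0}|a^-(t)| < \infty$, the right-hand side is bounded by a constant times $e^{-(\lambda - 2\eta)|s|}$, which tends to $0$ as $s \to -\infty$. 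Hence $a^-(0) = h^{cu}(W(0))$, i.e.\ $(W, a^-)(0) \in W^{cu}$. Applying the same argument at any $t_0 \le 0$ in place of $0$ gives $a^-(t_0) = h^{cu}(W(t_0))$, so the entire backward orbit lies on $W^{cu}$; the local invariance of $W^{cu}$ from Lemma \ref{L:invariance-1} then propagates this to the relevant forward times.

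I do not expect a genuine obstacle here; the only technical point is parameter compatibility. The cutoff bound $|\p_t y| = |\hat G^T| \le C\delta$ forces $\sigma \le C\delta$ in the linear estimates of Proposition \ref{P:linear-0} on which Lemma \ref{L:StaWcu} is built, and this is precisely what the explicit hypothesis $\eta \in (C\delta, 1)$ guarantees; simultaneously, the factor $(\lambda - 2\eta)^{-1}$ already appearing in \eqref{E:parameter-4} ensures $\lambda - 2\eta > 0$, so the decaying exponential above is genuinely decaying. All these conditions can be met simultaneously by first choosing $\delta$ small enough and then selecting $\eta \in (C\delta, \lambda/2)$, which is nonempty for small $\delta$.
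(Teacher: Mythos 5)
Your proof is correct and follows essentially the same route as the paper, which states Lemma \ref{L:character-cu} as a direct corollary of the forward attraction estimate in Lemma \ref{L:StaWcu}: since \eqref{E:cut-GP} is autonomous, applying that estimate on $[s,0]$ and letting $s\to-\infty$, with $\sup_{t\le 0}|a^-(t)|<\infty$ and $\Vert h^{cu}\Vert_{C^0}\le \delta/15$ controlling the right-hand side, forces $a^-(t_0)=h^{cu}\big(W(t_0)\big)$ at every $t_0\le 0$. Your remarks on parameter compatibility ($\eta\in(C\delta,1)$ and $\lambda-2\eta>0$ via \eqref{E:parameter-4}) are consistent with the paper's hypotheses.
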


Since \eqref{GP}, or equivalently \eqref{tfeq}, is equivalent to \eqref{E:cut-GP} in a neighborhood of $\CM$, we have 

\begin{corollary} \label{C:character-cu}
If $U(t) = \Phi\big(w(t) \big)$ is a solution of \eqref{tfeq} satisfying $w(t) \in B^{d_1+d_2+2d}(\frac \delta{15}) \oplus \CX^e(\frac \delta{15})$ for all $t\le 0$, then $U(t) \in \CW^{cu}$, $t \le 0$. 
\end{corollary}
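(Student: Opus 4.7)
The plan is to reduce the corollary directly to Lemma \ref{L:character-cu} by noting that in the small neighborhood where $w(t)$ lives, the cut-off system \eqref{E:cut-GP} coincides with the coordinate representation of \eqref{tfeq}. First, I would write out the coordinates
\[
w(t) = (y(t), a^{d1}(t), a^{d2}(t), a^+(t), a^-(t), V^e(t)) \in B^{d_1+d_2+2d}(\tfrac{\delta}{15}) \oplus \CX^e(\tfrac{\delta}{15}),
\]
and observe that for every $t \le 0$,
\[
|a^{d1}(t)| + |a^{d2}(t)| + |a^+(t)| + |a^-(t)| + \Vert V^e(t) \Vert_{X_1} < \tfrac{5\delta}{15} = \tfrac{\delta}{3}.
\]
Set $W(t) = (y(t), a^{d1}(t), a^{d2}(t), a^+(t), V^e(t))$ and $w = \Lambda(W, a^-)$ as in \eqref{E:lp-w}, whose $X_1$-norm is then less than $\delta/3$. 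By the definition \eqref{E:gamma} of $\gamma$ and the formula for $\gamma_\delta$, this gives $\gamma_\delta \equiv 1$ along the trajectory, so the modifications in $\hat G^\alpha$ are inactive. Since $U(t) = \Phi(w(t))$ solves \eqref{tfeq}, the derivation in Subsection \ref{csntw} shows that $(W, a^-)(t)$ satisfies the original system \eqref{eqy'}, \eqref{E:a^pm}, \eqref{E:a^d1}, \eqref{E:a^d2}, \eqref{E:VE}; because $\gamma_\delta \equiv 1$, this is exactly the modified system \eqref{E:cut-GP}.

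Next I would verify the hypotheses of Lemma \ref{L:character-cu}. The inclusion $(W, a^-)(t) \in \BR^{d_1+d_2+2d} \oplus \CX^e(\delta)$ for all $t \le 0$ is immediate from the assumption (with room to spare, $\delta/15 < \delta$), and $\sup_{t \le 0} |a^-(t)| \le \delta/15 < \infty$. Consequently, Lemma \ref{L:character-cu} yields $(W, a^-)(t) \in W^{cu}$, i.e.\ $a^-(t) = h^{cu}(W(t))$ for all $t \le 0$.

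Finally, to upgrade from $W^{cu}$ to $\CW^{cu}$, note the stricter bounds $|a^{d1}(t)|, |a^{d2}(t)|, |a^+(t)|, \Vert V^e(t)\Vert_{X_1} < \delta/15$ hold directly by hypothesis. Hence the point $(y, a^{d1}, a^{d2}, a^+, a^-, V^e)(t)$ lies in the subset of $W^{cu}$ defining $\CW^{cu}$ in \eqref{E:CW-cu}, so $U(t) = \Phi(w(t)) \in \CW^{cu}$ for every $t \le 0$. The only conceptual point requiring care is the equivalence between \eqref{tfeq} (in coordinates) and \eqref{E:cut-GP} along trajectories remaining inside the ball of radius $\delta/3$, but this is automatic from the choice of cut-off radius and was already recorded in the paragraph preceding Proposition \ref{P:inv-1}; no further estimates are needed.
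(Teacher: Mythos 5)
Your proof is correct and follows essentially the same route as the paper, which obtains the corollary in one line from Lemma \ref{L:character-cu} together with the observation that \eqref{tfeq}, written in the bundle coordinates, coincides with the cut-off system \eqref{E:cut-GP} as long as $|a^{d1}|+|a^{d2}|+|a^+|+|a^-|+\Vert V^e\Vert_{X_1}\le \delta/3$, after which the stricter $\delta/15$ bounds place the orbit in $\CW^{cu}$ via \eqref{E:CW-cu}. The only cosmetic point is that your intermediate claim $\Vert w\Vert_{X_1}<\delta/3$ is neither needed nor literally justified (constants from $K_{c,y}^{-1}$ and the bases enter $Em^\perp$); what makes $\gamma_\delta\equiv 1$ is the bound on the coordinate norms themselves, which you already have.
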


\begin{remark} \label{R:character-cu} 
Note that the assumption in the above lemma is satisfied if a solution stays in a neighborhood of $\CM$ for all $t \le 0$, which is the case of neighboring traveling waves.  
\end{remark}

More precisely, consider another travel wave $U_{\wt c} = (u_{\wt c}, v_{\wt c}) = \psi(w_{\wt c})$ with traveling velocity $\wt c \in \BR^3$. Here we recall $\psi$ and $\psi^{-1}$ is given in \eqref{E:psi} and \eqref{E:psi-inv}, respectively. 

\begin{lemma} \label{L:nTW}
There exists $\delta_0>0$ such that 
\begin{enumerate}
\item if $\wt c \wedge c =0$, $\Vert w_{\wt c} - w_c\Vert_{X_1} < \delta_0$, then $U_{\wt c} \in \CW^{cu}$;
\item or without $\wt c \wedge c=0$, but instead with the additional $\Vert x \nabla (w_{\wt c} - w_c \big)\Vert_{X_1} < \delta_0$ and that the angel between $c$ and $\wt c$ is bounded by $\delta_0$, then $u_{\wt c} \in \CW^{cu}$. 
\end{enumerate}
\end{lemma}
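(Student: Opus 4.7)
The plan is to invoke Corollary \ref{C:character-cu} (cf.\ Remark \ref{R:character-cu}): it suffices to show that when $U_{\tilde c}$ is taken as initial data for \eqref{tfeq}, the resulting trajectory admits a bundle--coordinate representation lying in $B^{d_1+d_2+2d}(\delta/15) \oplus \CX^e(\delta/15)$ for every $t \le 0$. Since $u(t,x) = U_{\tilde c}(x - \tilde c\, t)$ solves \eqref{GP}, in the $c$--traveling frame the trajectory is $U(t, x) = U_{\tilde c}\big(x + (c - \tilde c) t\big)$, and the translation commutation \eqref{E:psi-sym} gives
\[
\psi^{-1}\big(U(t)\big) = w_{\tilde c}\big(\cdot + (c - \tilde c) t\big).
\]

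The key idea is to exploit translation invariance to produce a bundle representation whose $a$--coordinates and $V^e$--fiber norm are \emph{time--independent}. By the implicit function theorem, when $\|w_{\tilde c} - w_c\|_{X_1}$ is small there exists a unique small $y_0 \in \BR^3$ with $\Pi_{c,0}^T\big(w_{\tilde c}(\cdot - y_0) - w_c\big) = 0$, satisfying $|y_0| \le C\|w_{\tilde c}-w_c\|_{X_1}$. Define, for $\alpha \in \{d1, d2, +, -\}$, the constants $a^{\alpha,*} := \Pi_{c, 0}^\alpha\big(w_{\tilde c}(\cdot - y_0) - w_c\big)$ and $V^{e,*} := \Pi_{c, 0}^e\big(w_{\tilde c}(\cdot - y_0) - w_c\big) \in X_{c,0}^e$. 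Setting $y(t) := (c - \tilde c) t + y_0$, the translation invariance of $\Pi_{c,y}^\alpha$ from Lemma \ref{decomp2} yields, for every $t$,
\[
\Pi_{c, y(t)}^\alpha\big(\psi^{-1}(U(t)) - w_c(\cdot + y(t))\big) = \big(\Pi_{c,0}^\alpha(w_{\tilde c}(\cdot - y_0) - w_c)\big)(\cdot + y(t)),
\]
so the bundle coordinates $a^{\alpha}(t) \equiv a^{\alpha,*}$ are constants, $V^e(t) = V^{e,*}(\cdot + y(t)) \in X_{c,y(t)}^e$, and $\|V^e(t)\|_{X_1} = \|V^{e,*}\|_{X_1}$ is time--independent. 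Each component is bounded by $C\|w_{\tilde c}-w_c\|_{X_1}$, so it suffices to arrange $\|w_{\tilde c}-w_c\|_{X_1} < \delta/(15C)$.

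For Case 1, this is immediate from the hypothesis upon choosing $\delta_0$ small. For Case 2, we must derive an $X_1$ bound from the rotational data. Using the rotational invariance in \eqref{E:psi-sym}, factor $\tilde c = \rho R c$ with $\rho = |\tilde c|/|c|$ and $R \in SO(3)$ of rotation angle $\theta \le C\delta_0$, so that $w_{\tilde c} = w_{\rho c} \circ R^{-1}$. Split
\[
w_{\tilde c} - w_c = \big[w_{\rho c}(R^{-1}\,\cdot\,) - w_c(R^{-1}\,\cdot\,)\big] + \big[w_c(R^{-1}\,\cdot\,) - w_c\big].
\]
The first bracket has $X_1$ norm equal to $\|w_{\rho c} - w_c\|_{X_1}$ by the $SO(3)$--invariance of the $X_1$ norm, and is small via the smoothness of the parallel branch $c \mapsto w_c$ provided $|\rho - 1|$ is small (this reduces to Case 1 applied to the pair $\rho c, c$). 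The second bracket, arising from a small rotation, is expanded by Taylor's formula as $-\theta(v \wedge x)\cdot\nabla w_c + O(\theta^2)$; its $X_1$ norm is controlled by $\theta$ times a weighted--gradient norm, which by Hardy--type manipulations combined with the decay \eqref{E:TW-decay} is in turn controlled by $\|x\nabla(w_{\tilde c}-w_c)\|_{X_1}$ together with the small--angle bound.

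The main technical obstacle is the rotational estimate in Case 2: the angular derivative $(v\wedge x)\cdot\nabla w_c$ of $w_c$ typically fails to lie in $X_1$ under \eqref{E:TW-decay} alone, so one cannot obtain $\|w_{\tilde c}-w_c\|_{X_1}$ small from a naive $X_1$--Taylor expansion in $\theta$. This explains the weighted--norm hypothesis $\|x\nabla(w_{\tilde c}-w_c)\|_{X_1} < \delta_0$ in place of an $X_1$ bound, and the estimate must be closed uniformly in $\theta$ while tracking the rotation axis $v$.
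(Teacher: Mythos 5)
Your Case 1 is essentially the paper's argument (the trajectory of $U_{\wt c}$ in the $c$-frame is a time-dependent spatial translate of $w_{\wt c}$, so by translation invariance of the norm, of the projections in Lemma \ref{decomp2} and of $K_{c,y}$, its bundle coordinates have time-independent size $O(\Vert w_{\wt c}-w_c\Vert_{X_1})$, and Corollary \ref{C:character-cu} applies); it is in fact more explicit than the paper. The only small imprecision is that the genuine coordinates of $\Phi$ are obtained after applying $K_{c,y}$ to the difference, so your projections should be conjugated by $K_{c,y}$ — harmless, since $K_{c,y}^{\pm1}$ are uniformly bounded and translation covariant.

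Case 2, however, has a genuine gap. You factor $w_{\wt c}=w_{\rho c}\circ R^{-1}$ and control $\Vert w_{\rho c}-w_c\Vert_{X_1}$ ``via the smoothness of the parallel branch $c\mapsto w_c$''; no branch of traveling waves, let alone its continuity in the speed, is assumed anywhere in the paper, nothing in the hypotheses makes $|\rho-1|$ small, and ``reducing to Case 1'' is circular since Case 1's hypothesis is exactly the smallness you are trying to establish (and Case 1 concludes membership in $\CW^{cu}$, not an estimate). Moreover, you aim the weighted hypothesis at the wrong term: $\Vert x\nabla(w_{\wt c}-w_c)\Vert_{X_1}<\delta_0$ gives no control of $x\nabla w_c$, so it cannot close the Taylor expansion of $w_c(R^{-1}\cdot)-w_c$ — and, as you yourself note, $x\nabla w_c$ need not lie in $X_1$ under \eqref{E:TW-decay}. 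That term actually needs no rate: $w_c$ is a single fixed element of $X_1$ and rotations act strongly continuously on $H^1\times\dot H^1$, so $\Vert w_c(O\cdot)-w_c\Vert_{X_1}\to0$ as the rotation angle tends to $0$. The place where the weighted hypothesis is genuinely needed is the rotation acting on the $\wt c$-dependent part. This is how the paper proceeds: choose a near-identity $O$ aligning $\wt c$ with $c$, bound $\Vert w_c(O\cdot)-w_c\Vert_{X_1}$ by plain strong continuity, and then show $\Vert w_{\wt c}-w_c(O\cdot)\Vert_{X_1}\ll1$ from the hypotheses; in the term $(w_{\wt c}-w_c)(O\cdot)-(w_{\wt c}-w_c)$ one has no modulus of continuity uniform in $\wt c$, and it is precisely $\Vert x\nabla(w_{\wt c}-w_c)\Vert_{X_1}$ times the small angle that controls it. The paper's reduction only ever rotates the two given profiles and never compares waves of different speeds along the same direction, which is exactly the comparison your factorization through $w_{\rho c}$ forces and which you cannot justify. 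With both smallness statements in hand one concludes as in Case 1 via Corollary \ref{C:character-cu}; as written, your Case 2 does not close.
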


\begin{proof}
The solution to \eqref{w1w2} corresponding to $U_{\wt c}$ is given by $w_{\wt c}(x - \wt c t)$.  If $\wt c$ and $c$ are parallel, then $ w_c (\cdot - \wt c t) \in \psi^{-1}(\CM)$ and $\Vert w_{\wt c} (\cdot - \wt c t) - w_c (\cdot - \wt c t) \Vert_{X_1} < \delta_0$ for all $t \le 0$, then $U_{\wt c} \in \CW^{cu}$ by Corollary \ref{C:character-cu}.  

In the general case, there exists a near identity orthogonal matrix $O_{3\times 3}$ such that $O \wt c= |\wt c||c|^{-1} c$. It is easy to verify that $w_c\big(O(x - \wt ct)\big)$ is a traveling wave of \eqref{w1w2} with traveling velocity $|\wt c||c|^{-1} c$. $|O-I | \ll1$ yields that $w_c \big(Ox)$ is close to $w_c$ satisfying the assumption of case (1), therefore $U_c (Ox) \in \CW^{cu}$. Our assumptions imply $\Vert w_{\wt c} - w(O\cdot)\Vert_{X_1} \ll1$ and thus Corollary \ref{C:character-cu} implies $U_{\wt c} \in \CW^{cu}$. 
\end{proof}

\subsection{Construction of Local Center-Stable Manifolds}
Basically by reversing the time in the previous procedure, we can construct a local Lipschitz center-stable manifold $W^{cs}$ of $\CM$. It is given by the graph of a function $h^{cs} : B^{d_1+d_2+d} (\delta) \oplus \CX^e (\delta) \to \BR^d$, 
\begin{align*}
\CW^{cs}(\CM)  = \Phi\big( \big\{ &a^+ = h^{cs} (y, a^{d1}, a^{d2}, a^-, V^e) \mid \\
& (y, a^{d1}, a^{d2}, a^-, V^e) \in B^{d_1+d_2+d} (\delta) \oplus \CX^e (\delta)\big\} \big). 
\end{align*}

We briefly outline the steps here. Let $X^{cs} =\BR^{3+d_1+d_2+d} \times X_1$ be same as $X^{cu}$ and equipped with the same $\Vert \cdot \Vert_{X_1, Q}$ metric as in \eqref{E:Q-norm}. The set $\Gamma_{\mu, \delta}^{cs}$ of mappings $h: X^{cs} (\delta) \to \BR^d$ also takes the same form as $\Gamma^{cu}_{\mu, \delta}$. 

On $X^{cs} \times \BR^d$, we rewrite \eqref{E:cut-GP} as
\begin{subequations} \label{E:cut-GP-1}
\begin{equation} \label{E:cut-GP-cs}  
\p_t W = A^{cs}\big(y, \hat G^T (W, a^+) \big) W + \hat G^{cs}(W, a^+)
\end{equation} 
\begin{equation} \label{E:cut-GP-u}
\p_t a^+ = M_+ a^+ + \hat G^+ (W, a^+)
\end{equation}
\end{subequations}
where $\hat G^{cs} = (\hat G^T, \hat G^{d1}, \hat G^{d2}, \hat G^-, \hat G^e)$ are as defined in Subsection \ref{SS:GT} and 
\[
A^{cs} (y, \tilde y) = diag\big( 0, M_1, M_2, M_-, \Pi_{c, y}^e J L_{c, y} \Pi_{c, y}^e + \mathcal{F} (c, y) (\tilde y, \cdot ) \big).
\]

For any $h$ and $\bar W \in X^{cs}(\delta)$, let $W(t)$, $t\ge 0$, be the solution to 
\[
\p_t W = A^{cs} \big(y, \hat G^T(W, h(W)\big) W + \hat G^{cs} \big(W, h(W) \big), \qquad W(0)= \bar W. 
\]
and define $\widetilde{h}(\bar W)$ as 
\[
\widetilde{h} (\bar W) = \bar a^+ =- \int_0^\infty e^{-\tau M_+}\hat{G}^{+} \big(W(t), h(W(t))\big) dt
\]
and $\CT^{cs} (h) =\wt h$. Following exactly the same procedure, one proves that this defines a contraction mapping on $\Gamma_{\mu, \delta}^{cs}$, the graph of whose fixed point, restricted to $\BR^{d_1+d_2+d} (\frac \delta{15}) \oplus \CX^e (\frac \delta{15})$, leads to the locally invariant Lipschitz center stable manifold of $\CM$. 

\begin{proposition} \label{P:CS}
There exist $ \delta>0$ such that there exists $h^{cs} \in \Gamma_{\mu, \delta}^{cs}$ and  
\begin{enumerate}
\item the center-stable manifold 
\begin{align*}
&\CW^{cs}= \Phi\big(\{ (W, a^+) \in W^{cs} \mid W \in B^{d_1+d_2+d} (\delta/15) \oplus \CX^e (\delta/{15})\}\big)
\end{align*}
is locally invariant under \eqref{tfeq}, where  
\[
W^{cs} = graph(h^{cs})=\big\{ a^+ = h^{cs} (W
) \mid 
W \in \BR^{d_1+d_2+d}\oplus \CX^e (\delta)\big\}, 
\]
locally invariant under \eqref{E:cut-GP-1}. 
\item There exists $C>0$ independent of $Q, \mu, \delta$ and $\eta\in (0, 1)$, such that if \eqref{E:parameter-1},  \eqref{E:parameter-2}, \eqref{E:parameter-3}, and \eqref{E:parameter-4} are satisfied, then for any $T>0$ and any solution $U(t) = \Phi\big((W, a^+)(t)\big)$ to \eqref{E:cut-GP-1} with $(W, a^+)(t) \in \BR^{d_1 + d_2+2d} \oplus \CX^e(\delta/15)$, $t \in [0, T]$, we have  
\begin{align*}
&|a^+(t) - h^{cs} \big(W(t) 
\big) | 
\ge  C e^{(\lambda -2\eta) t}  |a^+(0) - h^{cs} \big(W(0) 
\big) |
\end{align*}
for any $t \in [0, T]$. 
\item A solution of \eqref{tfeq} $\Phi\big((W, a^+)(t)\big) \in \CW^{cs}$ for all $t \ge 0$ if $(W, a^+)(t) \in B^{d_1 + d_2+2d} (\delta/15) \oplus \CX^e(\delta/15)$ for all $t \ge 0$.
\end{enumerate}
\end{proposition}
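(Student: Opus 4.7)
The plan is to mirror Section \ref{S:InMa} with time reversed, exploiting the symmetry $\sigma(M_+) = -\sigma(M_-)$ from Lemma \ref{decomp2} and the time-reversal invariance of the linear space-time estimate in Proposition \ref{P:linear-0}. Concretely, for any $h \in \Gamma_{\mu,\delta}^{cs}$ and $\bar W \in X^{cs}(\delta)$, the transformation is defined by $\mathcal{T}^{cs}(h)(\bar W) = -\int_0^\infty e^{-\tau M_+} \hat G^+\bigl(W(\tau), h(W(\tau))\bigr) d\tau$, where $W(t)$, $t \ge 0$, solves \eqref{E:cut-GP-cs} with $a^+$ replaced by $h(W)$ and initial value $\bar W$. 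The integrand decays at rate $Ce^{-\lambda\tau}\delta^2$ by \eqref{E:exp-M} and \eqref{E:hat-G-esti-2}, so the integral converges and $\Vert \mathcal{T}^{cs}(h)\Vert_{C^0} \le \delta/15$.

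For part (1), the forward-time analog of Proposition \ref{P1} holds with essentially the same proof, since Lemma \ref{L:A-cu} (which rests on the time-symmetric Proposition \ref{P:linear-0}) gives the same growth control $\eta^{-2d_1} e^{\eta|t-s|}$ in either time direction for the $M_1, M_2, V$ blocks, while the role of the exponentially decaying block is played by the now forward-stable $M_-$. Repeating the contraction argument of Lemma \ref{L:contraction-1} verbatim with $M_-$ replaced by $M_+$ and backward integrals over $(-\infty, 0]$ replaced by forward integrals over $[0, \infty)$, under \eqref{E:parameter-1}--\eqref{E:parameter-3} one obtains a unique fixed point $h^{cs} \in \Gamma_{\mu,\delta}^{cs}$. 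The argument of Lemma \ref{L:invariance-1} then transfers directly to show that $W^{cs} = \mathrm{graph}(h^{cs})$ is locally invariant under \eqref{E:cut-GP-1}, and since the cutoff $\gamma_\delta \equiv 1$ on the smaller ball of radius $\delta/15$, $\CW^{cs}$ is locally invariant under \eqref{tfeq}.

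For part (2), I would adapt the proof of Lemma \ref{L:StaWcu}. Fix $t \in (0, T]$ and let $(W_2, \tilde a_2^+)(\tau)$, $\tau \in [0, t]$, be the solution of \eqref{E:cut-GP-1} running backward from $\tau = t$ with initial data $(W(t), h^{cs}(W(t)))$; by the local invariance of $W^{cs}$ under \eqref{E:cut-GP-1} (applied backward in $\tau$), one has $\tilde a_2^+(\tau) \equiv h^{cs}(W_2(\tau))$. With $\Delta^+(\tau) = a^+(\tau) - h^{cs}(W(\tau))$ and $l(\tau) = \Vert(W_2 - W_1)(\tau)\Vert_{X_1,Q}$, the forward-time analog of Proposition \ref{P1} applied after shifting the time origin to $\tau = t$ gives $l(\tau)^2 \le C\eta^{-(2d_1+1)}Q^6\delta^2 \int_\tau^t e^{-2\eta(\tau'-\tau)} |\Delta^+(\tau')|^2 d\tau'$. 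Solving the $a^+$ equation from $\tau=0$ to $\tau=t$ and using $\Vert e^{-\tau M_+}\Vert \le C e^{-\lambda\tau}$ produces, after rearranging, a \emph{lower} bound of the form $|\Delta^+(t)| \ge C e^{\lambda t}|\Delta^+(0)| - (\text{error terms involving } l \text{ and } \Delta^+)$. Running through the Cauchy--Schwartz and Gronwall bookkeeping from the proof of Lemma \ref{L:StaWcu}, but with all inequalities reversed so that the error terms are absorbed into the dominant exponential growth rather than decay, yields, under \eqref{E:parameter-4}, the claimed estimate $|\Delta^+(t)| \ge C e^{(\lambda - 2\eta)t}|\Delta^+(0)|$.

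Part (3) is then immediate: if $(W, a^+)(t) \in B^{d_1+d_2+2d}(\delta/15) \oplus \CX^e(\delta/15)$ for all $t \ge 0$, then $|\Delta^+(t)| \le 2\delta/15$ for all $t$ by the $C^0$ bound on $h^{cs}$, while part (2) would force $|\Delta^+(t)| \to \infty$ exponentially unless $\Delta^+(0) = 0$; so $a^+(0) = h^{cs}(W(0))$, and applying the same reasoning at every subsequent time gives $\Phi\bigl((W,a^+)(t)\bigr) \in \CW^{cs}$ throughout. The main obstacle is the careful time-reversal of the Gronwall argument of Lemma \ref{L:StaWcu}: one must verify that the error terms arising from the cutoff nonlinearity $\hat G^+$ and the Lipschitz gap $|\tilde a_1^+ - \tilde a_2^+| \le |\Delta^+| + \mu l$ remain controllable as subdominant perturbations of the $e^{(\lambda - 2\eta)t}$ growth, rather than being swallowed by exponential decay as in the center-unstable case. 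Once this sign bookkeeping is handled correctly, everything else is a mechanical translation of the earlier arguments under the symmetry $M_+ \leftrightarrow -M_-$.
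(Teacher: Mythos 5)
Your construction of $h^{cs}$ and your proof of part (1) follow the paper's route exactly: the paper defines the transformation by the same formula $\wt h(\bar W)=-\int_0^\infty e^{-\tau M_+}\hat G^+\big(W(\tau),h(W(\tau))\big)\,d\tau$ with $W$ solved forward in time, and then invokes verbatim time-reversed versions of Lemma \ref{L:contraction-1} and Lemma \ref{L:invariance-1}; your remark that the forward analogues of Lemma \ref{L:A-cu} and Proposition \ref{P1} hold because Proposition \ref{P:linear-0} and the bounds on $e^{tM_1}, e^{tM_2}$ are symmetric in time, with $M_-$ now playing the benign (forward-decaying) role, is precisely the point. Part (3) as you derive it from part (2) is also the intended argument.

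The soft spot is part (2). The paper does not rerun the Gronwall argument forward in time with a lower bound in mind; it states that the growth estimate ``follows directly from the decay estimate of $a^+(t)$, $t<0$, which is parallel to Lemma \ref{L:StaWcu} \dots in the opposite time evolution direction.'' That is, one proves, by a verbatim copy of the Gronwall argument of Lemma \ref{L:StaWcu} in the reversed time variable (using $\Vert e^{\tau M_+}\Vert\le Ce^{\lambda\tau}$ for $\tau\le 0$), that orbits are attracted to $W^{cs}$ backward in time, so that for $0\le s\le t$ one has $|a^+(s)-h^{cs}(W(s))|\le Ce^{-(\lambda-2\eta)(t-s)}\,|a^+(t)-h^{cs}(W(t))|$; taking $s=0$ and rearranging this \emph{upper} bound gives part (2) immediately. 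Your plan instead is to run ``the Cauchy--Schwartz and Gronwall bookkeeping \dots with all inequalities reversed so that the error terms are absorbed into the dominant exponential growth.'' As stated this step is a genuine gap: in the forward Duhamel representation of $\Delta^+(t)$ the error terms involving $\delta\,(l(\tau)+|\Delta^+(\tau)|)$ integrated against the growing kernel $e^{\lambda(t-\tau)}$ are not signed, so in a lower bound they must be subtracted, and there is no ``reverse Gronwall'' that absorbs an unsigned error into a growing main term; to close the argument directly you would need an a priori bound on $l(\tau)$ and $|\Delta^+(\tau)|$ of exactly the exponential type you are trying to prove, i.e.\ a bootstrap, not a sign flip. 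The repair is already contained in your own setup: the comparison solution $(W_2,\tilde a_2^+)$ launched backward from $\tau=t$ with data on $W^{cs}$ is precisely the configuration of Lemma \ref{L:StaWcu} with time reversed, so prove the decay (upper-bound) estimate in the backward variable and then invert it; no new inequality manipulation is required.
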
 

The estimate in part (2) on the growth of $a^+(t)$, $t>0$, for any solution follows directly from the decay estimate of $a^+(t)$, $t<0$, which is parallel to Lemma \ref{L:StaWcu} for $W^{cu}$ in the opposite time evolution direction.  

\begin{remark} \label{R:nTW-cs}
The local invariance of $\CW^{cs}$ is in the same sense as in Proposition \ref{P:inv-1}. 

Like $\CW^{cu}$, $\CW^{cs}$ is translation invariant in the sense as in Lemma \ref{L:invariance-1}, and $\CW^{cs}$ is Lipschitz. 
As in Lemma \ref{L:nTW}, all neighboring traveling waves belong to $W^{cs}$ under the same assumptions. 
\end{remark}

\begin{remark} \label{R:instability}
The above statement (2) implies that, if the initial value is not on the center-stable manifold, then the solution would eventually leave the $\frac \delta{15}$-neighborhood of $\CM$, and thus $\CM$ is orbitally unstable. 
\end{remark}

\subsection{Local Center Manifolds} 

A center manifold $\CW^c = \Phi(W^c)$ is given by the intersection of a center-unstable and a center-stable manifold, and thus it is also locally invariant and extends in the directions of the center subspace $X_{c, y}^T \oplus X_{c, y}^{d1} \oplus X_{c, y}^{d2} \oplus X_{c, y}^e$ at any $y$. For $0< \delta \ll1$, $(y, a^{d1}, a^{d2}, a^+, a^-, V^e) \in \BR^{d_1 + d_2+2d} \oplus \CX^e(\delta)$ belongs to $W^c$ if and only if 
\begin{equation}\label{fpcm}
{a}^{-}=h^{cu}(y,a^{d1}, a^{d2}, a^{+},V^{e}), \quad {a}^{+}=h^{cs}(y,a^{d1}, a^{d2}, a^{+},V^{e}).
\end{equation}
The 

Lipschitz property  implies that \eqref{fpcm} is equivalent to that $(a^+, a^-)$ is the fixed point of a contraction $(h^{cu}, h^{cs})$ with Lipschitz constant $\mu$. 
Therefore we obtain 

\begin{proposition} \label{P:CM}
There exists $C>0$ independent of $Q, \mu, \delta$ and $\eta\in (0, 1)$, such that if \eqref{E:parameter-1},  \eqref{E:parameter-2}, \eqref{E:parameter-3}, and \eqref{E:parameter-4}
are satisfied, then 
there exists $h^{c}: \BR^{d_1 + d_2} \oplus \CX^e(\delta) \to \BR^{2d}$ such that 
\begin{enumerate}
\item the center manifold 
\begin{align*}
\CW^{c}= \Phi\big(\{ &(y, a^{d1}, a^{d2}, a^+, a^-, V^e) \in W^{c} \mid \\
&(y, a^{d1}, a^{d2}, V^e) \in B^{d_1+d_2} (\delta/15) \oplus \CX^e (\delta/{15})\}\big)
\end{align*}
is locally invariant under \eqref{tfeq}, where  
\begin{align*}
W^c= graph(h^{c})= &graph(h^{cu}) \cap graph(h^{cs}) = \big\{  (y, a^{d1}, a^{d2}, a^+, a^-, V^e)\\
& \in \BR^{d_1+d_2+2d} \oplus \CX^e(\delta) \mid  (a^+, a^-) = h^{c} (y, a^{d1}, a^{d2}, V^e) \},
\end{align*}
locally invariant under \eqref{E:cut-GP-1} (and equivalently \eqref{E:cut-GP-1}). 
\item $h^c$ satisfies \eqref{E:trans-inv-h}, $h^c(y, 0, 0, 0)=0$, and  
\begin{align*}
&|h^c (y_2, a_2^{d1}, a_2^{d2}, V_2^e) - h^c (y_1, a_1^{d1}, a_1^{d2}, V_1^e)|\\
\le & \frac \mu{1-\mu} \big( |y_2 -y_1| + Q|a_2^{d1} - a_1^{d1}| + Q^3 |a_2^{d2} - a_1^{d2}| + Q^2 \Vert V_2^e - V_1^e \Vert_{X_1} \big).
\end{align*}
\item a solution $\Phi\big((y, a^{d1}, a^{d2}, V, a^+, a^-)(t)\big) \in \CW^{c}$ if $(y, a^{d1}, a^{d2}, a^+, a^-, V)(t) \in B^{d_1 + d_2+2d} (\delta/15) \oplus \CX^e(\delta/15)$ forall $t \in \BR$.
\item There exists $\delta>0$ such that any traveling wave solution satisfying assumptions in Lemma \ref{L:nTW} belongs to $\CW^c$. 
\end{enumerate}
\end{proposition}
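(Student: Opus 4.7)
My plan is to obtain $h^c$ by a simple contraction argument on $\BR^{2d}$, with parameters $(y, a^{d1}, a^{d2}, V^e)$ playing the role of parameters. Fix $(y, a^{d1}, a^{d2}, V^e) \in \BR^{d_1+d_2} \oplus \CX^e(\delta)$ and define the mapping
\[
\CT_{y, a^{d1}, a^{d2}, V^e}(a^+, a^-) = \bigl(h^{cs}(y, a^{d1}, a^{d2}, a^-, V^e), \; h^{cu}(y, a^{d1}, a^{d2}, a^+, V^e)\bigr).
\]
Since $h^{cu}$ and $h^{cs}$ are bounded by $\delta/15$ in $C^0$, this map sends the closed ball $\{(a^+, a^-) : |a^+|, |a^-| \le \delta/15\}$ into itself. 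Moreover, the Lipschitz property of $h^{cu, cs}$ with constant $\mu$ in $\Vert\cdot\Vert_{X_1, Q}$ (whose restriction to $a^\pm$ has coefficient $1$) shows that $\CT_{y, a^{d1}, a^{d2}, V^e}$ is a contraction with constant $\mu < 1/5$, yielding a unique fixed point which I define to be $h^c(y, a^{d1}, a^{d2}, V^e) = (a^+, a^-)$. The normalization $h^c(y, 0, 0, 0) = 0$ is immediate from $h^{cu, cs}(y, 0, 0, 0, 0) = 0$.

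For the Lipschitz bound, denote $P_j = (y_j, a_j^{d1}, a_j^{d2}, V_j^e)$ and $(a_j^+, a_j^-) = h^c(P_j)$, $j=1,2$. Using the Lipschitz property of $h^{cu, cs}$ in $\Vert\cdot \Vert_{X_1, Q}$, I get
\begin{align*}
|a_2^+ - a_1^+| &\le \mu \bigl( \Vert P_2 - P_1\Vert_* + |a_2^- - a_1^-|\bigr), \\
|a_2^- - a_1^-| &\le \mu \bigl( \Vert P_2 - P_1\Vert_* + |a_2^+ - a_1^+|\bigr),
\end{align*}
where $\Vert P_2 - P_1\Vert_* = |y_2 - y_1| + Q|a_2^{d1} - a_1^{d1}| + Q^3|a_2^{d2} - a_1^{d2}| + Q^2 \Vert V_2^e - V_1^e\Vert_{X_1}$. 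Taking the maximum of the two left sides and solving gives the stated bound $\frac{\mu}{1-\mu} \Vert P_2 - P_1\Vert_*$. The translation invariance property \eqref{E:trans-inv-h} of $h^c$ follows from the uniqueness of the fixed point together with the corresponding invariance of $h^{cu, cs}$ (since translating the parameters and the fixed point simultaneously yields another fixed point of the translated $\CT$).

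For the local invariance of $\CW^c = \CW^{cu} \cap \CW^{cs}$, suppose $U(t) = \Phi(w(t))$ is a solution of \eqref{tfeq} with $U(0) \in \CW^c$ that stays in $\Phi\bigl(B^{d_1+d_2+2d}(\delta/15) \oplus \CX^e(\delta/15)\bigr)$ on some interval $(-\epsilon, \epsilon)$. By Proposition \ref{P:inv-1}, $U(t) \in \CW^{cu}$, and by the parallel statement of Proposition \ref{P:CS}(1), $U(t) \in \CW^{cs}$, so $U(t) \in \CW^c$. For the global-time characterization in (3), if $(y, a^{d1}, a^{d2}, a^+, a^-, V)(t)$ stays in $B^{d_1+d_2+2d}(\delta/15) \oplus \CX^e(\delta/15)$ for all $t \in \BR$, then Corollary \ref{C:character-cu} forces the point to lie on $\CW^{cu}$ (using $t \le 0$) and the analogous statement of Proposition \ref{P:CS}(3) forces it onto $\CW^{cs}$ (using $t \ge 0$); hence it lies on $\CW^c$.

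The last assertion (4) follows immediately from Lemma \ref{L:nTW} and Remark \ref{R:nTW-cs}: under those smallness assumptions, the neighboring traveling waves lie on both $\CW^{cu}$ and $\CW^{cs}$, hence on $\CW^c$. I do not anticipate a serious obstacle; the only mild subtlety is verifying that the fixed point automatically stays within the domains of $h^{cu, cs}$, which is handled by the $C^0$ bound $\delta/15$ on these functions and the choice of the ball of radius $\delta/15$ for the contraction.
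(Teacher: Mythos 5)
Your proposal is correct and follows essentially the same route as the paper: the paper also obtains $W^c$ as the intersection of the two graphs, characterizes points of $W^c$ via the fixed-point equation $(a^+,a^-)=\bigl(h^{cs}(\cdot,a^-,\cdot),h^{cu}(\cdot,a^+,\cdot)\bigr)$, and uses the Lipschitz constant $\mu<1/5$ in $\Vert\cdot\Vert_{X_1,Q}$ to get a parameter-dependent contraction whose fixed point defines $h^c$ with Lipschitz constant $\mu/(1-\mu)$, with parts (1), (3), (4) inherited from the corresponding statements for $\CW^{cu}$ and $\CW^{cs}$. Your write-up merely fills in the details (invariant ball of radius $\delta/15$, translation invariance via uniqueness of the fixed point) that the paper leaves implicit.
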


The following lemma states that, as a submanifold, the center manifold attracts orbits on the center-unstable and center-stable manifolds.

\begin{lemma} \label{L:center-sta}
There exists $C>0$ independent of $Q, \mu, \delta$ and $\eta\in (0, 1)$, such that if \eqref{E:parameter-1},  \eqref{E:parameter-2}, \eqref{E:parameter-3}, and \eqref{E:parameter-4}
are satisfied, then for any $T>0$ the following hold. 
\begin{enumerate} 
\item Let $U(t) = \Phi\big((W, a^+, a^-)(t) \big)\in \CW^{cs}$ be a solution to \eqref{tfeq}, where $W=(y, a^{d1}, a^{d2}, V^e)$, satisfying  
$ (W, a^+, a^-)(t) \in B^{d_1 + d_2+2d} (\delta/15) \oplus \CX^e(\delta/15)$, $t \in [0, T]$, then we have  
\begin{align*}
&|(a^+, a^-)(t) - h^{c} \big(W(t) 
\big) | 
\le  C e^{-(\lambda -2\eta) t}  |(a^+, a^-)(0) - h^{c} \big(W(0) 
\big) | 
\end{align*}
for any $t \in [0, T]$. 
\item Let $U(t) = \Phi\big((W, a^+, a^-)(t) \big)\in \CW^{cu}$ be a solution to \eqref{tfeq}, where $W=(y, a^{d1}, a^{d2}, V^e)$, satisfying  
$ (W, a^+, a^-)(t) \in B^{d_1 + d_2+2d} (\delta/15) \oplus \CX^e(\delta/15)$, $t \in [-T, 0]$, then we have  
\begin{align*}
&|(a^+, a^-)(t) - h^{c} \big(W(t) 
\big) | 
\le  C e^{(\lambda -2\eta) t}  |(a^+, a^-)(0) - h^{c} \big(W(0) 
\big) | 
\end{align*}
for any $t \in [-T, 0]$. 
\end{enumerate}
\end{lemma}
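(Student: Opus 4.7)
The plan is to reduce both statements to the Lipschitz stability estimates already proved for $\CW^{cu}$ and $\CW^{cs}$ (Lemma \ref{L:StaWcu} and Proposition \ref{P:CS}(2)), using the algebraic relationship $\CW^c = \CW^{cu} \cap \CW^{cs}$ together with the small Lipschitz constant $\mu < 1/5$ of the graph functions. Write $h^c = (h^{c,+}, h^{c,-})$. Since $\CW^c = W^{cu} \cap W^{cs}$, these components satisfy the fixed-point identities
\begin{equation*}
h^{c,+}(W) = h^{cs}\bigl(W, h^{c,-}(W)\bigr), \qquad h^{c,-}(W) = h^{cu}\bigl(W, h^{c,+}(W)\bigr).
\end{equation*}
I will denote the discrepancies $\Delta^\pm(t) = a^\pm(t) - h^{c,\pm}(W(t))$ and reduce $|(\Delta^+, \Delta^-)|$ to a single one of the distances controlled by Lemma \ref{L:StaWcu} or Proposition \ref{P:CS}(2).

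For part (1), the solution lies on $\CW^{cs}$, so $a^+(t) = h^{cs}(W(t), a^-(t))$ throughout. Subtracting the first fixed-point identity and using that $h^{cs}$ is Lipschitz with constant $\mu$ in its $a^-$ argument, I get $|\Delta^+(t)| \le \mu |\Delta^-(t)|$. To estimate $|\Delta^-|$, I compute
\begin{equation*}
a^-(t) - h^{cu}\bigl(W(t), a^+(t)\bigr) = \Delta^-(t) - \bigl[h^{cu}(W, h^{c,+}(W) + \Delta^+) - h^{cu}(W, h^{c,+}(W))\bigr](t),
\end{equation*}
and the Lipschitz bound on $h^{cu}$ yields $(1-\mu^2)|\Delta^-(t)| \le |a^-(t) - h^{cu}(W(t), a^+(t))| \le (1+\mu^2)|\Delta^-(t)|$. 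The solution, viewed as a solution to \eqref{E:cut-GP} staying in $\BR^{d_1+d_2+2d}\oplus \CX^e(\delta)$ on $[0,T]$, then satisfies the hypotheses of Lemma \ref{L:StaWcu} applied to $\CW^{cu}$, giving
\begin{equation*}
\bigl|a^-(t) - h^{cu}(W(t), a^+(t))\bigr| \le C e^{-(\lambda - 2\eta)t} \bigl|a^-(0) - h^{cu}(W(0), a^+(0))\bigr|.
\end{equation*}
Combining the two-sided bounds with $|\Delta^+| \le \mu|\Delta^-|$ gives $|(\Delta^+,\Delta^-)(t)| \le \tfrac{(1+\mu)(1+\mu^2)}{1-\mu^2} C e^{-(\lambda-2\eta)t} |(\Delta^+,\Delta^-)(0)|$, which is the claim after absorbing constants.

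Part (2) is obtained by exactly the symmetric argument, reversing the roles of $\pm$ and using Proposition \ref{P:CS}(2) (the growth/decay estimate for $\CW^{cs}$) in place of Lemma \ref{L:StaWcu}. On $\CW^{cu}$ we have $a^-(t) = h^{cu}(W(t), a^+(t))$, hence $|\Delta^-(t)| \le \mu|\Delta^+(t)|$, and the other comparison
\begin{equation*}
(1-\mu^2)|\Delta^+(t)| \le \bigl|a^+(t) - h^{cs}(W(t), a^-(t))\bigr| \le (1+\mu^2)|\Delta^+(t)|
\end{equation*}
feeds into the growth estimate for $a^+(t) - h^{cs}(W(t),a^-(t))$ run backward from $t=0$ to $t \in [-T,0]$.

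The main subtlety, and the only nontrivial obstacle I anticipate, is checking that throughout the comparisons above the arguments of $h^{cu}$ and $h^{cs}$ stay inside the domains $X^{cu}(\delta)$ and $X^{cs}(\delta)$ on which these graph functions were constructed; this follows because the solution is assumed to remain in the $\delta/15$-neighborhood and $\Vert h^{cu,cs}\Vert_{C^0} \le \delta/15$, so all intermediate quantities are $O(\delta)$ and the cut-off region is never engaged. Everything else is a direct application of the already-established estimates together with the Lipschitz inequality $\mu < 1/5$.
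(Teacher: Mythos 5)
Your proposal is correct and follows essentially the same route as the paper: write $h^c=(h^c_+,h^c_-)$, use the fixed-point identities from $\CW^c=\CW^{cu}\cap\CW^{cs}$ together with the Lipschitz constant $\mu$ and the on-manifold relation $a^+=h^{cs}(W,a^-)$ to compare $|(a^+,a^-)-h^c(W)|$ with $|a^--h^{cu}(W,a^+)|$, and then invoke Lemma \ref{L:StaWcu} (respectively the $\CW^{cs}$ analogue from Proposition \ref{P:CS}(2) for part (2)). Your two-sided comparison and the check that the arguments stay in the uncut region are exactly the implicit steps in the paper's shorter computation, so nothing further is needed.
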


\begin{proof}
Let us denote the components of $h^c$ by $h^c = (h_+^c, h_-^c)$. From \eqref{fpcm}, we have 
\begin{equation} \label{E:temp-5}
h_-^c (W) = h^{cu} \big(W, h_+^c (W)\big), \quad h_+^c (W) = h^{cu} \big(W, h_-^c (W)\big)
\end{equation}
We shall only prove part (1) of the lemma, where $a^+ = h^{cs} (W, a^-)$, as part (2) is verbatim. One may compute 
\begin{align*}
|a^- - h_-^c(W)| =& |a^- - h^{cu} \big(W, h_+^c (W)\big)| \le |a^- - h^{cu}(W, a^+)| + \mu |a^+ - h_+^c(W)| 
\end{align*}
and 
\[
|a^+ - h_+^c(W)| = |h^{cs} (W, a^-) - h^{cs}\big(W, h_-^c (W)\big) | \le \mu |a^- - h_-^c(W)|.
\]
Therefore
\[
|(a^+, a^-)(t) - h^{c} \big(W(t) \big) | \le (1-\mu)^{-1} |a^- - h^{cu}(W, a^+)|
\]
which along with Lemma \ref{L:StaWcu} implies the desired estimates. 
\end{proof}

\section{Smoothness of the center-unstable manifold} \label{S:smooth}

We will prove the smoothness of the local center-unstable/center-stable/center manifolds roughly following the approach in \cite{CLS92}. 

\begin{proposition} \label{P:smoothness} 
For any $k>0$, there exists $C>0$ such that if $\eta \in (C\delta, 1)$ and $Q, \mu, \delta$ satisfy \eqref{E:parameter-1}, \eqref{E:parameter-2}, \eqref{E:parameter-3}, \eqref{E:parameter-4},  \eqref{E:parameter-5}, and \eqref{E:parameter-6}, then $h^{cu}, h^{cs}, h^c \in C^k$ and $Dh^{cu}(y, 0, 0, 0, 0)$, $Dh^{cs}(y, 0, 0, 0, 0)$, and $Dh^{c}(y, 0, 0, 0, 0)$ are equal to 0. 
\end{proposition}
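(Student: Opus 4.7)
The plan is a fiber-contraction argument in the spirit of \cite{CLS92}, adapted to the bundle coordinates used here. I focus on $h^{cu}$; $h^{cs}$ is parallel, and $h^{c}$ follows by an implicit function theorem applied to \eqref{fpcm} once $h^{cu},h^{cs}$ are known to be $C^k$. The first step is to augment $\Gamma_{\mu,\delta}$ to
\[
\Gamma_{\mu,\delta}^{(1)}=\{(h,\Lambda)\mid h\in\Gamma_{\mu,\delta},\; \Lambda\in C^0\big(X^{cu}(\delta),L(X^{cu},\BR^d)\big),\; \Vert\Lambda\Vert_{L_Q}\le\mu\},
\]
and to formally differentiate the defining integral \eqref{newh} in $\bar W$. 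Denoting the flow of \eqref{lp} by $\Phi_t^h(\bar W)=W(t)$ and its variation by $D\Phi_t^h(\bar W)\in L(X^{cu},X^{cu})$, the target equation is
\[
D\wt h(\bar W)=\int_{-\infty}^{0}e^{-tM_-}\Bigl[D_W\hat G^-\bigl(W(t),h(W(t))\bigr)+D_{a^-}\hat G^-\cdot Dh\bigl(W(t)\bigr)\Bigr]\circ D\Phi_t^h(\bar W)\,dt.
\]
Replacing $Dh$ by $\Lambda$ in the integrand produces an induced map $\Lambda\mapsto\wt\Lambda$, and I pair it with $\CT$ to form $\CT^{(1)}(h,\Lambda)=(\CT h,\wt\Lambda)$ on $\Gamma_{\mu,\delta}^{(1)}$.

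The core of the argument is uniform-in-$h$ control of $D\Phi_t^h$. It solves the linear variational equation obtained by differentiating \eqref{lp} in $\bar W$; its coefficient operator has the same block structure as $A^{cu}(y,\hat G^T)$ plus bounded perturbations coming from $D\hat G^{cu}$ and from derivatives of $A^{cu}$ in $(y,\partial_t y)$, all controlled by Lemma \ref{L:G-Lip} and Lemma \ref{L:A_e}. Proposition \ref{P:linear-0} and Lemma \ref{L:A-cu} then yield essentially the same Gronwall-type estimate as Proposition \ref{P1} for $D\Phi_t^h$, uniformly in $h\in\Gamma_{\mu,\delta}$. Under the additional smallness hypotheses \eqref{E:parameter-5}--\eqref{E:parameter-6} (which play at the derivative level the role that \eqref{E:parameter-2}--\eqref{E:parameter-3} played for $\CT$), the map $\Lambda\mapsto\wt\Lambda$ is well defined into $\{\Vert\Lambda\Vert_{L_Q}\le\mu\}$ and is a contraction at each fixed $h$; combined with the contraction of $\CT$ in $h$ from Lemma \ref{L:contraction-1}, the Hirsch--Pugh fiber contraction theorem produces a unique continuous fixed point $(h^{cu},\Lambda^\ast)$. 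To identify $\Lambda^\ast=Dh^{cu}$ I would run the standard telescoping estimate on the difference quotient $h^{cu}(\bar W+\bar V)-h^{cu}(\bar W)-\Lambda^\ast(\bar W)\bar V$, closing it with a Lipschitz modulus of $D\Phi_t^h$ in $\bar W$ obtained from a second-order variational Gronwall bound.

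Higher $C^k$ regularity follows by induction on $k$: given $h^{cu}\in C^{j}$ for some $1\le j<k$, one repeats the fiber contraction on continuous candidate $j$-th derivatives $\Lambda_j$ derived from the $j$-th variational system, using the smoothness of $\hat G^{cu,-}$ (Lemma \ref{L:G-Lip}) and of $\Pi_{c,y}^\alpha$ in $y$ (Lemma \ref{decomp2}). The combinatorial constants from Fa\`a di Bruno on the chain rule are what force progressively stronger smallness conditions, which are summarized in \eqref{E:parameter-5}--\eqref{E:parameter-6}. For the vanishing of $Dh^{cu}$ at the base point $W_0=(y,0,0,0,0)$, note that $W(t)\equiv W_0$ solves \eqref{lp}, and since $G^\alpha(c,y,w)$ is quadratic in $w$ for small $w$ we have $D_wG^\alpha(c,y,0)=0$; as $\Lambda(W,a^-)$ in \eqref{E:lp-w} is linear in $(W,a^-)$, the chain rule gives $D_W\hat G^-(W_0,0)=D_{a^-}\hat G^-(W_0,0)=0$, so the integrand in the linearized equation vanishes and $Dh^{cu}(W_0)=0$. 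The corresponding statements for $Dh^{cs}$ and (via \eqref{fpcm}) $Dh^{c}$ are immediate.

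The main obstacle will be bookkeeping in the bundle coordinates. Differentiating $D\Phi_t^h$ in $\bar W$ forces derivatives onto $\Pi_{c,y}^e$ and onto the second-fundamental-form term $\mathcal F(c,y)$, and the inhomogeneity $D^k\hat G^e$ lies only in $\wt X=X_1+W^{1,3/2}+(L^{3/2}\cap\dot W^{1,6/5})$, not in $X_1$. Proposition \ref{P:linear-0} and Proposition \ref{P:linear} were formulated exactly to absorb this, so the real technical task is to verify that every higher variational system preserves this $\wt X$-valued structure with acceptable $Q$-weights, that Lemma \ref{L:G-Lip} extends to all derivatives uniformly in $y\in\BR^3$, and that \eqref{E:parameter-5}--\eqref{E:parameter-6} dominate the resulting constants at each step of the induction.
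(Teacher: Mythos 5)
Your proposal is correct in outline, but it follows a genuinely different mechanism from the paper's proof. You set up a Hirsch--Pugh fiber contraction on pairs $(h,\Lambda)$ over the whole class $\Gamma_{\mu,\delta}$ and then identify the fiber fixed point with $Dh^{cu}$ by a difference-quotient argument closed with a second-order variational Gronwall bound; the paper instead works only at the already-constructed fixed point $h=h^{cu}$: it formally differentiates the invariance identity \eqref{E:fix-p} to obtain a linear (resp.\ affine) operator $\CT_k$ on $Y_k=C^0\big(X^{cu}(\delta),L(\otimes_{sym}^k X^{cu},\BR^d)\big)$, proves it is a contraction under \eqref{E:parameter-5}--\eqref{E:parameter-6} (Lemmas \ref{L:CT-1} and \ref{L:kContraction}), inserts an intermediate step showing $Lip\, D^{k-1}h^{cu}<\infty$ (Lemma \ref{L:C1,1}), and then verifies directly through the remainder estimates of Lemmas \ref{L:C1} and \ref{L:Ck} that the fixed point equals $D^k h^{cu}$. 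What the paper's route buys is exactly what your identification step gives up: its remainder argument uses only uniform continuity of $D^k\hat G$, so $C^k$ smoothness follows from $\hat G\in C^k$ alone, whereas your second-order variational bound (or, in the classical fiber-contraction identification via $C^1$ iterates, differentiating $\CT$ along the whole sequence) effectively requires $C^{k,1}$-type control of the nonlinearity --- harmless here because $\hat G$ is smooth, but a weaker statement, and this is precisely the distinction drawn in the remark closing Subsection \ref{SS:pre-smooth}. Your route also carries the additional burden of continuity of the fiber map in the base point, i.e.\ continuity of $\Phi^h_t$ and $D\Phi^h_t$ in $h$ uniformly in the exponential weights, which the paper avoids by freezing $h=h^{cu}$; in exchange you get the fixed point and its candidate derivative in one stroke rather than through a separate contraction at each order. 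The items you flag as bookkeeping are indeed the real technical content and are handled by the same tools the paper uses: Lemma \ref{L:A_e} and \eqref{E:CF} control the $y$-derivatives of $A_e$ and the second-fundamental-form term appearing in the variational systems, Lemma \ref{L:LPsi} (built on Propositions \ref{P:linear-0} and \ref{P:linear}) absorbs the $\wt X$-valued inhomogeneities with the correct $Q$-weights, and Lemma \ref{L:G-Lip} supplies the uniform-in-$y$ derivative bounds. Your tangency argument (the flow fixes $(y,0)$ and $D\hat G^-$ vanishes there) and the passage to $h^c$ via the uniform contraction in \eqref{fpcm} coincide with the paper's Lemma \ref{L:tangency} and its treatment of the center manifold.
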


Unlike in  \cite{CLS92}, however, $A^{cu}$ in \eqref{E:cut-GP} depends on the unknowns and extra care has to be taken. Without loss of generality, we will work on $h^{cu} \in \Gamma_{\mu, \delta}$, which is defined on $X^{cu} (\delta)$, and the proof of $h^{cs}$ is verbatim. The smoothness of the center manifold, as the intersection of the center-unstable and center-stable manifolds,  follows subsequently.

\subsection{Outline of the framework of the smoothness proof} \label{SS:pre-smooth}

We first introduce some notations to simplify the presentations. Consider \eqref{lp} with $h=h^{cu}$. For $t \le 0$, let 
\[
\Psi(t, W)= (y, a^{d1}, a^{d2}, a^+, V)(t), \quad W = (y, a^{d1}, a^{d2}, a^+, V) \in X^{cu} (\delta), 
\]
be the solution with initial value $W$. We have from Lemma \ref{L:reduce} that 
\begin{equation} \label{E:reduce-1}
\tilde \Pi^e \Psi (t, W)  = \Psi(t, W), \quad \forall t\le 0 \; \; \text{ if } \; \tilde \Pi^e W = W. 
\end{equation}
Moreover, assuming \eqref{E:parameter-1} and \eqref{E:parameter-2}, Proposition \ref{P1} implies, for all $t \le 0$,  
\begin{equation} \label{E:Lip-Psi}
Lip_{\Vert \cdot \Vert_{X_1, Q}} \Psi(t, \cdot) \le 
C \eta^{-d_1} e^{-\eta t},
\quad \Psi(t, W) \in X^{cu} (C\delta), \; \forall W \in X^{cu}(\delta).
\end{equation}
As the fixed point of the transformation $\CT$, $h^{cu}$  satisfies
\begin{equation} \label{E:fix-p} 
h^{cu} (W) = \int_{-\infty}^0 e^{-tM_-} \hat G^-\Big(\Psi(t, W), h^{cu}\big(\Psi(t, W)\big) \Big) dt.  
\end{equation}
Since \eqref{lp} is autonomous, a time translation of \eqref{E:fix-p} implies, for $t\le 0$, 
\begin{equation} \label{E:fix-p-1} 
h^{cu}\big(\Psi(t, W)\big) = \int_{-\infty}^t e^{(t-\tau) M_-} \hat G^-\Big(\Psi(\tau, W), h^{cu}\big(\Psi(\tau, W)\big) \Big) d\tau.  
\end{equation}
By differentiating \eqref{E:fix-p} formally,  we obtain, for any $\wt W \in X^{cu}$, 
\begin{align*}
Dh^{cu} (W) \wt W =&  \int_{-\infty}^0 e^{-tM_-} \Big(D_{a^-}  \hat G^-\big(\Psi(t, W), h^{cu}\big(\Psi(t, W)\big) \big)Dh^{cu} \big(\Psi(t, W)\big) \\
&+ D_W \hat G^-\big(\Psi(t, W), h^{cu}\big(\Psi(t, W)\big) \big)\Big) D\Psi(t, W) \wt W dt. 
\end{align*}
Here $D\Psi$ also depends on $Dh^{cu}$ as it solves the following system of equation derived by differentiating \eqref{lp}
\begin{equation} \label{E:DPsi}
\p_t D\Psi = A^{cu} \big(y(t), \hat G^T \big) D\Psi + \CG_1 (\Psi) D\Psi + \wt \CG_1 (\Psi) Dh^{cu} D\Psi,  
\end{equation} 
where 
$\Psi$ and $D\Psi$ are evaluated at $(t, W)$, $\hat G^{cu}$ at $( \Psi, h^{cu})$, $h^{cu}$ and $D^{cu} h$ at $\Psi$. In the above $\CG_1 \in C^k \big(X^{cu},  L(X^{cu})\big)$ and $\wt \CG_1 \in C^k\big(X^{cu}, L(\BR^d, X^{cu})\big)$ are given by 
\begin{equation} \label{E:TCG-1} \begin{split}
\wt \CG_1 (W) \wt a^- =& D_{a^-} \big(A^{cu} \big(y, G^T(W, a^-) \big) \big)|_{a^- = h^{cu} (W)} (\wt a^-) W+ D_{a^-} \hat G^{cu} (\wt a^-)\\
=& \Big(0, 0, 0, 0, \CF\big(c,y\big) \big(D_{a^-} \hat G^T (\wt a^-), V \big) \Big)+ D_{a^-} \hat G^{cu} (\wt a^-),\\
\end{split} \end{equation}
\begin{equation} \label{E:CG-1} \begin{split}
\CG_1 (W) \wt W = & D_W \big(A^{cu} \big(y, G^T(W, a^-) \big) \big)|_{a^- = h^{cu} (W)} (\wt W) W+ D_W \hat G^{cu} (\wt W)\\
=& \Big(0, 0, 0, 0, \big( D_y A^e(y) \wt y\big) V+  \CF\big( c, y \big) \big( D_W \hat G^T (\wt W), V \big) \\ 
&\qquad \qquad \qquad + \big( D_y \CF(c, y) (\wt y)\big) ( \hat G^T, V)\Big)  + D_W \hat G^{cu} (\wt W) 
\end{split} \end{equation}
where $W =(y, a^{d1}, a^{d2}, a^-, V)$, $\wt W =(\wt y, \wt a^{d1}, \wt a^{d2},\wt a^-,\wt V) \in X^{cu}$ and $\hat G^{cu}$ are evaluated at $\big(W, h^{cu}(W)\big)$. 

Motivated by the above formally derived the equations, we define a linear transformation $\CT_1$ on 
\[
Y_1= C^0\big(X^{cu} (\delta), L(X^{cu}, \BR^d) \big) 
\]
as, for any $\CH \in Y_1$, $W\in X^{cu} (\delta)$, and $\wt W \in X^{cu}$, 
\begin{equation} \label{E:CT1} \begin{split}
(\CT_1 \CH) (W) \wt W =&   \int_{-\infty}^0 e^{-tM_-} \Big(D_W \hat G^-\big(\Psi, h^{cu} (\Psi) \big)\\
&+D_{a^-}  \hat G^-\big(\Psi, h^{cu}(\Psi) \big)\CH \big(\Psi\big) \Big) \Psi_1 (t) \wt W dt
\end{split} \end{equation}
where $\Psi$ is evaluated at $(t, W)$. Operator $\Psi_1 (t) \in L(X^{cu})$ satisfies $\Psi_1(0) =I$ and 
\begin{equation} \label{E:Psi-1}
\p_t \Psi_1 = A^{cu} \big(y(t), \hat G^T \big) \Psi_1 + \CG_1 (\Psi) \Psi_1 + \wt \CG_1 (\Psi) \CH (\Psi) \Psi_1,  
\end{equation}
where $\CG$ and $\CG_1$ are given in \eqref{E:CG-1}, $\hat G^{cu}$ is evaluated at $\big( \Psi, h^{cu} (\Psi)\big)$, and $\CH$ at $\Psi(t, W)$. Just as in Remark \ref{R:C-0}, the right side of \eqref{E:Psi-1} and the integrand in \eqref{E:CT1} are well-defined. Since \eqref{lp} is autonomous, when $W$ is shifted to $\Psi(t_0, W)$, the principle fundamental solution to the associated \eqref{E:Psi-1} becomes $\Psi_1(t+t_0) \Psi_1(t_0)^{-1}$. Therefore we obtain 
\begin{equation} \label{E:CT1-shift} \begin{split}
(\CT_1 \CH) \big( \Psi(t_0, W)\big) \Psi_1(t_0) & \wt W =   \int_{-\infty}^{t_0} e^{(t_0-t)M_-} \Big(D_W \hat G^-\big(\Psi, h^{cu} (\Psi) \big)\\
&+D_{a^-}  \hat G^-\big(\Psi, h^{cu}(\Psi) \big)\CH \big(\Psi\big) \Big) \Psi_1 (t) \wt W dt, 
\end{split} \end{equation}
where $\Psi$ is still evaluated at $(t, W)$ and $\Psi_1$ defined for $W$. 

If $h^{cu} \in C^1$, then $Dh^{cu}$ must be the fixed point of $\CT_1$. Therefore, our strategy to prove $h^{cu} \in C^1$ is to show 1.) $\CT_1$ is a well-defined contraction and 2.) the fixed point of $\CT_1$ is indeed $Dh^{cu}$ (Subsection \ref{SS:C1}). In the proof of the $C^k$ and higher order $C^k$ smoothness of $h^{cu}$ we shall need the following spaces $Y_k$, $k\ge 1$, of symmetric $k$-linear  transformations depending smoothly on the base points, 
\[
Y_k=\Big( C^0\big(X^{cu} (\delta), L(\otimes_{sym}^k (X^{cu}), \BR^d) \big) \Big), \quad k\ge 0.
\]
We equip $Y_k$ with the norm
\begin{equation} \label{E:norm-Yk} \begin{split}
&\Vert \CH \Vert_{Y_k} = \sup \{ \Vert \CH(W) \Vert_{L_Q^k} : W \in X^{cu} (\delta)\}, \\ 
&\Vert \CH(W) \Vert_{L_Q^k} = \sup\{ \frac {|\CH(W) (\wt W_1, \ldots, \wt W_k)|}{\Vert \wt W_1 \Vert_Q \ldots \Vert \wt W_k \Vert_Q}:  \wt W_1, \ldots, \wt W_k \in X^{cu} \backslash \{0\}\}.
\end{split} \end{equation}
We also use the $\Vert \cdot \Vert_{L_Q^k}$ norm of multilinear transformations in $L(\otimes_{sym}^k (X^{cu}), X^{cu})$ where $\Vert \cdot \Vert_{X_1, Q}$ is used in both the domain and the range.  

Formally differentiate \eqref{E:fix-p} twice, we see $D^2 h^{cu}$ is a fixed point of the following affine transformation $\CT_2$ on the space $Y_2$ of symmetric $k$-linear (with $k=2$) transformations depending continuously on the base points, 
\[
Y_k=\Big( C^0\big(X^{cu} (\delta), L(\otimes_{sym}^k (X^{cu}), \BR^d) \big) \Big)
\]
Here for any $\CH \in Y_2$, $W\in X^{cu} (\delta)$, and $\wt W_1, \wt W_2 \in X^{cu}$, 
\begin{equation} \label{E:CT2} \begin{split}
(\CT_2 \CH)& (W) (\wt W_1, \wt W_2)  =   \int_{-\infty}^0 e^{-tM_-} \Big( \Big(D_{a^-}  \hat G^- \CH (\Psi) + D_{WW} \hat G^- \Big) \\
& \big(  D\Psi \wt W_1,  D\Psi \wt W_2\big) + D_{a^-a^-} \hat G^- ( Dh^{cu} D\Psi \wt W_1, Dh^{cu}D \Psi \wt W_2) \\
& + 2 D_{Wa^-} \hat G^- (D\Psi \wt W_1, Dh^{cu} D\Psi \wt W_2) \\
&+ ( D_{a^-} \hat G^- Dh^{cu}+ D_W \hat G^-) \Psi_2(t) (\wt W_1, \wt W_2) \Big) dt, 
\end{split} \end{equation}
where $\Psi$ and $D\Psi$ are evaluated at $(t, W)$, $h^{cu}$ and $D h^{cu}$ at $\Psi$,  $\hat G^-$ and $D\hat G^-$ at $(\Psi, h^{cu})$, and the symmetric bilinear transformation $\Psi_2(t) \in L(\otimes_{sym}^2 X^{cu}, X^{cu})$ satisfies $\Psi_2(0)=0$ and 
\begin{equation} \label{E:Psi-2} \begin{split}
\p_t \Psi_2 = &\Big( A^{cu} \big(y(t), \hat G^T \big) +  \CG_1 (\Psi) + \wt \CG_1 (\Psi) D h^{cu} \Big) \Psi_2 \\
&+ \wt \CG_1 (\Psi) \CH (\Psi) (D\Psi, D\Psi) + \CG_2 (\Psi, D\Psi, Dh^{cu}). 
\end{split} \end{equation}
Here $\CG_2  (\Psi, D\Psi, Dh^{cu}) \in L(\otimes_{sym}^2 X^{cu}, X^{cu})$ is given by 
\begin{align*}
\CG_2  (\Psi, D\Psi, Dh^{cu}) (\wt W_1, \wt W_2) = &D_W \big( A^{cu} (y, G^T) +\CG_1(\Psi)\big) (\wt W_1) \big( D\Psi (\wt W_2)\big) \\
&+ D_W \big( \wt \CG_1(\Psi) \big)(\wt W_1) Dh^{cu} D\Psi(\wt W_2). 
\end{align*}

In order to prove $h^{cu}\in C^2$, we shall 1.) show that $\CT_2$ is a well-defined affine contraction and 2.) its fixed point is $D^2 h^{cu}$. 

The general $C^k$ smoothness of $h^{cu}$  (Subsection \ref{SS:Ck}) follows much as $h^{cu} \in C^2$ by 1.) differentiating \eqref{E:fix-p} repeatedly to obtain an affine operator on the space $Y_k$ of multilinear transformations, and 2.) proving its fixed point is indeed $D^k h^{cu}$. 

\begin{remark} 
A possible alternative adopted approach to prove the $C^k$, $k\ge 1$, smoothness of $h^{cu}$ is to prove that iteration sequences of the transformation $\CT$ defined in \eqref{lp} and \eqref{newh} actually converge in $C^k$ topology. That proof usually required the $C^{k, 1}$ bound on nonlinearity. Even though $\hat G$ is indeed smooth in our problem, this proof in Section \ref{S:smooth} shows that the $C^k$ smoothness holds as long as $\hat G \in C^k$. 
\end{remark}

\subsection{$C^1$ smoothness of $h^{cu}$} \label{SS:C1} 

We first prove the following estimate on equation \eqref{E:Psi-1} where $\Psi= \Psi(t, W)$, $W \in X^{cu}(\delta)$. 

\begin{lemma} \label{L:LPsi} 
There exists $C>0$ such that, if $\eta \in (C\delta, 1)$ and $\mu, \delta, Q$ satisfy \eqref{E:parameter-1} and  \eqref{E:parameter-2}, then for any $ B \in C^0\big([T, 0], L (X^{cu}, \BR^d)\big)$ with  $\Vert B\Vert_{ C^0_t L_Q^1} \le 1$, any solution $\wt W(t) \in X^{cu}$ of 
\[
\p_t \wt W = \big(A^{cu} \big(y(t), \hat G^T \big)  + \CG_1 (\Psi)  + \wt \CG_1 (\Psi) B \big) \wt W + f(t),  
\]
satsfies 
\[
\Vert \wt W(t) \Vert_{X_1, Q}^2 \le C \eta^{-2d_1} e^{-2\eta t}  \Vert W(0)\Vert_{X_1, Q}^2 +C \eta^{-2d_1-1} \int_t^0 e^{2\eta(\tau -t)} \Vert f(\tau) \Vert_{\wt X, Q}^2 d\tau.  
\]
\end{lemma}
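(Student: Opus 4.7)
The plan is to view the equation as the block-diagonal reference equation $\partial_t \wt W = A^{cu}(y(t), \hat G^T)\wt W + g(t)$ in which the two operators $\CG_1(\Psi)$ and $\wt\CG_1(\Psi) B$ have been absorbed into an effective forcing $g(t) = f(t) + \CG_1(\Psi)\wt W(t) + \wt\CG_1(\Psi) B(t)\wt W(t)$, then invoke Lemma \ref{L:A-cu} to reduce everything to a uniform operator-norm bound on the perturbations followed by Gronwall.

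\emph{Step 1 (Reduction).} Because $|\partial_t y| = |\hat G^T(\Psi, h^{cu}(\Psi))| \le C\delta$ by \eqref{E:hat-G-esti-1}, for $\eta > C\delta$ Lemma \ref{L:A-cu} applied with $\sigma \le C\delta$ gives
\[
\Vert \wt W(t)\Vert_{X_1,Q}^2 \le C\eta^{-2d_1}e^{-2\eta t}\Vert \wt W(0)\Vert_{X_1,Q}^2 + C\eta^{-2d_1-1}\int_t^0 e^{2\eta(\tau-t)}\Vert g(\tau)\Vert_{\wt X,Q}^2 \,d\tau.
\]

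\emph{Step 2 (Uniform bound on the perturbation).} The key estimate is
\[
\Vert \CG_1(W)\wt W\Vert_{\wt X,Q} + \Vert \wt\CG_1(W) B\wt W\Vert_{\wt X,Q} \le \kappa(Q,\delta)\Vert \wt W\Vert_{X_1,Q}, \qquad \kappa(Q,\delta) = C(Q^{-1}+\delta Q^3),
\]
uniform in $W\in X^{cu}(\delta)$ and in $B$ with $\Vert B\Vert_{L^1_Q}\le 1$. The $D_W\hat G^{cu}$ and $D_{a^-}\hat G^{cu}$ contributions to $\CG_1$ and $\wt\CG_1$ are bounded directly by Lemma \ref{L:G-Lip}, producing the $(Q^{-1}+\delta Q^3)$ factor. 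The remaining pieces are the pure $V$-component terms $(D_y A^e(y)\wt y)V$, $\CF(c,y)(D_W\hat G^T(\wt W), V)$, $(D_y\CF(c,y)\wt y)(\hat G^T, V)$ and their $\wt\CG_1$ analogues. These are controlled using Lemma \ref{L:A_e} (uniform bound on $D_y A^e$), the bilinear estimate \eqref{E:CF} on $\CF$, and the smoothness of $\Pi_{c,y}^e$ from Lemma \ref{decomp2}; each carries a factor of $\Vert V\Vert_{X_1}\le C\delta$ together with a factor of $|\hat G^T|$ or $|D\hat G^T|$ of size $O(\delta)$ from \eqref{E:hat-G-esti-2}. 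After accounting for the $Q^2$ scaling of the $V$-component in the target $\Vert\cdot\Vert_{\wt X,Q}$ norm, these contribute at most $C\delta^2 Q^2 \le C\delta Q^3$, absorbed into $\kappa$.

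\emph{Step 3 (Gronwall).} Setting $u(t) = e^{2\eta t}\Vert\wt W(t)\Vert_{X_1,Q}^2$, Steps 1 and 2 yield
\[
u(t) \le C\eta^{-2d_1}\Vert\wt W(0)\Vert_{X_1,Q}^2 + C\eta^{-2d_1-1}\int_t^0 e^{2\eta\tau}\Vert f(\tau)\Vert_{\wt X,Q}^2\,d\tau + C\eta^{-2d_1-1}\kappa^2 \int_t^0 u(\tau)\,d\tau.
\]
By \eqref{E:parameter-2} we have $\eta^{-2(d_1+1)}\kappa^2 \le C^{-1}$, hence $C\eta^{-2d_1-1}\kappa^2 \le \eta/C$ is arbitrarily small relative to $\eta$; after a harmless shrinking of $\eta$ in the application of Lemma \ref{L:A-cu}, the classical Gronwall inequality (applied backward in time on $[t,0]$) yields the claimed bound.

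\textbf{Main obstacle.} The technically delicate point is the bookkeeping of $Q$-scalings in Step 2: the $V$-component perturbations land in a subspace weighted by $Q^2$ in the target norm, while on the source side the factors $\wt y, \wt a^\bullet$ they multiply have no $Q$ weighting. The saving grace, ensuring compatibility with \eqref{E:parameter-2}, is that each such term carries one factor from $\hat G^T$ or its derivative (size $O(\delta)$) and another from $V$ (also $O(\delta)$ on $X^{cu}(\delta)$), producing the net multiplier $O(\delta Q^3)$ that matches the $\kappa$ predicted by Lemma \ref{L:G-Lip}.
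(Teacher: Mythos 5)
Your proposal is correct and takes essentially the same route as the paper: the perturbations $\CG_1(\Psi)$ and $\wt \CG_1(\Psi)B$ are bounded in the $Q$-weighted norms exactly as in the paper's estimates \eqref{E:CG-1-e-1}--\eqref{E:CG-1-e-2} (via Lemma \ref{L:A_e}, \eqref{E:CF}, and \eqref{E:hat-G-esti-1}--\eqref{E:hat-G-esti-2}), then absorbed as forcing into Lemma \ref{L:A-cu}, and the estimate is closed by Gronwall using the smallness supplied by \eqref{E:parameter-2}, which upgrades the rate from $\eta$ to $2\eta$. The minor imprecisions (quoting Lemma \ref{L:A-cu} with rate $2\eta$ before the "shrinking" remark, and treating all derivatives of $\hat G^T$ as $O(\delta)$ when some are only $O(1)$) are harmless because the $Q$-weights on $\wt a^{d1}, \wt a^{d2}, \wt V$ restore the claimed multiplier $C(Q^{-1}+\delta Q^3)$, just as in the paper.
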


\begin{proof}
From Lemma \ref{L:A_e}, \eqref{E:CF-def}, \eqref{E:hat-G-esti-1}, and \eqref{E:hat-G-esti-2}, we have, for any $W= (y, a^{d1}, a^{d2}, a^+, V)$, $\wt W= (y, a^{d1}, a^{d2}, a^+, V) \in X^{cu}$, and 
\begin{align}
& \Vert \wt \CG_1 (W) \wt a^- \Vert_{\wt X, Q} \le C\delta (1+ \Vert V \Vert _{X_1}) |\wt a^-|, \label{E:CG-1-e-1}\\
&\Vert \CG_1(W) \wt W\Vert_{\wt X, Q} \le C ( \Vert V \Vert_{X_1} + Q^{-1} + Q^3 \delta) \Vert \wt W \Vert_{X_1, Q}. \label{E:CG-1-e-2}
\end{align}
Lemma \ref{L:A-cu}, \eqref{E:Lip-Psi}, and the above inequalities imply that, for any $t\in [T, 0]$, 
\begin{align*}
\Vert \wt W (t) \Vert_{X_1, Q}^2 \le & C \eta^{-2d_1} e^{-\eta t} \Big(\Vert \wt W(0) \Vert_{X_1, Q}^2 \\
&+\eta^{-1} \int_t^0 e^{\eta \tau}  \big((Q^{-1} + Q^3 \delta)^2\Vert \wt W (\tau) \Vert_{X_1, Q}^2 + \Vert f (\tau)\Vert_{\wt X, Q}^2 \big) d\tau \Big). 
\end{align*}
The lemma follows from the Gronwall inequality. 
 \end{proof}

Recall the Lipschitz constant $\mu$ in the definition of $\Gamma_{\mu, \delta}$, which naturally should be an upper bound of $\Vert D h^{cu} \Vert_{L_Q^1}$. 

\begin{lemma} \label{L:CT-1}
There exists $C>0$ such that, if $\eta \in (C\delta, 1)$ and $\mu, \delta, Q$ satisfy \eqref{E:parameter-1},   \eqref{E:parameter-2}, \eqref{E:parameter-3}, and \eqref{E:parameter-4}, then $\CT_1$ defines a mapping on the closed $\mu$-ball $Y_1(\mu) = \{ \CH \in Y_1 : \Vert \CH\Vert_{Y_1} \le \mu\}$ with Lipschitz constant $C \delta \eta^{-d_1} (\lambda -2\eta)^{-1}$. 
\end{lemma}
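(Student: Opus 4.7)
The plan is to verify that $\CT_1$ is well-defined, show $\CT_1 Y_1(\mu) \subset Y_1(\mu)$, and establish the Lipschitz estimate, throughout using Lemma \ref{L:LPsi} applied to \eqref{E:Psi-1} as the main technical tool.

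First I would establish well-definedness and the self-mapping property. For $\CH \in Y_1(\mu)$ and $W \in X^{cu}(\delta)$, setting $B(t) = \CH\big(\Psi(t, W)\big)$ gives $\Vert B \Vert_{C^0_t L_Q^1} \le \mu < 1$, so Lemma \ref{L:LPsi} applies to \eqref{E:Psi-1} with vanishing forcing and initial value $\Psi_1(0) = I$, yielding
\[
\Vert \Psi_1(t) \Vert_{L_Q^1} \le C \eta^{-d_1} e^{-\eta t}, \qquad t \le 0.
\]
Combined with $\Vert e^{-tM_-} \Vert \le C e^{\lambda t}$ from \eqref{E:exp-M} and the bound $|D_W \hat G^-| + |D_{a^-} \hat G^-|\,|\CH| \le C\delta(1+\mu)$ from Lemma \ref{L:G-Lip} and \eqref{E:hat-G-esti-2}, the integrand in \eqref{E:CT1} is dominated by $C\delta \eta^{-d_1} e^{(\lambda - \eta) t} \Vert \wt W \Vert_{X_1,Q}$. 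Integrating over $(-\infty, 0]$ produces $\Vert \CT_1 \CH \Vert_{Y_1} \le C\delta \eta^{-d_1} (\lambda - \eta)^{-1}$, which is at most $\mu$ by condition \eqref{E:parameter-3}.

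Next I would prove the Lipschitz estimate. Given $\CH_1, \CH_2 \in Y_1(\mu)$, denote by $\Psi_1^{(j)}$ the solution of \eqref{E:Psi-1} corresponding to $\CH_j$ and split
\[
(\CT_1 \CH_1 - \CT_1 \CH_2)(W) \wt W = I_1 + I_2,
\]
where $I_1 = \int_{-\infty}^0 e^{-tM_-} D_{a^-}\hat G^-\,(\CH_1 - \CH_2)(\Psi)\,\Psi_1^{(1)}(t)\wt W\, dt$ and $I_2$ captures the remaining contribution from $\Psi_1^{(1)} - \Psi_1^{(2)}$. The bound on $I_1$ is a direct repetition of the previous step and gives $|I_1| \le C\delta \eta^{-d_1}(\lambda-\eta)^{-1} \Vert \CH_1 - \CH_2 \Vert_{Y_1} \Vert \wt W \Vert_{X_1,Q}$. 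For $I_2$, subtracting the two versions of \eqref{E:Psi-1} shows that $\Delta = \Psi_1^{(1)} - \Psi_1^{(2)}$ solves
\[
\p_t \Delta = \big[A^{cu}(y, \hat G^T) + \CG_1(\Psi) + \wt \CG_1(\Psi)\CH_1(\Psi)\big] \Delta + \wt \CG_1(\Psi)(\CH_1 - \CH_2)(\Psi)\Psi_1^{(2)},
\]
with $\Delta(0) = 0$. Using \eqref{E:CG-1-e-1} together with the $\Psi_1^{(2)}$ bound, the forcing is dominated by $C\delta \eta^{-d_1}\Vert\CH_1 - \CH_2\Vert_{Y_1} e^{-\eta t}$. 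A second application of Lemma \ref{L:LPsi}, followed by absorbing the resulting $|t|^{1/2}$ into an exponential of rate $2\eta$, yields $\Vert \Delta(t) \Vert_{L_Q^1} \le C\delta \eta^{-2d_1-1/2} \Vert \CH_1 - \CH_2 \Vert_{Y_1} e^{-2\eta t}$. Inserting this into the integral for $I_2$ and integrating against $e^{\lambda t}$ gives $|I_2| \le C \delta^2 \eta^{-2d_1-1/2}(\lambda - 2\eta)^{-1} \Vert \CH_1 - \CH_2 \Vert_{Y_1} \Vert \wt W \Vert_{X_1,Q}$, which for $\delta \ll 1$ is dominated by the $I_1$ contribution. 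Using $(\lambda-\eta)^{-1} \le (\lambda - 2\eta)^{-1}$ to unify the two bounds yields the claimed Lipschitz constant $C\delta \eta^{-d_1}(\lambda - 2\eta)^{-1}$.

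The main obstacle is the nonlinear dependence of $\Psi_1$ on $\CH$: unlike the standard Lyapunov--Perron setting where the analogue of $\CT_1$ would be affine, here the variational equation \eqref{E:Psi-1} itself contains $\CH$, so the Lipschitz estimate requires a second round of Lemma \ref{L:LPsi} to bound $\Delta$. The resulting polynomial factor in $|t|$ must then be absorbed into an exponential of slightly larger rate, which is precisely why $\lambda - 2\eta$, rather than $\lambda - \eta$, appears in the final denominator; one should also verify that the assumed smallness of $\eta$ relative to $\lambda$ in \eqref{E:parameter-3}--\eqref{E:parameter-4} indeed guarantees $\lambda > 2\eta$ so that the bookkeeping closes.
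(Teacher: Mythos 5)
Your proposal is correct and follows essentially the same route as the paper: the bound $\Vert \Psi_1(t)\Vert_{L_Q^1}\le C\eta^{-d_1}e^{-\eta t}$ from Lemma \ref{L:LPsi}, the self-mapping via \eqref{E:exp-M}, the $C\delta$ bound on $D\hat G^-$, and \eqref{E:parameter-3}, and then the Lipschitz estimate by writing the equation for $\Psi_{1,2}-\Psi_{1,1}$ with forcing $\wt\CG_1(\Psi)(\CH_2-\CH_1)\Psi_{1,2}$, applying Lemma \ref{L:LPsi} a second time, and absorbing the resulting $|t|^{1/2}$ factor at the cost of replacing $(\lambda-\eta)^{-1}$ by $(\lambda-2\eta)^{-1}$, exactly as in the paper (where $\lambda>2\eta$ is indeed implicit in \eqref{E:parameter-4}). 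The only ingredient the paper includes that you pass over is the routine verification that $\CT_1(\CH)(W)$ is continuous in $W$ (done there by uniform approximation with the truncated integrals $\CT_1^{(n)}$), which is needed for $\CT_1(\CH)$ to actually lie in $Y_1$.
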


\begin{proof} 
Let $\CH \in Y_1(\mu)$. Lemma \ref{L:LPsi} implies that, for any $W \in X^{cu} (\delta)$, $t\le 0$, the $\Psi_1(t)$ defined in  \eqref{E:Psi-1} satisfies 
\begin{equation} \label{E:Psi-1-1} 
\Vert \Psi_1 (t) \Vert_{L_Q (X^{cu})} \le C \eta^{-d_1} e^{-\eta t}. 
\end{equation}
Therefore \eqref{E:exp-M}, definition \eqref{E:CT1} of $\CH_1$, and \eqref{E:hat-G-esti-1} imply 
\begin{equation} \label{E:Psi-1-2} 
\Vert \CT_1(\CH) \Vert_{Y_1} \le C \delta\eta^{-d_1} \int_{-\infty}^0 e^{(\lambda-\eta)t}dt= C \delta\eta^{-d_1} (\lambda-\eta)^{-1} \le \mu
\end{equation}
due to \eqref{E:parameter-3}. To prove $\CT_1 (\CH) \in Y_1$, it remains to show $\CH (W)$ is continuous in $W$. In fact, the above estimate implies that $\CT_1^{(n)} (\CH) \to \CT_1(\CH)$ uniformly, where 
\begin{align*} 
\big(\CT_1^{(n)} ( \CH)\big) (W) \wt W =&   \int_{-n}^0 e^{-tM_-} \Big(D_W \hat G^-\big(\Psi, h^{cu} (\Psi) \big)\\
&+D_{a^-}  \hat G^-\big(\Psi, h^{cu}(\Psi) \big)\CH \big(\Psi\big) \Big) \Psi_1 (t) \wt W dt.
\end{align*} 
From the continuity of $D\hat G^{cu, -}$, it is easy to verify that $\big(\CT_1^{(n)} (\CH)\big) (W)$ is $C^0$ in $W$. Therefore $\CT_1(\CH)$ is also continuous and thus $\CT_1(\CH) \in Y_1 (\mu)$.  

In the following we estimate the Lipschitz constant of $\CT_1$. Let $\CH_j\in Y_1(\mu)$ and $\Psi_{1, j}(t)$ be defined in  \eqref{E:Psi-1} for $\CH_j$, $j=1,2$, which satisfy 
\begin{align*}
\p_t (\Psi_{1,2} - \Psi_{1,1}) = &\big( A^{cu} (y, G^T) - \CG_1(\Psi) - \wt \CG_1(\Psi) \CH_1 \big) (\Psi_{1,2} - \Psi_{1,1})\\
& + \wt \CG_1(\Psi) (\CH_2 - \CH_1)  (\Psi) \Psi_{1,2}
\end{align*}
and $(\Psi_{1,2} - \Psi_{1,1})(0)=0$. Using Lemma \ref{L:LPsi} and \eqref{E:TCG-1}, we obtain 
\[
\Vert (\Psi_{1,2} - \Psi_{1,1}) (t) \Vert_{L_Q (X^{cu})} \le C \eta^{-2d_1-\frac 12}\delta |t|^{\frac 12} e^{-\eta t} \Vert \CH_2 - \CH_1\Vert_{Y_1}.
\]
From the definition of $\CT_1$, we have, for any $W \in X^{cu}(\delta)$,  
\begin{align*}
\big(\CT_1(\CH_1) - \CT_1(\CH_2)\big) (W) =& \int_{-\infty}^0 e^{-tM_-} \big(D_{a^-} \hat G^- (\CH_2 - \CH_1) \Psi_{1,2} (t) \\
&+(D_W \hat G^- + D_{a^-} G^- \CH_1) (\Psi_{1,2} - \Psi_{1,1}) (t)  \big)  dt,   
\end{align*}
where $D\hat G$ is evaluated at $\big(\Psi, h^{cu} (\Psi)\big)$, $\CH_j$ at $\Psi$, and $\Psi$ at $(t, W)$. Using \eqref{E:exp-M}, \eqref{E:hat-G-esti-2}, and the above estimates on $\Psi_{1,j}$ and $\Psi_{1,2} - \Psi_{1,1}$, it follows that 
\begin{align*}
\Vert \CT_1(\CH_1) - \CT_1(\CH_2) \Vert_{Y_1} \le & C\delta \int_{-\infty}^0 e^{(\lambda -\eta) t} \eta^{-d_1}  (1 + \eta^{-d_1-\frac 12}\delta |t|^{\frac 12}) dt \Vert \CH_2 - \CH_1\Vert_{Y_1}\\
\le & C\delta \eta^{-d_1} (\lambda -2\eta)^{-1} \Vert \CH_2 - \CH_1\Vert_{Y_1}.
\end{align*}
The proof of the lemma is complete. 
\end{proof} 

Assume 
\begin{equation} \label{E:parameter-5}
C \delta \eta^{-d_1} (\lambda -2\eta)^{-1} <1,
\end{equation}
then $\CT_1$ is a contraction mapping on $Y_1(\mu)$. Let $\CH^{cu} \in Y_1(\mu)$ be the unique fixed point of $\CT_1$. In the rest of this subsection, we will prove 

\begin{lemma}  \label{L:C1}
There exists $C>0$ such that if $\eta \in (C\delta, 1)$ and $Q, \mu, \delta$ satisfy \eqref{E:parameter-1}, \eqref{E:parameter-2}, \eqref{E:parameter-3}, \eqref{E:parameter-4}, and \eqref{E:parameter-5}, then $h^{cu} \in C^1 (X^{cu}, \BR^d)$ and $D h^{cu} (W)= \CH^{cu} (W)$ for any $W \in X^{cu}$. 
\end{lemma}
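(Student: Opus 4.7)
The plan is to verify that $h^{cu}$ is $C^1$ with $Dh^{cu} = \CH^{cu}$ by means of the fibre contraction principle of Hirsch--Pugh--Shub. Introduce the product space $Z = \Gamma_{\mu,\delta} \times Y_1(\mu)$, and extend $\CT_1$ to a family $\CT_1^h$ parametrized by $h \in \Gamma_{\mu,\delta}$, obtained by substituting $h$ for $h^{cu}$ in \eqref{E:CT1}, \eqref{E:Psi-1}, \eqref{E:CG-1}, and \eqref{E:TCG-1}. Define the combined transformation
\[
\mathbf{T}(h, \CH) = \bigl(\CT(h),\; \CT_1^h(\CH)\bigr), \qquad (h, \CH) \in Z.
\]
Its first component is the contraction of Lemma \ref{L:contraction-1}, and for each fixed $h$ the second component is a contraction on $Y_1(\mu)$ with Lipschitz constant bounded by $C\delta\eta^{-d_1}(\lambda-2\eta)^{-1}<1$, by a direct repetition of the proof of Lemma \ref{L:CT-1}, uniformly in $h$. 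The unique fixed point of $\mathbf{T}$ is $(h^{cu}, \CH^{cu})$.

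\textbf{Preservation of the $C^1$ relation.}
The crucial step is to show that if $(h, \CH) \in Z$ satisfies $h \in C^1$ and $Dh = \CH$, then $\CT(h) \in C^1$ with $D\bigl(\CT(h)\bigr) = \CT_1^h(\CH)$. Since $h$ is $C^1$, the right-hand side of the ODE \eqref{lp} (with $h^{cu}$ replaced by $h$) is $C^1$ in $W$, so classical ODE theory yields $C^1$ dependence of the solution map $\Psi(t, \cdot)$ on the initial datum, with $D_W \Psi(t, W)$ satisfying exactly the variational equation \eqref{E:Psi-1} where $\CH$ is replaced by $Dh$. Hence $D_W \Psi(t, W) = \Psi_1(t)$. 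Differentiating the defining integral \eqref{newh} under the integral sign—justified by the uniform exponential decay of the integrand via \eqref{E:Psi-1-1} combined with \eqref{E:hat-G-esti-2} under the standing parameter conditions—produces precisely $\CT_1^h(\CH)$.

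\textbf{Iteration and passage to the limit.}
Start from the trivial $C^1$ seed $(h_0, Dh_0) = (0, 0) \in Z$ and form the iterates $(h_n, \CH_n) = \mathbf{T}^n(h_0, Dh_0)$. By the preservation property just established, each $h_n \in C^1$ and $Dh_n = \CH_n$. Because $\CT$ is a $C^0$-contraction we have $h_n \to h^{cu}$ uniformly. To apply the fibre contraction principle and conclude $\CH_n \to \CH^{cu}$ uniformly, one verifies that $\CT_1^h(\CH)$ depends continuously on $h$ in the $C^0$ topology, uniformly in $\CH \in Y_1(\mu)$: a $C^0$-perturbation in $h$ perturbs the vector fields of both \eqref{lp} and \eqref{E:Psi-1}, and the induced perturbations of $\Psi$ and of $\Psi_1$ are controlled via Proposition \ref{P1} and Lemma \ref{L:LPsi}, remaining integrable against the weight $e^{(\lambda-\eta)t}$ in \eqref{E:CT1}. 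Uniform convergence of both $h_n$ and $Dh_n = \CH_n$ then implies $h^{cu} \in C^1$ with $Dh^{cu} = \CH^{cu}$. The identity $Dh^{cu}(y, 0,0,0,0) = 0$ asserted in Proposition \ref{P:smoothness} follows by evaluating \eqref{E:CT1} at a point of $\CM$: there $\Psi(t, \cdot) \equiv (y, 0,0,0,0)$, while $D_W \hat G^-$ and $D_{a^-}\hat G^-$ vanish at the origin by the at-least-quadratic nature of $\hat G^-$ encoded in \eqref{E:hat-G-esti-2}.

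\textbf{Main obstacle.}
The technical heart is the Lipschitz-in-$h$ estimate along the chain $h \mapsto \Psi \mapsto (\CG_1, \wt \CG_1) \mapsto \Psi_1 \mapsto \CT_1^h(\CH)$, in a topology weak enough to be driven by $\Vert h_n - h^{cu}\Vert_{C^0}$ yet strong enough to survive weighted integration against $e^{-tM_-}$ over $(-\infty, 0]$. The parameter constraints \eqref{E:parameter-1}--\eqref{E:parameter-5} are essential, as they furnish the spectral gap between $\lambda$ and the effective growth rate $\eta$ produced by $|\p_t y|$ in the linearized flow; without this gap the integrals defining $\CT_1^h(\CH)$ would not converge uniformly in $h$. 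The translation invariance and the bundle-coordinate formalism from Section \ref{S:InMa} must be threaded through the argument, but introduce only bookkeeping beyond what Lemma \ref{decomp2} and Subsection \ref{SS:bundle} already provide.
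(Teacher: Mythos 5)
Your route is genuinely different from the paper's. The paper does not use a fibre contraction on pairs $(h,\CH)$: it fixes $h=h^{cu}$ once and for all, shows in Lemma \ref{L:CT-1} that $\CT_1$ (with $h^{cu}$ frozen in it) is a contraction on $Y_1(\mu)$, and then proves \emph{directly} that its fixed point $\CH^{cu}$ equals $Dh^{cu}$ by a remainder estimate: it introduces $R_\Psi(t)=\Psi(t,W_0+\wt W)-\Psi(t,W_0)-\Psi_1(t)\wt W$ and $R_h(t)$, derives a coupled integral/differential system for them from \eqref{E:fix-p-1} and \eqref{lp}, and shows the weighted quantities $\wt R_h,\wt R_\Psi$ (with weight $e^{2\eta t}$) tend to $0$ as $\Vert\wt W\Vert_{X_1,Q}\to 0$, using Lemma \ref{L:LPsi} and only the \emph{uniform continuity} of $D\hat G$. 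Your fibre-contraction scheme is a recognized alternative, and the paper itself flags it in the remark closing Subsection \ref{SS:pre-smooth}: iterating $\CT$ in the $C^1$ topology is viable here because $\hat G$ is smooth, whereas the paper's direct argument is designed to need only $\hat G\in C^k$. Your observation that the contraction constant of Lemma \ref{L:CT-1} is uniform over $h\in\Gamma_{\mu,\delta}$ is correct, since that proof never uses the fixed-point property of $h^{cu}$.

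The genuine gap is in your ``preservation of the $C^1$ relation'' step, where you invoke ``classical ODE theory'' to get $C^1$ dependence of $\Psi(t,\cdot)$ on the initial datum and to identify $D_W\Psi$ with the solution of \eqref{E:Psi-1}. Equation \eqref{lp} is not an ODE with a smooth bounded vector field: the $V$-component involves the unbounded operator $\Pi^e_{c,y}JL_{c,y}\Pi^e_{c,y}$ with time-dependent $y(t)$, the nonlinearity $\hat G^e$ takes values only in $\wt X$ rather than $X_1$ (so Duhamel must be run through the space-time estimates of Section \ref{S:linear}, i.e.\ Lemma \ref{L:A-cu}/\ref{L:LPsi}), and the ``linear'' part $A^{cu}\big(y,\hat G^T(W,h(W))\big)$ itself depends on the unknown through $y$ and $\hat G^T$ — the very complication the paper warns about at the start of Section \ref{S:smooth}. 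Consequently, differentiability of the flow map is not an off-the-shelf fact; in the paper it is \emph{proved}, not quoted, and only as a by-product of the same remainder estimates that give $Dh^{cu}=\CH^{cu}$ (the proof of Lemma \ref{L:C1} ends by concluding $D\Psi(t,W_0)=\Psi_1(t,W_0)$). To make your preservation step rigorous you would have to carry out, for each $C^1$ iterate $h_n$, a weighted remainder estimate of exactly this type (uniform exponential bounds $e^{-\eta t}$ on $\Psi_1$ against the $e^{\lambda t}$ decay in \eqref{E:CT1}, plus control of the $\wt X$-valued error terms), so the analytic heart of the proof is being gestured at rather than supplied. The remaining ingredients of your scheme — uniform fibre contraction, continuity of $h\mapsto\CT_1^h(\CH)$ via Proposition \ref{P1} and Lemma \ref{L:LPsi} (which, strictly, needs uniform continuity of $D\hat G$ rather than a Lipschitz bound, harmless here), and the evaluation at $(y,0)$ giving $Dh^{cu}(y,0)=0$ — are sound.
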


\begin{proof}
Since $\CH(W)$ is continuous in $W$, it suffices to show $Dh^{cu} (W_0) \wt W = D\CH(W_0) \wt W$ at any fixed $W_0 \in X^{cu}(\delta)$ and $\wt W \in X^{cu}\backslash \{0\}$. 
Let $\Psi_1(t)$ be defined as in \eqref{E:Psi-1} associated to $\CH^{cu}$ and $W_0$ and 
\begin{align*}
&R_\Psi (t) = \Psi(t, W_0 +\wt W) - \Psi(t, W_0) - \Psi_1 (t) \wt W, \\
&R_h (t) = h^{cu} \big( \Psi(t, W_0 + \wt W) \big) - h^{cu} \big( \Psi(t, W_0)\big) - \CH^{cu} \big(\Psi(t, W_0)\big) \Psi_1(t) \wt W. 
\end{align*} 
According to \eqref{E:Lip-Psi}, \eqref{E:Psi-1-1}, $\Vert \CH^{cu} \Vert_{Y_1} \le \mu$, and the Lipschitz property of $h^{cu}$, $R_\psi$ and $R_h$ satisfy the rough estimates 
\begin{equation} \label{E:R-basic-1}
\Vert R_\Psi (t) \Vert_{X_1, Q}  + |R_h(t)|\le C \eta^{-d_1} e^{-\eta t} \Vert \wt W \Vert_{X_1, Q}, 
\end{equation}
for $t\le 0$. Our goal is to show  $\Vert R_\Psi(0) \Vert_{X_1, Q} \slash \Vert \wt W \Vert_{X_1, Q}  \to 0$ as $\Vert \wt W \Vert_{X_1, Q} \to 0$. 

To analyze $R_h$ and $R_\Psi$, denote \begin{align*} 
&W(s,t) = (1-s) \Psi(t, W_0) + s \Psi(t, W_0 + \wt W), \\ 
&a^- (s,t) = (1-s) h^{cu} \big(\Psi(t, W_0)\big)  + s h^{cu} \big( \Psi(t, W_0 + \wt W) \big).
\end{align*}
and for $\alpha \in \{T, d1, d2, \pm, V, cu\}$, let  
\begin{align*}
R^\alpha (t) =& \hat G^\alpha \big( W(1, t), a^-(1, t)\big) - \big[ \hat G^\alpha  + D_W \hat G^\alpha \big( W(1, t) - W(0, t)\big) \\
&+ D_{a^-}\hat G^\alpha \big( a^-(1,t) - a^-(0, t)  \big)  \big]
\end{align*}
where $\hat G^\alpha$ and $D\hat G^\alpha$ in the brackets $[\ldots]$ are evaluated at 
$\big( W(0, t), a^-(0, t)\big) =\Big( \Psi(t, W_0), h^{cu}\big(\Psi(t, W_0)\big)\Big)$. From \eqref{E:Psi-1-1}, we have 
\begin{equation}  \label{E:R_cu} 
\Vert R^{cu}(t)\Vert_{\wt X, Q} + |R^-(t)| \le 
r(t) \Vert \wt W \Vert_{X_1, Q}
\end{equation}
where $r(t)>0$ satisfies  
\begin{equation} \label{E:gen-bound}  
r(t) \le C 
\eta^{-d_1} e^{-\eta t}, \quad \Vert r \Vert_{C^0([t_1, t_2], \BR)} \to 0 \; \text{ as } \; \Vert \wt W \Vert_{X_1, Q} \to 0
\end{equation} 
for any $t_1 \le t_2 \le 0$ \footnote{Here we only need some uniform continuity of $D\hat G$, instead of $\hat G \in C^2$ or $C^{1,1}$.} 

From \eqref{E:fix-p} and $\CT_1(\CH^{cu}) = \CH^{cu}$, we have 
\[
R_h(0) = \int_{-\infty}^0 e^{-tM_-}\big( R^- (t) + D_W \hat G^- R_\Psi  (t)  + D_{a^-}\hat G^- R_h(t) \big) dt 
\]
Moreover, using  \eqref{E:fix-p} and \eqref{E:CT1-shift} instead, we obtain 
\begin{equation} \label{E:R_h}
R_h(t) = \int_{-\infty}^0 e^{-\tau M_-} ( R^-  + D_W \hat G^- R_\Psi   + D_{a^-}\hat G^- R_h)|_{\tau +t} d\tau,  \quad t\le 0, 
\end{equation} 
where again the above $D \hat G^-$ are evaluated at $\Big( \Psi(t+\tau, W_0), h^{cu}\big(\Psi(t+\tau, W_0)\big)\Big)$. 

From \eqref{lp}, $R_\Psi(t)$ satisfies $R_\Psi(0)=0$ and 
\begin{align*} 
\p_t R_\Psi& =  \CA_0^{cu} (t) R_\Psi + \CA_0^- (t) R_h + R^{cu} + D_W \hat G^{cu} R_\Psi  + D_{a^-}\hat G^{cu} R_h+ \int_0^1 (\CA_s^{cu} \\
&- \CA_0^{cu}) (t) \big( W(1,t) - W(0, t) \big)  
+ (\CA_s^- - \CA_0^-) (t) \big( a^-(1,t) - a^-(0, t)    
\big) 
ds 
\end{align*}
where $D\hat G^{cu}$ is evaluated at $\big( W(0, t), a^-(0, t)\big)$, operators $\CA_s^{cu}(t) \in L(X^{cu})$ and $\CA_s^-(t) \in L(\BR^d, X^{cu})$ are given by 
\begin{align*}
\CA_s^{cu}(t) \wt W =& A^{cu} \Big(y(s, t), G^T\big(W(s,t), a^-(s,t) \big)\Big) \wt W \\
&+ D_W \Big(A^{cu} \big(y, G^T (W, a^-)\big)\Big)|_{\big(W(s,t), a^-(s,t)\big)} (\wt W) W(s,t)  \\
\CA_s^-(t) \wt a^- = &D_{a^-} \Big(A^{cu} \big(y, G^T (W, a^-)\big)\Big)|_{\big(W(s, t), a^-(s, t)\big)} (\wt a^-) W(s,t)
\end{align*}
with $W(s,t)$ and $a^-(s,t)$ defined in the above and $y(s,t)$ being the $y$ component of $W(s,t)$ (so the $D_W$ also acts on the $y$ component in $A^{cu}$). 
Note $D A^{cu}$ acts only on the $V$ component of $\wt W$. From Lemma \ref{L:A_e},  \eqref{E:Lip-Psi}, \eqref{E:TCG-1}, \eqref{E:CG-1}, \eqref{E:CG-1-e-1}, \eqref{E:CG-1-e-2}, and \eqref{E:R_cu}, it is straight forward to obtain 
\begin{align*}
&\Vert \p_t R_\Psi -(A^{cu} + \CG_1) R_\psi  \Vert_{\wt X, Q}=\Vert \p_t R_\Psi - \CA_0^{cu} R_\psi - D_W \hat G^{cu} R_\Psi \Vert_{\wt X, Q} \\
\le &C \delta |R_h| + \Vert R^{cu}\Vert_{\wt X, Q} + r(t) \Vert \wt W\Vert_{X_1, Q} 
\le 
C \delta |R_h| + r(t) \Vert \wt W \Vert_{X_1, Q}
\end{align*}
where where $A^{cu}$ and $\CG_1$ are evaluated based on $\Psi(t, W_0)$ and $r(t)$ satisfies \eqref{E:gen-bound}. Lemma \ref{L:LPsi} implies  
\begin{equation} \label{E:R-Psi}
\Vert R_\Psi (t) \Vert_{X_1, Q}^2 \le C \delta^2 \eta^{-2d_1-1} \int_t^0 e^{2\eta(\tau -t)} |R_h(\tau)|^2 d\tau + r_1(t) \Vert \wt W\Vert_{X_1, Q}^2
\end{equation}
where $A^{cu}$ and $\CG_1$ are evaluated based on $\Psi(t, W_0)$ and $r_1(t)$ satisfies 
\begin{equation} \label{E:gen-bound-1} 
r_1(t) \le C 
\eta^{-4d_1 -2} (1+|t|) e^{-2\eta t}, \; \; \Vert r_1 \Vert_{C^0([t_1, t_2], \BR)} \to 0 \; \text{ as } \; \Vert \wt W \Vert_{X_1, Q} \to 0
\end{equation} 
for any $t_1 \le t_2 \le 0$. 

Finally, let  
\[
\wt R_h = \sup_{t \le 0} e^{2\eta t} \frac {|R_h(t)|}{ \Vert \wt W\Vert_{X_1, Q}}, \quad \wt R_\Psi = \sup_{t \le 0} e^{2\eta t} \frac {\Vert R_\Psi(t) \Vert_{X_1, Q}}{ \Vert \wt W\Vert_{X_1, Q}}.
\]
Inequality \eqref{E:R-basic-1} implies $\wt R_h, \wt R_\Psi < \infty$. We will prove $\wt R_h, \wt R_\Psi \to 0$ as $ \Vert \wt W\Vert_{X_1, Q} \to 0$. In fact, \eqref{E:R-Psi} and \eqref{E:R_h} along with \eqref{E:R_cu} and Lemma \ref{L:G-Lip} imply 
\[
\wt R_\Psi \le C \delta \eta^{-d_1-1}  \wt R_h + \sup_{t\le 0} r_1(t)^{\frac 12} e^{2 \eta t} 
\]
and
\begin{align*}
\wt R_h \le & C \int_{-\infty}^0\delta e^{(\lambda -2\eta) \tau} (\wt R_\Psi + \wt R_h) 
+ e^{\lambda \tau + 2 \eta t} r (t+\tau) d\tau\\ 
\le & C (\lambda-2\eta)^{-1} \big( 
\delta(\wt R_\Psi +  \wt R_h) + \sup_{\tau\le 0} r(\tau) e^{2\eta \tau} \big). 
\end{align*}
Therefore 
\[
\wt R_\Psi  + \wt R_h \le C \big(\sup_{t\le 0} r_1(t)^{\frac 12} e^{2 \eta t} + \sup_{\tau\le 0} r(\tau) e^{2\eta \tau} \big).
\]
From \eqref{E:gen-bound} and \eqref{E:gen-bound-1}, we obtain that $\wt R_h, \wt R_\Psi \to 0$ as $\Vert \wt W\Vert_{X_1, Q} \to 0$. Consequently $Dh^{cu} (W_0) = \CH(W_0)$ and $D \Psi(t, W_0) = \Psi_1(t, W_0)$. 
\end{proof}

Finally we prove that, at any traveling wave, the center-unstable manifold is tangent to the center-unstable subspace. 

\begin{lemma} \label{L:tangency}
There exists $C>0$ such that if $\eta \in (C\delta, 1)$ and $Q, \mu, \delta$ satisfy \eqref{E:parameter-1}, \eqref{E:parameter-2}, \eqref{E:parameter-3}, \eqref{E:parameter-4}, and \eqref{E:parameter-5}, then $Dh^{cu} (y, 0, 0, 0, 0) =0$ at any $y \in \BR^3$. 
\end{lemma}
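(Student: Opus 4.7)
The plan is to combine the identification $Dh^{cu} = \CH^{cu}$ from Lemma \ref{L:C1} with a direct evaluation of the fixed-point identity $\CH^{cu} = \CT_1 \CH^{cu}$ at the base point $W_0 = (y, 0, 0, 0, 0)$, exploiting that the flow of the cut-off system \eqref{lp} fixes $W_0$ and that the nonlinearity $\hat G^-$ vanishes together with its first derivatives at $w = 0$.

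First I would check that $W_0$ is a stationary solution of \eqref{lp}. Since $h^{cu} \in \Gamma_{\mu, \delta}$ satisfies $h^{cu}(W_0) = 0$, the associated $a^- = 0$ and therefore $w = \Lambda(W_0, 0) = 0$. Inspecting the definitions of $\hat G^\alpha$ in Subsection \ref{SS:GT}, each term vanishes at $(W_0, 0)$: the $G^\alpha(c, y, 0)$ pieces by \eqref{E:G^pm-0}--\eqref{E:G^pm-2}, and the added linear contributions to $\hat G^T$, $\hat G^{d1}$, $\hat G^e$ (namely the $\zeta_c^\alpha$-pairings with $A_{Te}, A_{1e}, A_{e2}$ applied to $K_{c,y} w$, together with the matrix terms $M_{T1} a^{d1}, M_{T2} a^{d2}, M_{12} a^{d2}$) because both $w$ and the $a^{d1,d2}$ components vanish at $W_0$. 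Combined with $A^{cu}(y, 0) W_0 = 0$, which holds since the $y$-diagonal block of $A^{cu}$ is trivial and every other component of $W_0$ is zero, this gives $\Psi(t, W_0) \equiv W_0$ for all $t \le 0$.

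Next I would verify that $D_W \hat G^-|_{(W_0, 0)} = 0$ and $D_{a^-} \hat G^-|_{(W_0, 0)} = 0$. The cut-off $\gamma_\delta$ equals $1$ in a neighborhood of $(W_0, 0)$ and hence contributes no derivative. Since $w = \Lambda(W, a^-)$ is affine with $\Lambda(W_0, 0) = 0$, the chain rule reduces the computation to $D_w G^-|_{w=0}$ and $D_y G^-|_{w=0}$, both of which vanish by \eqref{E:G^pm-0} applied with $(k, l) = (1, 0)$ and $(0, 1)$ respectively.

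The conclusion is then immediate: evaluating \eqref{E:CT1} at $W = W_0$ and using $\Psi(t, W_0) \equiv W_0$, the integrand becomes $e^{-tM_-}\bigl( D_{a^-}\hat G^-|_{(W_0,0)} \CH^{cu}(W_0) + D_W \hat G^-|_{(W_0,0)} \bigr) \Psi_1(t) \wt W$, which vanishes identically regardless of $\Psi_1(t)$. Hence $\CH^{cu}(W_0) \wt W = 0$ for every $\wt W \in X^{cu}$, i.e., $Dh^{cu}(y, 0, 0, 0, 0) = 0$. There is no substantive obstacle here; the only real content is the second-order vanishing of $G^-$ at $w = 0$ together with the stationarity of $W_0$, which together collapse the linear fixed-point equation for $\CH^{cu}$ to the trivial identity at the base point.
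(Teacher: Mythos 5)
Your proposal is correct and follows essentially the same route as the paper: observe that $(y,0,0,0,0)$ is a fixed point of the flow of \eqref{lp} (since $h^{cu}(y,0)=0$ and $\hat G^{cu}(y,0,0)=0$), note that $D\hat G^-$ vanishes there, and conclude from the fixed-point identity for $\CT_1$ together with Lemma \ref{L:C1} that $\CH^{cu}(y,0)=Dh^{cu}(y,0,0,0,0)=0$. Your write-up merely fills in the chain-rule details through $\Lambda$ and the cut-off that the paper's terse proof leaves implicit.
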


\begin{proof} 
In the proof of this lemma, we adopt the notation $(y, 0) = (y, 0, 0, 0, 0) \in X^{cu}$. Observe that \eqref{lp} and the definition of $\hat G^{cu}$ implies $\Psi\big(t, (y, 0)\big) =(y, 0)$ for all $t\le 0$. 
For any $\CH \in Y_1$, \eqref{newh}, the fact $D \hat G^-(y, 0)=0$, and the above observation implies $\CT_1 (\CH) (y, 0) =0$. The conclusion of the lemma follows immediately. 
\end{proof}

\subsection{Higher order smoothness of $h^{cu}$} \label{SS:Ck} 

In this subsection, we shall prove 

\begin{proposition} \label{P:Ck} 
For any $k\ge 1$, there exists $C>0$ such that if $\eta \in (C\delta, 1)$ and $Q, \mu, \delta$ satisfy \eqref{E:parameter-1}, \eqref{E:parameter-2}, \eqref{E:parameter-3}, \eqref{E:parameter-4},  \eqref{E:parameter-5}, and 
\begin{equation} \label{E:parameter-6} 
C \delta \eta^{-kd_1} \big(\lambda- k\eta\big)^{-1} \le 1
\end{equation}
then $h^{cu} \in C^k$ and 
\[
\Vert D^k h^{cu} \Vert_{Y_k} + \sup_{t\le 0} e^{k\eta t} \Vert D^k \Psi(t, \cdot)\Vert_{C^0 L_Q^k}  < \infty.
\] 
\end{proposition}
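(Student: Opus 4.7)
The plan is to proceed by induction on $k$, the base case $k=1$ being Lemma~\ref{L:C1} together with Lemma~\ref{L:tangency}. Assume that $h^{cu}\in C^{k-1}$ and that, for every $j\le k-1$, the estimate
\[
 \Vert D^j h^{cu}\Vert_{Y_j}+\sup_{t\le 0} e^{j\eta t}\,\Vert D^j\Psi(t,\cdot)\Vert_{C^0 L_Q^j}<\infty
\]
holds. Just as $D^2 h^{cu}$ was shown to be the fixed point of the affine map $\CT_2$ on $Y_2$ defined in \eqref{E:CT2}, we differentiate the fixed point identity \eqref{E:fix-p} formally $k$ times. Using the chain rule and Fa\`a di Bruno's formula, the resulting expression for $\CH=D^k h^{cu}$ has the form $\CH=\CT_k(\CH)$, where for $W\in X^{cu}(\delta)$ and $\wt W_1,\ldots,\wt W_k\in X^{cu}$,
\[
 (\CT_k\CH)(W)(\wt W_1,\ldots,\wt W_k) = \int_{-\infty}^{0} e^{-tM_-}\Bigl(D_{a^-}\hat G^-\,\CH(\Psi)\bigl(D\Psi \wt W_1,\ldots,D\Psi\wt W_k\bigr) + (D_{a^-}\hat G^-\,Dh^{cu}+D_W\hat G^-)\Psi_k(t)(\wt W_1,\ldots,\wt W_k)+\CR_k(t,W)(\wt W_1,\ldots,\wt W_k)\Bigr)\,dt,
\]
with $\CR_k$ a polynomial expression in $D^j\hat G^-$, $D^j h^{cu}$, $D^j\Psi$ for $j\le k-1$ that is completely determined by the induction hypothesis. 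Here $\Psi_k(t)\in L(\otimes^k_{sym} X^{cu},X^{cu})$ satisfies $\Psi_k(0)=0$ and an equation of the shape \eqref{E:Psi-2}, namely
\[
 \partial_t\Psi_k = \bigl(A^{cu}(y,\hat G^T)+\CG_1(\Psi)+\wt\CG_1(\Psi)Dh^{cu}\bigr)\Psi_k + \wt\CG_1(\Psi)\CH(\Psi)(D\Psi,\ldots,D\Psi)+\CG_k\bigl(\Psi,D\Psi,\ldots,D^{k-1}\Psi;Dh^{cu},\ldots,D^{k-1}h^{cu}\bigr),
\]
where $\CG_k$ collects all the lower-order multilinear terms produced by differentiating the $W$- and $a^-$-dependences in $A^{cu}+\CG_1+\wt\CG_1 Dh^{cu}$. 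The inductive hypothesis ensures $\CG_k$ is controlled and continuous in $W$.

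The first step is to establish the solvability and the weighted bound $\Vert\Psi_k(t)\Vert_{L_Q^k}\le C\eta^{-kd_1}e^{-k\eta t}$ together with a forcing estimate in $\CH$, obtained by iterating Lemma~\ref{L:LPsi} exactly as \eqref{E:Psi-1-1} was derived: the homogeneous part satisfies the same linear equation as $\Psi_1$ and thus inherits the rate $\eta^{-d_1}e^{-\eta t}$, while each of the $k-1$ factors of $D\Psi$ in the Duhamel integrand contributes an additional factor $\eta^{-d_1}e^{-\eta t}$ up to integration, yielding the claimed $k$-fold loss. On the appropriate closed ball in $Y_k$, one then checks, mimicking Lemma~\ref{L:CT-1} with the bounds \eqref{E:CG-1-e-1}--\eqref{E:CG-1-e-2} and \eqref{E:hat-G-esti-1}--\eqref{E:hat-G-esti-2}, that
\[
 \Vert\CT_k(\CH_2)-\CT_k(\CH_1)\Vert_{Y_k}\le C\delta\,\eta^{-kd_1}(\lambda-k\eta)^{-1}\,\Vert\CH_2-\CH_1\Vert_{Y_k}.
\]
Assumption \eqref{E:parameter-6} makes this a contraction, producing a unique continuous fixed point $\CH^{k,cu}\in Y_k$. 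Continuity in $W$ follows as in the proof of Lemma~\ref{L:CT-1} by approximating the tail of the integral uniformly.

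The second step is to identify $\CH^{k,cu}$ with $D^k h^{cu}$ and thereby upgrade $h^{cu}$ from $C^{k-1}$ to $C^k$. Fix $W_0\in X^{cu}(\delta)$ and $\wt W\in X^{cu}$, and introduce the remainders
\[
 R_\Psi^{(k-1)}(t)=D^{k-1}\Psi(t,W_0+\wt W)-D^{k-1}\Psi(t,W_0)-\Psi_k(t)(\wt W,\cdot,\ldots,\cdot),\qquad R_h^{(k-1)}(t)=D^{k-1}h^{cu}\bigl(\Psi(t,W_0+\wt W)\bigr)-D^{k-1}h^{cu}\bigl(\Psi(t,W_0)\bigr)-\CH^{k,cu}\bigl(\Psi(t,W_0)\bigr)\Psi_1(t)\wt W\cdot(\cdots),
\]
expressed analogously to the $k=1$ case of Lemma~\ref{L:C1}. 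Using the fixed point identity for $\CT_k$ together with \eqref{E:fix-p-1} and expanding $\hat G^-$, $A^{cu}$, $\CG_1$, $\wt\CG_1$ to one order higher than in the induction step (exploiting the fact that $\hat G$ and the other coefficient maps are smooth), one obtains a coupled integral/differential inequality for the weighted quantities
\[
 \wt R_\Psi^{(k-1)} = \sup_{t\le 0} e^{k\eta t}\,\Vert R_\Psi^{(k-1)}(t)\Vert_{L_Q^{k-1}}/\Vert\wt W\Vert_{X_1,Q}, \qquad \wt R_h^{(k-1)} = \sup_{t\le 0} e^{k\eta t}\,\vert R_h^{(k-1)}(t)\vert/\Vert\wt W\Vert_{X_1,Q},
\]
of precisely the same form as in the proof of Lemma~\ref{L:C1}, with a small coefficient $C\delta\eta^{-kd_1}(\lambda-k\eta)^{-1}<1$ from \eqref{E:parameter-6}. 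The uniform continuity of $D\hat G$ and of the $k$-th Taylor remainders forces $\wt R_\Psi^{(k-1)},\wt R_h^{(k-1)}\to 0$ as $\Vert\wt W\Vert\to 0$, which is exactly the statement $D^k h^{cu}(W_0)=\CH^{k,cu}(W_0)$ and $D^k\Psi(t,W_0)=\Psi_k(t)$.

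The main obstacle, as in the $C^2$ case already treated, is the bookkeeping: because $A^{cu}(y,\hat G^T)$ depends on $y$ and on $\hat G^T(W,h^{cu}(W))$, each derivative of $\Psi$ produces additional terms involving $D^jh^{cu}$ and additional factors of $\wt\CG_1$ through $\CF$, and each of these must be tracked through both $\CR_k$ and $\CG_k$ to verify that only $j<k$ derivatives of $h^{cu}$ appear in them. Once this Fa\`a di Bruno accounting is done, the remaining estimates reduce cleanly to applications of Lemma~\ref{L:LPsi} and the small-Lipschitz bounds of Lemma~\ref{L:G-Lip}, and the vanishing of $Dh^{cu}$ at $\CM$ (Lemma~\ref{L:tangency}) is inherited by $D^k h^{cu}$ because $\Psi(t,(y,0))\equiv(y,0)$ forces $\CT_k(\CH)(y,0)=0$ for every $\CH\in Y_k$, so the unique fixed point vanishes there as well. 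This yields Proposition~\ref{P:Ck} and, by the same scheme applied to $h^{cs}$ and then by intersection to $h^c$, Proposition~\ref{P:smoothness}.
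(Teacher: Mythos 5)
Your overall scheme is the same as the paper's: induct on $k$, set up the affine map $\CT_k$ on $Y_k$ by formally differentiating \eqref{E:fix-p} $k$ times, prove it is a contraction under \eqref{E:parameter-6} using Lemma \ref{L:LPsi} (this matches the paper's Lemmas \ref{L:LContraction}--\ref{L:kContraction}), and then identify the fixed point with $D^k h^{cu}$ by a remainder estimate "of the same form as Lemma \ref{L:C1}". However, there is a genuine gap in the identification step. In the $k=1$ argument, the a priori bound \eqref{E:R-basic-1} and the $O(\Vert \wt W\Vert_{X_1,Q})$ factors in the coupled inequalities for $\wt R_h,\wt R_\Psi$ come from the \emph{Lipschitz} property of $h^{cu}$ itself (the constant $\mu$ built into $\Gamma_{\mu,\delta}$) together with \eqref{E:Lip-Psi}. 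At level $k$, the analogous role is played by Lipschitz continuity of the top-order derivatives $D^{k-1}h^{cu}$ and $D^{k-1}\Psi(t,\cdot)$ (with the weight $e^{k\eta t}$): it is needed both to get the rough bound $\Vert R_\Psi(t)\Vert_{L_Q^{k-1}}+|R_h(t)|_{L_Q^{k-1}}\le \wt C e^{-k\eta t}\Vert\wt W\Vert_{X_1,Q}$, which makes the weighted suprema finite so the absorption can be closed, and to obtain the remainder bounds of type \eqref{E:gen-bound-2} when you difference the fixed-point identity $\CT_{k-1}(D^{k-1}h^{cu})=D^{k-1}h^{cu}$ at $W_0+\wt W$ and $W_0$. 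Your induction hypothesis only gives $h^{cu}\in C^{k-1}$ with $\Vert D^{k-1}h^{cu}\Vert_{Y_{k-1}}<\infty$, which yields merely $o(1)$, not $O(\Vert\wt W\Vert_{X_1,Q})$, for the differences of the top derivatives. The paper fills exactly this hole with a separate step (Lemma \ref{L:C1,1}): one shows that a ball $\{\CH\in Y_{k-1}:\ Lip\,\CH\le C_{k-1}\}$ is invariant under $\CT_{k-1}$, so the fixed point $D^{k-1}h^{cu}$ is Lipschitz, and similarly $e^{k\eta t}\,Lip\,D^{k-1}\Psi(t,\cdot)$ is bounded. Your proposal never establishes this $C^{k-1,1}$-type control, and without it the step "of precisely the same form as in the proof of Lemma \ref{L:C1}" does not go through.

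A secondary error: your closing claim that $\CT_k(\CH)(y,0)=0$ for every $\CH\in Y_k$, hence $D^k h^{cu}(y,0)=0$ for all $k$, is false for $k\ge 2$. While $D\hat G^-(y,0)=0$ kills the terms carrying $\CH$ and $\Psi_k$, the lower-order part $\CL_k$ contains contractions such as $D_{WW}\hat G^-\big(D\Psi\,\wt W_1, D\Psi\,\wt W_2\big)$, and neither $D^2\hat G^-(y,0)$ nor $D\Psi(t,(y,0))$ vanishes; this is why the paper (Proposition \ref{P:smoothness}) only asserts the vanishing of the first derivative at $\CM$. This does not affect Proposition \ref{P:Ck} itself, but it should be removed from your argument.
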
 

Here $D^k \Psi$ denote the differentiation with respect to $W$ only. In particular $\Vert Dh^{cu} \Vert_{Y_1} \le \mu$ and $\Vert D^kh^{cu} \Vert_{Y_k}$ may depend on $\delta$ for $k>1$. In the rest of this subsection, $C$ as usual denotes a generic upper bound independent of $t$, $W\in X^{cu}(\delta)$, and $\delta, Q, \mu$, while $\wt C$ independent of $t$ and $W\in X^{cu}(\delta)$, but may depend on $\delta, Q, \mu$. 

Formally differentiating \eqref{lp} and \eqref{newh} $k$ times implies that $D^k h^{cu}$ should be a fixed point of the following affine transformation $\CT_k$ on the space $Y_k$. Here for  $k\ge 2$, any $\CH \in Y_k$, $W\in X^{cu} (\delta)$, and $\wt W_1, \ldots, \wt W_k \in X^{cu}$, 
\begin{equation} \label{E:CTk} \begin{split}
& (\CT_k \CH) (W)  (\wt W_1, \ldots, \wt W_k)  \\
=&   \int_{-\infty}^0 e^{-tM_-} \Big( \big( ( D_{a^-} \hat G^- Dh^{cu}+ D_W \hat G^-) \Psi_k(t) +\CL_k(t)  \big) (\wt W_1, \ldots, \wt W_k) \\
& \qquad +D_{a^-}  \hat G^- \CH (\Psi) ( D\Psi \wt W_1, \ldots, D\Psi \wt W_k)\Big)  dt, 
\end{split} \end{equation}
where $D^l \Psi$ is evaluated at $(t, W)$, $D^l h^{cu}$ at $\Psi$, $D^l \hat G^l$ at $(\Psi, h^{cu})$, the symmetric multilinear mapping $\CL_k (t) \in Y_k$ is an algebraic combination involving $D^l \Psi$ and $D^l h^{cu}$, $D^k \hat G^-$, and $D^l \hat G^-$, $0\le l\le k-1$, and the symmetric multilinear $\Psi_k(t) \in L(\otimes_{sym}^k X^{cu}, X^{cu})$ satisfies $\Psi_k(0)=0$ and 
\begin{equation} \label{E:Psi-k} \begin{split}
\p_t \Psi_k = &\Big( A^{cu} \big(y(t), \hat G^T \big) +  \CG_1 (\Psi) + \wt \CG_1 (\Psi) D h^{cu} (\Psi) \Big) \Psi_k \\
&+ \wt \CG_1 (\Psi) \CH (\Psi) (D\Psi, \ldots, D\Psi) + \CG_k (t). 
\end{split} \end{equation}
Here $\CG_k(t) \in C^0\big( X^{cu}(\delta), L(\otimes_{sym}^k X^{cu}, X^{cu})\big)$ is again an algebraic combination involving $D^l \Psi$ and $D^l h^{cu}$, $D^k \hat G^{cu}$, and $D^l \hat G^{cu}$, $0\le l\le k-1$. These terms $\CG_k$ and $\CL_k$ are the lower order term in the higher order differentiation of compositions of mappings. The explicit forms of $\CT_2$, $\CG_2$, and $\CL_2$ can be found in \eqref{E:CT1} and \eqref{E:Psi-k}. 

The proof of Proposition \ref{P:Ck} is inductive in $k$. The case of $k=1$ has been proved in Subsection \ref{SS:C1}. Assume it holds for $1\le l< k$, we will prove it for $k$. As outlined in Subsection \ref{SS:pre-smooth}, we shall prove by showing that $D^k h^{cu}$ is given by the fixed point of the contraction $\CT_k$. 
Based on the usual formula of higher order derivatives of compositions of mappings, the induction assumptions imply 
\begin{equation} \label{E:temp-4} 
\sup_{t\le 0} e^{k\eta t} \Vert \CL_k (t) \Vert_{Y_k} + \sup_{t\le 0} e^{k\eta t} \Vert \CG_k (t) \Vert_{C^0L_Q^k} < \infty.  
\end{equation} 
In the following proof we will skip some details which are similar to those in Subsection \ref{SS:C1}.  

For $k \ge 2$, as $\CT_k$ is an affine transformation on $Y_k$, we first consider its homogeneous part $\CT_k \in L(Y_k)$
\begin{equation} \label{E:tCTk} \begin{split}
& (\wt \CT_k \CH) (W)  (\wt W_1, \ldots, \wt W_k)  \\
=&   \int_{-\infty}^0 e^{-tM_-}  \big( ( D_{a^-} \hat G^- Dh^{cu}+ D_W \hat G^-)\wt \Psi_k(t) (\wt W_1, \ldots, \wt W_k) \\
& \qquad +D_{a^-}  \hat G^- \CH (\Psi) ( D\Psi \wt W_1, \ldots, D\Psi \wt W_k)\big) dt, 
\end{split} \end{equation}
with the same convention of the notations and 
\begin{equation} \label{E:tPsi-k} \begin{split}
\p_t \wt \Psi_k = &\Big( A^{cu} \big(y(t), \hat G^T \big) +  \CG_1 (\Psi) + \wt \CG_1 (\Psi) D h^{cu} (\Psi) \Big) \wt \Psi_k \\
&+ \wt \CG_1 (\Psi) \CH(\Psi) (D\Psi, \ldots, D\Psi). 
\end{split} \end{equation}

\begin{lemma} \label{L:LContraction}
Let $k \ge 2$. There exists $C>0$ such that if $\eta \in (C\delta, 1)$ and $Q, \mu, \delta$ satisfy \eqref{E:parameter-1}, \eqref{E:parameter-2}, \eqref{E:parameter-3}, \eqref{E:parameter-4}, and \eqref{E:parameter-5}, then 
\[
\Vert \wt \CT_k \Vert_{L(Y_k)} \le C \delta \eta^{-kd_1} (\lambda- k\eta)^{-1}. 
\]
\end{lemma}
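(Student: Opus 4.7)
My plan is to estimate $\wt\CT_k$ by separately bounding the two summands inside its defining integral: the one linear in $\wt\Psi_k$ and the one linear in $\CH$ through $D\Psi$. Since $\wt\Psi_k$ itself is defined as the solution to a linear inhomogeneous ODE whose forcing contains $\CH$, the first task is to obtain a sharp pointwise-in-$t$ bound on $\Vert \wt\Psi_k(t)\Vert_{L_Q^k(\otimes_{sym}^k X^{cu},X^{cu})}$ in terms of $\Vert\CH\Vert_{Y_k}$. The second task is then a direct substitution into \eqref{E:tCTk} using Lemma \ref{L:G-Lip} and the semigroup estimate $\Vert e^{-tM_-}\Vert\le Ce^{\lambda t}$ (for $t\le 0$) from \eqref{E:exp-M}.

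For the first task, I would apply Lemma \ref{L:LPsi} to $\wt\Psi_k$ viewed, for any fixed $(\wt W_1,\ldots,\wt W_k)$, as an $X^{cu}$-valued curve solving $\wt\Psi_k(0)=0$ and the linear equation \eqref{E:tPsi-k} with the operator $B=Dh^{cu}(\Psi)$ (which is admissible since $\Vert Dh^{cu}\Vert\le\mu<1$) and forcing $f(\tau)=\wt\CG_1(\Psi(\tau,W))\CH(\Psi)(D\Psi\wt W_1,\ldots,D\Psi\wt W_k)$. The inductive hypothesis that Proposition \ref{P:Ck} holds at level $k=1$ yields $\Vert D\Psi(\tau,W)\Vert_{L_Q^1}\le C\eta^{-d_1}e^{-\eta\tau}$, and together with \eqref{E:CG-1-e-1} and $\Vert V(\tau)\Vert_{X_1}\le C\delta$ from \eqref{E:Lip-Psi} this gives $\Vert f(\tau)\Vert_{\wt X,Q}\le C_k\delta\eta^{-kd_1}e^{-k\eta\tau}\Vert\CH\Vert_{Y_k}\prod_i\Vert\wt W_i\Vert_Q$. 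Lemma \ref{L:LPsi} with zero initial data then yields, after computing $\int_t^0 e^{2\eta(\tau-t)-2k\eta\tau}d\tau\le (2(k-1)\eta)^{-1}e^{-2k\eta t}$ for $k\ge 2$, the bound $\Vert\wt\Psi_k(t)\Vert_{L_Q^k}\le C_k\delta\eta^{-(k+1)d_1-1}\Vert\CH\Vert_{Y_k}e^{-k\eta t}$.

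For the second task, I substitute this into \eqref{E:tCTk}: the first summand contributes $\int_{-\infty}^0 Ce^{\lambda t}\cdot C\delta\cdot C_k\delta\eta^{-(k+1)d_1-1}e^{-k\eta t}dt\,\Vert\CH\Vert_{Y_k}$, and the second contributes $\int_{-\infty}^0 Ce^{\lambda t}\cdot C\delta\cdot(C\eta^{-d_1}e^{-\eta t})^k dt\,\Vert\CH\Vert_{Y_k}$. Both integrands carry the factor $e^{(\lambda-k\eta)t}$, which under \eqref{E:parameter-6} integrates to $(\lambda-k\eta)^{-1}$. The extra $\delta\eta^{-d_1-1}$ in the first summand is absorbed into a universal constant using \eqref{E:parameter-2} (which gives $Q^3\delta\eta^{-(1+d_1)}\lesssim 1$), so the net estimate is $\Vert\wt\CT_k\CH\Vert_{Y_k}\le C\delta\eta^{-kd_1}(\lambda-k\eta)^{-1}\Vert\CH\Vert_{Y_k}$, as claimed.

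It remains to verify that $\wt\CT_k$ actually maps $Y_k$ into $Y_k$, which requires symmetry in $(\wt W_1,\ldots,\wt W_k)$ (immediate from the symmetry of $\CH$ and the symmetrization built into $\wt\Psi_k$ through \eqref{E:tPsi-k}) and continuity of $(\wt\CT_k\CH)(W)$ in $W$. The continuity follows by approximating $\wt\CT_k\CH$ by its truncations $\int_{-n}^0$ (each continuous in $W$ by the continuous dependence of $\Psi$, $D\Psi$, and $\wt\Psi_k$ on $W$) and noting that the above bounds make the convergence uniform on $X^{cu}(\delta)$, exactly as in the proof of Lemma \ref{L:CT-1}. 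The main technical point requiring care is simply the bookkeeping of the powers of $\eta$ and $\delta$ so that the extra factor in the $\wt\Psi_k$-term is absorbed using the already-imposed parameter restrictions; no fundamentally new estimate is needed beyond Lemma \ref{L:LPsi} and Lemma \ref{L:G-Lip}.
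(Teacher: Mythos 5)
Your proposal is correct and follows essentially the same route as the paper: apply Lemma \ref{L:LPsi} (with $B=Dh^{cu}$ and zero initial data) together with \eqref{E:TCG-1} to obtain exactly the paper's bound $\Vert \wt\Psi_k(t)\Vert_{L_Q^k}\le C\delta\eta^{-(k+1)d_1-1}e^{-k\eta t}\Vert\CH\Vert_{Y_k}$, and then substitute into \eqref{E:tCTk} and integrate $e^{(\lambda-k\eta)t}$. Your explicit absorption of the extra factor $\delta\eta^{-d_1-1}$ via \eqref{E:parameter-2}, and the symmetry/continuity remarks, only spell out details the paper leaves implicit (or defers to Lemma \ref{L:kContraction}).
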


\begin{proof} 
Lemma \ref{L:LPsi} and \eqref{E:TCG-1} imply, for $t \le 0$,  
\begin{equation} \label{E:Psi-k-e}
\Vert \wt \Psi_k (t) \Vert_{L_Q^k} \le C \delta \eta^{-(k+1)d_1-1} e^{-k\eta t} \Vert \CH\Vert_{Y_k}. 
\end{equation}
Substituting it into \eqref{E:tCTk} yields the lemma. 
\end{proof}

\begin{lemma} \label{L:kContraction}
Let $k \ge 2$ and assume Proposition \ref{P:Ck} holds for each $l$, $0 \le l \le k$. There exists $C>0$ such that if $\eta \in (C\delta, 1)$ and $Q, \mu, \delta$ satisfy \eqref{E:parameter-1}, \eqref{E:parameter-2}, \eqref{E:parameter-3}, \eqref{E:parameter-4}, \eqref{E:parameter-5}, and \eqref{E:parameter-6}, then $\CT_k$ is a contraction on $Y_k$. Moreover, for any $\CH\in Y_k$ and $W \in X^{cu}(\delta)$, the $\Psi_k(t)$ defined in \eqref{E:Psi-k} satisfies 
\[
\sup_{t\le 0, W \in X^{cu}(\delta)} e^{k\eta t} \Vert \Psi_k (t) \Vert_{L_Q^k} <\infty. 
\]
\end{lemma}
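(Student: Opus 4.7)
The plan is to exploit the affine structure of $\CT_k$ on $Y_k$. Writing $\CT_k(\CH) = \wt\CT_k(\CH) + \CT_k^{\mathrm{aff}}$, where $\wt\CT_k$ is the bounded linear operator of Lemma \ref{L:LContraction} and $\CT_k^{\mathrm{aff}}$ collects those pieces of the integrand in \eqref{E:CTk} that are independent of $\CH$ (namely the $\CL_k(t)$ term and the contribution of $\CG_k(t)$ to $\Psi_k(t)$ through \eqref{E:Psi-k}), the difference $\CT_k(\CH_1) - \CT_k(\CH_2) = \wt\CT_k(\CH_1 - \CH_2)$ has operator norm at most $C\delta\eta^{-kd_1}(\lambda - k\eta)^{-1} < 1$ by \eqref{E:parameter-6}. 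So once self-mapping is established, contractivity is immediate.

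For the self-mapping and the $\Psi_k$ estimate, I would treat the two simultaneously. Apply Lemma \ref{L:LPsi} to \eqref{E:Psi-k}, whose forcing term is $\wt\CG_1(\Psi)\CH(\Psi)(D\Psi,\ldots,D\Psi) + \CG_k(t)$. The $k=1$ case of the induction gives $\Vert D\Psi(t,\cdot)\Vert_{L_Q^1} \le C\eta^{-d_1}e^{-\eta t}$, so $\Vert (D\Psi)^{\otimes k}\Vert_{L_Q^k} \le C\eta^{-kd_1}e^{-k\eta t}$; combining with \eqref{E:CG-1-e-1} yields a bound $C\delta\eta^{-kd_1}e^{-k\eta t}\Vert\CH\Vert_{Y_k}$ on the first piece in $\Vert\cdot\Vert_{\wt X,Q}$, while $\CG_k$ satisfies \eqref{E:temp-4}. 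Lemma \ref{L:LPsi} then gives
\[
\Vert \Psi_k(t)\Vert_{L_Q^k}^2 \le C\eta^{-2d_1-1}\int_t^0 e^{2\eta(\tau - t)}\bigl(\delta\eta^{-kd_1}e^{-k\eta\tau}\Vert\CH\Vert_{Y_k} + \Vert\CG_k(\tau)\Vert_{L_Q^k}\bigr)^2 d\tau,
\]
and since $k\eta < 1$ can be arranged by shrinking $\delta$, the integral converges and produces $\Vert\Psi_k(t)\Vert_{L_Q^k} \le \wt C e^{-k\eta t}$. Substituting back into \eqref{E:CTk}, using the $e^{\lambda t}$ decay of $e^{-tM_-}$ from \eqref{E:exp-M} together with the bound $\Vert Dh^{cu}\Vert_{Y_1} \le \mu$ and Lemma \ref{L:G-Lip}, yields $\Vert \CT_k\CH\Vert_{Y_k} < \infty$. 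Continuity of $W \mapsto (\CT_k\CH)(W)$ follows exactly as in Lemma \ref{L:CT-1} by truncating the integral to $[-n,0]$, noting that the truncated integrand is continuous in $W$ by the inductive regularity of $\Psi$, $D^l\Psi$, $h^{cu}$, $D^l h^{cu}$, $\CG_k$, $\CL_k$, and the just-obtained bound on $\Psi_k$, while the tail converges uniformly thanks to the $e^{-k\eta t}$ weight against $e^{\lambda t}$.

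The main technical obstacle is the bookkeeping of $\CL_k(t)$ and $\CG_k(t)$, which by Fa\`a di Bruno are polynomial expressions in $D^{l_1}\Psi,\ldots,D^{l_j}\Psi$ and $D^{l_1}h^{cu},\ldots,D^{l_j}h^{cu}$ with $l_i \ge 1$ and $l_1 + \cdots + l_j = k$, multiplied by derivatives of $\hat G^{cu,-}$ of order at most $k$ evaluated at $(\Psi, h^{cu})$. The point to verify is that each such monomial, viewed as an element of the appropriate $L_Q^k$-valued space, decays like $e^{-k\eta t}$: by the inductive estimate $\Vert D^{l}\Psi(t,\cdot)\Vert_{C^0 L_Q^l} \le \wt C e^{-l\eta t}$ for $l < k$, the product has growth rate $e^{-(l_1 + \cdots + l_j)\eta t} = e^{-k\eta t}$, exactly matching what is needed to close the estimate on $\Psi_k$ and to justify \eqref{E:temp-4}. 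The lower-order derivatives of $h^{cu}$ are bounded in $Y_l$ by the induction (with $\Vert Dh^{cu}\Vert_{Y_1}\le\mu$ playing the special role), and the uniform boundedness of $D^k\hat G^{cu,-}$ near $a^- = h^{cu}(W)$ follows from \eqref{E:hat-G-esti-1}--\eqref{E:hat-G-esti-2}, with the off-diagonal structure ensuring a factor of $\delta$ that matches the factor in the parameter condition \eqref{E:parameter-6}.
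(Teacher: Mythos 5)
Your proposal is correct and follows essentially the same route as the paper: exploit the affine structure of $\CT_k$, bound the $\CH$-independent part using \eqref{E:temp-4} and Lemma \ref{L:LPsi} (which also gives the $e^{-k\eta t}$ bound on $\Psi_k$), invoke Lemma \ref{L:LContraction} with \eqref{E:parameter-6} for contractivity of the linear part, and obtain continuity in $W$ by the same truncation argument as in Lemma \ref{L:CT-1}. The only quibble is that your remark ``$k\eta<1$ can be arranged by shrinking $\delta$'' is unnecessary — the integral $\int_t^0 e^{2\eta(\tau-t)}e^{-2k\eta\tau}\,d\tau\le C\eta^{-1}e^{-2k\eta t}$ closes on its own; what is actually needed is $\lambda>k\eta$, which \eqref{E:parameter-6} already encodes.
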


\begin{proof}
Firstly, the $\Vert \cdot \Vert_{Y_k}$ bound of $\CT_k(0)$ can be easily obtained using \eqref{E:temp-4}, which along with Lemma \ref{L:LContraction} implies the $\Vert \cdot \Vert_{Y_k}$ bound of $\CT_k(\CH)$ for any $\CH \in Y_k$. The continuity of $\CT_k (\CH)(W)$ with respect to $W$ follows from the same argument as in the proof of Lemma \ref{L:CT-1}. Therefore $\CT_k(\CH) \in Y_k$ and thus \eqref{E:parameter-6} and Lemma \ref{L:LContraction} imply that $\CT_k$ is a contraction. 
\end{proof}

Recall that the Lipschitz property of $h^{cu}$ was used in the proof of $h^{cu} \in C^1$ in Subsection \ref{SS:C1}. Similarly, before we proceed to prove $D^k h^{cu}$ is equal to the fixed point of $\CT_k$ and thus $h^{cu} \in C^k$, we first take a step back to prove $Lip \ D^{k-1} h^{cu} < \infty$ using the above lemma. 

\begin{lemma} \label{L:C1,1}
Let $k \ge 2$ and assume Proposition \ref{P:Ck} holds for each $l$, $0 \le l \le k-1$. There exists $C>0$ such that if $\eta \in (C\delta, 1)$ and $Q, \mu, \delta$ satisfy \eqref{E:parameter-1}, \eqref{E:parameter-2}, \eqref{E:parameter-3}, \eqref{E:parameter-4}, \eqref{E:parameter-5}, and \eqref{E:parameter-6}, then \
\[
Lip \ D^{k-1} h^{cu} + \sup_{t\le 0} e^{k\eta t} Lip\ D^{k-1}\Psi(t, \cdot) <\infty. 
\]
\end{lemma}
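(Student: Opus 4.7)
The plan is to control the Lipschitz seminorms of $D^{k-1}h^{cu}$ and of $t\mapsto e^{k\eta t}D^{k-1}\Psi(t,\cdot)$ simultaneously via a coupled a priori estimate, exploiting that $D^{k-1}h^{cu}$ is the unique fixed point in $Y_{k-1}$ of the affine contraction $\CT_{k-1}$ (available from the inductive case of Proposition \ref{P:Ck} at level $k-1$), and that $D^{k-1}\Psi(t,\cdot)$ satisfies the linear inhomogeneous equation \eqref{E:Psi-k} with $\CH$ replaced by $D^{k-1}h^{cu}$. Fix $W_1,W_2\in X^{cu}(\delta)$, set $\Psi_j(t):=\Psi(t,W_j)$, and introduce
\[
\Delta(t):=D^{k-1}\Psi_2(t)-D^{k-1}\Psi_1(t),\qquad \delta_h:=D^{k-1}h^{cu}(W_2)-D^{k-1}h^{cu}(W_1).
\]

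I would first subtract the equations of type \eqref{E:Psi-k} satisfied by $D^{k-1}\Psi_2$ and $D^{k-1}\Psi_1$ to derive the equation for $\Delta$. Its principal linear part is $\big(A^{cu}(y_1,\hat G^T)+\CG_1(\Psi_1)+\wt\CG_1(\Psi_1)Dh^{cu}(\Psi_1)\big)\Delta$, and its source splits into: (i) a term linear in $\delta h_\Psi(t):=D^{k-1}h^{cu}(\Psi_2(t))-D^{k-1}h^{cu}(\Psi_1(t))$ arising from the $\wt\CG_1 Dh^{cu}$ coefficient and carrying the factor $D^{k-1}\Psi_2$ of size $e^{-(k-1)\eta t}$; (ii) Lipschitz differences of the lower-order multilinear building blocks $\CL_{k-1}(t)$ and $\CG_{k-1}(t)$, which are algebraic in $D^l\Psi_j$ and $D^lh^{cu}\circ\Psi_j$ for $l\le k-2$ and are bounded by $Ce^{-k\eta t}\|W_2-W_1\|_{X_1,Q}$ via the inductive Lipschitz bounds on $D^l h^{cu}$ together with the weighted bound $e^{l\eta t}\|D^l\Psi(t,\cdot)\|_{L_Q^l}\lesssim 1$; (iii) differences of the coefficients $A^{cu},\CG_1,\wt\CG_1 Dh^{cu}$ acting on $D^{k-1}\Psi_2$, controlled using the Lipschitz continuity of $Dh^{cu}$ from the $l=k-1$ case. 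Separately, subtracting the identities $\CT_{k-1}(D^{k-1}h^{cu})(W_1)=D^{k-1}h^{cu}(W_1)$ and the analogous equation at $W_2$ produces an integral representation of $\delta_h$ over $(-\infty,0)$, weighted by $e^{-tM_-}$, of (a) a term linear in $\Delta(t)$ with $O(\delta)$ coefficient via \eqref{E:hat-G-esti-2}, (b) a term linear in $\delta h_\Psi(t)$, and (c) lower-order Lipschitz contributions from the inductive hypothesis.

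Setting
\[
\mathcal{N}_\Psi:=\sup_{t\le 0,\,W_1\ne W_2}\frac{e^{k\eta t}\|\Delta(t)\|_{L_Q^{k-1}}}{\|W_2-W_1\|_{X_1,Q}},\qquad \mathcal{N}_h:=\sup_{W_1\ne W_2}\frac{|\delta_h|}{\|W_2-W_1\|_{X_1,Q}},
\]
an application of Lemma \ref{L:LPsi} to the $\Delta$-equation with exponential weight $e^{k\eta t}$ yields $\mathcal{N}_\Psi\le C\delta\eta^{-(k+1)d_1-1}(\mathcal{N}_\Psi+\mathcal{N}_h)+C_{k-1}$, while the integral representation of $\delta_h$, estimated using \eqref{E:exp-M} and \eqref{E:hat-G-esti-2}, gives $\mathcal{N}_h\le C\delta\eta^{-kd_1}(\lambda-k\eta)^{-1}(\mathcal{N}_\Psi+\mathcal{N}_h)+C_{k-1}$, where $C_{k-1}$ collects all contributions from the inductive hypothesis. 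Under \eqref{E:parameter-5} and \eqref{E:parameter-6} the coefficients multiplying $\mathcal{N}_\Psi+\mathcal{N}_h$ on the right are strictly less than $1/2$, and the coupled system closes to give $\mathcal{N}_\Psi+\mathcal{N}_h<\infty$, which is precisely the claimed bound.

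The main obstacle I anticipate is the combinatorial bookkeeping of the $\CL_{k-1}$ and $\CG_{k-1}$ terms produced by Fa\`a di Bruno-type formulas when differentiating \eqref{lp} and \eqref{E:fix-p} $k-1$ times: one must check that every Lipschitz difference of products of the form $D^l\Psi_j$ and $D^lh^{cu}(\Psi_j)$, $l\le k-2$, carries the weight $e^{-k\eta t}$, one exponential level higher than the $C^0$ bound on $D^{k-1}\Psi$; the extra weight is absorbed by the $\eta^{-1}$ factor coming from the Cauchy--Schwarz step in the proof of Lemma \ref{L:LPsi}. A secondary technicality is the a priori finiteness of $\mathcal{N}_\Psi$ and $\mathcal{N}_h$ before iterating: this is ensured by first restricting to a finite interval $t\in[T,0]$, where finiteness follows from local well-posedness and the inductive continuity of $D^{k-1}\Psi(t,\cdot)$, and then letting $T\to-\infty$ once the closed estimate yields a $T$-independent bound.
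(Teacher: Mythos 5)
Your quantitative estimates are essentially the paper's: the coupled inequalities you write for $\mathcal{N}_\Psi$ and $\mathcal{N}_h$ are exactly the two bounds the paper derives (the difference estimate for solutions of \eqref{E:Psi-k} via Lemma \ref{L:LPsi} and \eqref{E:Lip-CG}, and the integral estimate for $\CT_{k-1}$-differences via \eqref{E:exp-M}, \eqref{E:hat-G-esti-2}, closed by the smallness in \eqref{E:parameter-6}). The gap is in the logical scaffolding: you apply these estimates directly to the fixed point $D^{k-1}h^{cu}$, which requires knowing a priori that $\mathcal{N}_h$ and $\mathcal{N}_\Psi$ are finite, and your proposed remedy does not supply this. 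Truncating to $t\in[T,0]$ does nothing for $\mathcal{N}_h$, since the Lipschitz seminorm of $D^{k-1}h^{cu}$ is not a time-localized quantity (and replacing $h^{cu}$ by a time-truncated approximation changes the function whose derivative you are estimating). Worse, even finiteness of $\mathcal{N}_\Psi$ on a finite interval is circular: the equation for $\Delta(t)$ contains the source $\wt\CG_1(\Psi_1)\bigl[D^{k-1}h^{cu}(\Psi_2)-D^{k-1}h^{cu}(\Psi_1)\bigr](D\Psi,\ldots,D\Psi)$, and the inductive hypothesis gives only continuity and boundedness of $D^{k-1}h^{cu}$, not any bound of this difference by $\|W_2-W_1\|_{X_1,Q}$; so "local well-posedness plus inductive continuity" does not yield a finite Lipschitz ratio, and the closed estimate $\mathcal{N}\le\theta\mathcal{N}+C$ cannot be invoked.

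The paper avoids exactly this circularity by not estimating the fixed point directly: it shows that for a suitable $C_{k-1}$ the closed Lipschitz ball $\wt Y_k=\{\CH\in Y_{k-1}:\ Lip\,\CH\le C_{k-1}\}$ is invariant under the contraction $\CT_{k-1}$, using your two inequalities but applied to an arbitrary $\CH\in\wt Y_k$ and the associated solution of \eqref{E:Psi-k} (where everything is finite by hypothesis on $\CH$). Since $\CT_{k-1}$ is a contraction on $Y_{k-1}$ and its unique fixed point is $D^{k-1}h^{cu}$ (by the inductive Proposition \ref{P:Ck} at level $k-1$), the fixed point automatically lies in the closed invariant ball, hence is Lipschitz; the bound $\sup_{t\le0}e^{k\eta t}Lip\,D^{k-1}\Psi(t,\cdot)<\infty$ then follows by running the $\Psi$-difference estimate once more with $\CH=D^{k-1}h^{cu}$. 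Equivalently, you could run your coupled estimate along the Picard iterates $\CH_n=\CT_{k-1}^n(0)$, which are Lipschitz by construction with constants satisfying $c_{n+1}\le\theta c_n+\wt C$, and pass the uniform bound to the $C^0$-limit. Either device repairs your argument with no change to the analysis; also note the paper treats $k-1=1$ separately (via \eqref{E:CT1}, \eqref{E:Psi-1}), since for $k=2$ the Lipschitz continuity of $D\Psi$ is not available from the induction and the unknown $\Delta$ reappears in the source term, which is precisely why the coupling in your first inequality must be kept.
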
 

\begin{proof} 
From the induction assumption, $D^{k-1} h^{cu} \in Y_{k-1}$ and thus $\CT_{k-1} (D^{k-1} h^{cu} ) = D^{k-1} h^{cu}$. To prove the lemma, we shall show that, for some $C_{k-1}$ which might depend on $\delta, Q,\mu$, the closed subset 
\[
\wt Y_k =\{ \CH \in Y_{k-1}: Lip \ \CH \le C_{k-1}\}
\]
of $Y_{k-1}$ is invariant under $\CT_{k-1}$, which implies $D^{k-1} h^{cu} \in \wt Y_{k-1}$ and thus Lipschitz. 

Since $\CL_{k-1}$ and $\CG_{k-1}$, appearing in \eqref{E:CTk} and \eqref{E:Psi-k}, involve only $D^l \Psi$ and $D^l h^{cu}$, $D^{k-1} \hat G^-$, and $D^l \hat G^-$, $0\le l\le k-2$, the induction assumptions imply, for $t\le 0$, 
\begin{equation} \label{E:Lip-CG}
\Vert D_W \CL_{k-1} (t) \Vert_{Y_{k}} +  \Vert D_W \CG_{k-1} (t) \Vert_{C^0 L_Q^{k}} \le \wt C e^{-k\eta t}. 
\end{equation}
Due to the slightly different forms, one has to proceed separately in the cases of $k=2$ and $k>2$, even though the estimates in prove these cases are essentially the same. 

\vspace{.08in}\noindent {\it Case 1: $k-1\ge 2$.} 
Let $\CH\in \wt Y_{k-1}$ and $W_j \in X^{cu}(\delta)$, $j=0,1$,  let $ \Psi_{k-1}^j (t)=D^{k-1} \Psi(t, W_j)$, which are also the solutions to \eqref{E:Psi-k} where $\Psi$ is  evaluated at $(t, W_j)$. From Lemma \ref{L:LPsi}, \eqref{E:Psi-k-e}, \eqref{E:Lip-CG}, the induction assumptions, and the Lipschitz bound on $\CH$, it is straight forward to obtain the desired Lipschitz estimate on $D^{k-1} \Psi(t, \cdot)$ 
\[ 
\Vert \Psi_{k-1}^1-\Psi_{k-1}^0 \Vert_{L_Q^{k-1}} \le (\wt C + C\delta \eta^{-1-(k+1)d_1} C_{k-1} )  e^{-k\eta t} \Vert W_1 - W_0\Vert_{X_1, Q}.  
\]
Therefore, using \eqref{E:hat-G-esti-2}, \eqref{E:Lip-CG}, \eqref{E:parameter-2}, and the induction assumptions, we can estimate  \eqref{E:CTk} as 
\begin{align*}
&\Vert \CT_{k-1} (\CH) (W_0) - \CT_{k-1} (\CH) (W_1)\Vert_{L_Q^{k-1}} \\
\le & \int_{-\infty}^0 e^{(\lambda -k\eta) t} \big( C\delta \eta^{-k d_1} ( 1+ \eta^{-d_1-1} \delta)C_{k-1} + \wt C) dt \ \Vert W_1 - W_0\Vert_{X_1, Q}\\
\le & (\lambda -k\eta)^{-1} \big( C\delta \eta^{-k d_1} C_{k-1} + \wt C)  \ \Vert W_1 - W_0\Vert_{X_1, Q}.
\end{align*}
From \eqref{E:parameter-6}, there exists $C_{k-1}>0$ such that $\CT_{k-1} (\CH) \in \wt Y_k$ for any $\CH \in \wt Y_k$. 

\vspace{.08in}\noindent {\it Case 2: $k-1= 1$.} 
In this case, one considers \eqref{E:CT1} and \eqref{E:Psi-1} instead. The estimates are similar we omit the details. 
\end{proof}

Assume \eqref{E:parameter-6} and let $\CH_k \in Y_k$ be the fixed point of $\CT_k$. We will prove 

\begin{lemma} \label{L:Ck} 
There exists $C>0$ such that if $\eta \in (C\delta, 1)$ and $Q, \mu, \delta$ satisfy \eqref{E:parameter-1}, \eqref{E:parameter-2}, \eqref{E:parameter-3}, \eqref{E:parameter-4},  \eqref{E:parameter-5}, and \eqref{E:parameter-6}, then $D^k h^{cu} = \CH_k$. 
\end{lemma}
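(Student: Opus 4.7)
The plan is to adapt the argument used for the $C^{1}$ case in Lemma \ref{L:C1} one level up, using the inductive hypothesis and the auxiliary Lipschitz regularity provided by Lemma \ref{L:C1,1}. By induction, Proposition \ref{P:Ck} holds for $l \le k-1$, so $D^{l}h^{cu}$ and $D^{l}\Psi(t,\cdot)$ exist and satisfy the weighted bounds for every $l \le k-1$; moreover Lemma \ref{L:C1,1} provides $\mathrm{Lip}\,D^{k-1}h^{cu}<\infty$ and $\sup_{t\le 0} e^{k\eta t}\mathrm{Lip}\,D^{k-1}\Psi(t,\cdot)<\infty$. Lemma \ref{L:kContraction} produces the fixed point $\CH_k \in Y_k$ of $\CT_k$ and the bound $\sup_{t\le 0} e^{k\eta t}\|\Psi_k(t)\|_{L_Q^k}<\infty$ for the associated $\Psi_k$ in \eqref{E:Psi-k} with $\CH=\CH_k$. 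These play the role that the Lipschitz property of $h^{cu}$ itself played in the proof of Lemma \ref{L:C1}.

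Fix $W_0 \in X^{cu}(\delta)$ and a perturbation direction $\wt W \in X^{cu}\setminus\{0\}$. Viewing $D^{k-1}h^{cu}$ and $D^{k-1}\Psi(t,\cdot)$ as $Y_{k-1}$-valued maps of the base point, define the $k$-th order remainders
\[
R_\Psi(t) = D^{k-1}\Psi(t,W_0+\wt W) - D^{k-1}\Psi(t,W_0) - \Psi_k(t)[\wt W,\cdot,\ldots,\cdot],
\]
\[
R_h(t) = D^{k-1}h^{cu}\bigl(\Psi(t,W_0+\wt W)\bigr) - D^{k-1}h^{cu}\bigl(\Psi(t,W_0)\bigr) - \CH_k\bigl(\Psi(t,W_0)\bigr)\bigl[\Psi_1(t)\wt W,\cdot,\ldots,\cdot\bigr],
\]
regarded as elements of $L(\otimes_{sym}^{k-1}X^{cu},X^{cu})$ and $L(\otimes_{sym}^{k-1}X^{cu},\BR^d)$, respectively. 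The Lipschitz bounds on $D^{k-1}h^{cu}$ and $D^{k-1}\Psi$ together with the growth bound on $\Psi_k$ give the rough estimate $\|R_\Psi(t)\|_{L_Q^{k-1}} + \|R_h(t)\|_{L_Q^{k-1}} \le \wt C e^{-k\eta t}\|\wt W\|_{X_1,Q}$. The goal is to sharpen this to $\|R_\Psi(0)\|_{L_Q^{k-1}} + \|R_h(0)\|_{L_Q^{k-1}} = o(\|\wt W\|_{X_1,Q})$.

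To derive integral/differential equations for $R_\Psi,R_h$, use the fixed point identity $\CT_{k-1}(D^{k-1}h^{cu})=D^{k-1}h^{cu}$ (reinterpreted via \eqref{E:CT1-shift} at the shifted base point $\Psi(t,W_0+\wt W)$) together with $\CT_k(\CH_k)=\CH_k$, and subtract. As in the step leading to \eqref{E:R_h} and \eqref{E:R-Psi}, this yields
\[
R_h(t) = \int_{-\infty}^{0} e^{-\tau M_-}\bigl(D_W\hat G^-\, R_\Psi + D_{a^-}\hat G^-\,R_h + \mathcal{E}(\tau+t)\bigr)\,d\tau,
\]
and $R_\Psi$ satisfies a linearized equation of the same type as \eqref{E:Psi-k} with forcing bounded by $C\delta\,\|R_h\|_{L_Q^{k-1}} + \wt r(t)\|\wt W\|_{X_1,Q}$; here $\mathcal{E}(t)$ collects the second-order Taylor remainders obtained by expanding in $\wt W$ the lower-order terms $\CL_k,\CG_k$ and the compositions $D^l\hat G^-\bigl(\Psi,h^{cu}(\Psi)\bigr)\circ D^{j}\Psi$, $l,j\le k-1$. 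The crucial property, following from \eqref{E:temp-4}, the induction hypothesis, and Lemma \ref{L:C1,1}, is that $\|\mathcal{E}(t)\|_{L_Q^k} \le \wt r(t)\|\wt W\|_{X_1,Q}$ with $\wt r(t)\le \wt Ce^{-k\eta t}$ and $\|\wt r\|_{C^0([t_1,t_2])}\to 0$ as $\|\wt W\|_{X_1,Q}\to 0$ on any compact $[t_1,t_2]\subset(-\infty,0]$. Applying Lemma \ref{L:LPsi} to $R_\Psi$ and the exponential bound \eqref{E:exp-M} for $e^{-tM_-}$ to $R_h$, measured in the weighted quantities
\[
\wt R_h = \sup_{t\le 0} e^{k\eta t}\frac{\|R_h(t)\|_{L_Q^{k-1}}}{\|\wt W\|_{X_1,Q}}, \qquad \wt R_\Psi = \sup_{t\le 0}e^{k\eta t}\frac{\|R_\Psi(t)\|_{L_Q^{k-1}}}{\|\wt W\|_{X_1,Q}},
\]
and invoking \eqref{E:parameter-6} and \eqref{E:parameter-4} to absorb the $\wt R_h + \wt R_\Psi$ terms on the right, we obtain $\wt R_h + \wt R_\Psi \le C\sup_{t\le 0}e^{k\eta t}\wt r(t) \to 0$ as $\|\wt W\|_{X_1,Q}\to 0$. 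Evaluating at $t=0$ shows $D^{k-1}h^{cu}$ is Gateaux differentiable at $W_0$ with derivative $\CH_k(W_0)$; continuity of $\CH_k \in Y_k$ upgrades this to Fr\'echet differentiability and gives $D^k h^{cu}=\CH_k$ globally, hence $h^{cu}\in C^k$.

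The main obstacle is the bookkeeping for $\mathcal{E}(t)$: because $\CL_k$ and $\CG_k$ are algebraic combinations of $D^l\Psi$ and $D^l h^{cu}$ for $0\le l\le k-1$, showing that their first-order-in-$\wt W$ Taylor remainders satisfy the required pointwise bound with a little-$o$ prefactor requires systematically applying the Lipschitz regularity of the top-level $(k-1)$-derivatives from Lemma \ref{L:C1,1} together with the uniform continuity of $D^k\hat G^{cu,-}$ (finite by smoothness of $\hat G$), while maintaining the $e^{-k\eta t}$ weighting. Once this ``chain-rule remainder'' estimate is established, the remainder of the argument parallels the $k=1$ case verbatim, and the choice of constants in \eqref{E:parameter-6} is precisely what makes the fixed-point contraction close.
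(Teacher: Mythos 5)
Your overall route is the same as the paper's: define the $k$-th order remainders $R_\Psi, R_h$ along the backward flow, use the fixed-point identities $\CT_{k-1}(D^{k-1}h^{cu})=D^{k-1}h^{cu}$ and $\CT_k(\CH_k)=\CH_k$ together with Lemma \ref{L:C1,1} and the induction hypothesis to get an integral equation for $R_h$ and a linear equation of the type handled by Lemma \ref{L:LPsi} for $R_\Psi$, and then close a weighted-supremum estimate. However, there is a genuine gap in your last step: you define the weighted quantities with the weight $e^{k\eta t}$, which is exactly the borderline growth rate of the rough bound $\wt C e^{-k\eta t}\Vert \wt W\Vert_{X_1,Q}$ on the remainders and on $\wt r(t)$. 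With that weight your concluding claim $\sup_{t\le 0} e^{k\eta t}\,\wt r(t)\to 0$ as $\Vert \wt W\Vert_{X_1,Q}\to 0$ is false in general: the smallness of the Taylor remainders is only available on compact time intervals (it comes from uniform continuity of $D\hat G$ along trajectories, and the two trajectories separate like $e^{-\eta t}\Vert\wt W\Vert_{X_1,Q}$ as $t\to-\infty$), while for large negative $t$ you only have the uniform bound $\wt r(t)\le \wt C e^{-k\eta t}$, so the tail of the supremum contributes a constant of order $\wt C$ that does not shrink with $\Vert\wt W\Vert_{X_1,Q}$. The paper avoids this by taking the strictly heavier weight $e^{(k+1)\eta t}$ (any $e^{a\eta t}$ with $a>k$ works, as noted after the proof): one then splits the supremum at some $t_1\ll 0$, the tail being $O(e^{(a-k)\eta t_1})$ and the compact part tending to $0$ by the little-$o$ property, which is exactly what makes $\wt R_h,\wt R_\Psi\to 0$. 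Your finiteness claim for $\wt R_h,\wt R_\Psi$ survives with weight $e^{k\eta t}$, but the limit does not; replacing the weight fixes the argument without any new assumption since the relevant factor $(\lambda-a\eta)^{-1}$ is still controlled.

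A secondary, bookkeeping-level discrepancy: the paper defines $R_h(t)$ with the difference $D^{k-1}h^{cu}(\Psi(t,W_0+\wt W))-D^{k-1}h^{cu}(\Psi(t,W_0))$ composed with $\big(D\Psi(t,W_0)(\cdot),\ldots,D\Psi(t,W_0)(\cdot)\big)$, precisely so that subtracting the shifted fixed-point identity \eqref{E:CT1-shift} for $D^{k-1}h^{cu}$ from the one for $\CH_k$ yields the stated integral equation with all extra terms absorbed into $R_1$ via the induction bounds and Lemma \ref{L:C1,1}. Your $R_h$ omits this composition; at $t=0$ the two definitions agree (so the conclusion $D^kh^{cu}(W_0)=\CH_k(W_0)$ is unaffected), but without it the subtraction produces additional error terms involving differences of $D^l\Psi$ along the two trajectories that you would have to track explicitly inside $\mathcal{E}(t)$. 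This is fixable, but you should either adopt the paper's definition or verify that these extra contributions obey the same $\wt r(t)$-type bound.
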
 

\begin{proof}
As in the proof of Lemma \ref{L:C1}, for any fixed $W_0 \in X^{cu}(\delta)$ and $\wt W \in X^{cu}\backslash \{0\}$, let $\Psi_k(t) \in L(\otimes_{sym}^k X^{cu}, X^{cu})$ be defined as in \eqref{E:Psi-k} associated to $\CH_k$ and $W_0$ and 
\begin{align*}
R_\Psi (t) = & D^{k-1} \Psi(t, W_0 +\wt W) -D^{k-1} \Psi(t, W_0) - \Psi_k (t) (\wt W, \ldots )\\
R_h (t) =& \Big(D^{k-1} h^{cu} \big( \Psi(t, W_0 + \wt W) \big) - D^{k-1} h^{cu} \big( \Psi(t, W_0)\big)\Big) \big(D\Psi (\cdot), \ldots, D\Psi(\cdot)\big) \\
&- \CH_k \big(\Psi(t, W_0)\big) \big(D\Psi (\wt W), D\Psi (\cdot), \ldots, D\Psi(\cdot) \big),
\end{align*} 
where all above $D\Psi$ are evaluated at $(t, W_0)$. Note in the above $\Psi_k (t) (\wt W, \ldots )  \in L(\otimes_{sym}^{k-1} X^{cu}, X^{cu})$ and 
\[
\CH_k \big(\Psi(t, W_0)\big) \big(D\Psi (\wt W), D\Psi (\cdot), \ldots, D\Psi(\cdot)\big) \in L(\otimes_{sym}^{k-1} X^{cu}, \BR^d), 
\]
consistent with the other terms. According to \eqref{E:Lip-Psi} and Lemma \ref{L:C1,1}, $R_\psi$ and $R_h$ satisfy the rough estimates 
\begin{equation} \label{E:R-basic-k}
\Vert R_\Psi (t) \Vert_{L_Q^{k-1}}  + |R_h(t)|_{L_Q^{k-1}}\le \wt Ce^{-k \eta t}  \Vert \wt W \Vert_{X_1, Q}, 
\end{equation}
for $t\le 0$. Our goal is to show  $\Vert R_{\Psi, h}(0) \Vert_{X_1, Q} \slash \Vert \wt W \Vert_{X_1, Q}  \to 0$ as $\Vert \wt W \Vert_{X_1, Q} \to 0$. 

Using $\CT_{k-1} (D^{k-1} h^{cu}) = D^{k-1} h^{cu}$, Lemma \ref{L:C1,1}, and the induction assumptions, much as the derivation of \eqref{E:R_h} and \eqref{E:R-Psi}, we obtain 
\begin{equation}   \label{E:R_h-k} \begin{split}
R_h(t) = \int_{-\infty}^0 e^{-\tau M_-} \big(& (D_W \hat G^- + D_{a^-} \hat G^- D h^{cu}) R_\Psi \\
&+ D_{a^-} \hat G^- R_h 
+ R_1\big)|_{t+\tau} d\tau  
\end{split} \end{equation}
\begin{align*} 
\p_t R_\Psi = &  \Big( A^{cu} \big(y(t), \hat G^T \big) +  \CG_1 (\Psi) + \wt \CG_1 (\Psi) D h^{cu} (\Psi) \Big) R_\Psi + \wt \CG_1 (\Psi) R_h 
+ R_2 (t). 
\end{align*}
where $D \hat G^-$ and $D\hat G^{cu}$ are evaluated at $(\Psi, h^{cu})$, $h^{cu}$ at $\Psi$, and $\Psi$ at $(t, W_0)$, followed by the shift in the integral of $R_h$. Here the norms $r_j (t) = \Vert R_j(t) \Vert_{L_Q^{k-1}}$, $j=1,2$, of the remainder terms $R_1(t)$ and $R_2(t)$ satisfy 
\begin{equation} \label{E:gen-bound-2}
r_1(t) + r_2(t) \le \wt C e^{-k\eta t} \Vert \wt W \Vert_{X_1, Q} , \quad \lim_{\Vert \wt W \Vert_{X_1, Q} \to 0} \frac {\Vert r_1 + r_2\Vert_{C^0([t_1, t_2])}} {\Vert \wt W \Vert_{X_1, Q}} =0
\end{equation}
for any $t_1\le t_2 \le 0$. Lemma \ref{L:LPsi} and \eqref{E:Lip-Psi} imply  
\begin{equation} \label{E:R-Psi-k}
\Vert R_\Psi(t)\Vert_{L_Q^{k-1}}^2 \le C \delta^2 \eta^{-2d_1-1} \int_t^0 e^{2\eta( \tau - t)} \Vert R_h(\tau) \Vert_{L_Q^{k-1}}^2 d\tau + r_3 (t)
\end{equation}
where 
\begin{equation} \label{E:gen-bound-3} 
r_3(t) \le \wt C e^{-2k\eta t}  \Vert \wt W \Vert_{X_1, Q}^2 , \quad \lim_{\Vert \wt W \Vert_{X_1, Q} \to 0} \frac {\Vert r_3\Vert_{C^0([t_1, t_2])}} {\Vert \wt W \Vert_{X_1, Q}^2} =0
\end{equation}

Finally, let  
\[
\wt R_h = \sup_{t \le 0} e^{(k+1)\eta t} \frac {|R_h(t)|}{ \Vert \wt W\Vert_{X_1, Q}}, \quad \wt R_\Psi = \sup_{t \le 0} e^{(k+1)\eta t} \frac {\Vert R_\Psi(t) \Vert_{L_Q^{k-1}}}{ \Vert \wt W\Vert_{X_1, Q}}.
\]
Inequality \eqref{E:R-basic-k} implies $\wt R_h, \wt R_\Psi < \infty$. Inequalities \eqref{E:R-Psi-k} and \eqref{E:R_h-k} along with \eqref{E:gen-bound-2}, \eqref{E:gen-bound-3}, and Lemma \ref{L:G-Lip} imply 
\[
\wt R_\Psi \le C \delta \eta^{-d_1-1}  \wt R_h + \sup_{t\le 0} r_3(t)^{\frac 12} e^{(k+1) \eta t} 
\]
and
\begin{align*}
\wt R_h 
\le & C (\lambda-(k+1)\eta)^{-1} \big( 
\delta(\wt R_\Psi +  \wt R_h) + \sup_{\tau\le 0} r_1(\tau) e^{(k+1)\eta \tau} \big). 
\end{align*}
Therefore 
\[
\wt R_\Psi  + \wt R_h \le C \big(\sup_{t\le 0} r_3(t)^{\frac 12} e^{(k+1) \eta t} + \sup_{\tau\le 0} r_1(\tau) e^{(k+1)\eta \tau} \big).
\]
From \eqref{E:gen-bound-2} and \eqref{E:gen-bound-3}, we obtain that $\wt R_h, \wt R_\Psi \to 0$ as $\Vert \wt W\Vert_{X_1, Q} \to 0$. 
\end{proof}

In the last step of the above proof, we may define $\wt R_h$ and $\wt R_\Psi$ by using a weight $e^{a \eta t}$ with any $a>k$ and thus we do not have assume $\lambda > (k+1)\eta$ additionally.

\section{A non-degeneracy case} \label{S:non-deg}

In this section, we consider a traveling wave $U_c=u_c + iv_c$ satisfying the following non-degeneracy conditions. Recall $L_{c, y}$ and $L_y$ defined in \eqref{lcy}, its Morse index $n^-(L_c)$ in \eqref{E:n^-}, and the dimensions $d1, d2, d$ in Lemma \ref{decomp2}. Assume  
\begin{enumerate} 
\item [{\bf (H1)}] $\ker L_{c} = span\{ \p_{x_j} U_c \mid j=1,2,3\}$;
\item [{\bf (H2)}] $d=n^-(L_c)$. 
\end{enumerate}

\begin{remark} \label{R:non-deg}
Assumption {(H1)} is a linearized elliptic problem. Usually {\bf (H2)} is not easy to verify directly. A special situation is when $n^-(L_c)=1$, which is often  the case when $U_c$ is derived from the Mountain Pass or a constrained minimization process with 1 constraint. In this case, according to Theorem 2.3 and Proposition 2.2 in \cite{LZ17}, {\bf (H2)} is satisfied if $\langle L_c V, V\rangle >0$ for all $V \in \ker(JL_c)^2 \backslash \ker (JL_c)$.  More specifically, it was proved in \cite{LWZ} that, if $c_0 \in \BR^3$ and $U_{a c_0} (x)$ is a family of traveling waves depending on $a$ smoothly, then $\frac d{da} P(U_{ac_0}) <0$ along with $n^-(L_c)=1$ implies {\bf (H1)}. 
\end{remark}

Under these hypotheses, among the subspaces in the decomposition given in Lemma \ref{decomp2}, statement (2) there implies $X_{c, y}^{d1}= X_{c, y}^{d2}=\{0\}$ and thus, in the same notations, we have the following decomposition. 

\begin{lemma} \label{L:decomp3}
Assume {\bf (H), (H1-2)}, and  \eqref{E:TW-decay}, then for any $y \in \BR^3$, it holds that  
\begin{enumerate} 
\item $X= X_{c, y}^T \oplus X_{c, y}^e \oplus X_{c, y}^+ \oplus X_{c, y}^-$;
\item $JL_{c, y}$ and $L_{c, y}$ take the forms
\[
L_{c, y} \longleftrightarrow \begin{bmatrix} 0  & 0 & 0 & 0 \\ 0 &  L^e (y) & 0 & 0 \\ 0 &  0 & 0 & L_{+-}(y) \\ 0 & 0 &  L_{+-}(y)^* & 0\end{bmatrix}, 
\]
\[
JL_{c, y} \longleftrightarrow \begin{bmatrix} 0 &    A_{Te}(y) &0 &0\\  0 &  A_{e} (y)& 0 & 0\\ 0  &0 &  A_+ (y)& 0 \\ 0 &0 & 0 &  A_-(y) \end{bmatrix}.
\]
\end{enumerate}
\end{lemma}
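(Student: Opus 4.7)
The plan is to reduce the statement to a pure dimension count against the finer decomposition recorded in Lemma \ref{decomp2} and the underlying Theorem 2.1 of \cite{LZ17}, from which Lemma \ref{decomp2} was built. Recall from the excerpt that
\[
\tilde Y_0 = \{ w \in \ker L_c \mid (w, \tilde w)_{X_1} = 0 \ \forall\, \tilde w \in X_c^T \}, \quad X_c^{d1} = \tilde Y_0 \oplus Y_1 \oplus Y_2, \quad X_c^{d2} = Y_4,
\]
together with the identities $\dim Y_5 = \dim Y_6 = d$, $\dim Y_1 = \dim Y_4$, and the index formula
\[
\dim Y_1 + \dim Y_2 + \dim Y_5 = n^-(L_c).
\]

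First I would use hypothesis \textbf{(H1)} to compute $\tilde Y_0$. Since $\ker L_c = \mathrm{span}\{\p_{x_j} U_c : j=1,2,3\} = X_c^T$, the defining orthogonality condition for $\tilde Y_0$ forces $(\tilde w, \tilde w)_{X_1} = 0$ for every $\tilde w \in \tilde Y_0 \subset X_c^T$, hence $\tilde Y_0 = \{0\}$. Next I would use hypothesis \textbf{(H2)}: combining $d = n^-(L_c)$ with $\dim Y_5 = d$ and the index formula above yields $\dim Y_1 + \dim Y_2 = 0$, so $Y_1 = Y_2 = \{0\}$. Since $\dim Y_4 = \dim Y_1$, this in turn gives $Y_4 = \{0\}$. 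Therefore $X_c^{d1} = \tilde Y_0 \oplus Y_1 \oplus Y_2 = \{0\}$ and $X_c^{d2} = Y_4 = \{0\}$, and by the translation definition $X_{c,y}^{d1} = X_{c,y}^{d2} = \{0\}$ for every $y \in \BR^3$.

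With the two trivial blocks excised, statement (1) of the lemma follows immediately from the direct sum decomposition in Lemma \ref{decomp2}(4). For statement (2), I would simply delete the rows and columns indexed by $d1$ and $d2$ from the block matrices for $L_{c,y}$ and $JL_{c,y}$ displayed in Lemma \ref{decomp2}(4); the remaining blocks $L^e(y)$, $L_{+-}(y)$, $A_{Te}(y)$, $A_e(y)$, $A_\pm(y)$ retain the meanings and boundedness/positivity/spectral properties listed there, giving precisely the simplified block forms in the statement.

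There is no real obstacle here: the assertion is essentially bookkeeping, and once \textbf{(H1)} kills $\tilde Y_0$ and \textbf{(H2)} kills $Y_1, Y_2$ (and hence $Y_4$) via the index identity, everything else is a transcription from Lemma \ref{decomp2}. The only point worth stating carefully is that \textbf{(H1)} is used not merely to fix $\dim \ker L_c = 3$ but to identify $\ker L_c$ with $X_c^T$ itself, which is what forces $\tilde Y_0$ to vanish rather than just to be low-dimensional.
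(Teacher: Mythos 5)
Your proof is correct and follows essentially the same route as the paper: the paper disposes of the lemma in one sentence by noting that Lemma \ref{decomp2}(2) gives $d_1=n^-(L_c)+\dim\ker L_c-3-d$ and $d_2\le n^-(L_c)-d$, so {\bf (H1)} and {\bf (H2)} force $X_{c,y}^{d1}=X_{c,y}^{d2}=\{0\}$, after which the block forms are read off from Lemma \ref{decomp2}(4). Your unpacking of this count through $\tilde Y_0$, $Y_1$, $Y_2$, $Y_4$ in Theorem 2.1 of \cite{LZ17} is just the same dimension argument one level lower, and your remark that {\bf (H1)} identifies $\ker L_c$ with $X_c^T$ (not merely its dimension) is the right point to make explicit.
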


Here the above blocks satisfy the same properties as in Lemma \ref{decomp2}. 

In this non-degenerate case, we shall carefully consider the energy-momentum functional $E+ c \cdot P$ invariant under \eqref{GP} and \eqref{tfeq}, where $E$ and $P$ are defined in \eqref{E:energy} and \eqref{E:momentum}. Let $\wt E(y, a^+, a^-, V^e)$ be defined as 
\[
\wt E_c = (E + c \cdot P)\circ \Phi \in C^\infty (\BR^{2d} \oplus \CX^e, \BR),
\]
where the coordinate mapping $\Phi$ is defined in \eqref{E:coord-1}, whose domain can also be extended to $\BR^{3+2d} \times X_1$. The smoothness of $\wt E$ follows from Lemma 2.2 and 2,3 of \cite{LWZ} along with Lemma \ref{decomp2}. Using \eqref{KY} and \eqref{lcy} it is straight forward to obtain the leading order expansion of $\wt E$ at $(y, 0, 0 , 0) = \Phi^{-1} \big(U_c (\cdot +y)\big)$ 
\begin{equation}\label{E:tE1} \begin{split}
\wt E_c\big(y, a^+, a^-, V^e \big) = \langle L_e(y)& V^e, V^e\rangle + 2 \langle \wt L_{+-}a^-, a^+\rangle\\
&+ O\big( (|a^+| + |a^-|+ \Vert V^e\Vert_{X_1})^3\big)
\end{split} \end{equation}
when $|a^+|$, $|a^-|$, and $\Vert V^e\Vert_{X_1}$ are small. Here $L_e(y)$ is given in Lemma \ref{decomp2}, uniformly positive, and translation invariant, i.e. 
\[
\langle L_e(0) V^e, V^e \rangle = \langle L_e(y) V^e(\cdot +y), V^e(\cdot +y) \rangle. 
\]
The $d \times d$ matrix $\wt L_{+-}$ is defined by 
\[
\langle \wt L_{+-}a^-, a^+\rangle = \langle L_{+-}(y) a^- \xi_{c}^- (\cdot +y), a^+ \xi_{c}^- (\cdot +y) \rangle.  
\]
where $\xi_c^- = (\xi_{c, 1}^-, \ldots, \xi_{c, d}^-)$ and $L_{+-}(y)$ are given in Lemma \ref{decomp2}. $\wt L_{+-}$ is independent of $y$ since $L_{c, y}$ and thus $L_{+-}(y)$ are translation invariant. 

Let $\CW^{cu, cs, c}$, $h^{cu, cs}$, $h^c = (h_+^c, h_-^c)$ be given in Section \ref{S:InMa} , whose smoothness are established in Section \ref{S:smooth}, and the parameters $Q, \mu, \delta, \eta$ satisfy \eqref{E:parameter-1}, \eqref{E:parameter-2}, \eqref{E:parameter-3}, \eqref{E:parameter-4}, and \eqref{E:parameter-6}. For any $\big(y, a^+= h^{cs} (y, a^-, V^e), a^-, V^e\big) \in \CW^{cs}$, since $Dh^{cs}(y, 0, 0, 0)=0$, we have 
\begin{equation} \label{E:tEcs}
|\wt E_c \big(y, h^{cs} (y, a^-, V^e), a^-, V^e\big) - \langle L_e(y) V^e, V^e\rangle| \le C_0 (|a^-|+ \Vert V^e\Vert_{X_1})^3
\end{equation}
for some $C_0>0$. Based on the expansion \eqref{E:tE1}, we can prove the exponential stability of $\CW^c$ inside $\CW^{cs}$. 

\begin{lemma} \label{L:StaWc1} 
There exits $C>1$ such that if $\eta \in (C\delta, 1)$ and $Q, \mu, \delta$ satisfy \eqref{E:parameter-1}, \eqref{E:parameter-2}, \eqref{E:parameter-3}, \eqref{E:parameter-4}, and \eqref{E:parameter-6}, then for any initial value $\bar W= \big(\bar y, \bar a^+= h^{cs} (\bar y, \bar a^-, \bar V^e), \bar a^-, \bar V^e\big) \in \CW^{cs}$ with $|\bar a^-|+\Vert \bar V^e\Vert_{X_1} < C^{-2}\delta$, its corresponding solution $W(t) = (y, a^+, a^-, V^e)(t)$ satisfy, for all $t \ge 0$,  
\begin{align}
&|a^-(t)| + \Vert V^e(t) \Vert_{X_1} < \delta/15, \quad a^+ (t) = h^{cs} \big((y, a^-, V^e)(t) \big), \notag \\ 
&|a^-(t) - h_-^c \big((y, V^e) (t) \big)| \le C e^{-(\lambda - 2\eta) t} |a^-(0) - h_-^c \big((y, V^e) (0) \big)| \label{E:StaEcs}.
\end{align}
\end{lemma}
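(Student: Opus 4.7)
The plan is to run a continuity/bootstrap argument driven by conservation of $\wt E_c$, and then invoke Lemma \ref{L:center-sta}(1) for the exponential decay. I will set $T^* = \sup\{T \ge 0 : |a^-(t)| + \Vert V^e(t)\Vert_{X_1} < \delta/15 \text{ for all } t \in [0,T]\}$ and aim to show $T^* = \infty$. On $[0, T^*)$ the trajectory lies in the $\delta/15$-neighborhood of $\mathcal{M}$, so the local invariance of $\CW^{cs}$ (Proposition \ref{P:CS}) forces $a^+(t) = h^{cs}(y(t), a^-(t), V^e(t))$ throughout, and the energy identity \eqref{E:tEcs} applies.

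For the $V^e$ bound, I will use that $\wt E_c$ is conserved along \eqref{tfeq}. Evaluating \eqref{E:tEcs} at times $0$ and $t$ and using the uniform positivity $\langle L_e(y) V, V\rangle \ge \epi \Vert V\Vert_{X_1}^2$ from Lemma \ref{L:decomp3} should yield
\begin{equation*}
\epi \Vert V^e(t)\Vert_{X_1}^2 \le C\Vert \bar V^e\Vert_{X_1}^2 + C_0(|\bar a^-|+\Vert\bar V^e\Vert_{X_1})^3 + C_0(|a^-(t)|+\Vert V^e(t)\Vert_{X_1})^3.
\end{equation*}
For $|a^-(t)|+\Vert V^e(t)\Vert_{X_1}<\delta/15\ll 1$, the $\Vert V^e(t)\Vert_{X_1}^3$ self-term can be absorbed into the left side, and the smallness $|\bar a^-|+\Vert \bar V^e\Vert_{X_1} \le C^{-2}\delta \ll 1$ lets me bound powers $3/2$ by powers $1$ times $\delta^{1/2}$. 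This should give
\begin{equation*}
\Vert V^e(t)\Vert_{X_1} \le C_1\bigl(\Vert\bar V^e\Vert_{X_1} + \delta^{1/2}|\bar a^-| + \delta^{1/2}|a^-(t)|\bigr).
\end{equation*}

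Next I control $|a^-(t)|$. By Proposition \ref{P:smoothness} (the analogue for $h^{cs}$ of Lemma \ref{L:tangency}), $h^{cs}(y,0,0)=0$ and $Dh^{cs}(y,0,0,0)=0$, so $|a^+(t)|=|h^{cs}|\le C(|a^-|+\Vert V^e\Vert_{X_1})^2$. From $\p_t a^- = M_- a^- + \hat G^-(W, h^{cs}(W))$, the stable estimate $\Vert e^{tM_-}\Vert\le Ce^{-\lambda t}$, and the Lipschitz bound $|\hat G^-| \le C\delta(|a^-|+|a^+|+\Vert V^e\Vert_{X_1})$ from Lemma \ref{L:G-Lip}, Duhamel should give
\begin{equation*}
|a^-(t)|\le Ce^{-\lambda t}|\bar a^-| + C\delta\int_0^t e^{-\lambda(t-s)}\bigl(|a^-(s)|+\Vert V^e(s)\Vert_{X_1}\bigr)\,ds.
\end{equation*}
Substituting the $V^e$ bound above and applying Gronwall (with $C\delta \ll \lambda$) yields $|a^-(t)|\le C_2(|\bar a^-|+\Vert\bar V^e\Vert_{X_1})$ uniformly on $[0,T^*)$. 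Choosing the constant $C$ in the hypothesis larger than $30(C_1+C_2+1)$, the smallness $|\bar a^-|+\Vert\bar V^e\Vert_{X_1}<C^{-2}\delta$ forces $|a^-(t)|+\Vert V^e(t)\Vert_{X_1}<\delta/30$ strictly on $[0,T^*)$, and continuity of the $X_0$-valued flow rules out $T^*<\infty$.

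With the solution confined to the $\delta/15$-neighborhood for all $t\ge 0$, \eqref{E:StaEcs} is immediate from Lemma \ref{L:center-sta}(1) applied on each $[0, T]$: in the non-degenerate setting $W=(y,V^e)$, and $|a^+(t)-h_+^c| = |h^{cs}(y,a^-,V^e) - h^{cs}(y, h_-^c, V^e)| \le \mu|a^-(t)-h_-^c|$, so the bound on $|(a^+,a^-)(t)-h^c(W(t))|$ furnished by that lemma specializes to the desired estimate on $a^- - h_-^c$. The main technical subtlety I anticipate is the coupled nature of the bootstrap closure: the cubic energy error ties $\Vert V^e(t)\Vert_{X_1}$ to $|a^-(t)|$ at the same instant $t$, while the Duhamel integral for $a^-$ pulls in $\Vert V^e(s)\Vert_{X_1}$ at earlier times, and both feed back into $\hat G^-$. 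Making the smallness hierarchy $|\bar a^-|+\Vert\bar V^e\Vert_{X_1}\ll \delta\ll 1$ absorb every cross term cleanly will be the delicate part.
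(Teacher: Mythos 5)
Your proof is correct and follows the same overall strategy as the paper: a continuity/bootstrap argument in which conservation of $\wt E_c$, the expansion \eqref{E:tEcs}, and the uniform positivity of $L_e(y)$ control $\Vert V^e(t)\Vert_{X_1}$, after which Lemma \ref{L:center-sta}(1) (together with the observation that on $\CW^{cs}$ one has $|a^+-h_+^c|\le\mu|a^--h_-^c|$) yields \eqref{E:StaEcs}. The one place you genuinely diverge is the control of $|a^-(t)|$ inside the bootstrap: the paper gets it by invoking the already-established attraction estimate of Lemma \ref{L:center-sta}(1) on the bootstrap interval together with the tangency $Dh^c(y,0)=0$, which gives $|a^-(t)|\le C\Vert V^e(t)\Vert_{X_1}^2 + C^{-1}e^{-(\lambda-2\eta)t}\delta$, whereas you run a Duhamel/Gronwall argument directly on $\p_t a^-=M_-a^-+\hat G^-$ using $\Vert e^{tM_-}\Vert\le Ce^{-\lambda t}$ and the $O(\delta)$ Lipschitz bound on $\hat G^-$ from Lemma \ref{L:G-Lip}. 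Both close the bootstrap; your route is slightly longer but keeps the decay estimate out of the bootstrap (it is used only at the end), while the paper's is shorter because Lemma \ref{L:center-sta} is already in hand. Two minor remarks: (i) your Gronwall closure needs $C\delta\lambda^{-1}$ small, which is indeed guaranteed by \eqref{E:parameter-3}, so that step is legitimate but should be flagged; (ii) the quadratic bound $|a^+|\le C(|a^-|+\Vert V^e\Vert_{X_1})^2$ relies on a bound for $D^2h^{cs}$ whose constant may depend on $\delta$ (cf. Proposition \ref{P:Ck}); the Lipschitz bound $|a^+|=|h^{cs}(y,a^-,V^e)|\le\mu\bigl(|a^-|+\Vert V^e\Vert_{X_1}\bigr)$ coming from $h^{cs}\in\Gamma^{cs}_{\mu,\delta}$ and $h^{cs}(y,0)=0$ is all you need in the Duhamel integrand and avoids that issue.
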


\begin{proof}
The assumptions on $\bar W$, the conservation of $\wt E$, and \eqref{E:tEcs} imply 
\[
|\wt E\big(W(t)\big)| = |\wt E(\bar W)| \le C^{-2} \delta^2.  
\]
Let 
\[
T = \sup \{t>0 : |a^-(t')| + \Vert V^e(t') \Vert_{X_1} < \delta/15, \; \forall  t' \in [0, t)\} >0. 
\]
On $[0, T]$, Proposition \ref{P:CS} implies \eqref{E:StaEcs} holds, which along with $Dh^c(y, 0) =0$ implies 
\[
|a^-(t)| \le C \Vert V^e(t)\Vert_{X_1}^2 + C^{-1} e^{-(\lambda -2\eta) t} \delta, \quad t\in [0, T].
\]
Applying \eqref{E:tEcs} again, we obtain
\[
\Vert V^e(t) \Vert_{X_1}^2 \le C \big(\wt E\big( W(t)\big) + |a^-(t)|^3\big) 
\]
and thus $\Vert V^e(t) \Vert_{X_1}^2 \le C^{-1} \delta^2$. This along with the above inequality on $a^-(t)$ implies the $T=\infty$. From Propositions \ref{P:CS} and \ref{P:CM} and Lemma \ref{L:center-sta}, the rest of the lemma follows. 
\end{proof}

Following exactly the same arguments, we also obtain the exponential stability of $\CW^c$ backward in time inside $\CW^{cu}$. 

\begin{lemma} \label{L:StaWc2} 
There exits $C>1$ such that if $\eta \in (C\delta, 1)$ and $Q, \mu, \delta$ satisfy \eqref{E:parameter-1}, \eqref{E:parameter-2}, \eqref{E:parameter-3}, \eqref{E:parameter-4}, and \eqref{E:parameter-6}, then for any initial value $\bar W= \big(\bar y, \bar a^+, \bar a^-= h^{cu} (\bar y, \bar a^+, \bar V^e), \bar V^e\big) \in \CW^{cu}$ with $|\bar a^+|+\Vert \bar V^e\Vert_{X_1} < C^{-2}\delta$, its corresponding solution $W(t) = (y, a^+, a^-, V^e)(t)$ satisfy, for all $t \le 0$,  
\begin{align}
& |a^+(t)| + \Vert V^e(t) \Vert_{X_1} < \delta/15, \quad a^- (t) = h^{cu} \big((y, a^+, V^e)(t) \big), \notag \\ 
&|a^+(t) - h_+^c \big((y, V^e) (t) \big)| \le C e^{(\lambda - 2\eta) t} |a^+(0) - h_+^c \big((y, V^e) (0) \big)| \label{E:StaEcu}.
\end{align}
\end{lemma}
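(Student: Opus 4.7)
The proof follows the pattern of Lemma \ref{L:StaWc1} verbatim with time reversed and the roles of $(a^+, \CW^{cu}, h^{cu}, h_+^c)$ interchanged with $(a^-, \CW^{cs}, h^{cs}, h_-^c)$. The two ingredients I will need are (i) a backward-in-time analog of the expansion \eqref{E:tEcs} for the energy-momentum functional along $\CW^{cu}$, and (ii) the backward-in-time exponential attraction estimate for $\CW^{c}$ inside $\CW^{cu}$, which is exactly part (2) of Lemma \ref{L:center-sta}. The $y$-component plays no essential role since $\wt E_c$ is translation invariant.

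First I will record the expansion: since $Dh^{cu}(y, 0, 0, 0)=0$ by Lemma \ref{L:tangency} and $\wt E_c$ satisfies \eqref{E:tE1}, for any point $\big(y, a^+, h^{cu}(y, a^+, V^e), V^e\big) \in \CW^{cu}$ with small $|a^+|, \Vert V^e\Vert_{X_1}$,
\begin{equation} \label{E:tEcu-prop}
\bigl|\wt E_c\bigl(y, a^+, h^{cu}(y, a^+, V^e), V^e\bigr) - \langle L_e(y) V^e, V^e\rangle\bigr| \le C_0 \bigl(|a^+|+ \Vert V^e\Vert_{X_1}\bigr)^3.
\end{equation}
This follows by substituting $a^- = h^{cu}(y, a^+, V^e) = O\bigl((|a^+|+\Vert V^e\Vert_{X_1})^2\bigr)$ into \eqref{E:tE1} and noting that the only surviving quadratic term is $\langle L_e(y) V^e, V^e\rangle$ since the cross term $2\langle \wt L_{+-} a^-, a^+\rangle$ is cubic once the quadratic bound on $h^{cu}$ is invoked.

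Next, fix initial data $\bar W \in \CW^{cu}$ with $|\bar a^+| + \Vert \bar V^e\Vert_{X_1} < C^{-2}\delta$ and define
\[
T = \inf\{ t \le 0 : |a^+(t')| + \Vert V^e(t')\Vert_{X_1} < \delta/15, \; \forall t' \in (t, 0]\}.
\]
By continuity $T<0$, and I claim $T=-\infty$. On $(T, 0]$, Proposition \ref{P:inv-1} gives $a^-(t) = h^{cu}\big((y, a^+, V^e)(t)\big)$, and part (2) of Lemma \ref{L:center-sta}, applied on $[T,0]$, yields \eqref{E:StaEcu}. Combined with $Dh_+^c(y, 0)=0$ (from \eqref{fpcm} and Lemma \ref{L:tangency}), this gives
\[
|a^+(t)| \le \bigl|a^+(t) - h_+^c\bigl((y,V^e)(t)\bigr)\bigr| + \bigl|h_+^c\bigl((y,V^e)(t)\bigr)\bigr| \le C e^{(\lambda -2\eta)t} \delta \cdot C^{-2} + C \Vert V^e(t)\Vert_{X_1}^2.
\]

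Finally, conservation of $\wt E_c$ along the flow of \eqref{tfeq} and \eqref{E:tEcu-prop} at both $t$ and $t=0$ gives
\[
\Vert V^e(t)\Vert_{X_1}^2 \le C\bigl(\wt E_c(\bar W) + (|a^+(t)|+\Vert V^e(t)\Vert_{X_1})^3\bigr) \le C\bigl(C^{-2}\delta^2 + (\delta/15)^3\bigr),
\]
which together with the bound on $|a^+(t)|$ above forces $|a^+(t)| + \Vert V^e(t)\Vert_{X_1} \le \tfrac12 \cdot \delta/15$ uniformly on $(T, 0]$ for $C$ chosen large enough. This contradicts the definition of $T$ unless $T = -\infty$, and then all three conclusions hold for every $t \le 0$.

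The only potential obstacle is keeping track of the implicit constants so that the choice $|\bar a^+| + \Vert \bar V^e\Vert_{X_1} < C^{-2}\delta$ genuinely suffices to close the bootstrap; this is handled exactly as in Lemma \ref{L:StaWc1} by absorbing the cubic terms in \eqref{E:tEcu-prop} into the uniform positivity constant of $L_e(y)$ guaranteed by Lemma \ref{decomp2}. No new estimate beyond those already established in Sections \ref{S:InMa}--\ref{S:smooth} is required.
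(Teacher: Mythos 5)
Your proposal is correct and is essentially the paper's own argument: the paper proves Lemma \ref{L:StaWc2} simply by invoking the proof of Lemma \ref{L:StaWc1} with time reversed, which is exactly what you carry out — energy conservation plus the expansion \eqref{E:tE1} with $Dh^{cu}(y,0,0,0,0)=0$ to control $\Vert V^e\Vert_{X_1}$, part (2) of Lemma \ref{L:center-sta} for the backward exponential estimate, and a continuity/bootstrap argument to get $T=-\infty$. The constant bookkeeping you flag is handled at the same level of detail as in the paper's proof of Lemma \ref{L:StaWc1}, so no gap remains.
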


Consequently, we also obtain the stability of $\CM$ in $\CW^c$. 

\begin{proposition} \label{P:StaWc} 
There exist $C>1$ and $\delta>0$ such that, for any initial value $\bar W= \big(\bar y, (\bar a^+, \bar a^-)= h^{c} (\bar y, \bar V^e), \bar V^e\big) \in \CW^{c}$ with $\Vert \bar V^e\Vert_{X_1} < C^{-2}\delta$, its corresponding solution $W(t) = (y, a^+, a^-, V^e)(t)$ satisfy, for all $t \in \BR$, 
\[
\Vert V^e(t) \Vert_{X_1} < \delta/15, \quad (a^+, a^-)(t) = h^c \big( (y, V^e)(t) \big). 
\] 
\end{proposition}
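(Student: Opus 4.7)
The plan is to deduce Proposition 6.4 as an essentially immediate consequence of Lemmas 6.2 and 6.3, using the fact that $\mathcal{W}^c = \mathcal{W}^{cs} \cap \mathcal{W}^{cu}$ by construction. Given $\bar W \in \mathcal{W}^c$, by the definition (6.1) of $h^c$ we have $(\bar a^+, \bar a^-) = h^c(\bar y, \bar V^e)$, so $\bar W$ is simultaneously an initial datum on $\mathcal{W}^{cs}$ (with $\bar a^- = h_-^c$) and on $\mathcal{W}^{cu}$ (with $\bar a^+ = h_+^c$). The heavy lifting — the Lyapunov-style stability argument built from the energy expansion \eqref{E:tE1}, the uniform positivity of $L_e(y)$, and the conservation of $\widetilde E_c$, all relying on the non-degeneracy hypotheses \textbf{(H1)}--\textbf{(H2)} — has already been carried out in the proofs of Lemmas 6.2 and 6.3.

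First I would verify that the smallness hypothesis $\|\bar V^e\|_{X_1} < C^{-2}\delta$ in the proposition implies the smallness hypotheses of Lemmas 6.2 and 6.3. By Proposition 5.11 applied to $h^{cu}, h^{cs}$ and then to the fixed-point system (6.1) defining $h^c$, one has $h^c(y, 0) = 0$ and $Dh^c(y, 0) = 0$, hence $|h^c(y, V^e)| \le C\|V^e\|_{X_1}^2$ for small $\|V^e\|_{X_1}$. Choosing the constant $C$ in the proposition large enough, we then get $|\bar a^\pm| \le C\|\bar V^e\|_{X_1}^2 \ll C^{-2}\delta$, so that $|\bar a^\pm| + \|\bar V^e\|_{X_1}$ lies well inside the smallness range required by Lemmas 6.2 and 6.3.

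Next I would apply Lemma 6.2 forward in time. Since $\bar W \in \mathcal{W}^c \subset \mathcal{W}^{cs}$ and $\bar a^-(0) - h_-^c(\bar y, \bar V^e) = 0$, the exponential estimate \eqref{E:StaEcs} forces $a^-(t) = h_-^c\bigl((y, V^e)(t)\bigr)$ for every $t \ge 0$. Lemma 6.2 simultaneously gives $a^+(t) = h^{cs}\bigl((y, a^-, V^e)(t)\bigr)$ together with $\|V^e(t)\|_{X_1} < \delta/15$. The algebraic identity $h^{cs}(y, h_-^c(y, V^e), V^e) = h_+^c(y, V^e)$, which is immediate from the defining fixed-point relation (6.1) for $h^c$, then yields $(a^+, a^-)(t) = h^c\bigl((y, V^e)(t)\bigr)$ for all $t \ge 0$. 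A symmetric application of Lemma 6.3 (using $\bar a^+(0) - h_+^c(\bar y, \bar V^e) = 0$ and the corresponding bound \eqref{E:StaEcu}) produces the analogous conclusion for all $t \le 0$, finishing the proof.

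I do not anticipate a genuine obstacle. The only delicate point is constant-chasing: one must use the tangency $Dh^c(y, 0) = 0$ to absorb $|\bar a^\pm|$ into a quadratic correction of $\|\bar V^e\|_{X_1}$ in order to pass from the single smallness condition on $\|\bar V^e\|_{X_1}$ to the combined smallness conditions demanded by Lemmas 6.2 and 6.3. Conceptually, all the analytic work — in particular the use of the modified energy \eqref{E:tE1} to show that the $V^e$-norm cannot escape the small ball, which is where \textbf{(H1)}--\textbf{(H2)} are actually consumed — has already been done at the level of the earlier lemmas.
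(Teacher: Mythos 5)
Your proposal is correct and matches the paper's route: the paper states Proposition \ref{P:StaWc} as a direct consequence of Lemmas \ref{L:StaWc1} and \ref{L:StaWc2}, applying the forward estimate on $\CW^{cs}$ and the backward estimate on $\CW^{cu}$ to a point of $\CW^c=\CW^{cs}\cap\CW^{cu}$, exactly as you do. Your constant-chasing via $h^c(y,0)=0$, $Dh^c(y,0)=0$ to absorb $|\bar a^\pm|$ into a quadratic correction of $\Vert \bar V^e\Vert_{X_1}$, and the identity $h^{cs}\big(y,h_-^c(y,V^e),V^e\big)=h_+^c(y,V^e)$ from \eqref{fpcm}, are precisely the ingredients the paper relies on.
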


Combine the above results and Corollary \ref{C:character-cu}, Propositions \ref{P:CS} and \ref{P:CM}, we obtain the following characterization of $\CW^{cu}$, $\CW^{cs}$, and $\CW^c$. 

\begin{proposition} \label{P:character}
There exist $C>1$ and $\delta>0$ such that the following hold. Let $U(t) = \Phi\big(W(t)\big)$, where $W(t)= (y, a^+, a^-, V^e)(t)$, be a solutions to \eqref{tfeq} with initial value 
\[
\bar W= \big(\bar y, \bar a^+, \bar a^-, \bar V^e\big)\in B^{2d} (C^{-2} \delta) \oplus \CX^e(C^{-2} \delta),
\]
then 
\begin{enumerate}
\item $\bar W \in \CW^{cu}$ and thus $W(t) \in \CW^{cu}$ for all $t\le 0$,  if and only if $W(t) \in B^{2d} (\delta/15) \oplus \CX^e(\delta/15)$ for all $t\le 0$. 
\item $\bar W \in \CW^{cs}$ and thus $W(t) \in \CW^{cu}$ for all $t\ge 0$,  if and only if $W(t) \in B^{2d} (\delta/15) \oplus \CX^e(\delta/15)$ for all $t\ge 0$. 
\item $\bar W \in \CW^{c}$ and thus $W(t) \in \CW^{c}$ for all $t\in \BR$,  if and only if $W(t) \in B^{2d} (\delta/15) \oplus \CX^e(\delta/15)$ for all $t\in \BR$. 
\end{enumerate}
\end{proposition}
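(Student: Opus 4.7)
The plan is to assemble the proposition from results already proved in this section and the preceding sections by splitting each biconditional into its two directions. First I would observe that under hypotheses \textbf{(H1-2)}, Lemma \ref{L:decomp3} gives $X_{c,y}^{d1} = X_{c,y}^{d2} = \{0\}$, so $d_1 = d_2 = 0$ and the bundle $B^{d_1+d_2+2d}(\delta/15) \oplus \CX^e(\delta/15)$ appearing in the earlier characterization results reduces to $B^{2d}(\delta/15) \oplus \CX^e(\delta/15)$, matching the statement.

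For the three ``if'' directions (i.e.\ that staying in the small neighborhood in the specified time range forces $\bar W$ onto the corresponding manifold), I would directly apply the characterizations already established. Statement (1) is precisely Corollary \ref{C:character-cu} specialized to the non-degenerate setting, statement (2) is Proposition \ref{P:CS}(3), and statement (3) then follows by combining (1) and (2) via $\CW^c = \CW^{cu} \cap \CW^{cs}$ together with Proposition \ref{P:CM}(1).

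For the three ``only if'' directions, I would invoke the exponential stability results proved earlier in Section \ref{S:non-deg}: Lemma \ref{L:StaWc2} (solutions starting on $\CW^{cu}$ remain in $B^{2d}(\delta/15) \oplus \CX^e(\delta/15)$ for all $t \le 0$), Lemma \ref{L:StaWc1} (the analogous forward-time statement on $\CW^{cs}$), and Proposition \ref{P:StaWc} (the two-sided statement on $\CW^c$). Each of these gives the required $W(t)$-estimates, provided the initial data obeys the smallness assumption $|\bar a^\pm| + \Vert \bar V^e\Vert_{X_1} < C^{-2}\delta$ for a suitable $C$.

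The main step to take care of is not the dynamics but the bookkeeping of constants: I need to choose the universal $C$ in the statement to be the maximum of the various constants produced by Lemmas \ref{L:StaWc1}, \ref{L:StaWc2} and Proposition \ref{P:StaWc}, and then fix $\delta$ small enough (shrinking it relative to the $\delta$ used in Section \ref{S:InMa} if necessary) so that the parameter conditions \eqref{E:parameter-1}--\eqref{E:parameter-6} together with $\eta \in (C\delta,1)$ can all be satisfied simultaneously. Once these are reconciled, the three biconditionals follow immediately, with no further PDE argument required. The only conceptually delicate point is the stability in the center directions used in Proposition \ref{P:StaWc}, but that has already been handled via the energy--momentum expansion \eqref{E:tE1}--\eqref{E:tEcs} and the uniform positivity of $L^e(y)$; so nothing new is needed here.
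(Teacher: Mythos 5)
Your proposal is correct and follows essentially the same route as the paper: the paper gives no separate argument beyond stating that the proposition follows by combining Lemmas \ref{L:StaWc1}, \ref{L:StaWc2} and Proposition \ref{P:StaWc} (the ``only if'' directions) with Corollary \ref{C:character-cu} and Propositions \ref{P:CS}, \ref{P:CM} (the ``if'' directions), which is exactly the assembly you describe, including the reduction $d_1=d_2=0$ from Lemma \ref{L:decomp3} and the choice of a common $C$ and $\delta$ compatible with the parameter conditions.
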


\begin{remark}
Note that when we construct the local invariant manifolds, we cut off the nonlinearities to focus on the local dynamics. Different choice of the cut-off could yield different local invariant manifolds. Therefore local center-stable, center-unstable, and center manifolds are usually not unique. However, under the non-degeneracy conditions {\bf (H1-2)}, we obtain the above characterization of the local invariant manifolds which is independent of the cut-off. Therefore the local manifolds are unique in this case. 
\end{remark}

\begin{appendix} 
\section{}

In the Appendix, we give some estimates of the nonlinear term $G$ in \eqref{w1w2}. One may compute in \eqref{E:w2}
\begin{equation} \label{E:G_2} \begin{split}
G_2(c, y, w) =&- \left( |U|^2 - |U_c(\cdot +y)|^2 - 2 U_c (\cdot +y) \cdot (K_{c, y} w) \right) u_c (\cdot+ y) \\
& - \left( |U|^2 - |U_c(\cdot +y)|^2 \right) \left( w_{1}-\chi(D)\left(v_{c}(\cdot+y)w_{2}\right)\right) \\
&- \frac 12 \left( (1-|U|^2) \chi(D) (w_2^2) + \Delta \chi(D) (w_2^2)   \right),    
\end{split}\end{equation}
where $U$ is given in \eqref{UnearM} as 
\begin{equation} \label{E:UnearM}
U = \psi\left(w_{c}\left(\cdot +y\right)+w\right) =  U_{c}\left(\cdot +y\right)+ K_{y, c} w - \left( \frac 12 \chi(D) (w_2^2), 0 \right)^T. 
\end{equation}

Substituting this into $G_1$ we obtain 
\[
G_1=G_1(c, y, \p_t y, w) = G_{11} (c, y, w) + G_{12} (c, y, \p_t y, w) 
\] 
where 
\begin{equation} \label{E:G_11} \begin{split}
G_{11} = & \left( |U|^2 - |U_c(\cdot +y)|^2 - 2 U_c (\cdot +y) \cdot (K_{c, y} w) \right) v_c (\cdot+ y) \\
& + \left( |U|^2 - |U_c(\cdot +y)|^2 \right) w_{2}  -\frac 12 \chi(D) (w_{2}\nabla w_{2}\cdot c)
\end{split} \end{equation}
and by substituting \eqref{E:w2} into $G_1$ 
\begin{equation} \label{E:G_12} \begin{split}
G_{12} = G_{12} (c, y, \tilde y, w) =&  \chi(D)\Big(w_2 \big(-(L_{c,y}K_{c,y}w)_1 - \tilde y \cdot \nabla v_c(\cdot+y)  \\
&+G_2(c,y,w)\big)\Big). 
\end{split} \end{equation}
Here $(L_{c,y}K_{c,y}w)_1$ denotes the first component of $L_{c,y}K_{c,y}w$. For fixed $c$ and $y$, $G$ is a polynomial of $w$ and $\tilde y$. More precisely, it is the sum of some multi-linear transformations on $w$ and $\tilde y$ of degree between 2 and $6$.

\begin{lemma}\label{ESG}
Fix $c$. It holds that 
\[
G(c, \cdot, \cdot, \cdot) \in C^\infty ( \BR^3 \times \BR^3 \times X_1, W^{1, \frac 32}) +  C^\infty ( \BR^3 \times X_1, L^{\frac 32} \cap \dot W^{1, \frac 65}), 
\]
and 
\[
G(c, y, 0) =0, \quad D_w G(c, y, 0) =0. 
\]
\end{lemma}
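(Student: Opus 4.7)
The vanishing conditions $G(c,y,0)=0$ and $D_{w}G(c,y,0)=0$ are immediate from the derivation of \eqref{w1w2}: by construction $G$ collects only the nonlinear remainder left after the linearization $JL_{c,y}K_{c,y}w$ has been subtracted, so it is strictly quadratic-or-higher in $w$. For the function-space regularity, my plan is to produce an explicit decomposition $G=G^{(1)}+G^{(2)}$. The first step is to substitute the identity
\[
|U|^{2}-|U_{c}(\cdot+y)|^{2}=2\,U_{c}(\cdot+y)\cdot\bigl(U-U_{c}(\cdot+y)\bigr)+|U-U_{c}(\cdot+y)|^{2},
\]
together with $U-U_{c}(\cdot+y)=K_{c,y}w-\tfrac{1}{2}(\chi(D)(w_{2}^{2}),0)^{T}$ from \eqref{UnearM}, into \eqref{E:G_2}, \eqref{E:G_11}, \eqref{E:G_12}. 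This rewrites $G$ as an explicit finite sum of monomials multilinear in $w$ of algebraic degrees $2$ through $6$, each with coefficient built from $u_{c}(\cdot+y),v_{c}(\cdot+y)$ and their gradients, possibly enveloped by the Fourier multipliers $\chi(D)$ or $\Delta\chi(D)$.

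Next, I would split each coefficient via $u_{c}=1+(u_{c}-1)$, $u_c^2=1+(u_c^2-1)$, $|U_{c}|^{2}=1+(|U_{c}|^{2}-1)$, and $v_{c}=0+v_{c}$, thereby separating every monomial into either one that carries a spatially decaying factor from
\[
\mathcal{D}=\{u_{c}(\cdot+y)-1,\;v_{c}(\cdot+y),\;\nabla u_{c}(\cdot+y),\;\nabla v_{c}(\cdot+y),\;|U_{c}|^{2}(\cdot+y)-1\}
\]
(assigned to $G^{(1)}$) or a ``bare'' monomial with constant coefficient (assigned to $G^{(2)}$). The only $\tilde y$ dependence in $G$ appears in the monomial $\chi(D)(w_{2}\,\tilde y\cdot\nabla v_{c}(\cdot+y))$ from \eqref{E:G_12}, which carries a $\nabla v_{c}$ factor and hence lies in $G^{(1)}$, so $G^{(2)}$ is automatically $\tilde y$-independent. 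For $G^{(1)}$, each monomial has the form $f(\cdot+y)\,P(w)$ or $f(\cdot+y)\,\chi(D)(P(w))$ with $f\in\mathcal{D}$ (times bounded $u_{c},v_{c}$ factors) and $P(w)$ polynomial in $w_{1},w_{2},\nabla w_{2}$. The pointwise decay rates $|f|=o(|x|^{-2})$ for $f\in\{u_{c}-1,\nabla u_{c},|U_{c}|^{2}-1\}$ and $|f|=o(|x|^{-1})$ for $f\in\{v_{c},\nabla v_{c}\}$ from \eqref{E:TW-decay}, combined with $H^{1}\hookrightarrow L^{2}\cap L^{6}$, $\dot H^{1}\hookrightarrow L^{6}$, Hardy's inequality for the products involving $v_{c}\cdot w_{2}$, the $L^{p}$-boundedness of $\chi(D)$ via Mikhlin, and the Leibniz rule, then place each such term into $W^{1,3/2}$; the smooth dependence on $(y,\tilde y,w)$ is routine from the multilinear structure and the $C^{\infty}$ regularity of $u_{c},v_{c}$.

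For $G^{(2)}$, the bare monomials consist of polynomials in $w_{1},w_{2}$ together with contributions involving $\chi(D)(w_{2}^{2})$, $\chi(D)(w_{2}\nabla w_{2})$, $\Delta\chi(D)(w_{2}^{2})$, and $\chi(D)\bigl(w_{2}(L_{c,y}K_{c,y}w)_{1}\bigr)$. The pure-polynomial pieces that include at least one factor of $w_{1}$ land in $L^{3/2}\cap\dot W^{1,6/5}$ by Hölder, exploiting the extra $L^{2}$-integrability supplied by $w_{1}\in H^{1}$. For the $\chi(D)$-type pieces I would exploit that $1-\chi(\xi)$ vanishes near $\xi=0$, so $(I-\chi(D))/|D|^{s}$ is a Mikhlin multiplier for $s\ge 0$, giving extra smoothing, and that $\check\chi$ is Schwartz, so $\chi(D)$ acts by convolution with a Schwartz kernel; Young's inequality then converts the $L^{3/2}$ bound on $w_{2}\nabla w_{2}$ and the $L^{3}$ bound on $w_{2}^{2}$ into the required $L^{3/2}\cap\dot W^{1,6/5}$ control. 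The $(L_{c,y}K_{c,y}w)_{1}$ piece, which carries a $-\Delta\tilde w_{1}$, is regularized by integration by parts under the outer $\chi(D)$ so that at most one derivative falls on each $w$. The main obstacle is precisely this last step: balancing the Young's-inequality exponents against the decay of $\check\chi$ and the extra smoothing furnished by $(I-\chi(D))/|D|$ finely enough to hit the sharp pair $(L^{3/2},\dot W^{1,6/5})$ rather than the easier $(L^{3},\dot W^{1,3/2})$ obtained by naive application of Mikhlin and Sobolev.
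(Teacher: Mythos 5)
Your opening observations are fine and match the paper: the vanishing of $G(c,y,0)$ and $D_wG(c,y,0)$ follows from the quadratic-or-higher multilinear structure, the only $\tilde y$-dependent monomial is indeed $\chi(D)\big(w_2\,\tilde y\cdot\nabla v_c(\cdot+y)\big)$ in \eqref{E:G_12}, and your use of \eqref{E:TW-decay} with Hardy's inequality for the $v_c w_2$ products, of the Schwartz kernel of $\chi(D)$, and of a divergence-form rewriting of $w_2\Delta w_1$ under the outer $\chi(D)$ are all exactly the tools the paper uses. The gap is in your proposed decomposition rule: ``decaying coefficient $\Rightarrow W^{1,\frac32}$, bare constant-coefficient monomial $\Rightarrow L^{\frac32}\cap \dot W^{1,\frac65}$.'' Several bare monomials built solely from $w_2$ simply do not lie in $\dot W^{1,\frac65}$, because $w_2$ is only in $\dot H^1\hookrightarrow L^6$ and carries no decay information: concretely, $\Delta\chi(D)(w_2^2)$ and the $(1-|U|^2)\chi(D)(w_2^2)$ piece in \eqref{E:G_2}, $\chi(D)(w_2\nabla w_2\cdot c)$ in \eqref{E:G_11}, $\big(I-\chi(D)\big)(w_2^2)$ coming from $|U|^2-|U_c|^2$, and bare products such as $w_1w_2$ in $(|U|^2-|U_c|^2)\,w_2$, where a gradient falling on $w_1$ leaves $\nabla w_1\cdot w_2\in L^2\cdot L^6=L^{\frac32}$ only. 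A $\dot W^{1,\frac65}$ bound for these would require $w_2\in L^3$ (or equivalent decay), which is not available, and your proposed remedy cannot supply it: Young's inequality gives $\|k*f\|_{L^r}\le\|k\|_{L^q}\|f\|_{L^p}$ only for $r\ge p$, and more generally no nonzero translation-invariant operator (Schwartz-kernel convolution, Mikhlin multiplier, $(I-\chi(D))/|D|^s$) is bounded from $L^p$ to $L^q$ with $q<p$; decay can never be manufactured this way. So the ``main obstacle'' you flag at the end is not a matter of balancing exponents more finely — it is unresolvable along that route.

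The repair is the one the paper makes: those offending monomials are all $\tilde y$-independent, but nothing in the statement forces the $W^{1,\frac32}$ summand to genuinely depend on $\tilde y$, so you should simply assign them to the $W^{1,\frac32}$ component. This is where they naturally live, via the smoothing bound \eqref{E:chi}, $\Vert |\nabla|^s\chi(D)f\Vert_{L^p\cap L^\infty}\le C\Vert f\Vert_{L^p}$ applied to $w_2\nabla w_2\in L^{\frac32}$ (e.g.\ $\Delta\chi(D)(w_2^2)=\nabla\chi(D)\cdot\nabla(w_2^2)$), and via the $L^1$ kernel of $(1-\chi(|\xi|))/|\xi|$ for $\big(I-\chi(D)\big)(w_2^2)$ as in \eqref{E:temp-A-1}. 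The $L^{\frac32}\cap\dot W^{1,\frac65}$ summand is instead reserved for the terms whose \emph{regularity}, not decay, is the limiting factor — the pieces of type $(|U|^2-|U_c|^2)\nabla w_1$ where the extra integrability of $w_1\in H^1$ and of the decaying coefficients is what saves you — which is the regrouping the paper carries out after establishing \eqref{E:G_2-esti-1}--\eqref{E:G_12-esti-2}. With that reassignment (and a corresponding check of the $D_y^k$, $D_w^l$ derivatives, which only improve matters because $D^k U_c\in L^2\cap L^\infty$ for $k\ge1$), your argument closes.
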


In particular, the only term $G_{12}$ containing $\tilde y$ belongs to $C^\infty ( \BR^3 \times \BR^3 \times X_1, W^{1, \frac 32})$. More refined estimates on $G$ can be found in \eqref{E:G_2-esti-1}, \eqref{E:G_2-esti-2}, \eqref{E:G_11-esti-1}, \eqref{E:G_11-esti-2}, \eqref{E:G_12-esti-1}, and \eqref{E:G_12-esti-2}, where the generic constant $C$ in those inequalities are independent of $y$. 

\begin{proof}
Due to the polynomial form of $G$ in $w \in X_1$ and $\tilde y \in \BR^3$, we only need to estimate the boundedness of each monomial, i.e. multi-linear transformation. To handle the terms with $\chi(D)$, we will  repeatedly use 
\begin{equation} \label{E:chi}
\Vert |\nabla |^s \chi(D) f \Vert_{L^p \cap L^\infty} \le C_{s, p} \Vert f\Vert_{L^p}, \quad \forall k \ge 0, \; 1\le p \le \infty. 
\end{equation}

We start with the consideration on $\vert U\vert ^{2}-\vert U_{c}(\cdot + y)\vert ^{2}$. Let 
\[
\rho=\frac 12 \chi(D)\left(w_{2}^{2}+2w_{2}w_{2c}(\cdot+y)\right) \Longrightarrow D_y^k \rho = \chi(D)\left(w_{2}D^k w_{2c}(\cdot+y)\right)
\] 
for any $k\ge 1$ and 
\[
\nabla \rho = \chi(D)\left(w_{2}\nabla w_2+\nabla w_{2}w_{2c}(\cdot+y)+w_{2}\nabla w_{2c}(\cdot+y)\right). 
\]

Using $w_{2c} = v_c \in \dot H^1$, \eqref{E:chi} implies, for any $s+ k \ge 1$  
\begin{equation} \label{E:rho-E} 
\Vert \nabla^s D_y^k \rho \Vert_{L^{\frac 32} \cap L^\infty} \le C_{s,k} \Vert w_2\Vert_{\dot H^1} (\Vert w_2\Vert_{\dot H^1}+1)
\end{equation}
where $C_{s, k}$ is independent of $y$. Here we used the property $D^k w_{2c} = D^k v_c \in L^2\cap L^\infty$ for all $k\ge 1$ due to equation \eqref{twGP} and also the embedding $\dot W^{1, \frac 32} (\BR^3) \to L^3(\BR^3)$ on $\rho$. The second term in $\rho$ actually has a better spatial decay estimate by using the Hardy's inequality and \eqref{E:TW-decay}. Namely, 
\[
\Vert w_{2}w_{2c}(\cdot+y)\Vert_{L^2} = \Vert w_{2} (\cdot -y) v_{c}\Vert_{L^2} \le C \Vert \frac {w_{2} (\cdot -y)}{|x|} \Vert_{L^2} \le C \Vert w_2\Vert_{\dot H^1}.
 \]
On some occasions, we also need to consider $\big(I-\chi(D)\big)(f_1f_2)$, for $f_{1,2} \in \dot H^1$. Since $\nabla (f_1f_2) \in L^{\frac 32}$, we have 
\begin{align*}
&\Vert \big(I-\chi(D)\big)(f_1f_2) \Vert_{L^{\frac 32}} =\Vert \frac {I-\chi(D)}{|D|}|D|(f_1f_2) \Vert_{L^{\frac 32}} \\
\le& C \Vert \nabla (f_1f_2) \Vert_{L^{\frac 32}} \le C \Vert f_1 \Vert_{\dot H^1} \Vert f_2 \Vert_{\dot H^1}
\end{align*}
where we used that the inverse Fourier transform of $\frac {1-\chi(\xi)}{|\xi|}$ is in $L^1$. It implies 
\begin{equation} \label{E:temp-A-1} 
\Vert \big(I-\chi(D)\big)(f_1f_2) \Vert_{W^{1, \frac 32}}  \le C \Vert f_1 \Vert_{\dot H^1} \Vert f_2 \Vert_{\dot H^1}. 
\end{equation}
One may compute that 
\begin{align*}
\vert U\vert ^{2}-\vert U_{c}\vert ^{2}= & w^{2}_{1}+2u_{c} (\cdot +y) (w_{1} -\rho) +\rho^{2}-2w_{1}\rho+2v_{c}(\cdot +y)w_{2}+w_{2}^{2}\\
=&  w^{2}_{1}+ 2 w_1+2\big(u_{c} (\cdot +y)-1\big) (w_{1} -\rho) +\rho^{2}-2w_{1}\rho\\
&+2\big(I-\chi(D)\big)\big(v_{c}(\cdot +y)w_{2} + w_2^2\big). 
\end{align*}
By using the above inequalities and \eqref{E:TW-decay}, we obtain, through straight forward calculations, for $k\ge 1$,  
\begin{equation} \label{E:temp-A-2} \begin{split}
&  \Vert \vert U\vert ^{2}-\vert U_{c}\vert^{2} - 2 w_1\Vert_{W^{1,\frac 32}}
+\Vert D_y^k (\vert U\vert ^{2}-\vert U_{c}\vert ^{2}) \Vert_{W^{1,\frac 32} \cap \dot H^1}
\\
\le& C \Vert w\Vert_{X_1} (\Vert w\Vert_{X_1}^3+1). 
\end{split}\end{equation}
Similarly, 
\begin{align*}
&\vert U\vert ^{2}-\vert U_{c}\vert ^{2} -2 U_c (\cdot +y) \cdot (K_{c, y} w) \\
=& w^{2}_{1} +\big(1- \chi(D)\big) (w_2^2) +\big(1-u_{c} (\cdot +y)\big) \chi(D) (w_2^2) +\rho^{2}-2w_{1}\rho
\end{align*}
and along with the above inequalities, it implies for $k\ge 1$
\begin{equation} \label{E:temp-A-3} \begin{split}
& \Vert D_y^k \big( \vert U\vert ^{2}-\vert U_{c}\vert ^{2} -2 U_c (\cdot +y) \cdot (K_{c, y} w) \big)\Vert_{W^{1,\frac 65}\cap \dot H^1}\\
& \quad +\Vert  \vert U\vert ^{2}-\vert U_{c}\vert ^{2} -2 U_c (\cdot +y) \cdot (K_{c, y} w)\Vert_{W^{1,\frac 32}} 
\le C \Vert w\Vert_{X_1}^2 (\Vert w\Vert_{X_1}^2+1). 
\end{split}\end{equation}
Substituting \eqref{E:temp-A-2} and \eqref{E:temp-A-3} into \eqref{E:G_2} and using \eqref{E:chi}, we obtain through straight forward calculations
\begin{equation} \label{E:G_2-esti-1}
\Vert G_2 \Vert_{L^{\frac 32} \cap L^2}  + \Vert \nabla G_2 \Vert_{L^{\frac 32} + L^{\frac 65}} \le C \Vert w\Vert_{X_1}^2 (\Vert w\Vert_{X_1}^3+1). 
\end{equation}
Here we have to estimate $\nabla G_2$ in $L^{\frac 32} + L^{\frac 65}$ since 
\[
\Vert \nabla \Delta \chi(D) (w_2^2)\Vert_{L^{\frac 32}} \le C \Vert w_2\Vert_{X_1}^2
\]
does not seem to have better decay and 
\[
\Vert \big(\vert U\vert ^{2}-\vert U_{c}\vert ^{2} \big) \nabla w_1\Vert_{L^{\frac 65}} \le C \Vert w\Vert_{X_1}^2 (\Vert w\Vert_{X_1}^3+1).
\]
does not seem to have better regularity. Similarly, for any $k\ge 1$, 
\begin{equation} \label{E:G_2-esti-2}
\Vert D_y^k G_2 \Vert_{W^{1, \frac 65} \cap \dot W^{1, \frac 32}} \le C \Vert w\Vert_{X_1}^2 (\Vert w\Vert_{X_1}^3+1). 
\end{equation}
The estimates for $G_{11}$ are 
\begin{equation} \label{E:G_11-esti-1}
\Vert G_{11} \Vert_{L^{\frac 32} \cap L^2} + \Vert \nabla G_{11} \Vert_{L^{\frac 32}\cap L^\infty + L^1 \cap L^{\frac 65}} \le C \Vert w\Vert_{X_1}^2 (\Vert w\Vert_{X_1}^3+1). 
\end{equation}
Again, we have to estimate $\nabla G_2$ in this norm as 
\[
\Vert \nabla \chi(D) (w_{2}\nabla w_{2}\cdot c) \Vert_{L^{\frac 32} \cap L^\infty}  \le C \Vert w\Vert_{X_1}^2 
\]
does not seem to have better decay and 
\[
\Vert \nabla \left( \left( |U|^2 - |U_c(\cdot +y)|^2 \right) w_{2} \right) \Vert_{L^1\cap L^{\frac 65}} \le C \Vert w\Vert_{X_1}^2 (\Vert w\Vert_{X_1}^3+1)
\]
does not seem to have better regularity. Differentiating in $y$ implies that 
\begin{equation} \label{E:G_11-esti-2}
\Vert D_y^k G_{11} \Vert_{W^{1,\frac 65} \cap \dot W^{1, \frac 32}} \le C \Vert w\Vert_{X_1}^2 (\Vert w\Vert_{X_1}^3+1), \quad k \ge 1. 
\end{equation}
Next we consider $G_{12}$. Recall from \eqref{lcy}, for any $f=(f_1, f_2) \in X_1$, 
\[
(L_{c, y} f)_1 = (2-\Delta) f_1 + \big( 3(u_c^2-1) + v_c^2\big)(\cdot +y) f_1 - c\cdot \nabla f_2 + 2 (u_cv_c) (\cdot +y)f_2.
\]
Using the Hardy's inequality, and the fact $K_{c, y}$ being an isomorphism, we obtain 
\[
\Vert (L_{c, y} K_{c, y} w)_1 + \Delta w_1 \Vert_{L^2} \le C \Vert w \Vert_{X_1}. 
\]
From $w_2 \Delta w_1 = \nabla \cdot (w_2 \nabla w_1) - \nabla w_2 \cdot \nabla w_1$ and \eqref{E:chi}, we have, for any $s \ge 0$, 
\begin{align*}
\Vert |\nabla|^s \chi(D) (w_2 \Delta w_1) \Vert_{L^{\frac 32} \cap L^\infty} \le C \Vert w_2\Vert_{\dot H^1} \Vert w_1\Vert_{\dot H^1}. 
\end{align*}
Therefore, \eqref{E:G_12}, \eqref{E:G_2-esti-1}, and the above inequalities yield 
\begin{equation} \label{E:G_12-esti-1}
\Vert |\nabla|^s G_{12} \Vert_{L^{\frac 32} \cap L^\infty} \le C\Vert w_2\Vert_{\dot H^1} \big( |\tilde y|  + \Vert w_1\Vert_{\dot H^1} + \Vert w\Vert_{X_1}^2 (\Vert w\Vert_{X_1}^3+1) \big).
\end{equation} 
Differentiating in $y$, we have, for $k\ge 1$,
\begin{align*}
D_y^k (L_{c, y}& K_{c, y} w)_1 = 2 D^k (u_cv_c) (\cdot +y) w_2\\
&+D^k \big( 3(u_c^2-1) + v_c^2\big)(\cdot +y) w_1 - (2-\Delta) \chi(D) \big( (D^k v_c)(\cdot +y) w_2\big) \\
 &- \sum_{k_1 + k_2 =k} D^{k_1} \big( 3(u_c^2-1) + v_c^2\big)(\cdot +y)\chi(D) \big( (D^{k_2} v_c)(\cdot +y) w_2\big). 
\end{align*}
By using \eqref{E:chi} and \eqref{E:G_2-esti-2}, we obtain, for any $k\ge 1$, 
\begin{equation} \label{E:G_12-esti-2}
\Vert |\nabla|^s D_y^k G_{12} \Vert_{L^{\frac 32} \cap L^\infty} \le C\Vert w_2\Vert_{\dot H^1} \big( |\tilde y|  + \Vert w_1\Vert_{\dot H^1} + \Vert w\Vert_{X_1}^2 (\Vert w\Vert_{X_1}^3+1) \big). 
\end{equation} 

Finally, we note that $G_{11}$, $G_{12}$, and $G_2$ are polynomials of $w$ and $\tilde y$ consisting of monomials of degree between 2 and 6 with coefficients depending on $U_c(\cdot +y)$.  Therefore, one may regrouping those monomials so that some of them belong to $W^{1, \frac 32}$ while others to $\dot W^{1, \frac 65}$. Moreover it is easy to obtain the estimates on $D_w^l D_y^k G$ and the the proof is complete. 
\end{proof}

\end{appendix}

\bibliography{gp}{}
\bibliographystyle{plain}

\end{document}